\newcommand*{\mailto}[1]{\href{mailto:#1}{\nolinkurl{#1}}}
\definecolor{darkgreen}{rgb}{0.5,0.25,0}
\definecolor{darkblue}{rgb}{0,0,1}
\definecolor{answerblue}{rgb}{0,0,0.75}
\newcommand{\ep}{\varepsilon}
\newcommand{\eps}{\varepsilon}
\newcommand{\pd}{\partial}
\newcommand{\LL}{\langle}
\newcommand{\RR}{\rangle}
\newcommand{\Ex}{\mathbb{E}}
\renewcommand{\d}{\mathrm{d}}
\newcommand{\KK}{\bm{\Sigma}}
\newcommand{\JJ}{{\bf j}}
\newcommand{\doublehookrightarrow}{
    \lhook\joinrel\relbar\mspace{-12mu}\hookrightarrow
}
\newcommand{\R}{\mathbb{R}}
\newcommand{\T}{{\mathbb{S}^1}}
\newcommand{\N}{\mathbb{N}}
\newcommand{\Z}{\mathbb{Z}}
\newcommand{\prob}{\mathbb{P}}
\newcommand{\abs}[1]{\left | #1 \right |}
\newcommand{\norm}[1]{\left \| #1 \right \|}
\newcommand{\bk}[1]{ \left(  #1 \right)}
\newcommand{\seq}[1]{\left\{ #1 \right\}}
\newcommand{\dott}{\, \cdot\,}
\newcommand{\ton}{\overset{n\uparrow \infty}{\longrightarrow}}
\newcommand{\toR}{\overset{R\uparrow \infty}{\longrightarrow}}
\newcommand{\todelta}{\overset{\delta\downarrow 0}{\longrightarrow}}
\newtheorem{thm}{Theorem}[section]
\newtheorem{prop}[thm]{Proposition}
\newtheorem{lem}[thm]{Lemma}
\theoremstyle{theorem}
\newtheorem{defin}{Definition}[section]
\theoremstyle{remark}
\newtheorem{rem}[thm]{Remark}
\numberwithin{equation}{section}
\title[Viscous stochastic CH equation]
{Global well-posedness of the viscous 
\\ Camassa--Holm equation with gradient noise}
\author[H. Holden]{Helge Holden}
\address[Helge Holden]{Department 
of Mathematical Sciences\\
NTNU Norwegian University of Science and Technology\\
NO-7491 Trondheim\\ Norway}
\email{\mailto{helge.holden@ntnu.no}}
\urladdr{\url{https://www.ntnu.edu/employees/holden}}
\author[K. H. Karlsen]{Kenneth H. Karlsen}
\address[Kenneth H. Karlsen]{Department of Mathematics\\
University of Oslo\\
NO-0316 Oslo\\ Norway}
\email{\mailto{kennethk@math.uio.no}}
\author[P.H.C. Pang]{Peter H.C. Pang}
\address[Peter H.C. Pang]{Department 
of Mathematical Sciences\\
NTNU Norwegian University of Science and Technology\\
NO-7491 Trondheim\\ Norway}
\email{\mailto{ptr@math.uio.no}}
\subjclass[2010]{Primary: 35R60, 35G25, 60H15; Secondary: 35A01, 35A02}
\keywords{Shallow water equation, viscous Camassa--Holm 
equation, stochastic perturbation, convective 
noise, existence, Faedo--Galerkin method, compactness, 
tightness, Skorokhod--Jakubowski representation, 
uniqueness, commutator estimate}
\thanks{This research was partially supported 
by the Research Council of Norway Toppforsk 
project {\em Waves and Nonlinear Phenomena} (250070). 
The final author is also partially supported by the 
Research Council of Norway project {\em INICE} (301538).}
\date{\today}
\begin{document}

\begin{abstract}
We analyse a nonlinear stochastic partial 
differential equation that corresponds to 
a viscous shallow water equation (of 
the Camassa--Holm type) perturbed by a convective, 
position-dependent noise term. We establish the existence 
of weak solutions in $H^m$ ($m\in\mathbb{N}$) using 
Galerkin approximations and the 
stochastic compactness method. We derive a series 
of a priori estimates that combine a model-specific 
energy law with non-standard regularity estimates. 
We make systematic use of a stochastic 
Gronwall inequality and also stopping time techniques. 
The proof of convergence to a solution argues 
via tightness of the laws of the Galerkin solutions, and 
Skorokhod--Jakubowski a.s.~representations 
of random variables in quasi-Polish spaces.  
The spatially dependent noise function 
constitutes a complication throughout the analysis, 
repeatedly giving rise to nonlinear terms 
that ``balance" the martingale 
part of the equation against the second-order 
Stratonovich-to-It\^{o} correction term. 
Finally, via pathwise uniqueness, we 
conclude that the constructed solutions are probabilistically 
strong. The uniqueness proof is based on a 
finite-dimensional It\^{o} formula 
and a DiPerna--Lions type regularisation procedure, where 
the regularisation errors are controlled by 
first and second order commutators.
\end{abstract}

\maketitle

\setcounter{tocdepth}{1}

\tableofcontents

\section{Introduction and main results}

\subsection{Background}

We are interested in the initial-value problem 
for the stochastic parabolic-elliptic system 
\begin{equation}\label{eq:u_ch_ep}
	\begin{aligned}
		 0&=\d {u}+ \left[{u}\, \pd_x {u} 
		+ \pd_x P-\ep \pd_x^2 {u} \right] \;\d t  \\
		& \quad- \frac12 \sigma(x) 
		\pd_x \bk{\sigma(x) \pd_x {u}}\,\d t
		+\sigma(x) \pd_x {u} \, \d W ,
		\\  -\pd_x^2 P+P &= u^2 
		+\frac{1}{2} \left(\pd_x{u}\right)^2,
		\quad \text{for $(t,x)\in (0,T)\times \T$}, 
	\end{aligned}
\end{equation}
where $\T=\R/\Z$ is the 1D torus (circle), 
$\eps$ and $T$ are positive numbers, $\sigma=\sigma(x)\in 
W^{2,\infty}(\T)$ is a position-dependent noise amplitude, 
and $W$ is a 1D Brownian motion 
defined on a probability space and  adapted to 
some filtration (further details will be given 
later). Formally, by the It\^{o}--Stratonovich 
conversion formula, the two $\sigma$ terms 
in \eqref{eq:u_ch_ep} can be combined into the simple looking 
Stratonovich differential $\sigma\,\pd_x u\circ \d W$, 
which in the literature is referred to as a gradient, 
transport or convection noise term. The 
elliptic equation for $P$ can be ``solved" to give
\begin{equation}\label{eq:P-def}
	P = P[{u}]:=K*\left(u^2 
	+\frac{1}{2} \left(\pd_x{u}\right)^2\right),
\end{equation}
where $K$ denotes the Green's function of the operator
$1-\pd_x^2$ on $\T$, which can be 
given in explicit form, and $*$ denotes convolution 
in $x$. Consequently, \eqref{eq:u_ch_ep} takes the form of a 
nonlinear, nonlocal stochastic partial 
differential equation (SPDE). 

If $\ep=0$ and $\sigma\equiv 0$, then \eqref{eq:u_ch_ep} 
becomes the classical (deterministic) Camassa--Holm (CH) 
equation \cite{Camassa:1993zr,Fuchssteiner:1981fk}, which 
is a nonlinear dispersive PDE that models 
shallow water waves. Besides, it is nonlocal, 
completely integrable and may be written in 
(bi-)Hamiltonian form in terms of the 
so-called momentum variable $m:=\bigl(1-\pd_{xx}^2\bigr)u$. 
The inclusion of gradient type noise is natural in that the perturbation 
can be thought of as one on the transporting velocity field, 
i.e., $u \,\pd_x u$ is replaced by $(u+\sigma \circ \dot{W})\circ \pd_x u$, 
and hence as an additive perturbation of the underlying 
Lagrangian dynamics; see Section \ref{eq:derivation_SPDE} for more details.

The CH equation is a supercritical PDE in 
the sense that the competition between dispersion, 
which tends to spread out a wave, and nonlinearity, which 
causes a wave to concentrate, leads to 
the development of singularities in finite 
time (wave breaking). The well-posedness 
of the CH wave equation,  in different classes 
of weak solutions for general finite-energy initial data 
$u|_{t=0}=u_0\in H^1$, has been widely studied, see 
for example \cite{Bressan:2007fk,Bressan:2006nx,
Coclite:2005tq,HR2007,XZ2000} (and the references therein). 
The relevance of the Sobolev space $H^1$ 
is that its norm is preserved (up to an inequality) 
by the solution operator, and $H^1$ 
regularity is needed to make distributional 
sense to the equation. This space is 
consistent with wave breaking, i.e., 
a solution $u$ remains bounded while 
its $x$-derivative $\pd_x u$ becomes (negatively) 
unbounded \cite{Camassa:1993zr} 
(this is rigorously demonstrated in \cite{CE1998a, CE1998b}).

Random effects are important when 
developing good mathematical models 
of complex phenomena, with carefully 
crafted SPDEs providing tools for modelling, analysis, 
and prediction. Randomness can enter models 
differently, such as through stochastic transport, 
stochastic forcing, or uncertain system parameters like 
random initial and boundary data. The work \cite{Hol2015} 
proposes a general approach to 
deriving SPDEs for fluid dynamics from a 
stochastic variational principle. This approach 
constitutes a stochastic extension of the 
classical variational derivation of Eulerian 
fluid dynamics. The corresponding 
stochastic perturbation of the CH equation leads 
to an SPDE similar to \eqref{eq:u_ch_ep} (with 
$\ep=0$), see \cite{CH2018} and also \cite{Bendall:2021tx}. 
For the related stochastic Hunter--Saxton equation, 
see \cite{HKP2021}. 

\subsection{Stochastic CH equation}\label{eq:derivation_SPDE}

Let us discuss the derivation of \eqref{eq:u_ch_ep} (with 
$\ep=0$) in more detail. Denote by $M_m$ the 
multiplication operator by $m = \bigl(1-\pd^2_{x}\bigr) u$, 
i.e., $M_m[v]=mv$, 
and by $D$ the (spatial) differentiation 
operator on $\T$. As is well known, the deterministic 
CH equation can be written in a bi-Hamiltonian form as
\begin{equation}\label{eq:bi-Hamiltonian}
	0 = \pd_t m +M_m D 
	\frac{\delta \tilde{h} [m]}{\delta m}
	+D M_m \frac{\delta \tilde{h}[m]}{\delta m},
\end{equation}
where the Hamiltonian is
\begin{equation}\label{eq:tilde-h}
	\tilde{h}[m]=\frac{1}{2} 
	\int_\T m(t,x)\, (K*m)(t,x)\;\d x,
\end{equation}
and the kernel
\begin{align}\label{eq:kernel_def}
	K(x) :=\frac{\cosh(x-2 \pi \left[\frac{x}{2\pi}\right]-\pi)}
	{2\sinh(\pi)}
\end{align}
is the Green's function for the operator 
$1-\pd_{x}^2$ on $\T$. One can formally convert the 
bi-Hamiltonian equation \eqref{eq:bi-Hamiltonian} 
into the ``transport'' system
\begin{align*}
	0 & =\pd_t u + u\, \pd_x u + \pd_x {P},
	\qquad \text{with $P=P[u]$ defined in \eqref{eq:P-def}},
\end{align*}
which is a popular formulation of the CH equation. 
It was suggested in \cite{Hol2015,CH2018} that the 
Hamiltonian ought to be perturbed 
by noise directly, so that a physically 
significant stochastic analogue of the 
CH equation should be based on 
the integrated Hamiltonian
$$
H[m]=\int_\T \int_0^t 
\frac{1}{2}m(s,x) (K*m)(t,x) \;\d s
+\int_0^t \bigl(m(s,x) \sigma(x) \bigr)
\circ \d{W}(s)\;\d x.
$$
With $\sigma \equiv 0$, we identify 
$\tilde{h}$, cf.~\eqref{eq:tilde-h}, with $\d H/\d t$. 
The first variation of $H[m]$ is
$$
\frac{\delta H[m]}{\delta m} 
= u+\sigma(x) \;\dot{W},
$$
This expression is of class $C^{-1/2-0}$ in 
time, and it is far from being 
a time-continuous object. However, at the formal 
level, compared with \eqref{eq:bi-Hamiltonian}, 
the analogous stochastic CH equation becomes
$$
0 = \d m + M_{m} D \bigl(u \;\d t+ \sigma(x) 
\; \d{W}\bigr) + D M_{m}\bigl(u\;\d t
+\sigma(x)\; \d{W}\bigr),
$$
where the multiplication operator $M_m$ 
here uses the Stratonovich product $\circ$; 
written out more explicitly, we have
\begin{equation}\label{eq:m_Stoch-CH}
	0 = \d m + \bigl(m\, \pd_x u
	+\pd_x (mu) \bigr)\;\d t 
	+ m\, \pd_x \sigma(x) \circ \d W 
	+ \pd_x \bigl(m \sigma(x) \bigr)\circ \d W.
\end{equation}
We can derive an equation for $u$ that is 
heuristically equivalent to \eqref{eq:m_Stoch-CH}. 
Under the assumption that the functions 
$u$, $m=u-\pd_{x}^2 u$ and $\sigma$ are sufficiently 
regular, we can convolve \eqref{eq:m_Stoch-CH} 
by $K$ to obtain
\begin{align*}
	0 & = \d \Bigl( 
	K*\left(u - \pd_{x}^2 u\right) \Bigr)
	+K*\Bigl(3u\, \pd_x u - 2 \pd_xu\, \pd^2_{x}u 
	-u \pd_{x}^3 u\Bigr)\;\d t 
	\\ & \qquad 
	+K* \Bigl(\pd_x\sigma\, u
	-2\pd_x \sigma\, \pd_{x}^2 u  
	+\pd_x \left(\sigma u\right)
	-\sigma\pd_{x}^3 u\Bigr)\circ \d W.
\end{align*}
Recalling the definition of $K$, 
cf.~\eqref{eq:kernel_def}, we obtain
\begin{align*}
	0 & = \d u + u\, \pd_x u\;\d t 
	+ K* \left( 2 u\, \pd_x u 
	+ \pd_x u\,\pd^2_{x}u\right)\;\d t
	\\ & \quad 
	+ \left[\sigma \pd_x u + 
	K* \left(\pd_x^2 \sigma \, \pd_x u 
	+ 2 \pd_x \sigma\, u\right) \right]\circ \d W.
\end{align*}
Setting $P = P[u]$, cf.~\eqref{eq:P-def}, we 
arrive at the final form
\begin{align}\label{eq:sCH_1}
	0 &=  \d u + \left[u\, \pd_x u
	+\pd_x P \right] \;\d t+\left[\sigma \pd_x u
	+K* \left( 2 \pd_x \sigma\, u
	+\pd_x^2 \sigma \, 
	\pd_x u\right)\right] \circ \d W.
\end{align}
Mathematically, as explained in
Remark \ref{rem:EPnoise1}, the convolution part of 
the noise term offers no new (essential) 
difficulties compared to $\sigma \pd_x u\circ \d W$. 
For the sake of clarity, we 
will therefore focus on the equation
\begin{align*}
	0 &= \d u + \left[u\, \pd_x u 
	+ \pd_x P \right] \;\d t
	+\sigma(x)\pd_x u \circ \d W.
\end{align*}
By the Stratonovich--It\^{o} 
conversion formula, the foregoing equation takes 
the operational form
\begin{align}\label{eq:u_ch}
	0 =  \d u + \left[u\, \pd_x u 
	+ \pd_x P \right] \;\d t
	-\frac12\sigma \pd_x \bk{\sigma(x) 
	\pd_x u}\,\d t
	+\sigma \pd_x u \, \d W.
\end{align}

Regarding the analysis of the stochastic 
CH equation \eqref{eq:u_ch}, there are few 
results available at the moment. To better describe 
the situation, let us note that the 
equations discussed so far are all 
nonlinear SPDEs of the form 
$$
0=\pd_t u + u\pd_x u +S\bigl(u,\pd_x u,\pd_x^2 u\bigr)
+\Gamma\bigl(x,u,\pd_x u\bigr)\, \d W.
$$ 
Depending on the specification of the functions 
$S$ and $\Gamma$, randomness enters the equation in 
different ways, including stochastic forcing 
(noise through a lower order ``source term") 
or gradient-dependent noise (noise through 
a transport/convection operator). Examples of 
stochastic forcing arise if 
$\Gamma\bigl(x,u,\pd_x u\bigr)=\beta(x,u)$, for 
some function $\beta$. A typical gradient 
noise example is $\Gamma\bigl(x,u,\pd_x u\bigr)
=\sigma(x)\pd_x u$, for some function $\sigma$, 
like in \eqref{eq:u_ch} or \eqref{eq:u_ch_ep}.
Now most of the results in the literature concern 
the ``stochastic forcing" case, either via 
additive ($\beta=\beta(x)$) or multiplicative 
($\beta=\beta(u)$) noise, see the 
works \cite{Chen:2016aa,Chen:2012aa,Chen:2020wp,
Huang:2013ab,Zhang:2020vy,Lv:2019wt,
Rohde:2021aa,Tan2018,Tan2020,Zhang:2021wl}. 
For gradient noise, we refer to \cite{ABD2019} for 
a local well-posedness result (up to wave-breaking) 
for \eqref{eq:m_Stoch-CH}. The idea 
in \cite{ABD2019} is to transform the equation 
into a PDE with random coefficients, and 
apply Kato's operator theory.
The work \cite{Alonso-Oran:2021wt} extends this 
result to a stochastic two-component CH system with 
gradient noise $\sigma(x)\pd_x \circ \d W$, 
for smooth ($C^\infty$) noise functions $\sigma(x)$. 
The approach \cite{Alonso-Oran:2021wt} is based on an abstract 
SDE framework \textit{\`a la} \cite{LR2015}, but 
one that is adapted to handle gradient-dependent 
noise operators (the original framework \cite{LR2015} 
applies to stochastic forcing operators).  
The global-in-time existence of properly 
defined weak solutions for the stochastic 
CH equation \eqref{eq:u_ch} is an open problem, but 
see \cite{CDG2021} for some partial results 
if $\sigma$ is a constant. 

\subsection{Main results}

In this paper, we study a regularised 
version of \eqref{eq:u_ch}, namely the 
SPDE \eqref{eq:u_ch_ep}, which contains a viscous 
dissipation term $\ep \pd_x^2 u$, $\ep>0$. 
The second-order operator in \eqref{eq:u_ch} involving 
$\sigma$ is not a regularising 
(parabolic) operator. 
The technical reason for 
this is that the quadratic variation of the martingale 
part of the equation coincides 
with the dissipation generated by the 
second-order operator. The difference 
between these two terms arises when  
computing the nonlinear composition $S(u)$ 
using It\^{o}'s formula.
There are several reasons why we focus on \eqref{eq:u_ch_ep} 
instead of \eqref{eq:u_ch}. First of all, 
with a few exceptions (discussed above), 
a general (global) well-posedness 
theory for \eqref{eq:u_ch} is missing, 
and \eqref{eq:u_ch_ep} appears to be 
a natural place to start. Apart from that, 
in ongoing work, we are investigating the 
existence of dissipative weak solutions 
for \eqref{eq:u_ch}. This class of 
global weak solutions is strongly linked to the 
well-posedness of \eqref{eq:u_ch_ep}. 
Indeed, in the deterministic literature there are two 
natural classes of weak solutions, ``dissipative" and 
``conservative", which differ in how they 
continue the solution past the blow-up time.
Conservative solutions ask that the PDE holds 
weakly and that the total energy is preserved. 
In contrast, dissipative solutions 
are characterized by a drop in the total 
energy (at the time of blow up).
To demonstrate the existence of an appropriately 
defined dissipative solution to \eqref{eq:u_ch}, 
one starts from the well-posedness of 
the viscous SPDE \eqref{eq:u_ch_ep} to 
construct an approximate solution sequence 
$\seq{u_\ep}_{\ep>0}$, exhibiting good 
regularity properties and a priori estimates, 
and then attempt to pass to the 
limit $\ep\to 0$ to produce a solution of 
the inviscid equation \eqref{eq:u_ch}, making 
use of subtle weak convergence and propagation 
of compactness techniques (the details will 
be presented in an upcoming work). 

In the present paper, as a first step towards 
global existence for \eqref{eq:u_ch},  
we will develop a rather complete (global) 
well-posedness theory for \eqref{eq:u_ch_ep}, which 
allows for general ``non-smooth" noise 
functions $\sigma(x)$. Roughly speaking, 
by a solution to \eqref{eq:u_ch_ep} we mean a stochastic 
process $(\omega,t)\mapsto u(\omega,t,\dott)$ 
that takes values in $H^1(\T)$ and satisfies 
the SPDE in the weak sense in $x$. These 
solutions are strong (or pathwise) in 
the probabilistic sense, i.e., they are 
adapted to an underlying fixed filtration.  
For a detailed description of the concept of 
solution, see Definitions \ref{def:wk_sol} 
and \ref{def:st_sol}. The first main theorem of the paper is
the following result.

\begin{thm}[Well-posedness in $H^1$]\label{thm:main1}
Suppose $\sigma \in W^{2,\infty}(\T)$, $p_0 > 4$, 
and $u_0 \in L^{p_0}(\Omega;H^1(\T))$. 
There exists a unique strong $H^1$ solution 
to \eqref{eq:u_ch_ep} with initial 
condition $u|_{t=0}=u_0$. 
\end{thm}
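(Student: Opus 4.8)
The plan is to establish existence and uniqueness separately and then combine them through a Gy\"ongy--Krylov (Yamada--Watanabe type) argument: a weak-in-probability (martingale) solution together with pathwise uniqueness yields a unique probabilistically strong solution in the sense of Definitions \ref{def:wk_sol} and \ref{def:st_sol}. For existence I would approximate \eqref{eq:u_ch_ep} by a finite-dimensional Galerkin system, projecting onto the span of the first $N$ Fourier modes on $\T$; because the projected equation is a system of SDEs with locally Lipschitz coefficients (the nonlocal term $\pd_x P[u]$, with $P$ from \eqref{eq:P-def}, being smoothing), classical SDE theory gives local-in-time solvability, upgraded to global existence once uniform-in-$N$ a priori bounds are in hand.

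The heart of the existence argument is a family of a priori estimates. First I would derive the model-specific energy law by applying the finite-dimensional It\^{o} formula to the $H^1$ functional $\tfrac12\norm{u}_{H^1}^2$, exploiting the Hamiltonian structure of the CH nonlinearity so that $u\,\pd_x u+\pd_x P$ produces controllable contributions, and extracting the dissipation $\ep\norm{\pd_x u}_{L^2}^2$ from the viscous term. To close the estimate for $u_0\in L^{p_0}(\Omega;H^1)$ with $p_0>4$ I would raise the energy to the power $p_0/2$, apply It\^{o}'s formula a second time, use the Burkholder--Davis--Gundy inequality on the resulting martingale, and invoke a stochastic Gronwall inequality together with stopping-time localization to obtain bounds, uniform in $N$ and $\ep$, on $\Ex\sup_{t\le T}\norm{u}_{H^1}^{p_0}$ and on $\ep\,\Ex\int_0^T\norm{\pd_x u}_{L^2}^2\,\d t$. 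Supplemented by temporal regularity of the drift and martingale parts, these estimates yield tightness of the laws of the Galerkin solutions on a suitable path space.

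The main obstacle is the gradient noise. The second-order operator $-\tfrac12\sigma\,\pd_x(\sigma\,\pd_x u)$ is \emph{not} parabolic: its formal dissipation is cancelled by the quadratic variation of the martingale $\sigma\,\pd_x u\,\d W$, so no smoothing can be harvested from it, and the only genuine dissipation available is the $\ep$-viscosity. Consequently every energy computation must be organized so that the It\^{o} correction coming from the Stratonovich-to-It\^{o} conversion and the quadratic variation of the stochastic integral combine to leave only lower-order, $H^1$-controllable remainders; this forces careful integration by parts in which derivatives of $\sigma$ (up to second order, whence the hypothesis $\sigma\in W^{2,\infty}$) are moved onto factors that stay bounded in $H^1$. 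Because the Galerkin projection does not commute with multiplication by $\sigma$, preserving this delicate cancellation at the discrete level is where I expect the most demanding bookkeeping.

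With tightness in hand I would apply the Skorokhod--Jakubowski representation theorem -- valid on the quasi-Polish spaces arising here, where weak topologies preclude the classical Skorokhod theorem -- to obtain, on a new probability space, a.s.\ convergent copies of the Galerkin solutions together with a limit $u$ and a Brownian motion $W$; passing to the limit in the weak formulation (a.s.\ convergence for the deterministic terms, a martingale-identification argument for the stochastic integral) produces a weak $H^1$ solution. For uniqueness, since $u$ is only $H^1$ the transport term $u\,\pd_x u$ is not Lipschitz in the energy norm, so a direct estimate on the difference of two solutions fails; instead I would follow a DiPerna--Lions strategy, mollifying the equation for $v=u_1-u_2$, applying the finite-dimensional It\^{o} formula to $\norm{v}_{L^2}^2$, and controlling the regularization errors by first- and second-order commutator estimates, the second-order commutator arising precisely from the $\sigma$-dependent second-order term. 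A concluding stochastic Gronwall argument then forces $v\equiv0$, giving pathwise uniqueness and hence, with the weak solution above, the unique strong $H^1$ solution asserted in the theorem.
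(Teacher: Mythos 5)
Your proposal follows essentially the same route as the paper: a Galerkin scheme with locally Lipschitz coefficients, the model-specific $H^1$ energy law exploiting the cancellation between the It\^o correction and the quadratic variation of the gradient noise, higher moments via Burkholder--Davis--Gundy and a stochastic Gronwall inequality, tightness and Skorokhod--Jakubowski a.s.\ representations on quasi-Polish path spaces, identification of the limit as a weak (martingale) solution, DiPerna--Lions mollification with first- and second-order commutator estimates for pathwise uniqueness, and a Gy\"ongy--Krylov argument to conclude strong existence.

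One correction to the uniqueness step as you phrase it: applying It\^o's formula only to $\norm{v}_{L^2(\T)}^2$ for the difference $v=u_1-u_2$ cannot close, because the nonlocal pressure $P[u]=K*\bigl(u^2+\tfrac12(\pd_x u)^2\bigr)$ depends quadratically on $\pd_x u$, so the difference of the drifts contains $\pd_x K*\bigl(\pd_x v\,\pd_x(u_1+u_2)\bigr)$, which is not controlled by $\norm{v}_{L^2(\T)}$ alone. The paper (Theorem \ref{thm:st_uniqueness}) therefore runs the renormalised Gronwall argument on the full $\norm{v}_{H^1(\T)}^2$, absorbs the resulting second-derivative terms into the $\ep$-dissipation, and localises with the stopping time built from $\int_0^t\norm{u_1+u_2}_{W^{1,\infty}(\T)}^2\,\d s$, which is finite almost surely only through the $\ep$-dependent $L^2_t H^2_x$ bound; this localisation is precisely why the stochastic (rather than classical) Gronwall inequality is indispensable there. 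With that adjustment your outline matches the paper's proof.
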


The second main result supplies well-posedness 
in higher-regularity classes:

\begin{thm}[Well-posedness in $H^m$]\label{thm:main2} 
Fix $m \ge 2$ and $p_0 > 4$. Suppose $\sigma \in W^{m + 1,\infty}(\T)$, 
and $u_0 \in L^{p_0}(\Omega;H^{m}(\T))$. %for $p \in [1,p_0)$.
There exists a unique strong $H^m$ 
solution to \eqref{eq:u_ch_ep} with 
initial condition $u|_{t=0}=u_0$. 
\end{thm}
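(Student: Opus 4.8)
The plan is to reuse in its entirety the Faedo--Galerkin and stochastic-compactness architecture built for Theorem \ref{thm:main1}, changing only the scale at which the a priori estimates are run; uniqueness will then come for free. Indeed, for $m\ge 2$ we have $H^m(\T)\hookrightarrow H^1(\T)$, so any strong $H^m$ solution is in particular a strong $H^1$ solution, and pathwise uniqueness is already guaranteed by Theorem \ref{thm:main1}. The only genuinely new task is therefore to construct a solution living in $H^m$; once constructed it is automatically strong and, by the $H^1$ uniqueness, coincides with the solution of Theorem \ref{thm:main1}, which is thereby shown to inherit the $H^m$ regularity of its data.

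First I would run the Galerkin scheme as before, producing finite-dimensional approximations $u_N$, and establish bounds in $L^{p_0}\bigl(\Omega;L^\infty(0,T;H^m)\bigr)$ that are uniform in $N$, together with the parabolic gain $\ep\,\Ex\int_0^T\norm{\partial_x^{m+1}u_N}_{L^2}^2\,\d t$. The heart of the matter is the top-order energy identity obtained by applying It\^{o}'s formula to $\norm{\partial_x^m u_N}_{L^2}^2$. Here the quadratic variation of the martingale part produces $+\int_\T\bigl(\partial_x^m(\sigma\,\partial_x u_N)\bigr)^2\,\d x\,\d t$, whose leading contribution is $+\int_\T\sigma^2(\partial_x^{m+1}u_N)^2\,\d x\,\d t$, while the Stratonovich-to-It\^{o} correction $-\tfrac12\sigma\,\partial_x(\sigma\,\partial_x u_N)$, differentiated $m$ times and tested against $\partial_x^m u_N$, produces $-\int_\T\sigma^2(\partial_x^{m+1}u_N)^2\,\d x\,\d t$ at top order. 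These cancel exactly --- this is the ``balance'' of the transport noise advertised in the abstract --- and what remains are first- and second-order commutators $\bigl[\partial_x^m,\sigma\partial_x\bigr]u_N$ and their quadratic analogues, each involving at most $m$ derivatives of $u_N$ and at most $m+1$ derivatives of $\sigma$. This derivative count is exactly why $\sigma\in W^{m+1,\infty}$ is the correct hypothesis, specialising to $W^{2,\infty}$ when $m=1$.

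The remaining drift terms I would estimate by tame/Moser product inequalities. For the convective term the symmetric top-order piece integrates by parts to $-\tfrac12\int_\T\partial_x u_N\,(\partial_x^m u_N)^2\,\d x$ and the Kato--Ponce commutator $\bigl[\partial_x^m,u_N\bigr]\partial_x u_N$ is controlled in $L^2$, so that together they are bounded by $C\norm{\partial_x u_N}_{L^\infty}\norm{u_N}_{H^m}^2$. The nonlocal pressure term is the mildest: since $f\mapsto\partial_x K*f$ is smoothing on the Sobolev scale, $\norm{\partial_x^{m+1}P[u_N]}_{L^2}\lesssim\bigl\|u_N^2+\tfrac12(\partial_x u_N)^2\bigr\|_{H^{m-1}}\lesssim\norm{u_N}_{H^m}^2$, using the algebra property of $H^{m-1}$ for $m\ge 2$. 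To close the estimate \emph{globally} in time --- rather than merely up to a blow-up --- I would exploit the global $H^1$-energy control already furnished by the model-specific energy law of Theorem \ref{thm:main1}: interpolating $\norm{\partial_x u_N}_{L^\infty}$ (Agmon/Gagliardo--Nirenberg) between the bounded $H^1$ norm and $\norm{\partial_x^{m+1}u_N}_{L^2}$ lets me absorb the resulting top-derivative factor into the strictly positive viscous dissipation $\ep\norm{\partial_x^{m+1}u_N}_{L^2}^2$ via Young's inequality, at the cost of an $\ep$-dependent constant. The surviving drift is then controlled by the globally bounded $H^1$ energy and $\norm{u_N}_{H^m}^2$, and the estimate is closed by the stochastic Gronwall inequality and stopping-time argument already in place, the moment exponent $p_0>4$ serving to control the martingale terms as in the $H^1$ proof.

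With uniform $H^m$ bounds in hand, the tightness and Skorokhod--Jakubowski steps transfer verbatim to the stronger phase space --- for instance $C([0,T];H^{m-1})$ intersected with the weak topology of $L^2(0,T;H^m)$ --- yielding a martingale $H^m$ solution on a new probability space; the passage to the limit in the nonlinear and noise terms reuses the same weak/strong convergence pairings, now with one derivative to spare. A Gy\"ongy--Krylov (Yamada--Watanabe) argument then upgrades this to a probabilistically strong $H^m$ solution on the original stochastic basis, which the $H^1$ uniqueness identifies with the solution of Theorem \ref{thm:main1}. The step I expect to be the genuine obstacle is precisely the top-order a priori estimate: after the exact cancellation of the $\int_\T\sigma^2(\partial_x^{m+1}u_N)^2\,\d x$ terms one is still left with cross terms carrying a single factor of $\partial_x^{m+1}u_N$ against order-$m$ quantities, and each of these must either be integrated by parts down to order $m$ --- which is where the extra smoothness $\sigma\in W^{m+1,\infty}$ is consumed --- or absorbed into the fixed parabolic dissipation by Young's inequality. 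Keeping a meticulous account of the derivative bookkeeping, so that no term ultimately exceeds $m$ derivatives of $u_N$ or $m+1$ derivatives of $\sigma$, is the delicate core of the argument.
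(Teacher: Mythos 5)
Your skeleton is the paper's: Galerkin at the $H^m$ level, top-order cancellation between the quadratic variation and the Stratonovich--It\^o correction (leaving commutators with at most $m+1$ derivatives on $\sigma$, which is exactly where $W^{m+1,\infty}$ is consumed), Moser/Gagliardo--Nirenberg estimates for the convection and pressure terms, Skorokhod--Jakubowski, Gy\"ongy--Krylov, and uniqueness inherited from the $H^1$ theory (this last reduction is precisely the paper's Theorem \ref{thm:st_uniqueness_Hm} and is fine). The gap is in the step you yourself flag as the ``genuine obstacle'': closing the top-order estimate \emph{globally} in time. After the cancellations the surviving drift has the form $C\bigl(1+\norm{u_N}_{W^{1,\infty}(\T)}\bigr)\norm{u_N}_{H^m(\T)}^2$, and your plan to interpolate $\norm{\pd_x u_N}_{L^\infty}\lesssim \norm{u_N}_{H^1}^{1-\theta}\norm{u_N}_{H^{m+1}}^{\theta}$ (with $\theta=\tfrac1{2m}$) and absorb the $H^{m+1}$ factor into $\ep\norm{\pd_x^{m+1}u_N}_{L^2}^2$ does not close: Young's inequality returns a remainder proportional to $\norm{u_N}_{H^1}^{2(1-\theta)/(2-\theta)}\norm{u_N}_{H^m}^{4/(2-\theta)}$, which is superlinear in the Gronwall quantity $\norm{u_N}_{H^m}^2$ and therefore yields only a bound up to a random blow-up time, not on $[0,T]$. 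If you instead keep the coefficient $1+\norm{u_N}_{W^{1,\infty}}$ in front of $\norm{u_N}_{H^m}^2$, the stochastic Gronwall lemma (Lemma \ref{thm:stochastic_gronwall}) demands exponential moments of $\int_0^T\norm{u_N}_{W^{1,\infty}}\,\d s$; the ``globally bounded $H^1$ energy'' you invoke is bounded only in $L^{p_0}(\Omega)$, never in $L^\infty(\Omega)$, so no such exponential moments exist. This is exactly why the paper runs the estimate only up to the stopping time $\eta_R^n=\inf\{t:\int_0^t\norm{u_n(s)}_{W^{1,\infty}(\T)}^2\,\d s>R\}$ of Proposition \ref{thm:galerkinHm}, on which the Gronwall factor is deterministically bounded by $e^{CR}$.

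Because the $H^m$ and $H^{m+1}$ moment bounds are consequently available only on $[0,\eta_R^n]$, your claim that ``the tightness and Skorokhod--Jakubowski steps transfer verbatim'' also fails; this is the point the paper singles out as the key difference between $m=1$ and $m\ge2$. The missing ingredient is Lemma \ref{thm:stoch_bound_Hm_v2}: a two-layer stopping-time argument (decomposing $\{X_n(t)>M\}$ over $\{\eta_R^n<t\}$ and its complement, sending $M\to\infty$ first and then $R\to\infty$) that converts the bounds up to $\eta_R^n$ into $n$-uniform \emph{stochastic boundedness} of $\norm{u_n}_{L^2([0,T];H^{m+1}(\T))}$ on the whole interval --- weaker than a moment bound, but still enough for tightness on $L^2([0,T];H^m(\T))$ via the Aubin--Lions--Simon embedding. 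This is also why the $H^m$ solution concept in Definition \ref{def:wk_sol} only asks for stochastic boundedness in $L^2([0,T];H^{m+1}(\T))$ when $m\ge2$, rather than the $L^2(\Omega;L^2_tH^2_x)$ inclusion available when $m=1$.
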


There is a sense in which the ``natural" energy space 
given by the structure of the CH equation
is $L^\infty_tH^1_x$; see beginning of 
Section \ref{sec:higherreg_existence} where $H^m_x$ 
estimates are discussed and compared against 
estimates in $H^1_x$.

The $H^m_x$ estimates ($m\ge 2$) 
do not follow from standard parabolic regularity theory 
because of nonlinear factors of cubic type,
and the fundamental lack of 
$L^1_tL^\infty_{\omega,x}$ estimates on 
$u$ and $\pd_x u$ (this is in contrast to 
the deterministic equation \cite{XZ2000}). 
We cope with these problems 
using stopping time arguments and a stochastic 
Gronwall inequality. The moment requirement 
$p_0 > 4$ on the initial condition in $H^{m}_x$ comes from 
technical lemmas (Lemmas \ref{thm:commutator1}, \ref{thm:commutator2} 
and Proposition \ref{thm:commutator3}, and  
 see also Remark \ref{rem:L8_omega}).

\subsection{Organisation of paper}\label{sec:organisation}
We bring this introduction to an end by outlining 
the organization of the paper, along with a quick 
exposition of ideas behind the proofs. 

First, in Section \ref{sec:sol_def}, 
we state precisely the different 
solution concepts used throughout the paper. 
The existence parts of Theorems 
\ref{thm:main1} and \ref{thm:main2} 
are based on weak solutions and the 
introduction of suitable Faedo--Galerkin 
approximations $\seq{u_n}$, where the epithet
``weak" refers to probabilistic weak 
and so-called martingale solutions. 
A refined stochastic compactness method \cite{Ond2010} 
is used to conclude convergence $\seq{u_n}$ 
towards a weak solution. In the context of SPDEs, 
the stochastic compactness method goes back to 
\cite{Bensoussan:1995aa} and it was subsequently 
used in numerous works, see for example \cite{DGT2011,FG1995,GV2014} 
and the references therein. In Section \ref{sec:galerkin}, we define and establish 
the well-posedness of the Faedo--Galerkin approximations.
A priori estimates and tightness properties 
of the approximations $\seq{u_n}$ are proved in 
Sections \ref{sec:apriori} and \ref{sec:laws}. 
More precisely, in Proposition~\ref{thm:galerkin_H1p} 
and Lemma~\ref{thm:u_Ctheta_L2} we supply 
several $n$-uniform (and $\ep$-uniform) 
bounds that imply
\begin{align*}
	\seq{u_n} \subseteq_b 
	L^p\bigl(\Omega;L^\infty([0,T]; H^1(\T))\bigr)
	\cap L^p\bigl(\Omega;C^\theta([0,T];L^2(\T))\bigr),
\end{align*}
for appropriate ranges of $p$ and $\theta$ (where $\subseteq_b$ means ``bounded inclusion", i.e., $A\subseteq_b X$ if
$A\subseteq X$ and $\sup_{a\in A}\norm{a}_X<\infty$). 
We use this and the compact inclusion
$$
L^\infty([0,T]; H^1(\T))\cap C^\theta([0,T];L^2(\T))
\doublehookrightarrow C([0,T];H^1_w(\T))
$$
to deduce the tightness of the probability 
laws of the Faedo--Galerkin solutions in the 
quasi-Polish space $C([0,T];H^1_w(\T))$.   
Here $H^1_w(\T)$ denotes the 
space $H^1(\T)$  with the weak topology. 
Because of a uniform-in-$n$ bound 
on $\Ex\norm{u_n}_{L^2([0,T];H^2(\T))}^2$, 
arising from the $\ep$-dissipation operator 
in \eqref{eq:u_ch_ep}, we also obtain the 
uniform stochastic boundedness of 
$\seq{u_n}$ in the space $L^2([0,T];H^2(\T)) 
\cap W^{\theta',2} ([0,T];L^2(\T))$, 
with $\theta'<\theta$. Hence, it follows 
that the probability laws of $\seq{u_n}$ are 
tight on $L^2([0,T];H^1(\T))$, 
cf.~Lemma \ref{thm:stochastic_boundedness}. Using the 
Skorokhod--Jakubowski theorem \cite{Jak1998} 
of almost sure representations of 
random variables in quasi-Polish spaces 
(see Appendix \ref{sec:toolbox}), we deduce 
in Section \ref{sec:existence1} the existence of weak 
(martingale) solutions to the viscous stochastic CH 
equation \eqref{eq:u_ch_ep}. In Sections \ref{sec:commutators}--\ref{sec:pathwise_unique}, we prove pathwise uniqueness 
for \eqref{eq:u_ch_ep} by a renormalisation procedure, 
bypassing the need for an infinite dimensional 
It\^{o} formula.
The uniquenss proof requires some non-standard 
first- and second-order commutator estimates (that 
extend beyond the standard DiPerna--Lions estimates), 
which are established in Lemmas \ref{thm:commutator1}, 
\ref{thm:commutator2} and Proposition \ref{thm:commutator3}. 
Pathwise uniqueness, along with the weak existence 
result and also the Gy{\"o}ngy--Krylov characterization 
of convergence in probability, allows 
us to conclude in Section  \ref{sec:H1_existence} 
the existence of a unique strong 
(pathwise) $H^1$ solution to \eqref{eq:u_ch_ep}, 
thus concluding the proof of 
Theorem \ref{thm:main1}.  

One-sided strong temporal continuity characterises 
{\em dissipative} weak solutions in the inviscid 
$\ep \downarrow 0$ limit. For fixed positive viscosity, 
solutions satisfy (two-sided) strong temporal continuity. 
This is demonstrated afterwards in Section \ref{sec:temporalcont}.

In Section \ref{sec:higherreg}, we turn to Theorem \ref{thm:main2} and 
solutions with higher regularity. 
In Section \ref{sec:higherreg_existence}, we 
fix $m \ge 2$ and prove $n$-uniform bounds in
$L^p\left(\Omega;L^\infty([0,\tau];H^m(\T)\right)$, 
for $p \in [1,\infty)$, up to a suitable 
stopping time $\tau$ (Proposition \ref{thm:galerkinHm}). 
Using this we conclude the stochastic 
boundedness (see \eqref{eq:stochbounded_defin}) in the higher regularity space 
$L^2([0,T];H^{m+1}(\T)) \cap W^{\theta, 2}([0,T];L^2(\T))$, 
for some $\theta<1$, as long as the initial 
condition $u_0$ belongs to $L^p(\Omega;H^1(\T)) \cap 
L^2(\Omega;H^{m}(\T))$. By 
some additional stopping time 
arguments, this implies that the laws 
of $\seq{u_n}$ are tight on $L^2([0,T];H^m(\T))$, 
(see Lemma \ref{thm:stoch_bound_Hm_v2}), and by 
a Skorokhod--Jakubowski procedure 
(as in  Section \ref{sec:existence1}) we extract 
a weak solution in $L^2([0,T];H^m(\T))$. 
The key difference between the $H^1$ and 
$H^m$ cases lies in the lack of a bound on $\Ex 
\norm{u_n}_{L^2([0,T];H^{m}(\T))}^2$, that is, if 
$m\ge 2$, then Lemma \ref{thm:stoch_bound_Hm_v2} 
is available for $H^{m}$ only up to some 
stopping time $\tau<T$ but not on $[0,T]$. 
This obstacle, which is peculiar to the 
stochastic problem, makes it necessary 
to argue along several layers of stopping times, 
see Lemma \ref{thm:stoch_bound_Hm_v2} and its proof. 
Finally, in Section \ref{sec:st_sol_Hm}, we establish 
the pathwise uniqueness in $L^1(\Omega;L^\infty([0,T];H^m(\T)))$ 
and conclude the well-posedness of strong $H^m$ solutions. 

In Appendix \ref{sec:toolbox}, we record some 
results of stochastic analysis frequently 
deployed in this paper.

\section{Solution concepts}
\label{sec:sol_def}

In this section, we present the solution concept used 
in Theorems \ref{thm:main1} and \ref{thm:main2}. 
We denote by $\bigl(\Omega,\mathcal{F},
\mathbb{P}\bigr)$ a complete \textit{probability space} 
with (countably generated) $\sigma$-algebra 
$\mathcal{F}$ and probability measure $\mathbb{P}$. 
We consider filtrations $\{\mathcal{F}_t\}_{t\in[0,T]}$ 
that satisfy the ``usual conditions" of being complete 
and right-continuous. We refer to
\begin{equation}\label{eq:stoch-basis}
	\mathcal{S}:=\bigl(\Omega,\mathcal{F},
	\{\mathcal{F}_t\}_{t\ge 0},\mathbb{P}\bigr)
\end{equation}
as a \textit{stochastic basis} (sometimes called a filtered 
probability space). 

Theorems \ref{thm:main1} 
and \ref{thm:main2} speak of \textit{strong} $H^m$ 
solutions. These are weak (distributional) solutions 
in the PDE sense in the Sobolev space $H^m$. 
From the probabilistic point of view, however, we 
will have to consider first so-called 
\textit{martingale} solutions, which are also 
referred to as \textit{weak} solutions.  
The notions of weak/strong probabilistic solutions 
have a different meaning from weak/strong solutions 
in the PDE literature. If the stochastic basis $\mathcal{S}$ 
\eqref{eq:stoch-basis} and the Wiener process 
$W$ are fixed in advance, we speak of a strong 
(or pathwise) solution. If $\bigl(\mathcal{S},W\bigr)$ 
is a part of the unknown solution, the relevant notion 
is a martingale solution. In what follows, ``weak $H^m$ 
solutions" refer to solutions that are probabilistic weak 
and weak in the PDE sense, whereas ``strong $H^m$ 
solutions" refer to solutions that are pathwise 
and weak in the PDE sense.

In view of Theorems \ref{thm:main1} and \ref{thm:main2}, 
the $H^1$ well-posedness theory deviates 
slightly from the $H^m$ theory for $m \ge 2$. 
The corresponding solution concepts differ 
in their requirement on the initial condition and 
the condition $u\in L^2\left(\Omega;L^2([0,T];H^2(\T))\right)$ 
if $m=1$ versus the weaker stochastic 
boundedness condition in $L^2([0,T];H^{m+1}(\T))$ if $m\ge 2$, as is 
seen in the next definition.

\begin{defin}[Weak $H^m$ solution]\label{def:wk_sol} 
Fix $m\in\N$ and $p_0 > 4$. Let $\Lambda$ be a probability 
measure on $H^{m}(\T)$ satisfying
\begin{equation}\label{eq:u0-cond-Hmp1}
	\int_{H^{m}(\T)} 
	\norm{v}_{H^{m}(\T)}^{p_0} 
	 \Lambda(\d v)<\infty.
\end{equation}
The triple $\bigl(\mathcal{S},u,W \bigr)$ 
is a weak (or martingale) $H^m$ solution 
to \eqref{eq:u_ch_ep} with initial distribution 
$\Lambda$ if the following conditions hold:
\begin{itemize}
	\item[(a)] $\mathcal{S}=\bigl(\Omega, 
	\mathcal{F},\{\mathcal{F}_t\}_{t \ge 0},
	\mathbb{P}\bigr)$ is stochastic basis, 
	cf.~\eqref{eq:stoch-basis};
	
	\item[(b)] $W$ is a standard Wiener process 
	on $\mathcal{S}$;

	\item[(c)] $u\colon\Omega\times[0,T]\to H^1(\T)$ is 
	adapted, with $u\in L^{p_0}\left(\Omega;C( [0,T];H^1(\T)\right)$
	and $u \in L^2([0,T];H^{m}(\T))$, $\mathbb{P}$-almost surely.
	Moreover,
	$$
	\begin{cases}
		u\in L^2\left(\Omega;L^2([0,T];
		H^2(\T)\right), & \textit{if $m=1$}, 
		\\ 
		u\in_{\rm sb} L^2([0,T];H^{m + 1}(\T)) \cap L^\infty([0,T];H^m(\T)), 
		& \textit{if $m\ge 2$},
	\end{cases}
	$$
	where $\in_{\rm sb}$ means stochastically bounded 
	(see \eqref{eq:stochbounded_defin});
	 
	\item[(d)] the law of the initial data $u_0:=u(0)$  on $H^{m}(\T)$ 
	is $\Lambda$, i.e., $\bk{u(0)}_*\mathbb{P}=\Lambda $, 
	or $\Lambda(A)=\mathbb{P}(u(0)^{-1}(A))$ for measurable sets $A$;

	\item[(e)] for all $t \in [0,T]$ and all 
	$\varphi \in C^1(\T)$
	the following equation holds $\mathbb{P}$-almost surely (in 
	the sense of It\^o):
		\begin{align*}
  			\int_\T u(t) \varphi& \,\d x
  			-\int_\T u_0 \varphi\,\d x\\
  			&= \int_0^t\int_\T \left(- u\,\pd_x u \,\varphi +  \left(P
  			-\ep \pd_x u \right)\pd_x \varphi\right) \,\d x\, \d s
  			\\ & \quad 
  			-\frac12\int_0^t\int_\T \sigma \pd_x u
  			\, \pd_x\bk{\sigma \varphi} \,\d x \,\d s
  			-\int_0^t\int_\T \varphi \sigma 
  			\, \pd_x u\,\d x \, \d W(s),
  			\end{align*}
\end{itemize}
where $P=P[u] :=K*\left(u^2+\frac{1}{2}
\left(\pd_x {u}\right)^2\right)$.
\end{defin}

Finally, we introduce the notion of 
strong (pathwse) $H^m$ solution.

\begin{defin}[Strong $H^m$ solution]\label{def:st_sol} 
Fix a stochastic basis $\mathcal{S}$, 
cf.~\eqref{eq:stoch-basis}, and a Wiener process $W$ 
defined on $\mathcal{S}$. Fix $m\in\N$ and $p_0 > 4$, and consider 
a random variable $u_0\in L^{p_0}(\Omega;H^1(\T))$. 
A process $u$, defined relative to $\mathcal{S}$, is a strong $H^m$ solution 
to \eqref{eq:u_ch_ep} if $\bigl(\mathcal{S},u,W\bigr)$ 
is a weak $H^m$ solution to \eqref{eq:u_ch_ep} with initial 
law $\Lambda:=\bk{u_0}_*\mathbb{P}$, 
i.e., $\Lambda$ obeys \eqref{eq:u0-cond-Hmp1} 
and $\bigl(\mathcal{S},u,W\bigr)$ 
satisfies (a)--(e) in Definition \ref{def:wk_sol}.
\end{defin}

\section{The Galerkin approximation}\label{sec:galerkin}

We now specify our Galerkin scheme for constructing 
approximate solutions. Let $\seq{e_1,e_2,\ldots} 
\subseteq H^1(\T)$ be an orthonormal basis of $L^2(\T)$ 
that is dense in $H^1(\T)$ and set 
$H_n=\mathrm{span}\seq{e_1,\ldots, e_n}$. 
In particular, we take $\seq{e_i}_{i\in \N}$ 
to be the eigenfunctions of $\pd_x^2$ on the 
circle $\T$, i.e., $e_{2j}(x) = \cos(2 \pi jx)$ and 
$e_{2j+1}(x) = \sin(2\pi jx)$, $x\in[0,1]$, for concreteness. 
Let $\bm{\Pi}_n:\bigl(H^1(\T)\bigr)^* \to H_n$ 
be defined by
$$
\bm{\Pi}_n u:= \sum_{i = 1}^n 
\bigl\LL u,e_i\bigr\RR_{L^2(\T)} \,e_i,
$$
so that, restricted to $L^2(\T)$, $\bm{\Pi}_n$ is 
the orthogonal projection onto $H_n$.

For each $n \in \mathbb{N}$, we 
consider the Galerkin approximation of \eqref{eq:u_ch_ep} 
on $H_n$, that is, we seek a function 
$$
u_n(\omega,t,x)=\sum_{i=1}^n w_i(\omega,t)e_i(x),
$$
where the unknown coefficients 
$\seq{w_i=w_i(\omega,t)}_{i=1}^n$ are 
determined by requiring that
\begin{equation}\label{eq:galerkin_n}
	\begin{aligned}
		 0 &= \d u_n-\ep \pd_x^2 u_n \,\d t
		+\bm{\Pi}_n \bk{u_n \pd_x u_n+\pd_xP[u_n]} \,\d t
		\\ &\quad
		-\frac{1}{2} \bm{\Pi}_n \bk{\sigma \pd_x
		\bk{\sigma \pd_x u_n}}\, \d t
		+\bm{\Pi}_n\bk{\sigma \pd_x u_n} \, \d W,
		\\
		u_{n}(0)&=\bm{\Pi}_n u_0.
	\end{aligned}
\end{equation}
Here, $u_0$ is a random variable $\Omega \to H^{1}(\T)$ 
with law $\Lambda$ and a bounded second 
moment, i.e., $\Ex \norm{u_0}_{H^{1}(\T)}^2<\infty$. 

\begin{thm}
For any fixed $n$, there exists a unique $C([0,T];H_n)$-valued 
adapted process $u_n$ that is a strong solution 
to \eqref{eq:galerkin_n}.
\end{thm}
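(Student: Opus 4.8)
The plan is to recast \eqref{eq:galerkin_n} as a finite-dimensional It\^o SDE for the coefficient vector $\bm{w}=(w_1,\dots,w_n)$ and then invoke the standard theory for SDEs with locally Lipschitz coefficients, promoting the resulting local solution to a global one via an energy estimate. First I would test \eqref{eq:galerkin_n} against each $e_k$ in $L^2(\T)$: writing $u_n=\sum_{i=1}^n w_i e_i$, this yields a system $\d\bm{w}=\bm{b}(\bm{w})\,\d t+\bm{\Sigma}(\bm{w})\,\d W$ on $\R^n$. Since $\seq{e_i}$ diagonalises $\pd_x^2$, the viscous term $\ep\pd_x^2 u_n$ and the Stratonovich--It\^o correction $-\tfrac12\bm{\Pi}_n\bk{\sigma\pd_x(\sigma\pd_x u_n)}$ contribute entries that are linear in $\bm{w}$ (with coefficients built from $\sigma$ and its derivatives); the $k$-th component of the diffusion coefficient is $\ang{\bm{\Pi}_n(\sigma\pd_x u_n),e_k}_{L^2(\T)}$, which is linear in $\bm{w}$; and the transport term $\bm{\Pi}_n(u_n\pd_x u_n)$ together with the nonlocal pressure term $\bm{\Pi}_n\pd_x P[u_n]$, cf.~\eqref{eq:P-def}, are quadratic polynomials in $\bm{w}$ (the convolution $K*$ being a fixed bounded linear map). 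Thus $\bm{b}$ is a degree-two polynomial and $\bm{\Sigma}$ is linear, so both are $C^\infty$, hence locally Lipschitz on $\R^n$.

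Given local Lipschitz continuity, the classical It\^o theory produces a unique maximal adapted solution $u_n\in C([0,\tau_\infty);H_n)$, where $\tau_\infty=\lim_{R\to\infty}\tau_R$ and $\tau_R:=\inf\seq{t:\norm{u_n(t)}_{L^2(\T)}\ge R}$. Pathwise uniqueness on each $[0,\tau_R]$ follows by truncating $\bm{b}$ and $\bm{\Sigma}$ outside the ball of radius $R$ (rendering them globally Lipschitz), applying the standard uniqueness theorem, and letting $R\to\infty$. It remains only to prove $\tau_\infty\ge T$ almost surely, i.e.\ to rule out finite-time blow-up.

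Non-explosion is the crux, and it cannot be obtained from the $L^2$-energy alone, since the pressure term is cubic when tested against $u_n$ and cubic drift can force blow-up in finite time. The remedy is the Camassa--Holm $H^1$ energy structure. I would apply the finite-dimensional It\^o formula to $V:=\norm{u_n}_{H^1(\T)}^2$ on $[0,t\wedge\tau_R]$, using the identity $\ang{u_n,\bm{\Pi}_n f}_{H^1(\T)}=\ang{m_n,f}_{L^2(\T)}$, valid for every $f$, where $m_n:=(1-\pd_x^2)u_n\in H_n$. Three structural facts then close the estimate (all integrations by parts being legitimate since $H_n$ consists of trigonometric polynomials): the viscous term yields the dissipative sign $-2\ep\bk{\norm{\pd_x u_n}_{L^2(\T)}^2+\norm{\pd_x^2 u_n}_{L^2(\T)}^2}\le0$; the transport and pressure contributions evaluate to $\ang{m_n,u_n\pd_x u_n}_{L^2(\T)}=\tfrac12\int_\T(\pd_x u_n)^3\,\d x$ and $\ang{m_n,\pd_x P[u_n]}_{L^2(\T)}=-\tfrac12\int_\T(\pd_x u_n)^3\,\d x$, so the cubic nonlinearity cancels exactly and is conservative in $H^1$; and the drift from the correction term together with the martingale quadratic variation $\norm{\bm{\Pi}_n(\sigma\pd_x u_n)}_{H^1(\T)}^2$ is only quadratic in $\norm{u_n}_{H^1(\T)}$, hence for fixed $n$ is bounded by $C\norm{u_n}_{H^1(\T)}^2$ (the finer, $n$-uniform balance between these two $\sigma$-terms being the feature exploited in the a priori estimates of the next section). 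Altogether the It\^o drift of $V$ obeys $\le C(1+V)$ with $C=C(\ep,n,\norm{\sigma}_{W^{2,\infty}(\T)})$, so a Gronwall/Khasminskii non-explosion argument gives $\Ex\sup_{t\le T\wedge\tau_R}\norm{u_n}_{H^1(\T)}^2\le C$ uniformly in $R$; consequently $\prob(\tau_R\le T)\to0$ as $R\to\infty$, which forces $\tau_\infty\ge T$ a.s.\ and delivers the global solution on $[0,T]$. The main obstacle is exactly this a priori bound, and within it the exact cancellation of the cubic transport and pressure terms in $H^1$.
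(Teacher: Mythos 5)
Your proposal is correct, and its core — testing against the basis to recast \eqref{eq:galerkin_n} as an SDE on $\R^n$ whose drift is a quadratic polynomial and whose diffusion coefficient is linear in $\bm{w}$, hence locally Lipschitz — is exactly the paper's argument. Where you diverge is in what you do with local Lipschitz continuity: the paper simply cites a standard SDE well-posedness theorem and asserts a continuous strong solution on all of $[0,T]$, leaving the non-explosion issue implicit (it is in effect covered by the $H^1$ energy estimate of Proposition \ref{thm:galerkin_H1}, proved immediately afterwards, which bounds $\Ex\sup_{t\le T}\norm{u_n}_{H^1(\T)}^2$ uniformly). You instead make the maximal solution and the stopping times $\tau_R$ explicit and close the argument with the $H^1$ Lyapunov estimate inside the well-posedness proof itself. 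Your key cancellation $\ang{m_n,u_n\pd_x u_n}_{L^2}+\ang{m_n,\pd_x P[u_n]}_{L^2}=0$, phrased via the momentum variable $m_n=(1-\pd_x^2)u_n$ and the identity $\ang{u_n,\bm{\Pi}_n f}_{H^1}=\ang{m_n,f}_{L^2}$, is precisely the content of the paper's computation $I_1^n=0$ in \eqref{eq:H1_estm_A}, there carried out by adding the $L^2$-balances of $u_n$ and $\pd_x u_n$ and using $K-\pd_x^2K=\bm{\delta}$. Your version is self-contained and arguably more careful on the explosion point (for fixed $n$ the crude $n$-dependent quadratic bound on the $\sigma$-terms suffices, as you note, with the $n$-uniform balance deferred to the a priori estimates); the paper's version is terser and relies on the reader connecting the global claim to the subsequent energy estimate. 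One small point: your remark that the $L^2$ balance alone is insufficient is apt — the transport term $\int u_n^2\pd_x u_n$ vanishes but $\int u_n\,\pd_x P[u_n]$ does not, so the cubic term genuinely survives at the $L^2$ level and the $H^1$ structure is what closes the estimate.
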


\begin{proof}
The proof consists of noting that \eqref{eq:galerkin_n} 
is a SDE system with coefficients that are locally Lipschitz 
continuous in $w=\seq{w_i}_{i\in \N}$. By a standard 
well-posedness theorem for 
SDEs \cite[Thm.~IX.2.1]{Revuz:1999wi}, this 
immediately implies the existence and uniqueness of a 
continuous (strong) solution of 
\eqref{eq:galerkin_n} on $[0,T]$.

It remains to argue that the Galerkin 
equation \eqref{eq:galerkin_n} can be viewed as a 
SDE system in $w$. First, by properties of the basis functions, 
$$
\pd_x u_n=\sum_{i=1}^n C_i\, w_i\, e_i,\quad
\pd_x^2 u_n=\sum_{i=1}^n -C_i^2\, w_i\, e_i,
$$
where $C_i$ are constants depending only on $i$. 
Next, the nonlinear term
\begin{align*}
	\bm{\Pi}_n \bigl(u_n \pd_x u_n\bigr)
	& =\sum_{i,j=1}^n C_j \, w_i w_j\,\bm{\Pi}_n
	\bigl(e_i e_j\bigr)
	\\ & =\sum_{i,j,k=1}^n C_j\, w_i w_j 
	\int_\T e_i(y) e_j(y) e_k(y) \,\d y \, e_k,
\end{align*}
is locally Lipschitz in $w$. 
Regarding the nonlocal operator, we can calculate thus:
\begin{align*}
	&\bm{\Pi}_n \pd_x P[u_n]\\
	&=\sum_{k=1}^n\int_\T \pd_y K*\bk{u_n^2
	+\frac12\bk{\pd_y u_n}^2} e_k(y)\,\d y\, e_k
	\\ & 
	= \sum_{i,j,k=1}^n\int_\T \int_\T 
	\pd_z K(z-y)\Bigl(w_iw_j e_i(y) e_j(y) 
	\\ & \qquad\qquad 
	+\frac12 C_i C_j w_i w_j e_i(y) e_j(y)\Bigr)
	\, e_k (z)\,\d y\,\d z\,e_k
	\\ & 
	= \sum_{i,j,k=1}^n w_i w_j  \bk{\int_\T \int_\T 
	\pd_z K(z - y)\bk{1+\frac12 C_i C_j }
	e_i(y) e_j(y) \, e_k (z)\,\d y\,\d z}\; e_k,
\end{align*}
which is then seen also to be locally Lipschitz in $w$. Similarly, 
we can show that the linear terms $\bm{\Pi}_n 
\bk{\sigma \pd_x\bk{\sigma \pd_x u_n}} $ and 
$\bm{\Pi}_n\bk{\sigma \pd_x u_n}$ 
are locally Lipschitz in $w$.
\end{proof}

\section{A priori estimates}\label{sec:apriori}

Our first result is a fundamental model-specific 
energy estimate that we will refer to repeatedly 
throughout this work.

We use frequently the fact that for any 
function $f \in L^2(\T)$,
\begin{equation}\label{eq:Pin-SA}
	\int_\T u_{n} \bm{\Pi}_n f \, \d x 
	=\int_\T u_{n} f \, \d x,
\end{equation}
because $\bm{\Pi}_n$ is self-adjoint and idempotent. 
For any $f\in H^1(\T)$ and $\frac{n-1}{2} 
\in \mathbb{N}$, we compute the spatial derivative of 
$\bm{\Pi}_n f$ as follows:
\begin{equation}\label{eq:Pin_cancellations}
	\begin{aligned}
		\pd_x  \left(\bm{\Pi}_n f\right) 
		&= \sum_{2j\le n} \LL f, e_{2j}
		\RR_{L^2(\T)}\pd_x e_{2j}
		+\sum_{2j+1\le n} \LL f,e_{2j+1}
		\RR_{L^2(\T)}\pd_x  e_{2j + 1}
		\\ & 
		=2\pi \Big(\sum_{2j\le n} j \LL f,e_{2j}\RR_{L^2(\T)} 
		e_{2j+1}-\sum_{2j+1\le n} j
		\LL f,e_{2j+1}\RR_{L^2(\T)}e_{2j}  \Big)
		\\ & =
		-\sum_{2j \le n}  \LL f,\pd_x e_{2j+1}
		\RR_{L^2(\T)} e_{2j+1}-\sum_{2j+1\le n} 
		\LL f,\pd_x e_{2j}\RR_{L^2(\T)}e_{2j}  
		\\ & 
		= \sum_{2j+1\le n} \LL \pd_x f, e_{2j+1}
		\RR_{L^2(\T)} e_{2j+1}
		+\sum_{2j\le n} \LL \pd_x f, 
		e_{2j}\RR_{L^2(\T)}e_{2j}
		\\ & 
		=\bm{\Pi}_n\left(\pd_x f\right).
	\end{aligned}
\end{equation}

\begin{prop}[Energy estimate]\label{thm:galerkin_H1} 
For each $n\in \N$, let $u_n$ be a solution 
to \eqref{eq:galerkin_n} with $\Ex 
\norm{u_0}_{H^1(\T)}^2<\infty$. 
There exists a constant 
$$
C=C\bk{T,\Ex \norm{u_0}_{H^1(\T)}^2, 
\norm{\sigma}_{W^{2,\infty}(\T)}},
$$
independent of $n$ and $\ep$, such that
\begin{align}\label{eq:galerkin_unifbd1}
	\Ex \norm{u_{n}}_{L^\infty([0,T];H^1(\T))}^2
	+\ep \Ex\int_0^T \norm{\pd_x u_{n}(t)}_{H^1(\T)}^2 
	\, \d t \le C.
\end{align}
\end{prop}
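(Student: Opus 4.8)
The plan is to apply the finite-dimensional It\^o formula to the smooth quadratic functional $u\mapsto\norm{u}_{H^1(\T)}^2$ along the diffusion $u_n$, which is legitimate because $u_n$ takes values in the finite-dimensional space $H_n$ and hence no infinite-dimensional It\^o calculus is needed. Writing the Galerkin equation \eqref{eq:galerkin_n} as $\d u_n = A_n\,\d t + B_n\,\d W$ with $B_n=-\bm{\Pi}_n\bk{\sigma\pd_x u_n}$, It\^o's formula yields
\begin{equation*}
\d\norm{u_n}_{H^1}^2 = \bk{2\ang{u_n,A_n}_{H^1}+\norm{B_n}_{H^1}^2}\d t + 2\ang{u_n,B_n}_{H^1}\,\d W.
\end{equation*}
A preliminary observation I would use throughout is that, since $\seq{e_i}$ consists of eigenfunctions of $\pd_x^2$, it is orthogonal in the $H^1$ inner product as well; hence $\bm{\Pi}_n$ is simultaneously the $L^2$- and the $H^1$-orthogonal projection, so it is self-adjoint and norm-nonincreasing on $H^1(\T)$, and it commutes with $\pd_x$ as in \eqref{eq:Pin_cancellations}. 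I would then sort the drift into four groups: the viscous term, the Camassa--Holm nonlinearity, the Stratonovich--It\^o correction, and the martingale quadratic variation.

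For the viscous term, integration by parts gives $\ang{u_n,\pd_x^2 u_n}_{H^1}=-\norm{\pd_x u_n}_{H^1}^2$, producing the dissipative contribution $-2\ep\norm{\pd_x u_n}_{H^1}^2$ that I retain on the left. For the nonlinear term I would use self-adjointness of $\bm{\Pi}_n$ on $H^1$ together with $\ang{u,v}_{H^1}=\ang{(1-\pd_x^2)u,v}_{L^2}$ to reduce $\ang{u_n,\bm{\Pi}_n(u_n\pd_x u_n+\pd_x P[u_n])}_{H^1}$ to $\ang{(1-\pd_x^2)u_n,\,u_n\pd_x u_n+\pd_x P[u_n]}_{L^2}$, which vanishes identically. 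This is precisely the $H^1$-energy conservation of the Camassa--Holm flow: it follows by repeated integration by parts using $(1-\pd_x^2)P[u_n]=u_n^2+\tfrac12(\pd_x u_n)^2$, whereby $\int_\T u_n^2\pd_x u_n$ vanishes outright and the two surviving $\int_\T(\pd_x u_n)^3$ contributions cancel. Thus the entire transport-plus-nonlocal nonlinearity drops out of the energy balance.

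The heart of the matter, and the step I expect to be most delicate, is the interaction between the correction term and the martingale quadratic variation. After using self-adjointness to remove $\bm{\Pi}_n$ from the correction term, I must control
\begin{equation*}
\ang{u_n,\sigma\pd_x\bk{\sigma\pd_x u_n}}_{H^1}+\norm{\bm{\Pi}_n\bk{\sigma\pd_x u_n}}_{H^1}^2.
\end{equation*}
Because $\bm{\Pi}_n$ is norm-nonincreasing on $H^1$, this is bounded by $\ang{u_n,\sigma\pd_x(\sigma\pd_x u_n)}_{H^1}+\norm{\sigma\pd_x u_n}_{H^1}^2$, so the projection works in my favour and may be discarded. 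The surviving quantity measures the defect from skew-adjointness of the transport operator $\sigma\pd_x$: a direct (if lengthy) integration by parts shows that every term carrying $\pd_x^2 u_n$ or $\pd_x^3 u_n$ cancels, leaving only
\begin{equation*}
\tfrac12\int_\T\bk{(\sigma')^2+\sigma\sigma''}u_n^2\,\d x+\tfrac12\int_\T\bk{(\sigma')^2-\sigma\sigma''}(\pd_x u_n)^2\,\d x\le C\norm{\sigma}_{W^{2,\infty}}^2\norm{u_n}_{H^1}^2.
\end{equation*}
This is exactly the ``balance'' of martingale part against correction term announced in the abstract, and it explains the hypothesis $\sigma\in W^{2,\infty}(\T)$: the surviving commutators involve $\sigma''$ but no higher derivative.

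Collecting these estimates, I obtain the pathwise differential inequality $\d\norm{u_n}_{H^1}^2+2\ep\norm{\pd_x u_n}_{H^1}^2\,\d t\le C\norm{u_n}_{H^1}^2\,\d t+2\ang{u_n,B_n}_{H^1}\,\d W$. To close it I would integrate in time, localise with the stopping times $\tau_R=\inf\seq{t:\norm{u_n(t)}_{H^1}\ge R}$ so that the stochastic integral is a genuine martingale and all suprema are a priori finite, then take the supremum over $[0,t]$ and expectations. For the martingale I apply the Burkholder--Davis--Gundy inequality; its integrand satisfies $\abs{\ang{u_n,B_n}_{H^1}}=\abs{\ang{u_n,\sigma\pd_x u_n}_{H^1}}\le C\norm{\sigma}_{W^{1,\infty}}\norm{u_n}_{H^1}^2$, again after the $\pd_x^2 u_n$ term is integrated away, so BDG followed by Young's inequality lets me absorb a small multiple of $\Ex\sup_{[0,t]}\norm{u_n}_{H^1}^2$ into the left-hand side. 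What remains is $\Ex\sup_{[0,t]}\norm{u_n}_{H^1}^2+\ep\Ex\int_0^t\norm{\pd_x u_n}_{H^1}^2\,\d s\lesssim\Ex\norm{\bm{\Pi}_n u_0}_{H^1}^2+\int_0^t\Ex\sup_{[0,s]}\norm{u_n}_{H^1}^2\,\d s$, and a Gronwall argument yields the claim after letting $R\to\infty$ by monotone convergence. Since $\Ex\norm{\bm{\Pi}_n u_0}_{H^1}^2\le\Ex\norm{u_0}_{H^1}^2$ and every constant tracks only $T$, $\norm{\sigma}_{W^{2,\infty}}$ and $\Ex\norm{u_0}_{H^1}^2$, the bound is uniform in both $n$ and $\ep$.
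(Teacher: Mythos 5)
Your proposal is correct and follows essentially the same route as the paper: apply the finite-dimensional It\^o formula to the $H^1$-norm, observe that the nonlinearity drops out via the kernel identity $K-\pd_x^2K=\bm{\delta}$, use that $\bm{\Pi}_n$ commutes with $\pd_x$ and is norm-nonincreasing (Bessel) to discard the projections in the correction/quadratic-variation balance, integrate by parts to reduce that balance to terms controlled by $\norm{\sigma}_{W^{2,\infty}}\norm{u_n}_{H^1}^2$, and close with Burkholder--Davis--Gundy, Young, and Gronwall. Your explicit computation of the surviving commutator terms and the localisation by stopping times before BDG are minor presentational refinements of the paper's argument, not a different method.
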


\begin{proof}
We multiply (via It\^{o}'s formula) the 
SDE \eqref{eq:galerkin_n} against $u_n$ and then 
integrate in $x\in \T$. 
Using \eqref{eq:Pin-SA} and \eqref{eq:Pin_cancellations}, 
we obtain the SDE
\begin{align*}
	\frac12 \d \int_\T \abs{u_n}^2 \,\d x 
	&+ \ep \int_\T \abs{\pd_x u_n}^2 \,\d x \,\d t  \\
	&= -\int_\T \Big(u_n^2 \pd_x u_n
	+ u_n \pd_x K*\big(u^2_n+\frac12\bk{\pd_x u_n}^2\big)\Big)
	\,\d x\,\d t
	\\ & \quad
	+\frac12 \int_\T \Big(\sigma u_n \pd_x 
	\bk{\sigma \pd_x u_n}
	+\abs{\bm{\Pi}_n \bk{\sigma \pd_x u_n}}^2\Big)
	\,\d x\,\d t
	\\ &\quad
	- \int_\T \sigma u_n\pd_x u_n \,\d x \, \d W.
\end{align*}
Differentiating \eqref{eq:galerkin_n} and 
multiplying through 
by $\pd_x u_n=\bm{\Pi}_n \pd_x u_n$ (via It\^{o}'s formula), 
using again \eqref{eq:Pin-SA} and 
\eqref{eq:Pin_cancellations}, yields
\begin{align*}
	\frac12 \d \int_\T \abs{\pd_x u_{n}}^2 \,\d x 
	&+ \ep \int_\T \abs{\pd_x^2 u_n}^2\,\d x \, \d t 
	\\ & 
	=  \int_\T  \Big(u_n \pd_x u_n \pd_x^2 u_n  
	+ \pd_x^2 u_n \pd_x K*\big(u^2_n+\frac12 
	\bk{\pd_x u_n}^2\big)\Big)\,\d x \,\d t
	\\ &\quad
	-\frac12 \int_\T \Big(\pd_x^2 u_n
	\sigma \pd_x\bk{\sigma \pd_x u_n}
	-\abs{\pd_x \bk{\bm{\Pi}_n
	\bk{\sigma \pd_x u_n}}}^2\Big)\,\d x \,\d t
	\\ & \quad
	+\int_\T \sigma \pd_x u_n\,\pd_x^2 u_n\,\d x\,\d W.
\end{align*}
Adding the previous two equations, we arrive at
\begin{equation}\label{eq:H1_intermediate1}
	\frac12 \d \norm{u_n}_{H^1(\T)}^2
	+\ep \int_\T \Big( \abs{\pd_x  u_n}^2 
	+ \abs{\pd_x^2 u_{n}}^2\Big)\,\d x \, \d t
	= I_1^n \,\d t + I_2^n \,\d t + I_3^n \,\d W,
\end{equation}
where
\begin{equation}\label{eq:H1_intermediates}
	\begin{aligned}
		I_1^n & :=  
		-\int_\T \Big(u^2_n \pd_x u_n + u_n \pd_x K*\big(u^2_n
		+\frac12 \bk{\pd_x u_{n}}^2\big)\Big)\,\d x
		\\ & \quad 
		+\int_\T \Big(u_n \pd_x u_n \pd_x^2 u_n
		+\pd_x^2 u_n \, \pd_x K *\big(u^2_n
		+\frac12 \bk{\pd_x u_{n}}^2\big)\Big)\,\d x, 
		\\ 
		I_2^n & := 
		\frac12 \int_\T \Big(\sigma u_n \pd_x \bk{\sigma \pd_x u_n}
		+\abs{\bm{\Pi}_n \bk{\sigma \pd_x u_n}}^2\Big)\,\d x
		\\ & \quad 
		-\frac12 \int_\T \Big(\pd_x^2 u_n
		\sigma \pd_x\bk{\sigma \pd_x u_n}
		-\abs{\pd_x \bk{\bm{\Pi}_n\bk{\sigma 
		\pd_x u_n}}}^2\Big)\, \d x 
		\\ & =: \frac12I_{2,1}^n-\frac12I_{2,2}^n,
		\\ I_3^n &: =
		-\int_\T \Big(\sigma u_n \pd_x u_n 
		-\sigma \pd_x u_n \pd_x^2 u_n\Big)\,\d x.
	\end{aligned}
\end{equation}

\medskip
{\it 1. Estimate of $I_1^n$.}

Using integration by parts and the 
kernel property of $K$ that 
$K - \pd_x^2 K = \bm{\delta}$, the Dirac mass,
\begin{equation}\label{eq:H1_estm_A}
	\begin{aligned}
		I_1^n & =
		-\int_\T \Big(u^2_{n} \pd_x u_n
		-\pd_x  u_{n} K*\big(u^2_n
		+\frac12 \bk{\pd_x u_{n}}^2\big)\Big)\,\d x
		\\ & \quad
		+\int_\T \Big(u_n\pd_x u_n \pd_x^2u_n 
		-\pd_x u_n \pd_x^2 K*\big(u^2_n 
		+\frac12\bk{\pd_x u_{n}}^2\big)\Big)\,\d x= 0.
	\end{aligned}
\end{equation}

\medskip
{\it 2. Estimate of $I_2^n$.}

By Bessel's inequality,
$$
\int_\T \abs{\bk{\bm{\Pi}_n
\bk{\sigma \pd_x u_n}}}^2\,\d x 
\le \int_\T \abs{\sigma \pd_x u_n}^2\,\d x.
$$
Combining this with an integration by parts 
in the $I_{2,1}^n$-term $\sigma u_n \pd_x 
\bk{\sigma \pd_x u_n}$, and then expanding out 
$\pd_x \bk{\sigma u_n}$ followed by 
another integration by parts, yields
$$
I_{2,1}^n\le -\int_\T \sigma\pd_x 
\sigma u_n \pd_x u_n\, \d x
=-\frac14\int_\T \pd_x \sigma^2 \pd_x u_n^2\, \d x.
$$
Similarly, by \eqref{eq:Pin_cancellations} and 
Bessel's inequality,
\begin{align*}
	\int_\T \abs{\pd_x \bigl(\bm{\Pi}_n
	\bk{\sigma \pd_x u_n}\bigr)}^2\,\d x 
	&=\int_\T \abs{\bm{\Pi}_n\bigl(\pd_x 
	\bk{\sigma \pd_x u_n}\bigr)}^2\,\d x
	\\ &  
	\le \int_\T 
	\abs{\pd_x \bk{\sigma \pd_x u_n}}^2\,\d x
	\\ & 
	=\int_{\T} \Big(\abs{\pd_x\sigma \pd_x u}^2
	+\frac12\pd_x\sigma^2 \pd_x \abs{\pd_x u_n}^2
	+\abs{\sigma \pd_x^2 u_n}^2\Big)\, \d x
	\\ & 
	=\int_{\T}\Big(\abs{\pd_x\sigma \pd_x u}^2
	-\frac12\pd_x^2\sigma^2 \abs{\pd_x u_n}^2
	+\abs{\sigma \pd_x^2 u_n}^2\Big)\, \d x.
\end{align*}
We combine this with an expansion of 
the $I_{2,2}^n$-term $\pd_x^2 u_n\sigma 
\pd_x\bk{\sigma \pd_x u_n}$ into the sum
$\sigma^2 \abs{\pd_x^2 u_n}^2+\frac14\pd_x \sigma^2  
\pd_x\bk{\pd_x u_n}^2$, along with an integration 
by parts in the latter term:
\begin{align*}
	I_{2,2}^n &\ge 
	\int_\T \Big(\sigma^2 \abs{\pd_x^2 u_n}^2
		+\frac14\pd_x \sigma^2 \pd_x\bk{\pd_x u_n}^2
		\\ & \qquad \qquad
		-\abs{\pd_x\sigma \pd_x u}^2
		+\frac12\pd_x^2\sigma^2 \abs{\pd_x u_n}^2
		-\abs{\sigma \pd_x^2 u_n}^2\Big)\, \d x
		\\ &= \int_\T \Big(\frac14\pd_x^2 \sigma^2\abs{\pd_x u_n}^2
		-\abs{\pd_x\sigma \pd_x u}^2\Big)\, \d x.
\end{align*}
Hence
\begin{align}
	2 I_2^n & = I_{2,1}^n-I_{2,2}^n 
	\notag \\ & 
	\le \int_\T \Big(\frac14\pd_x \sigma^2 u_n^2
	-\frac14\pd_x^2 \sigma^2 \abs{\pd_x u_n}^2
	+\abs{\pd_x\sigma \pd_x u}^2\Big)\, \d x
	\notag \\ & 
	\le C_\sigma \norm{u_n}_{H^1(\T)}^2.
	\label{eq:H1_estm_BC}
\end{align}

\medskip
{\it 3. Estimate of martingale term $I_3^n$.}

First, since $I_3^n= \frac{1}{2} \int_\T \pd_x\sigma
\left(\abs{u_n}^2-\abs{\pd_x u_n}^2\right)\,\d x$, 
we have the estimate
\begin{align*}
	\abs{I_3^n} 
	& \le \frac12\norm{\pd_x \sigma}_{L^\infty(\T)}
	\norm{u_n}_{H^1(\T)}^2.
\end{align*}
By the Burkholder--Davis--Gundy inequality,
\begin{align*}
	\Ex \sup_{s\in [0,t]}
	\abs{\int_0^s I_3^n \,\d W}
	& \le \Ex \bk{\int_0^t \abs{I_3^n}^2\,\d s}^{1/2}
	\\ & 
	\le \tilde{C}_\sigma\, \Ex \bk{\int_0^t 
	\norm{u_n(s)}_{H^1(\T)}^4 \,\d s}^{1/2} 
	=:\tilde{I}_3^n.
\end{align*}
By the H\"older and Young inequalities, we can 
further estimate the above by
\begin{align}
	\tilde{I}_3^n & \le \tilde{C}_\sigma \,\Ex
	\bk{\int_0^t\norm{u_n(s)}_{H^1(\T)}^2
	\,\d s \sup_{s\in [0,t]}
	\norm{u_n(s)}_{H^1(\T)}^2}^{1/2}
	\notag \\ & 
	\le C_\sigma\Ex \int_0^t 
	\norm{u_n(s)}_{H^1(\T)}^2\,\d s
	+\frac12 \Ex \sup_{s\in [0,t]}\norm{u_n(s)}_{H^1(\T)}^2.
	\label{eq:H1_estm_M}
\end{align}

\medskip
{\it 4. Conclusion.}

Gathering the estimates \eqref{eq:H1_estm_A},
\eqref{eq:H1_estm_BC}, and \eqref{eq:H1_estm_M}, 
we conclude that there exists a constant $C$, 
independent of $n$ and $\ep$, such that
\begin{align*}
	\frac12\Ex \sup_{s\in[0,t]}
	\norm{u_n(s)}_{H^1(\T)}^2 
	&+\ep \int_0^t \int_\T \Big(\abs{\pd_x u_n}^2 
	+\abs{\pd_x^2 u_n}^2\Big)\,\d x \, \d s 
	\\ & 
	\le \Ex \norm{u_{n}(0)}_{H^1(\T)}^2
	+C\Ex \int_0^t 
	\norm{u_{n}(s)}_{H^1(\T)}^2\;\d s,
	\quad t\in [0,T],
\end{align*}
which implies \eqref{eq:galerkin_unifbd1} 
by Gronwall's inequality.
\end{proof}

The previous lemma supplies control of the second 
moment of the $H^1(\T)$-norm. This effectively 
guarantees that higher moments are bounded as well.

\begin{lem}[Higher moment bounds for the $H^1(\T)$-norm]
\label{thm:galerkin_H1p}
Fix $p \in (4,\infty)$, and let $u_{n}$ be a solution 
to \eqref{eq:galerkin_n} with $\Ex \norm{u_0}_{H^1(\T)}^p 
<\infty$. There exists a constant 
$$
C=C\bk{p,T,\norm{\sigma}_{W^{2,\infty}(\T)}},
$$
independent of $n$ (and $\ep$), such that
\begin{equation}\label{eq:galerkin_unifbd2}
	\begin{aligned}
		\Ex &\sup_{t \in [0,T]}\norm{u_n}_{H^1(\T)}^p 
		\\ &\qquad
		+\ep^{p/2} \Ex \bk{\int_0^T  \int_\T  \big(\abs{\pd_x u_n}^2
		+\abs{\pd_x^2 u_n}^2\big)\,\d x \, \d t }^{p/2}
		\le C\Ex \norm{u_0}_{H^1(\T)}^{p}.
	\end{aligned}
\end{equation}

\end{lem}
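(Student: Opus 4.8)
The plan is to apply It\^o's formula to the map $y\mapsto y^{p/2}$ composed with the scalar semimartingale $X_t:=\norm{u_n(t)}_{H^1(\T)}^2$, whose dynamics are exactly the energy identity \eqref{eq:H1_intermediate1}. Since $p>4$ forces $p/2>2$, the function $y\mapsto y^{p/2}$ is $C^2$ on $[0,\infty)$ (its second derivative has the nonnegative exponent $p/2-2$) and It\^o's formula applies without any difficulty at the origin. Writing $D_t:=\int_\T\bk{\abs{\pd_x u_n}^2+\abs{\pd_x^2 u_n}^2}\,\d x$, \eqref{eq:H1_intermediate1} reads $\d X_t=2\bk{I_1^n+I_2^n-\ep D_t}\,\d t+2I_3^n\,\d W$, so that
\begin{align*}
\d\bk{X_t^{p/2}}&=\tfrac{p}{2}X_t^{p/2-1}\bk{2I_1^n+2I_2^n-2\ep D_t}\,\d t+p\,X_t^{p/2-1}I_3^n\,\d W\\
&\quad+\tfrac{p}{2}\bk{\tfrac{p}{2}-1}X_t^{p/2-2}\cdot 2\abs{I_3^n}^2\,\d t.
\end{align*}
Into this I would feed the three facts already established in the proof of Proposition \ref{thm:galerkin_H1}: the exact cancellation $I_1^n=0$ from \eqref{eq:H1_estm_A}; the bound $2I_2^n\le C_\sigma X_t$ from \eqref{eq:H1_estm_BC}; and the pointwise control $\abs{I_3^n}\le\tfrac12\norm{\pd_x\sigma}_{L^\infty(\T)}X_t$, which gives $\abs{I_3^n}^2\le C_\sigma^2 X_t^2$ and hence an It\^o correction term bounded by $C_{p,\sigma}X_t^{p/2}$. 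The dissipation contribution $-p\,\ep X_t^{p/2-1}D_t$ is nonpositive and is discarded for the supremum bound, which is precisely what keeps the resulting constant independent of both $n$ and $\ep$.

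Integrating leaves, pathwise, the inequality $X_t^{p/2}\le X_0^{p/2}+C_{p,\sigma}\int_0^t X_s^{p/2}\,\d s+M_t$ with $M_t:=p\int_0^t X_s^{p/2-1}I_3^n\,\d W$. To control the martingale I would estimate its bracket by $\langle M\rangle_t=p^2\int_0^t X_s^{p-2}\abs{I_3^n}^2\,\d s\le C\int_0^t X_s^{p}\,\d s\le C\bk{\sup_{s\le t}X_s^{p/2}}\int_0^t X_s^{p/2}\,\d s$, apply the Burkholder--Davis--Gundy inequality, and then split by Young's inequality so that $\Ex\sup_{s\le t}\abs{M_s}\le\tfrac12\Ex\sup_{s\le t}X_s^{p/2}+C\,\Ex\int_0^t X_s^{p/2}\,\d s$. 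Taking the supremum over $[0,t]$ and then expectations, the half-supremum is absorbed into the left-hand side, leaving a Gronwall-type inequality for $\phi(t):=\Ex\sup_{s\le t}X_s^{p/2}$; Gronwall's lemma together with $\Ex X_0^{p/2}=\Ex\norm{\bm{\Pi}_n u_0}_{H^1(\T)}^p\le\Ex\norm{u_0}_{H^1(\T)}^p$ then delivers the supremum part of \eqref{eq:galerkin_unifbd2}. The only genuinely delicate point is rigour: I would first localise with the stopping times $\tau_R=\inf\seq{t:\norm{u_n(t)}_{H^1(\T)}\ge R}$ so that every term is integrable and $M_{t\wedge\tau_R}$ is a true martingale, run the whole argument on $[0,t\wedge\tau_R]$, and pass $R\uparrow\infty$ by monotone convergence.

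For the dissipation term I would avoid the It\^o expansion and instead integrate the $p=2$ identity \eqref{eq:H1_intermediate1} over $[0,T]$ to isolate
\[
\ep\int_0^T D_s\,\d s=\tfrac12 X_0-\tfrac12 X_T+\int_0^T\bk{I_1^n+I_2^n}\,\d s+\int_0^T I_3^n\,\d W.
\]
Raising to the power $p/2$ and taking expectations, the four resulting contributions are each dominated using the supremum bound just proved: $\Ex X_0^{p/2}$ and $\Ex X_T^{p/2}$ directly; the drift term via $I_1^n=0$, $\abs{I_2^n}\le C_\sigma X_s$ and H\"older's inequality in time; and the stochastic integral via Burkholder--Davis--Gundy together with $\abs{I_3^n}^2\le C_\sigma^2 X_s^2$. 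This produces the $\ep^{p/2}\Ex\bk{\int_0^T D_s\,\d s}^{p/2}$ bound and completes \eqref{eq:galerkin_unifbd2}.
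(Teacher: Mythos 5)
Your proof is correct, but it takes a genuinely different route from the paper's. The paper never applies It\^o's formula to the power function: it integrates the $p=2$ identity \eqref{eq:H1_intermediate1} in time, raises the whole integrated (in)equality to the power $p/2$ while keeping both $\sup_t\norm{u_n}_{H^1(\T)}^p$ and $\ep^{p/2}\bk{\int_0^T\int_\T(\abs{\pd_x u_n}^2+\abs{\pd_x^2 u_n}^2)\,\d x\,\d t}^{p/2}$ on the left simultaneously, applies the Burkholder--Davis--Gundy inequality at exponent $p/2$ to the martingale, and uses convexity of $x\mapsto x^{p/4}$ (this is where $p>4$ enters for the paper) before invoking Gronwall. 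You instead expand $\d\bk{X_t^{p/2}}$ via It\^o, which costs you the explicit second-order correction $\tfrac{p}{2}\bk{\tfrac{p}{2}-1}X_t^{p/2-2}\cdot 2\abs{I_3^n}^2$ and a localisation argument (your stopping times $\tau_R$) to make $M$ a true martingale, but buys you a BDG application at exponent $1$ with the standard half-supremum absorption; in your version $p>4$ is instead what makes $y\mapsto y^{p/2}$ of class $C^2$ at the origin. Because you discard the nonpositive dissipation term in the first pass, you need the separate second step isolating $\ep\int_0^T D_s\,\d s$ from the integrated identity to recover the dissipation bound, whereas the paper gets both halves of \eqref{eq:galerkin_unifbd2} in one pass. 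Your ingredients ($I_1^n=0$, $2I_2^n\le C_\sigma X_t$, $\abs{I_3^n}\le\tfrac12\norm{\pd_x\sigma}_{L^\infty(\T)}X_t$, $\norm{\bm{\Pi}_n u_0}_{H^1(\T)}\le\norm{u_0}_{H^1(\T)}$) are all correctly imported from Proposition \ref{thm:galerkin_H1}, and your explicit localisation is, if anything, more careful on the integrability needed for the Gronwall step than the paper's presentation.
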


\begin{rem}
Insofar as the bound $\Ex 
\norm{u_{n}}_{L^\infty([0,T];H^1(\T))}^{p} 
\le C$ is concerned, since $(\Omega, \mathbb{P})$ 
is a finite measure space, the bound 
with the same constant holds for any 
$r\in [1,p)$ in place of $p$, 
though the theorem is stated for $p > 4$. 
By \eqref{eq:galerkin_unifbd2} and 
the one-dimensional embedding 
$H^1(\T) \hookrightarrow L^\infty (\T)$, 
we have also that
\begin{equation}\label{eq:Linfty-est}
	\Ex\norm{u_n}_{L^\infty([0,T]\times \T)}^p
	\lesssim_p 1, \quad p\in [1,\infty),
\end{equation}
but $u_n$ is not uniformly 
bounded in $L^\infty_{\omega,t,x}$. 
\end{rem}

\begin{proof}
By \eqref{eq:H1_intermediate1} and parts 1, 2 
of the proof of Proposition \ref{thm:galerkin_H1}, we have
\begin{align*}
	& \frac12\d \norm{u_n}_{H^1(\T)}^2 
	+\ep \int_\T  \big(\abs{\pd_x u_n}^2
	+\abs{\pd_x^2 u_n}^2\big)\,\d x \, \d t 
	\\ & \qquad
	=\frac12\int_\T \big(\sigma u_n\pd_x \bk{\sigma \pd_x u_n}
	+\abs{\bm{\Pi}_n \bk{\sigma \pd_x u_n}}^2\big) \,\d x\,\d t
	\\ & \quad \qquad 
	-\frac12\int_\T \big(\pd_x^2 u_n\sigma
	\pd_x\bk{\sigma \pd_x u_n}-\abs{\pd_x 
	\bk{\bm{\Pi}_n\bk{\sigma\pd_x u_n}}}^2\big)\, \d x \,\d t
	\\ & \quad\qquad
	-\int_\T \big(\sigma\, u_n \pd_x u_n
	-\sigma \,\pd_x u_n \pd_x^2 u_n\big)\,\d x \, \d W.
\end{align*}

We again use Bessel's inequality to eliminate 
the two projection operators (remembering that 
projection commutes with differentiation).
Then, integrating in time, rasing both sides to the power $p/2$,
and taking expectation, we find
\begin{align*}
	& \frac1{2^{p/2} }\Ex \sup_{t \in [0,T]}\norm{u_n}_{H^1(\T)}^p 
	+\ep^{p/2} \Ex \bk{\int_0^T  \int_\T  \abs{\pd_x u_n}^2
	+\abs{\pd_x^2 u_n}^2\,\d x \, \d t }^{p/2}
	\\ & \qquad \lesssim_p 
	\Ex \sup_{t \in [0,T]}\bigg|\int_0^t 
	\int_\T \sigma u_n\pd_x \bk{\sigma \pd_x u_n}
	+\abs{ \bk{\sigma \pd_x u_n}}^2 \,\d x\,\d t
	\\ &\qquad \quad \qquad 
	-\int_0^t \int_\T \pd_x^2 u_n\sigma
	\pd_x\bk{\sigma \pd_x u_n}-\abs{\pd_x 
	\bk{\sigma\pd_x u_n}}^2\, \d x \,\d t\bigg|^{p/2}
	\\ & \quad\qquad
	+\Ex \sup_{t \in [0,T]}\abs{\int_0^t \int_\T \sigma\, u_n \pd_x u_n
	-\sigma \,\pd_x u_n \pd_x^2 u_n\,\d x \, \d W}^{p/2} =: I_1 + I_2.
\end{align*}

For $I_1$, we find:
\begin{align*}
	&\int_\T \sigma u_n\pd_x \bk{\sigma \pd_x u_n}
	+\abs{  \bk{\sigma \pd_x u_n}}^2 \,\d x\\
	& =  \int_\T - \abs{\sigma\,\pd_x u_n}^2 
	+\sigma  u_n\, \pd_x \sigma \,\pd_x  u_n
	+ \abs{  \bk{\sigma \pd_x u_n}}^2\,\d x\\
	& = \int_\T -\frac12 \pd_x\bk{\sigma  \, \pd_x \sigma} \,  u_n^2\,\d x,
\end{align*}
and, similarly,
\begin{align*}
	&\int_\T  \pd_x^2 u_n\sigma
	\pd_x\bk{\sigma \pd_x u_n}-\abs{\pd_x 
	\bk{ \sigma\pd_x u_n}}^2\,\d x\\
	& = \int_\T  \abs{\pd_x\bk{\sigma \pd_x u_n}}^2
	- \pd_x \sigma \,\pd_x u_n \,\pd_x\bk{ \sigma \,\pd_x u_n}
	-\abs{ \pd_x \bk{\sigma\pd_x u_n}}^2\,\d x\\
	& =  \int_\T  \bk{ \frac12  \pd_x \bk{\sigma \,\pd_x \sigma} 
	- \abs{\pd_x \sigma }^2 }\abs{\pd_x u_n}^2\,\d x.
\end{align*}

This give us 
$$
I_1 \lesssim_\sigma  \int_0^T \Ex  \norm{u_n}_{H^1(\T)}^p\,\d t,
$$
where the implied constant depends on $\norm{\sigma}_{W^{2,\infty}(\T)}$.

For $I_2$, by the Burkholder--Davis--Gundy inequality,
we have
\begin{align*}
	I_2 
	&\le \Ex \bk{\int_0^T \abs{\int_\T \sigma\, u_n \pd_x u_n
	-\sigma \,\pd_x u_n \pd_x^2 u_n\,\d x }^2\, \d t}^{p/4}\\
	&\le \Ex  \bk{\int_0^T \abs{\int_\T \frac12 \pd_x \sigma\, 
	\bk{\abs{u_n}^2 - \abs{\pd_x u_n}^2}\,\d x }^2 \d t}^{p/4}\\
	& \lesssim_\sigma   \int_0^T \Ex\norm{u_n }_{H^1(\T)}^{p}\,\d t,
\end{align*}
where we used the convexity of $x \mapsto x^{p/4}$ 
in the final inequality, provided by the assumption $p > 4$.

The estimates on $I_1$ and $I_2$ then allow us to derive 
the stated bound in the Lemma statement by a standard
application of Gronwall's inequality.
\end{proof}

\begin{rem}[Full Euler--Poincar\'e structure 
in the noise]\label{rem:EPnoise1}
It can be verified that there is no additional 
difficulty with the incorporation of full 
Euler--Poincar\'e noise of the form
\begin{align*}
	 \mathcal{B}(u) \circ\d W &=\bk{\sigma\,\pd_x u
	+\mathcal{J}_1(u)} \circ \,\d W,  \\
	\mathcal{J}_1(u)&:= K*\tilde{B}(u):= K*\bk{2 \pd_x \sigma \, u 
	+\pd_x^2 \sigma \,\pd_x u},
\end{align*}
in place of $\sigma \pd_x {u} \circ \d W$ 
in \eqref{eq:u_ch_ep}, see \eqref{eq:sCH_1}.

Written out in It\^o form, the noise 
$\mathcal{B}(u)\circ \d W = \mathcal{B}(u)\,\d W
+\frac12 \mathcal{C}(u)\,\d t$ gives a Stratonovich--It\^o 
correction $\mathcal{C}$ of the form
\begin{align*}
	 2\mathcal{C}(u)&= \bigl\LL\mathcal{B}(u),W\bigr\RR 
	=-\mathcal{B} \left(\mathcal{B}(u)\right)
	=-\sigma \,\pd_x\left( \sigma\, \pd_x u \right)
	+2 \mathcal{J}_2(u), 
	\\ 
	 2\mathcal{J}_2(u)&= - \sigma \pd_x K*\tilde{B}(u)
	-2 K*\left(\pd_x \sigma\, \left( \sigma \pd_x u 
	+K*\tilde{B}(u)\right)  \right)
	\\ & \quad
	-K*\left(\pd_x^2 \sigma \pd_x \left( \sigma \pd_x u
	+K*\tilde{B}(u)\right)  \right).
\end{align*}
Since the transformation $\sigma \pd_x u \mapsto 
\mathcal{B}(u)=\sigma \pd_x u+K*\tilde{B}(u)$ does not 
introduce higher-order derivatives on $u$, but 
it does on $\sigma$, the only extra requirement 
for bounds on $\mathcal{B}$ or $\mathcal{C}$ 
is that $\sigma \in W^{3,\infty}(\T)$ 
instead of $W^{2,\infty}(\T)$. 

The corresponding energy balance is
\begin{align*}
	\frac12\d \norm{{u}}_{H^1(\T)}^2&
	+\ep \norm{\pd_x{u}}_{H^1(\T)}^2\,\d t 
	\\ & \qquad 
	= I_2 \,\d t+\int_\T \bigl(u\mathcal{J}_2({u})
	+\pd_x u \pd_x \mathcal{J}_2(u)\bigr)\,\d x  \, \d t 
	\\ &\qquad \quad
	+ \frac12 \int_\T\big( \abs{\mathcal{J}_1(u)}^2
	+\abs{\pd_x \mathcal{J}_1(u)}^2\big)\,\d x\,\d t
	\\ &\qquad\quad
	+ \int_\T \big(\mathcal{J}_1({u}) \sigma \pd_x u
	+\pd_x \mathcal{J}_1(u)\pd_x
	\bk{\sigma \,\pd_x u}\big) \,\d x\,\d t  
	\\ &\qquad\quad
	+\bk{I_3- \int_\T u \bk{2 \pd_x \sigma u
	+\pd_x^2 \sigma \pd_x u}\,\d x}\,\d W,
\end{align*}
where 
\begin{align*}
	& I_2 := \frac12 \int_\T \big(\sigma u \pd_x\bk{\sigma \pd_x u}
	+\abs{\sigma \pd_x u}^2\big)\, \d x
	\\ &\qquad
	-\frac12 \int_\T \big(\pd_x^2 u\,
	\sigma \pd_x\bk{\sigma \pd_x u}
	-\abs{\pd_x \bk{\sigma \pd_x u}}^2\big)\, \d x,
	\\ & 
	I_3 :=-\int_\T \big(\sigma u \pd_x u 
	-\sigma \pd_x u \pd_x^2 u\big) \,\d x
\end{align*}
are as in \eqref{eq:H1_intermediates} for ready comparison.
\end{rem}

\section{Tightness of probability laws}\label{sec:laws}

We will prove that the probability laws 
$\seq{(u_n)_*\mathbb{P}}$ of the Galerkin 
approximations $\seq{u_n}$ are ($n$-uniformly) tight, 
on suitable Polish and quasi-Polish spaces.  We will later 
construct weak solutions by applying a 
stochastic compactness argument. In one step of the 
argument, one makes use of tightness, which is 
linked to weak compactness of the laws. 
In contrast to the results in Section \ref{sec:apriori}, 
tightness results are not uniform-in-$\ep$.

\subsection{Tightness on $L^2([0,T];H^1(\T))$ and on $ 
C([0,T];H^1_w(\T))$}\label{sec:tightness_final}

We will first show improved temporal regularity 
in $L^2(\T)$. This will be used to establish the 
tightness of laws of $\seq{u_n}$ on the Polish space 
$L^2([0,T];H^1(\T))$ and on the quasi-Polish 
space $C([0,T];H^1_w(\T))$. 

\begin{lem}[Temporal $L^2$ continuity]\label{thm:u_Ctheta_L2}
For each $n \in \N$, let $u_n$ be a solution 
to \eqref{eq:galerkin_n} with $\Ex \norm{u_0}_{H^1(\T)}^p
<\infty$ for $p>2$. 
For any $\theta \in [0,(p-2)/4p)$, there 
exists a constant 
$$
C=C\bk{T, p,\theta,\Ex \norm{u_0}_{H^1(\T)}^{4p}, 
\norm{\sigma}_{W^{2,\infty}(\T)}},
$$
independent of $n$ and $\ep$, such that
\begin{equation}\label{eq:galerkin_Ctheta}
	\Ex \norm{u_{n}}_{C^\theta([0,T];L^2(\T))}^{2p}
	\le C.
\end{equation}
\end{lem}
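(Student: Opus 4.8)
The plan is to apply the Kolmogorov--\v{C}entsov continuity criterion to the $L^2(\T)$-valued process $t\mapsto u_n(t)$. Concretely, I aim to establish a constant $C$, independent of $n$ and $\ep$, such that
$$
\Ex \norm{u_n(t) - u_n(s)}_{L^2(\T)}^{2p} \le C\abs{t-s}^{p/2},
\qquad 0 \le s \le t \le T.
$$
Since $2p\cdot\tfrac{p-2}{4p}=\tfrac{p-2}{2}=\tfrac{p}{2}-1$, this increment bound feeds into Kolmogorov's theorem with exponent $a=2p$ and time-power $1+\tfrac{p-2}{2}=\tfrac{p}{2}$ (here $\tfrac{p-2}{2}>0$ uses $p>2$), yielding a modification in $C^\theta([0,T];L^2(\T))$ for every $\theta<(p-2)/4p$ together with the asserted moment bound \eqref{eq:galerkin_Ctheta}.

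The main obstacle, and the reason a naive $L^2$ estimate of the drift fails, is the second-order Strato\-novich--It\^o correction $\tfrac12\bm{\Pi}_n\bk{\sigma\pd_x(\sigma\pd_x u_n)}$: unlike the viscous term $\ep\pd_x^2 u_n$, it carries no factor of $\ep$, so bounding its $L^2$-increment would require $\Ex\bk{\int_0^T\norm{u_n}_{H^2(\T)}^2\,\d t}^{p}$, which by Lemma \ref{thm:galerkin_H1p} is controlled only after multiplication by $\ep^p$ and hence is \emph{not} $\ep$-uniform. To circumvent this I will estimate the increment in the weaker space $H^{-1}(\T)$, where both second-order drift contributions become $\ep$-uniformly first order, and then recover the $L^2$-bound by interpolating against the uniform $H^1$-bound of Lemma \ref{thm:galerkin_H1p}.

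In detail, writing the increment from \eqref{eq:galerkin_n} as $u_n(t)-u_n(s)=\int_s^t F\,\d r-\int_s^t G\,\d W$ with martingale integrand $G=\bm{\Pi}_n\bk{\sigma\pd_x u_n}$, I first note $\norm{G}_{H^{-1}(\T)}\le\norm{G}_{L^2(\T)}\le C\norm{u_n}_{H^1(\T)}$. For the drift, the quadratic terms $\bm{\Pi}_n\bk{u_n\pd_x u_n}$ and $\pd_x P[u_n]$ are bounded in $L^2(\T)\hookrightarrow H^{-1}(\T)$ by $C\norm{u_n}_{H^1(\T)}^2$, using $H^1(\T)\hookrightarrow L^\infty(\T)$ and Young's convolution inequality for $P$. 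The delicate piece is rewritten as $\sigma\pd_x(\sigma\pd_x u_n)=\pd_x\bk{\sigma^2\pd_x u_n}-\sigma\pd_x\sigma\,\pd_x u_n$, exhibiting it as $\pd_x$ of an $L^2$-function plus an $L^2$-function, so that $\norm{\tfrac12\bm{\Pi}_n\bk{\sigma\pd_x(\sigma\pd_x u_n)}}_{H^{-1}(\T)}\le C\norm{u_n}_{H^1(\T)}$ uniformly in $\ep$; likewise $\norm{\ep\pd_x^2 u_n}_{H^{-1}(\T)}\le\ep\norm{u_n}_{H^1(\T)}$. Collecting, $\norm{F}_{H^{-1}(\T)}\le C\bk{\norm{u_n}_{H^1(\T)}+\norm{u_n}_{H^1(\T)}^2}$, with $C$ depending only on $\norm{\sigma}_{W^{2,\infty}(\T)}$.

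With these pointwise-in-time bounds, H\"older's inequality in time controls the drift contribution by $\abs{t-s}^{2p-1}\int_s^t\Ex\norm{F}_{H^{-1}(\T)}^{2p}\,\d r$, while the Burkholder--Davis--Gundy inequality in the Hilbert space $H^{-1}(\T)$ controls the martingale contribution by $C\abs{t-s}^{p-1}\int_s^t\Ex\norm{G}_{H^{-1}(\T)}^{2p}\,\d r$. Invoking Lemma \ref{thm:galerkin_H1p} with exponent $4p$, which is the source of the $\Ex\norm{u_0}_{H^1(\T)}^{4p}$ dependence in the constant, bounds both time-integrals and yields $\Ex\norm{u_n(t)-u_n(s)}_{H^{-1}(\T)}^{2p}\le C\abs{t-s}^{p}$. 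Finally, the interpolation inequality $\norm{f}_{L^2(\T)}^2\le\norm{f}_{H^{-1}(\T)}\norm{f}_{H^1(\T)}$, together with the Cauchy--Schwarz inequality and the uniform bound $\Ex\norm{u_n}_{L^\infty([0,T];H^1(\T))}^{2p}\le C$, gives
$$
\Ex\norm{u_n(t)-u_n(s)}_{L^2(\T)}^{2p}
\le\bk{\Ex\norm{u_n(t)-u_n(s)}_{H^{-1}(\T)}^{2p}}^{1/2}
\bk{\Ex\norm{u_n(t)-u_n(s)}_{H^1(\T)}^{2p}}^{1/2}
\le C\abs{t-s}^{p/2},
$$
which is the desired estimate. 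The hard part throughout is the non-parabolic second-order $\sigma$ term, and the $H^{-1}$-then-interpolation device is precisely what renders its contribution $\ep$-uniform.
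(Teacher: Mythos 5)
Your proof is correct, and it reaches the same key increment bound $\Ex\norm{u_n(t)-u_n(s)}_{L^2(\T)}^{2p}\le C\abs{t-s}^{p/2}$ as the paper before invoking Kolmogorov, but by a genuinely different route. The paper estimates the $L^2$ increment directly, writing $\int_\T\bigl(u_n(t)-u_n(s)\bigr)^2\,\d x=\int_\T\bigl(u_n(t)-u_n(s)\bigr)\int_s^t\d u_n(r)\,\d x$ and integrating by parts so that the troublesome derivatives from $\ep\pd_x^2 u_n$ and from the Stratonovich correction land on the increment $u_n(t)-u_n(s)$, which is controlled by the uniform $H^1$ bound; this is morally the same ``pay in a weaker norm'' idea as yours, implemented through duality inside the quadratic pairing rather than through an explicit $H^{-1}$ norm. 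Your two-step scheme --- an $\ep$-uniform $H^{-1}$ increment estimate (using the divergence-form rewriting $\sigma\pd_x(\sigma\pd_x u_n)=\pd_x(\sigma^2\pd_x u_n)-\sigma\pd_x\sigma\,\pd_x u_n$, and noting that $\bm{\Pi}_n$ is a contraction on every $H^s$) followed by the interpolation $\norm{f}_{L^2(\T)}^2\le\norm{f}_{H^{-1}(\T)}\norm{f}_{H^1(\T)}$ and Cauchy--Schwarz in $\omega$ --- is cleaner in one concrete respect: the paper must confront the fact that $\int_\T\bigl(u_n(t)-u_n(s)\bigr)\int_s^t\sigma\pd_x u_n\,\d W\,\d x$ is not a stochastic integral of an adapted process (the increment is anticipating), and resorts to repeated Cauchy--Schwarz plus a pointwise-in-$x$ Burkholder--Davis--Gundy argument, whereas your Hilbert-space BDG applied to $\norm{\int_s^t G\,\d W}_{H^{-1}(\T)}$ avoids the issue entirely. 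Both arguments require the $4p$ moment of $\norm{u_n}_{L^\infty([0,T];H^1(\T))}$ from Lemma \ref{thm:galerkin_H1p}, and both deliver the exponent $1+\tfrac{p-2}{2}$ needed for the stated H\"older range $\theta<(p-2)/4p$.
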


\begin{proof}
We shall estimate $\Ex\norm{u_n(t)-u_n(s)}_{L^2(\T)}^{2p}$ 
in terms of $|t-s|^{1+\gamma}$ for some $\gamma  >1$, 
and then appeal to Kolmogorov's continuity criterion. 

First, we separate the spatial integral as 
$$
\int_\T \bigl(u_n(t) - u_n(s)\bigr)^2\,\d x 
=\int_\T \bigl(u_n(t)-u_n(s)\bigr)
\int_s^t \, \d u_n(r)\,\d x
=\sum_{i=1}^5 I_i^n,
$$ 
where
\begin{align*}
	I_1^n & :=-\int_\T \bigl(u_n(t)-u_n(s)\bigr)
	\int_s^t u_n(r) \pd_x u_n(r) \,\d r \,\d x,
	\\ 
	I_2^n &:=  -\int_\T \bigl(u_n(t)-u_n(s)\bigr)
	\int_s^t \pd_x P[u_n](r) \,\d r \,\d x,
	\\
	I_3^n & := 
	\ep\int_\T \bigl(u_n(t)-u_n(s)\bigr)
	\int_s^t \pd_x^2 u_n(r)\,\d r \, \d x,
	\\
	I_4^n & := 
	\frac12 \int_\T \bigl(u_n(t)-u_n(s)\bigr)
	\int_s^t \sigma(r) \pd_x\bk{\sigma \pd_x u}(r) \,\d r \, \d x,
	\\
	I_5^n & := \int_\T\big(u_n(t)-u_n(s)\bigr) 
	\int_s^t \sigma(r)\pd_x u_n(r)\,\d W(r)\,\d x.
\end{align*}
After an integration by parts involving 
$u_n \pd_x u_n=\pd_x \left (u_n^2/2\right)$, and using the bound 
 $\norm{\pd_x u_n}_{L^\infty([0,T];L^1(\T))} 
\le C\norm{u_n}_{L^\infty([0,T];H^1(\T))}$,
\begin{align*}
	\Ex \abs{I_1^n}^p & 
	\le \Ex \left(\norm{u_n}_{L^\infty([0,T]\times\T)}^{2p}
	\norm{\pd_x u_n}_{L^\infty([0,T];L^1(\T))}^p
	\right)\abs{t-s}^p
	\\ & 
	\le \left(\Ex\norm{u_n}_{L^\infty([0,T]\times\T)}^{4p}
	\right)^{1/2}
	\left(\Ex \norm{u_n}_{L^\infty([0,T];H^1(\T))}^{2p}
	\right)^{1/2}\abs{t-s}^p.
\end{align*}
We conclude, given the higher moment 
bounds \eqref{eq:galerkin_unifbd2} 
and \eqref{eq:Linfty-est}, that $\Ex\, \abs{I_1^n}^p 
\le C \abs{t-s}^p$.

Recalling that $\pd_x P[u_n]=
\pd_x K*\bk{u_n^2 +\frac12\bk{\pd_x u_n}^2}$ 
and using Young's convolution inequality, we obtain
\begin{align*}
	\norm{\pd_x P[u_n]}_{L^2(\T)} 
	& \le \norm{\pd_x K}_{L^2(\T)}
	\norm{u_n^2+\frac12 \bk{\pd_x u_n}^2}_{L^1(\T)}
	\\ & 
	\le \norm{\pd_x K}_{L^2(\T)}\norm{u_n}_{H^1(\T)}^2,
\end{align*}
and therefore
\begin{align*}
	\Ex \abs{I_2^n}^p
	& \le \left(2\norm{\pd_x K}_{L^2(\T)}\right)^p 
	\Ex \left(\norm{u_n}_{L^\infty([0,T]\times\T))}^p
	\norm{u_n}_{L^\infty([0,T];H^1(\T))}^{2p}\right) 
	\abs{t-s}^p
	\\ & 
	\le {C} \left(\Ex 
	\norm{u_n}_{L^\infty([0,T]\times\T)}^{2p}
	\right)^{1/2}
	\left(\Ex \norm{u_n}_{L^\infty([0,T];H^1(\T))}^{4p}
	\right)^{1/2}\abs{t-s}^p.
\end{align*}
Again making use of \eqref{eq:galerkin_unifbd2} 
and \eqref{eq:Linfty-est}, we arrive 
at $\Ex\,\abs{I_2^n}^p\le C \abs{t-s}^p$.

Similarly, after integration by parts, we obtain
$$
\Ex \abs{I_3^n}^p \le \left(2\ep\right)^p 
\Ex\norm{u_n}_{L^\infty([0,T];H^1(\T))}^{2p}
\abs{t-s}^p \le C \abs{t-s}^p
$$
and, noting that
\begin{align*}
	&\pd_x\bigl(\sigma \bk{u_n(t)-u_n(s)}\bigr)
	\sigma \pd_x u(r)
	\\ & \quad
	=\sigma \pd_x\sigma \bk{u_n(t)-u_n(s)}\pd_x u(r) 
	+\sigma^2 \pd_x\bk{u_n(t)-u_n(s)}\pd_x u(r)
\end{align*}
generates terms of the type handled before, 
$\Ex\,\abs{I_4^n}^p \le C \abs{t-s}^p$.

Finally, we estimate the stochastic term $I_5^n$. 
We cannot exchange the temporal and 
spatial integrals because the stochastic process 
$$
(\omega,t)\mapsto \int_\T \sigma 
\bk{u_n(t)-u_n(s)}\pd_x u_n(r) \,\d x
$$
is not $\mathcal{F}_r$-measurable, so we will instead 
estimate using the Cauchy--Schwarz inequality 
repeatedly and then the 
Burkholder--Davis--Gundy inequality. 
We have  that $\norm{\pd_x\bigl(\sigma 
\bk{u_n(t)-u_n(s)}\bigr)}_{L^2(\T)}
\le C_\sigma \norm{u_n}_{L^\infty([0,T];H^1(\T)))}$.
After an integration by parts,
\begin{align*}
	\Ex \abs{I_5^n}^p 
	 & 
	\le \Ex\, \abs{\,\norm{\pd_x
	\bigl(\sigma \bk{u_n(t)-u_n(s)}\bigr)}_{L^2(\T)}
	\bk{\int_\T\abs{\int_s^t u_n
	\,\d W(r)}^2\,\d x}^{1/2}\,}^p
	\\ & 
	\le \tilde{C}_{\sigma,p} 
	\Ex \left(\, \norm{u_n}_{L^\infty([0,T];H^1(\T))}^p
	\bk{\int_\T \abs{\int_s^t u_n\,\d W(r)}^2
	\,\d x\,}^{p/2}\,\right)
	\\ & 
	\le \tilde{C}_{\sigma,p}\left(\Ex 
	\norm{u_n}_{L^\infty([0,T];H^1(\T))}^{2p}\right)^{1/2}
	\left(\Ex \bk{\int_\T 
	\abs{\int_s^t u_n\,\d W(r)}^2\,\d x}^{p}\right)^{1/2}
	\\ &  
	\le C_{\sigma,p}\left(\int_\T\Ex 
	\abs{\int_s^t u_n\,\d W(r)}^{2p}\,\d x\right)^{1/2},
\end{align*}
where the final inequality is the result 
of \eqref{eq:galerkin_unifbd2} and Jensen's inequality. 
Finally, by \eqref{eq:galerkin_unifbd2} 
and the Burkholder--Davis--Gundy inequality, 
pointwise in $x$, 
\begin{align*}
	 \left(\int_\T \Ex \abs{\int_s^t u_n
	\,\d W(r)}^{2p} \,\d x\right)^{1/2} 
	&\le C_p \left(\Ex\int_\T 
	\bk{\int_s^tu_n^2\,\d r}^p\, \d x\right)^{1/2}
	\\ & 
	\le C \bk{\Ex
	\norm{u_n}_{L^\infty([0,T]\times\T)}^{2p}}^{1/2}
	\abs{t-s}^{p/2} \\ \, & \overset{\eqref{eq:Linfty-est}}{\le} 
	C\abs{t-s}^{p/2}.
\end{align*}

Summarising, we have obtained
$$
\Ex \norm{u_n(t)-u_n(s)}_{L^2(\T)}^{2p}
\le C\abs{t-s}^{p/2}=C\abs{t-s}^{1+(p-2)/2},
$$
where the constant $C$ is independent of $n$ and $\ep$. 
By Kolmogorov's continuity criterion, there is a 
version of $u_n$ in $C^{\theta}([0,T];L^2(\T))$, 
for any $\theta\in [0,(2 - p)/4p )$, and a bound of 
the form \eqref{eq:galerkin_Ctheta}.
\end{proof}

\begin{rem}
The temporal continuity bound \eqref{eq:galerkin_Ctheta} 
can also be carried out with respect to a fractional 
Sobolev norm via a computation 
following,  e.g.,  \cite[Lemma 2.1]{FG1995}.
\end{rem}

\begin{lem}[Tightness on {$C([0,T];H^1_w(\T))$}]
\label{rem:Ctheta_tightness}
For each $n \in \N$, let $u_{n}$ be a solution to 
\eqref{eq:galerkin_n} with $\Ex \norm{u_0}_{H^1(\T)}^p 
< \infty$ for $p > 2$. The laws of 
$\seq{u_n}$ are tight on $C([0,T];H^1_w(\T))$.
\end{lem}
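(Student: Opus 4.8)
The plan is to combine the two uniform moment estimates already in hand with a deterministic compact-embedding fact, and then to exhibit, via Markov's inequality, compact subsets of $C([0,T];H^1_w(\T))$ carrying almost all the mass of \emph{every} law $(u_n)_*\mathbb{P}$ uniformly in $n$; this is exactly tightness.

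First I would collect the a priori bounds. From the energy estimate (Proposition~\ref{thm:galerkin_H1}) the family $\seq{u_n}$ is bounded in $L^2\bigl(\Omega;L^\infty([0,T];H^1(\T))\bigr)$ (only $\Ex\norm{u_0}_{H^1(\T)}^2<\infty$ is needed, which holds since $p>2$), and from Lemma~\ref{thm:u_Ctheta_L2} it is bounded in $L^1\bigl(\Omega;C^\theta([0,T];L^2(\T))\bigr)$ for a suitable $\theta\in(0,(p-2)/4p)$. Writing
$$
\norm{v}_{X}:=\norm{v}_{L^\infty([0,T];H^1(\T))}
+\norm{v}_{C^\theta([0,T];L^2(\T))},
$$
there is a finite constant $M$, independent of $n$, with $\sup_n\Ex\norm{u_n}_X\le M$.

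Next I would invoke the compact embedding
$$
L^\infty([0,T];H^1(\T))\cap C^\theta([0,T];L^2(\T))
\doublehookrightarrow C([0,T];H^1_w(\T))
$$
recorded in the introduction: for $R>0$ the set $K_R:=\overline{\seq{v:\norm{v}_X\le R}}$ (closure taken in $C([0,T];H^1_w(\T))$) is compact. The substance behind this is an Arzel\`a--Ascoli argument in the weak topology. A bound in $L^\infty_tH^1_x$ confines each time slice to a fixed ball of $H^1(\T)$, which is weakly compact and, being a bounded subset of a separable reflexive space, metrizable in the weak topology; the H\"older-$L^2$ bound supplies equicontinuity in that weak metric, since for $\phi\in L^2(\T)$ one has $\abs{\ang{v(t)-v(s),\phi}_{L^2(\T)}}\le R\abs{t-s}^\theta\norm{\phi}_{L^2(\T)}$, and a density argument promotes this to test functionals $\phi\in H^{-1}(\T)$ by splitting $\phi=\phi_\delta+(\phi-\phi_\delta)$ with $\phi_\delta\in L^2(\T)$, $\norm{\phi-\phi_\delta}_{H^{-1}(\T)}<\delta$, and absorbing the remainder through the uniform $H^1(\T)$ bound on $v(t)-v(s)$.

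Finally, tightness follows by Markov's inequality: for each $n$,
$$
\mathbb{P}\bigl(u_n\notin K_R\bigr)
\le \mathbb{P}\bigl(\norm{u_n}_X>R\bigr)
\le \frac{\Ex\norm{u_n}_X}{R}\le\frac{M}{R},
$$
uniformly in $n$, so given $\eta>0$ we choose $R$ with $\sup_n\mathbb{P}(u_n\notin K_R)<\eta$. The only genuinely nontrivial ingredient is the compact embedding into $C([0,T];H^1_w(\T))$; the remaining steps are bookkeeping with the already-proven estimates and a one-line Markov bound. I therefore expect the metrizability/equicontinuity verification sketched above---showing that weak-$H^1$ continuity and relative compactness really do follow from the $L^\infty_tH^1_x$ together with the $C^\theta_tL^2_x$ bounds---to be the main point requiring care.
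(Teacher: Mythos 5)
Your proof is correct and follows essentially the same route as the paper: uniform bounds in $L^\infty_tH^1_x$ (Proposition~\ref{thm:galerkin_H1}) and $C^\theta_tL^2_x$ (Lemma~\ref{thm:u_Ctheta_L2}), the compact embedding into $C([0,T];H^1_w(\T))$, and Markov's inequality applied to the sum of the two norms. The only difference is that the paper simply cites \cite[Cor.~B.2]{Ond2010} for the compact embedding, whereas you sketch its Arzel\`a--Ascoli proof; your sketch is sound.
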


\begin{proof} Choose $\theta \in (0, (2 - p)/4p)$. 
Given the compact embedding \cite[Cor.~B.2]{Ond2010} 
$$
L^\infty([0,T];H^1(\T)) \cap C^\theta([0,T];L^2(\T)) \doublehookrightarrow C([0,T];H^1_w(\T)),
$$
the laws of $\seq{u_n}$ are tight on $C([0,T];H^1_w(\T))$ 
via the following standard computation: 
By the compact embedding, the sets
$$
\mathcal{K}_R:=\seq{u \in C([0,T];H^1_w(\T)):
\norm{u}_{L^\infty([0,T];H^1(\T))}
+\norm{u}_{C^\theta([0,T];L^2(\T))}\le R}
$$
are compact in $\mathcal{X}:=C([0,T];H^1_w(\T))$. 
Therefore, by Markov's inequality,
\begin{align*}
	\bk{u_n}_*\mathbb{P}
	\bigl(\mathcal{X} \backslash \mathcal{K}_R\bigr)
	\le \frac1R \Ex\norm{u_n}_{L^\infty([0,T];H^1(\T))}
	+\frac1R \Ex\norm{u_n}_{C^\theta([0,T];L^2(\T))}.
\end{align*}
By \eqref{eq:galerkin_unifbd1} and \eqref{eq:galerkin_Ctheta}, 
the right-hand side tends to zero as $R \to \infty$.
\end{proof}

We need the following variant of the 
Aubin--Lions lemma \cite[Thm.~2.1]{FG1995} 
(see also \cite[Sec.~13.3]{Temam:LN}) to establish 
tightness on $L^2([0,T];H^1(\T))$.

\begin{lem}\label{thm:temam_flandoli_gatarek}
Let $B_0 \subseteq B \subseteq B_1$ be 
Banach spaces, $B_0$ and $B_1$ reflexive, 
with compact embedding of $B_0$ in $B$. 
Fix $p \in (1,\infty)$ and $\alpha\in (0,1)$. Let
$$
\mathcal{Y} = L^p([0,T];B_0) \cap W^{\alpha,p}([0, T];B_1),
$$
be endowed with the natural norm. 
The embedding of $\mathcal{Y}$ in $L^p([0,T];B)$ is compact.
\end{lem}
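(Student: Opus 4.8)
The plan is to show that every sequence $\seq{v_n}$ that is bounded in $\mathcal{Y}$, say with $\sup_n\norm{v_n}_{\mathcal{Y}}=:M<\infty$, admits a subsequence converging in $L^p([0,T];B)$; this is precisely compactness of the inclusion. I would obtain this by verifying the two hypotheses of the vector-valued Kolmogorov--Riesz (Aubin--Lions--Simon) compactness criterion in $L^p([0,T];B)$, cf.\ \cite{FG1995,Temam:LN}: namely (i) for every $0<t_1<t_2<T$ the set of time-averages $\seq{\int_{t_1}^{t_2}v_n(t)\,\d t}_n$ is relatively compact in $B$, and (ii) the time translates are equicontinuous in the strong $L^p([0,T];B)$ norm, uniformly in $n$. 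The two structural inputs are the compact embedding $B_0\hookrightarrow B$ (for the averaging condition) and the fractional time regularity valued in $B_1$ (for the translation condition), bridged by an Ehrling-type interpolation inequality.

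First I would record the Ehrling inequality: since $B_0\hookrightarrow B$ is compact and $B\hookrightarrow B_1$ is continuous, for every $\eta>0$ there is a constant $C_\eta$ with $\norm{w}_B\le \eta\norm{w}_{B_0}+C_\eta\norm{w}_{B_1}$ for all $w\in B_0$, proved by a routine contradiction argument using compactness. Condition (i) is then immediate and does not even use the fractional regularity: by H\"older's inequality $\norm{\int_{t_1}^{t_2}v_n\,\d t}_{B_0}\le (t_2-t_1)^{1-1/p}\norm{v_n}_{L^p([0,T];B_0)}\le CM$, so the averages are bounded in $B_0$ and hence relatively compact in $B$ by the compact embedding.

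The crux of the argument, and the place where the $W^{\alpha,p}$ regularity enters, is condition (ii). Here I would invoke the elementary translate bound for Sobolev--Slobodeckij functions (equivalently, the continuous embedding of $W^{\alpha,p}=B^\alpha_{p,p}$ into the Nikolskii space $N^{\alpha,p}=B^\alpha_{p,\infty}$ in the time variable), giving a constant $C$ with $\int_0^{T-h}\norm{v(t+h)-v(t)}_{B_1}^p\,\d t\le C\,h^{\alpha p}\,[v]^p_{W^{\alpha,p}([0,T];B_1)}$ for all $h\in(0,T)$. Applying the Ehrling inequality pointwise in $t$ to $w=v_n(t+h)-v_n(t)$ and then taking $L^p([0,T-h])$ norms yields
\begin{equation*}
\norm{\tau_h v_n-v_n}_{L^p([0,T-h];B)}\le 2\eta\,\norm{v_n}_{L^p([0,T];B_0)}+C_\eta\,\norm{\tau_h v_n-v_n}_{L^p([0,T-h];B_1)}\le 2\eta M+C_\eta\,CM\,h^{\alpha}.
\end{equation*}
Given $\epsilon>0$, I would first fix $\eta$ so small that $2\eta M<\epsilon/2$, and then choose $h_0$ so that $C_\eta CM h^{\alpha}<\epsilon/2$ for $h\le h_0$, making $\sup_n\norm{\tau_h v_n-v_n}_{L^p([0,T-h];B)}\to 0$ as $h\to0$. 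The main obstacle is exactly this step: we have genuine compactness only in the strong space $B_0$ (with no time-regularity there) and fractional time-regularity only in the weak space $B_1$ (with no compactness there), so neither space alone yields compactness in $L^p([0,T];B)$. The Ehrling interpolation is what couples the two, letting the uncontrolled-but-compact $B_0$ part be absorbed into the small parameter $\eta$ while the $B_1$ translate estimate supplies the vanishing rate $h^{\alpha}$.

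With (i) and (ii) in hand, the Kolmogorov--Riesz--Simon criterion gives relative compactness of $\seq{v_n}$ in $L^p([0,T];B)$, completing the proof. I note that the reflexivity of $B_0$ and $B_1$ is not actually needed for this route; it is harmless to retain in the hypotheses and would only be relevant if one instead argued via weak compactness and lower semicontinuity.
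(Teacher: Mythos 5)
The paper does not prove this lemma; it is quoted verbatim from the literature (\cite[Thm.~2.1]{FG1995}, see also \cite[Sec.~13.3]{Temam:LN}), so there is no internal proof to compare against. Your argument is correct and is essentially the standard proof of that cited result: Simon's compactness criterion in $L^p([0,T];B)$, with the time-averages condition supplied by the compact embedding $B_0\doublehookrightarrow B$, and the translate-equicontinuity condition supplied by the Ehrling interpolation inequality combined with the Nikolskii-type bound $\norm{\tau_h v - v}_{L^p([0,T-h];B_1)}\lesssim h^\alpha\, [v]_{W^{\alpha,p}([0,T];B_1)}$, which indeed follows from the Slobodeckij seminorm by averaging $v(t+h)-v(t)$ over intermediate points. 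Your closing remark is also accurate: reflexivity of $B_0$ and $B_1$ plays no role in this route and is retained in the statement only because it appears in the hypotheses of the cited theorem.
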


Tightness of probability measures is related to the 
stochastic boundedness of random variables. Below 
we prove that $u_n\in_{\rm sb} L^2_tH^2_x\cap 
W^{\theta,2}_tL^2_x$, uniformly in $n$, for 
some $\theta< (2 - p)/4p$, making essential use of 
the dissipation part of \eqref{eq:galerkin_unifbd1}.

\begin{lem}[Tightness on {$L^2([0,T];H^1(\T))$}]
\label{thm:stochastic_boundedness}
For each $n \in \N$, let $u_{n}$ be a solution 
to \eqref{eq:galerkin_n} with $\Ex \norm{u_0}_{H^1(\T)}^p 
< \infty$ for $p > 2$. Let $\theta' \in (0, (2 - p)/4p)$. 
The following stochastic boundedness estimate holds 
uniformly in $n$: 
\begin{equation}\label{eq:stoch-bound-H2}
	\lim_{M \to \infty}\mathbb{P}
	\bk{\norm{u_n}_{ L^2([0,T]; H^2(\T))
	\cap W^{\theta',2}([0,T];L^2(\T))}>M} = 0.
\end{equation}
Moreover, the laws of $\seq{u_n}$ 
are tight on $L^2([0,T];H^1(\T))$.
\end{lem}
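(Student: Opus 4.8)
The plan is to deduce tightness on $L^2([0,T];H^1(\T))$ from the stochastic boundedness \eqref{eq:stoch-bound-H2}, so the heart of the matter is \eqref{eq:stoch-bound-H2} itself. Writing $\mathcal{Y}:=L^2([0,T];H^2(\T))\cap W^{\theta',2}([0,T];L^2(\T))$, I would prove a uniform first-moment bound $\sup_n\Ex\norm{u_n}_{\mathcal{Y}}\le C_\ep<\infty$ and then read off \eqref{eq:stoch-bound-H2} from Markov's inequality, $\mathbb{P}(\norm{u_n}_{\mathcal{Y}}>M)\le C_\ep/M$. I would establish the moment bound by treating the two defining pieces of $\norm{u_n}_{\mathcal{Y}}$ separately. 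It is worth stressing at the outset that, in contrast to the a priori estimates of Section \ref{sec:apriori}, here I permit the constants to depend on $\ep$; this dependence is unavoidable and is precisely the reason tightness fails to be uniform in $\ep$.

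For the $L^2([0,T];H^2(\T))$ piece I would extract everything from the energy estimate \eqref{eq:galerkin_unifbd1}. Dividing its dissipation term by $\ep$ gives $\Ex\int_0^T\norm{\pd_x u_n(t)}_{H^1(\T)}^2\,\d t\le C/\ep$, and adding the $L^\infty([0,T];H^1(\T))$ bound to account for the lowest-order part of the $H^2$-norm yields $\Ex\norm{u_n}_{L^2([0,T];H^2(\T))}^2\le C/\ep+CT=:C_\ep$, uniformly in $n$.

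For the $W^{\theta',2}([0,T];L^2(\T))$ piece I would recycle the temporal increment estimate behind Lemma \ref{thm:u_Ctheta_L2}. Taking $p=1$ in the five-term decomposition of $\norm{u_n(t)-u_n(s)}_{L^2(\T)}^2$ used there, the four drift terms each contribute $O(\abs{t-s})$, while the martingale term is controlled by Cauchy--Schwarz, the It\^o isometry and a Young-inequality absorption, giving $\Ex\norm{u_n(t)-u_n(s)}_{L^2(\T)}^2\le C\abs{t-s}$; equivalently, Jensen's inequality applied to the $2p$-moment bound $\Ex\norm{u_n(t)-u_n(s)}_{L^2(\T)}^{2p}\le C\abs{t-s}^{p/2}$ proved there gives the same conclusion. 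Substituting this into the Gagliardo seminorm,
\begin{align*}
\Ex[u_n]_{W^{\theta',2}([0,T];L^2(\T))}^2
&=\int_0^T\!\!\int_0^T\frac{\Ex\norm{u_n(t)-u_n(s)}_{L^2(\T)}^2}{\abs{t-s}^{1+2\theta'}}\,\d s\,\d t\\
&\le C\int_0^T\!\!\int_0^T\abs{t-s}^{-2\theta'}\,\d s\,\d t<\infty,
\end{align*}
since $2\theta'<1$ (which holds throughout the stated range, where in fact $\theta'<1/4$). Combined with the trivial bound $\Ex\norm{u_n}_{L^2([0,T];L^2(\T))}^2\le CT$, this furnishes the uniform $W^{\theta',2}$ bound, and hence \eqref{eq:stoch-bound-H2}.

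To convert \eqref{eq:stoch-bound-H2} into tightness, I would apply the Aubin--Lions type Lemma \ref{thm:temam_flandoli_gatarek} with $B_0=H^2(\T)$, $B=H^1(\T)$, $B_1=L^2(\T)$ (all reflexive, the embedding $H^2(\T)\doublehookrightarrow H^1(\T)$ compact by Rellich), $p=2$ and $\alpha=\theta'\in(0,1)$, obtaining the compact embedding $\mathcal{Y}\doublehookrightarrow L^2([0,T];H^1(\T))$. Hence each sublevel set $\mathcal{K}_M:=\seq{u:\norm{u}_{\mathcal{Y}}\le M}$ is precompact in $L^2([0,T];H^1(\T))$, and Markov's inequality together with \eqref{eq:stoch-bound-H2} gives $(u_n)_*\mathbb{P}\bigl(L^2([0,T];H^1(\T))\setminus\overline{\mathcal{K}_M}\bigr)\le\mathbb{P}(\norm{u_n}_{\mathcal{Y}}>M)\to 0$ uniformly in $n$ as $M\to\infty$, which is exactly tightness. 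I expect the only genuinely delicate point to be the martingale-increment bound entering the $W^{\theta',2}$ estimate — the non-measurability issue noted in the proof of Lemma \ref{thm:u_Ctheta_L2} forces one to pull the increment out by Cauchy--Schwarz before invoking the It\^o isometry — but this merely repeats a computation already performed; the real conceptual content is simply the acceptance of $\ep$-dependent constants in the $H^2$ bound.
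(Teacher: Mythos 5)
Your proposal is correct and follows the same architecture as the paper: stochastic boundedness in $\mathcal{Y}=L^2([0,T];H^2(\T))\cap W^{\theta',2}([0,T];L^2(\T))$ obtained separately for the two pieces (the $H^2$ piece from the $\ep$-dissipation in Proposition~\ref{thm:galerkin_H1}, at the cost of $\ep$-dependent constants), followed by the compact embedding of Lemma~\ref{thm:temam_flandoli_gatarek} with $B_0=H^2(\T)$, $B=H^1(\T)$, $B_1=L^2(\T)$ and Markov's inequality on the sublevel sets. The only genuine divergence is in the $W^{\theta',2}$ bound: the paper routes through Lemma~\ref{thm:u_Ctheta_L2}, i.e.\ Kolmogorov's criterion gives a moment bound on the $C^\theta$-norm, and then one uses the continuous embeddings $C^\theta\hookrightarrow C^{\theta'}\hookrightarrow W^{\theta',2}$ before applying Markov; you instead feed the increment estimate $\Ex\norm{u_n(t)-u_n(s)}_{L^2(\T)}^2\lesssim\abs{t-s}^\beta$ directly into the Gagliardo seminorm via Fubini. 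Your route is slightly more economical (it bypasses Kolmogorov entirely and is exactly the fractional-Sobolev computation the paper alludes to in the remark after Lemma~\ref{thm:u_Ctheta_L2}); the paper's route buys the pathwise H\"older regularity that is reused in Lemma~\ref{rem:Ctheta_tightness}. One small inaccuracy: your parenthetical claim that Jensen applied to $\Ex\norm{u_n(t)-u_n(s)}_{L^2(\T)}^{2p}\le C\abs{t-s}^{p/2}$ gives ``the same conclusion'' $\Ex\norm{u_n(t)-u_n(s)}_{L^2(\T)}^{2}\le C\abs{t-s}$ is off --- Jensen yields only the exponent $\abs{t-s}^{1/2}$ --- but this is harmless, since $2\theta'<1/2$ throughout the admissible range and the double integral still converges.
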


\begin{proof}
A natural norm on $L^2([0,T]; H^2(\T))
\cap W^{\theta',2}([0, T]; L^2(\T))$ is
$$
\norm{u_n}_{ L^2([0,T]; H^2(\T)) 
\cap W^{\theta',2}([0,T]; L^2)}
=\norm{u_n}_{ L^2([0,T]; H^2(\T))}
+\norm{u_n}_{W^{\theta', 2}([0, T]; L^2(\T))}.
$$

For $\theta \in (\theta', (2 - p)/4p)$, the embeddings 
$C^{\theta}([0,T];B_1) \hookrightarrow 
C^{\theta'}([0,T]; B_1) \hookrightarrow 
W^{\theta',2}([0,T];B_1)$ are continuous. 
Using Markov's inequality and 
\eqref{eq:galerkin_Ctheta} with $\theta=1/5$,
$$
\prob\bk{\seq{\norm{u_n}_{W^{\theta',2}([0,T];L^2(\T))} 
>M}} \le \frac1M \,\Ex \norm{u_n}_{C^\theta([0,T];L^2(\T))} 
\lesssim \frac1M.
$$
Next, using the energy estimate of 
Proposition \ref{thm:galerkin_H1}, we obtain
$$
\prob\bk{\seq{\norm{u_n}_{L^2([0,T];H^2(\T))}>M}}
\le \frac1{M^2} \, \Ex\norm{u_n}_{L^2([0,T];H^2(\T))}^2 
\lesssim \frac{1}{\ep M^2}.
$$
In view of the natural norm for the intersection space, 
this implies \eqref{eq:stoch-bound-H2}. 

Tightness of the laws on $L^2([0,T];H^1(\T))$ 
now follows from Lemma \ref{thm:temam_flandoli_gatarek} 
and the stochastic boundedness estimate 
\eqref{eq:stoch-bound-H2}. In particular, for each 
$\delta>0$, there exists a number $M>0$ and a compact set
\begin{align*}
	\mathcal{A}_M 
	& = \Bigl\{v \in L^2([0,T];H^2(\T))
	\cap W^{\theta',2}([0,T];L^2(\T)):
	\\ & \qquad \qquad
	\norm{v}_{L^2([0,T];H^2(\T))}+
	\norm{v}_{W^{\theta', 2}([0, T]; L^2(\T))}
	\le M\Bigr\},
\end{align*}
such that the complement $\mathcal{A}^c_M$ 
satisfies $\bk{u_n}_*\prob\bigl(\mathcal{A}^c_M\bigr)<\delta$.
\end{proof}

\section{Weak (martingale) solutions}\label{sec:existence1}

To be able to pass to the limit in the nonlinear terms 
in the SPDE \eqref{eq:u_ch_ep}, we must show that 
the Galerkin approximations $\seq{u_n}$ converge 
strongly in $(\omega,t,x)$. Setting aside the probability 
variable $\omega$, strong $(t,x)$ convergence is linked 
to the spatial and temporal a priori estimates established 
in Section  \ref{sec:apriori} and Section  \ref{sec:laws}. 
On the other hand, the available estimates only 
ensure weak convergence in $\omega$. To rectify this 
unfortunate (but typical) situation, we will replace 
the random variables $\seq{u_n}$ by 
Skorokhod--Jakubowski a.s.~representations 
$\seq{\tilde{u}_n}$, which are defined on a new 
stochastic basis and will converge almost surely. 
The existence of $\seq{\tilde{u}_n}$ will follow 
from the tightness estimates established 
in Section \ref{sec:laws}. Finally, we will show that 
the strong limit of $\seq{\tilde{u}_n}$ constitutes 
a weak (martingale) solution according to 
Definition~\ref{def:wk_sol}. 

\subsection{Skorokhod--Jakubowski a.s.~representations}

Introduce the path spaces
\begin{equation}\label{eq:pathspaces}
	\begin{aligned}
		 \mathcal{X}_{u,s}&:=L^2([0,T];H^1(\T))\\
		 \mathcal{X}_{u,w}&:= C([0,T];H^1_w(\T)),
		\\  
		\mathcal{X}_W&:= C([0,T]), \\
		\mathcal{X}_0&:= H^1(\T),
	\end{aligned}
\end{equation}
and set $\mathcal{X}:=\mathcal{X}_{u,s} 
\times \mathcal{X}_{u,w} \times 
\mathcal{X}_W \times  \mathcal{X}_0$.  
Denote by $\mu^n$ the (joint) law of the 
$\bigl(\mathcal{X},\mathcal{B}(\mathcal{X})\bigr)$-valued 
random variable $\bigl(u_{n,s}, u_{n,w} ,W,\bm{\Pi}_n u_0\bigr)$.
We denote by $\mu_{u,s}^n$, $\mu_{u,w}^n$, $\mu_W^n$ and $\mu_0^n$ 
the laws of $u_n$, $u_n$, $W$ and $\bm{\Pi}_n u_0$ 
on $\mathcal{X}_{u,s}$, $\mathcal{X}_{u,w}$, $\mathcal{X}_W$ 
and $\mathcal{X}_0$, respectively.  
(The subscripts ``$s$'' and ``$w$'' 
refer to the ``strong'' and ``weak'' topologies used in the 
subscripted path spaces and laws defined on them.)

Note carefully that we have used two 
copies of $u_n$ in separate spaces 
$\mathcal{X}_{u,s}$ and $\mathcal{X}_{u,w}$
that do not inject continuously into one another. 
The aim of this manoeuvre is to ensure convergence in two separate 
topologies of the Skorokhod--Jakubowski 
representations of $u_n$. The rationale for this is 
explained in the appendix following Theorem \ref{thm:skorokhod}.
The two variables are identified {\em post hoc} 
in Lemma \ref{thm:identification_lemma}.

\begin{lem}[Tightness of Galerkin approximations]
\label{thm:jointlaw_tightness}
The laws $\seq{\mu^n}$ are tight.
\end{lem}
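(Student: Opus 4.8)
The plan is to reduce tightness of the joint laws $\seq{\mu^n}$ on the product space $\mathcal{X}=\mathcal{X}_{u,s}\times\mathcal{X}_{u,w}\times\mathcal{X}_W\times\mathcal{X}_0$ to tightness of each of the four marginal families, and then to quote the results already established. The point is that tightness is a purely topological/measure-theoretic notion (for each $\delta>0$ a compact set whose complement has measure below $\delta$), and a finite product of compact sets is compact in the product topology by Tychonoff. This reduction is insensitive to the fact that $\mathcal{X}_{u,w}=C([0,T];H^1_w(\T))$ is only quasi-Polish: I never invoke Prokhorov's theorem on $\mathcal{X}$ itself, only the elementary union bound carried out at the end. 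So it suffices to show that $\seq{\mu_{u,s}^n}$, $\seq{\mu_{u,w}^n}$, $\seq{\mu_W^n}$ and $\seq{\mu_0^n}$ are each tight on their respective factor.

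Three of these are immediate. Tightness of $\seq{\mu_{u,s}^n}$ on $\mathcal{X}_{u,s}=L^2([0,T];H^1(\T))$ is exactly the content of Lemma \ref{thm:stochastic_boundedness}, and tightness of $\seq{\mu_{u,w}^n}$ on $\mathcal{X}_{u,w}=C([0,T];H^1_w(\T))$ is Lemma \ref{rem:Ctheta_tightness}; both hold under the moment hypothesis $\Ex\norm{u_0}_{H^1(\T)}^p<\infty$ with $p>2$ in force here. The Wiener marginal $\mu_W^n$ is simply the law of the fixed Brownian motion $W$, hence independent of $n$, and a single Borel probability measure on the Polish space $C([0,T])$ is inner regular by Ulam's theorem; thus $\seq{\mu_W^n}$ is trivially tight.

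The only marginal requiring a genuine argument is $\seq{\mu_0^n}$, the laws of $\bm{\Pi}_n u_0$ on $\mathcal{X}_0=H^1(\T)$. A uniform second-moment bound on $\bm{\Pi}_n u_0$ does \emph{not} suffice here, since bounded balls of $H^1(\T)$ are not compact in the strong $H^1$ topology. Instead I would exploit that, for the trigonometric basis $\seq{e_i}$, $\bm{\Pi}_n$ is a Fourier truncation onto frequencies below a cutoff that tends to infinity with $n$; hence $\bm{\Pi}_n u_0\to u_0$ in $H^1(\T)$ for $\prob$-a.e.\ $\omega$ (recall $u_0\in H^1(\T)$ a.s.). Almost sure convergence implies convergence in law, and on the separable Hilbert (hence Polish) space $H^1(\T)$ a weakly convergent sequence of probability measures is tight by Prokhorov's theorem; therefore $\seq{\mu_0^n}$ is tight.

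Finally I would combine the four statements. Given $\delta>0$, select compact sets $\mathcal{K}_{u,s}$, $\mathcal{K}_{u,w}$, $\mathcal{K}_W$, $\mathcal{K}_0$ in the respective factors whose complements carry marginal measure at most $\delta/4$ uniformly in $n$. Then $\mathcal{K}:=\mathcal{K}_{u,s}\times\mathcal{K}_{u,w}\times\mathcal{K}_W\times\mathcal{K}_0$ is compact in $\mathcal{X}$, and since the event $\{(u_{n,s},u_{n,w},W,\bm{\Pi}_n u_0)\notin\mathcal{K}\}$ is contained in the union of the four coordinate-failure events, a union bound yields $\mu^n(\mathcal{X}\setminus\mathcal{K})\le\delta$ for every $n$. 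As $\delta$ is arbitrary, $\seq{\mu^n}$ is tight. I expect the only delicate point to be the initial-data marginal: the mismatch between the strong $H^1$ topology on $\mathcal{X}_0$ and the mere moment control forces one to use the actual $H^1$-convergence $\bm{\Pi}_n u_0\to u_0$ rather than a soft estimate, whereas the remainder is the standard "tightness of marginals implies tightness of the joint law" mechanism, which survives the quasi-Polish factor verbatim.
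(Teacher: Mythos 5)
Your proposal is correct and follows essentially the same route as the paper's proof: reduce to tightness of the four marginals, quote Lemmas \ref{rem:Ctheta_tightness} and \ref{thm:stochastic_boundedness} for the two copies of $u_n$, observe that the Wiener marginal is a fixed (hence tight) law on $C([0,T])$, and use $\bm{\Pi}_n u_0\to u_0$ in $H^1(\T)$ for the initial-data marginal. The only difference is that you make explicit the standard steps the paper leaves implicit (the Tychonoff/union-bound mechanism on the product space and the passage from a.s.\ convergence to tightness of $\seq{\mu_0^n}$ via the converse half of Prokhorov's theorem), which is a matter of exposition rather than substance.
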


\begin{proof}
The tightness on $\mathcal{X}$ of the product measures 
$\seq{\mu_{u,s}^n\otimes \mu_{u,w}^n\otimes  \mu_W^n \otimes\mu_0^n}$ 
implies the tightness of the joint 
laws $\seq{\mu^n}$ on $\mathcal{X}$. The tightness of 
$\seq{\mu_{u,i}^n}$ on $\mathcal{X}_{u,i}$ for $i = 1, 2$ are stated 
in Lemmas \ref{rem:Ctheta_tightness} 
and \ref{thm:stochastic_boundedness}. 
Since $\bm{\Pi}_n u_0 \to u_0$ in $H^1(\T)$, the 
laws $\seq{\mu_0^n}$ are tight on $H^1(\T)$. 
The elements of $\seq{\mu_W^n}$ do not change 
with $n$, each $\mu_W^n$ is equal to the 
law of the Wiener process $W$ (which is tight 
on $\mathcal{X}_W$). Hence, the tightness of the 
product measures $\seq{\mu_{u,s}^n\otimes \mu_{u,w}^n\otimes \mu_W^n 
\otimes\mu_0^n}$ follows.
\end{proof}

\begin{thm}[Skorokhod--Jakubowski representations]\label{thm:skorohod}
There exist a probability space 
$\bigl(\tilde{\Omega},\tilde{\mathscr{F}},
\tilde{\mathbb{P}}\bigr)$ and $\mathcal{X}$-valued 
variables $\bigl\{\bigl(\tilde{u}_{n,s}, \tilde{u}_{n,w}, \tilde{W}_n, 
\tilde{u}_{0,n}\bigr)\bigr\}_{n\in\N}$,
$\bigl(\tilde{u}_s, \tilde{u}_w,\tilde{W}, \tilde{u}_0\bigr)$, defined 
on $\bigl(\tilde{\Omega},\tilde{\mathscr{F}},
\tilde{\mathbb{P}}\bigr)$, such that along a 
subsequence (not relabelled),
\begin{equation}\label{eq:laws-sim}
	\tilde{u}_{n,s} \sim u_n,  
	\quad 
	\tilde{u}_{n,w} \sim u_n,  
	\quad 
	\tilde{W}_n \sim W, 
	\quad
	\tilde{u}_{0,n} \sim \bm{\Pi}_n u_0
\end{equation}
and, $\tilde{\mathbb{P}}$-almost surely, 
$$
\bigl(\tilde{u}_{n,s}, \tilde{u}_{n,w},\tilde{W}_n, 
\tilde{u}_{0,n}\bigr) \ton 
\bigl(\tilde{u}_s,\tilde{u}_w,\tilde{W}, \tilde{u}_0\bigr) 
\quad \mbox{in $\mathcal{X}$}.
$$
\end{thm}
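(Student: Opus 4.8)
The plan is to apply the Skorokhod--Jakubowski theorem (Theorem \ref{thm:skorokhod}) directly to the tight family $\seq{\mu^n}$ furnished by Lemma \ref{thm:jointlaw_tightness}. Since tightness is already in hand, the only hypothesis that needs checking is that the ambient product space $\mathcal{X} = \mathcal{X}_{u,s} \times \mathcal{X}_{u,w} \times \mathcal{X}_W \times \mathcal{X}_0$ is \emph{quasi-Polish}, i.e., that it carries a countable family of continuous real-valued functions separating its points.

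First I would verify the quasi-Polish property factor by factor. The three factors $\mathcal{X}_{u,s} = L^2([0,T];H^1(\T))$, $\mathcal{X}_W = C([0,T])$ and $\mathcal{X}_0 = H^1(\T)$ are separable metric (indeed Polish), so each carries such a separating family automatically. For the remaining, non-metrizable factor $\mathcal{X}_{u,w} = C([0,T];H^1_w(\T))$ I would exhibit the family explicitly: the maps $v \mapsto \LL v(t), e_j \RR_{H^1(\T)}$, indexed by rational $t \in [0,T]$ and by the basis elements $\seq{e_j}$, are continuous on $C([0,T];H^1_w(\T))$ (time evaluation is continuous into $H^1_w$, and each $\LL \cdot, e_j \RR_{H^1(\T)}$ is weakly continuous). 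They separate points because two weakly continuous paths agreeing against every $e_j$ at every rational time must coincide at every rational time, hence everywhere by continuity. A finite product of spaces each carrying a countable separating family again carries one, so $\mathcal{X}$ is quasi-Polish.

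With the quasi-Polish structure established and $\seq{\mu^n}$ tight, Theorem \ref{thm:skorokhod} supplies a subsequence (not relabelled), a single probability space $\bigl(\tilde\Omega, \tilde{\mathscr{F}}, \tilde{\mathbb{P}}\bigr)$, and $\mathcal{X}$-valued variables whose \emph{joint} laws coincide with those of $\bigl(u_{n,s}, u_{n,w}, W, \bm{\Pi}_n u_0\bigr)$ and which converge $\tilde{\mathbb{P}}$-almost surely in the product topology of $\mathcal{X}$. Reading off the coincidence of joint laws component-wise yields \eqref{eq:laws-sim}, while almost sure convergence in the product topology is exactly simultaneous almost sure convergence in each of the four factors. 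The one point requiring care here is not tightness but the verification of the quasi-Polish hypothesis on the weak-topology factor $C([0,T];H^1_w(\T))$: this is precisely why the Jakubowski generalisation is needed in place of the classical metric-space Skorokhod theorem, the latter being inapplicable because $H^1_w(\T)$ is not metrizable.
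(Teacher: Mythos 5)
Your proposal is correct and follows essentially the same route as the paper, whose proof is the one-line invocation of the Skorokhod--Jakubowski theorem (Theorem \ref{thm:skorokhod}) applied to the tight joint laws from Lemma \ref{thm:jointlaw_tightness}. The extra work you do in exhibiting the countable separating family on $C([0,T];H^1_w(\T))$ and on the product space is exactly the verification the paper defers to Appendix \ref{sec:toolbox} and the cited references (e.g.\ \cite{Brzezniak:2013aa,Ond2010}), so it is a welcome but not divergent addition.
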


\begin{proof}
Apply Theorem \ref{thm:skorokhod}.
\end{proof}

\begin{rem}
We need Jakubowski's version \cite{Jak1998} 
of the Skorokhod representation theorem 
because our path space $\mathcal{X}$ contains
the non-metrisable (but quasi-Polish) 
space $C([0,T];H^1_w(\T))$.
\end{rem}

\begin{lem}[identification of doubled variables]
\label{thm:identification_lemma}
For the sequence of variables defined in \eqref{eq:laws-sim}, 
$\tilde{u}_{n,s} = \tilde{u}_{n,w}$, 
$\tilde{\mathbb{P}}\otimes \d t \otimes \d x$--a.e. 
Moreover, $\tilde{u}_s=\tilde{u}_w$, 
$\tilde{\mathbb{P}}\otimes \d t \otimes \d x$--a.e.
\end{lem}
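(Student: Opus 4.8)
The plan is to exploit the fact that, on the original stochastic basis, the two $u$-components of the random variable $\bigl(u_{n,s},u_{n,w},W,\bm{\Pi}_n u_0\bigr)$ are literally the same function $u_n$, merely viewed in the two different topologies of $\mathcal{X}_{u,s}=L^2([0,T];H^1(\T))$ and $\mathcal{X}_{u,w}=C([0,T];H^1_w(\T))$. Since the Skorokhod--Jakubowski theorem preserves the full joint law on $\mathcal{X}$, any property of the pair $(u_{n,s},u_{n,w})$ that can be phrased through countably many Borel functionals transfers verbatim to $(\tilde{u}_{n,s},\tilde{u}_{n,w})$. I would encode the statement ``the two components agree as $\d t\otimes\d x$-classes'' in exactly this way, and then pass to the limit.

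First I would fix a countable family $\{\varphi_k\}=\{\chi_k\otimes\psi_k\}$ of product test functions, $\chi_k\in C([0,T])$ and $\psi_k\in C^\infty(\T)$, whose finite linear combinations are dense in $L^2([0,T]\times\T)$. For each $k$ I define the two linear functionals
$$
\ell_k(a):=\int_0^T \chi_k(t)\,\LL a(t),\psi_k\RR_{L^2(\T)}\,\d t,
\qquad
r_k(b):=\int_0^T \chi_k(t)\,\LL b(t),\psi_k\RR_{L^2(\T)}\,\d t,
$$
and set $F_k(a,b):=\ell_k(a)-r_k(b)$. Here $\ell_k$ is continuous on $\mathcal{X}_{u,s}$ because $L^2([0,T];H^1(\T))\hookrightarrow L^2([0,T]\times\T)$, while $r_k$ is continuous on $\mathcal{X}_{u,w}$: convergence in $C([0,T];H^1_w(\T))$ entails $\sup_{t}\abs{\LL b_n(t)-b(t),\psi\RR_{L^2(\T)}}\to0$ for every $\psi\in L^2(\T)$ (recall $L^2(\T)\subseteq H^{-1}(\T)=(H^1(\T))^*$), which is precisely what is needed to pass to the limit in $r_k$. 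Thus each $F_k$ is a continuous, hence Borel, real-valued functional on the product space $\mathcal{X}_{u,s}\times\mathcal{X}_{u,w}$.

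On the original space $u_{n,s}=u_{n,w}=u_n$, so $F_k(u_{n,s},u_{n,w})=0$ almost surely, i.e.\ the law of $F_k(u_{n,s},u_{n,w})$ is the Dirac mass $\delta_0$. Because $(\tilde{u}_{n,s},\tilde{u}_{n,w})$ shares the joint law of $(u_{n,s},u_{n,w})$ by Theorem~\ref{thm:skorohod}, the push-forward under the Borel map $F_k$ gives $F_k(\tilde{u}_{n,s},\tilde{u}_{n,w})=0$ $\tilde{\mathbb{P}}$-a.s., simultaneously for all $k$ (a countable intersection). Since $\tilde{u}_{n,s}(\tilde\omega)\in L^2_tH^1_x$ and $\tilde{u}_{n,w}(\tilde\omega)\in C_tH^1_w\subseteq L^\infty_tH^1_x$ both define elements of $L^2([0,T]\times\T)$, and $\{\varphi_k\}$ spans a dense subspace there, the vanishing of all $F_k$ forces $\tilde{u}_{n,s}=\tilde{u}_{n,w}$ as $\d t\otimes\d x$-classes, $\tilde{\mathbb{P}}$-a.s. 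For the limiting variables I would use the $\tilde{\mathbb{P}}$-a.s.\ convergences $\tilde{u}_{n,s}\to\tilde{u}_s$ in $\mathcal{X}_{u,s}$ and $\tilde{u}_{n,w}\to\tilde{u}_w$ in $\mathcal{X}_{u,w}$ supplied by Theorem~\ref{thm:skorohod}, together with the continuity of $\ell_k$ and $r_k$ just established, to let $n\to\infty$ in the identity $\ell_k(\tilde{u}_{n,s})=r_k(\tilde{u}_{n,w})$ and obtain $\ell_k(\tilde{u}_s)=r_k(\tilde{u}_w)$ for every $k$; density in $L^2([0,T]\times\T)$ then yields $\tilde{u}_s=\tilde{u}_w$ a.e.

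The step I expect to be the main obstacle is verifying the continuity of $r_k$ on the non-metrisable space $C([0,T];H^1_w(\T))$ and, relatedly, the bookkeeping needed to compare ``equality'' across the two incomparable path spaces $L^2_tH^1_x$ and $C_tH^1_w$, neither of which injects continuously into the other. The resolution is to pair only against the fixed smooth $\psi_k$, for which weak-$H^1$ convergence collapses to ordinary $L^2$ pairings, and to route the comparison through the common ambient space $L^2([0,T]\times\T)$, into which both path spaces embed. Everything else is a routine consequence of law preservation under the Skorokhod--Jakubowski representation.
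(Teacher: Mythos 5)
Your proposal is correct and follows essentially the same route as the paper: identify $\tilde{u}_{n,s}$ with $\tilde{u}_{n,w}$ for fixed $n$ by transporting the equality $u_{n,s}=u_{n,w}$ through the preserved joint law, and then pass to the limit by pairing against fixed test functions in the common ambient space $L^2([0,T]\times\T)$, using the $\tilde{\mathbb{P}}$-a.s.\ convergence in each of the two topologies. The only cosmetic differences are that you make the fixed-$n$ step self-contained via an explicit countable separating family of continuous functionals where the paper cites Jakubowski's Lemma~1, and you argue pathwise where the paper weights by $\eta\in L^\infty(\tilde{\Omega})$ and compares expectations; both are sound.
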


\begin{rem}\label{rem:identification_remark}
It is then henceforward sufficient to speak of 
$\tilde{u}_n:=\tilde{u}_{n,s} = \tilde{u}_{n,w}$ 
and $\tilde{u}:=\tilde{u}_s = \tilde{u}_w$. 
\end{rem}

\begin{proof}
For a fixed $n$, this follows directly from \cite[Lemma 1]{Jak1998},
where an identification was made for variables in two Polish spaces. 
However, the completeness and separability of the path spaces 
were not used in the proof, and the lemma can be proven unchanged 
for quasi-Polish spaces.

For any $\varphi \in C^\infty([0,T]\times \T)$, 
$\eta \in  L^\infty(\tilde{\Omega})$, as $n\to \infty$,
\begin{align*}
	\tilde \Ex \int_0^T \int_\T  \eta \varphi \tilde{u}_{n,s}
	\,\d x\,\d t 
	& \to \tilde \Ex \int_0^T \int_\T \eta \varphi \tilde{u}_s
	\,\d x \,d t,
	\\ 
	\tilde \Ex \int_0^T \int_\T  \eta \varphi \tilde{u}_{n,w}
	\,\d x\,\d t
	& \to \tilde \Ex \int_0^T \int_\T \eta \varphi \tilde{u}_w
	\,\d x \,d t,
\end{align*}
and, since $\tilde{u}_{n,s} = \tilde{u}_{n,w}$, 
$$
\tilde \Ex \bk{\eta  \int_0^T \int_\T \varphi 
\tilde{u}_s\,\d x \,d t} 
= \tilde \Ex \bk{\eta \int_0^T \int_\T \varphi 
\tilde{u}_w\,\d x \,d t}.
$$
From this it easily follows that $\tilde{u}_s = \tilde{u}_w$, 
$\tilde{\mathbb{P}}\otimes \d t \otimes \d x$-a.e.
\end{proof}

With $t \in [0,T]$ and $X$ denoting $L^2([0,T];H^1(\T))$, 
$C([0,T];H^1_w(\T))$ or $C([0,T])$, let 
$f \mapsto f|_{[0,t]}:X \to X|_{[0,t]}$ denote 
the restriction operator to $[0,t]$. 
We define $\{\tilde{\mathcal{F}}_t\}_{t\ge 0}$ to 
be the $\tilde{\prob}$-augmented 
canonical filtration of $\bigl(\tilde{u},\tilde{W},
\tilde{u}_0\bigr)$, i.e.,
$$
\tilde{\mathcal{F}}_t := {\it \Sigma}\bk{
{\it \Sigma}\bigl(\tilde{u}\big|_{[0,t]},
\tilde{W}\big|_{[0,t]},\tilde{u}_0\bigr)
\cup \bigl\{N \in \tilde{\mathcal{F}}:
\tilde{\prob}(N)=0\bigr\}}.
$$
where, for a collection $E$ 
of subsets of $\tilde{\Omega}$, ${\it \Sigma}(E)$ 
denotes the smallest sigma algebra containing $E$.  
Denote by $\tilde{\mathcal{S}}$ the corresponding 
stochastic basis, that is,
\begin{equation}\label{eq:stoch-basis-tilde}
	\tilde{\mathcal{S}}:=
	\bigl(\tilde{\Omega},\tilde{\mathcal{F}}, 
	\{\tilde{\mathcal{F}}_t\}_{t\ge 0},
	\tilde{\mathbb{P}}\bigr).
\end{equation}
Similarly, based on $\bigl(\tilde{u}_n,\tilde{W}_n,
\tilde{u}_{0,n}\bigr)$, we define $\tilde{\mathcal{S}}_n
=\bigl(\tilde{\Omega},\tilde{\mathcal{F}},
\{\tilde{\mathcal{F}}_t^n\}_{t\ge 0},
\tilde{\prob}\bigr)$. Then $\tilde{u}$, $\tilde{W}$ 
and $\tilde{u}_n$, $\tilde{W}_n$ are adapted relative 
to the stochastic bases $\tilde{\mathcal{S}}$, 
$\tilde{\mathcal{S}}_n$, respectively. 
Besides, by the equality of laws, $\tilde{W}_n$ 
is a Brownian motion on $\tilde{\mathcal{S}}_n$, 
and we have the following result.

\begin{lem}[Brownian motion]\label{thm:W_BM}
The process $\tilde{W}$ is a Brownian motion 
on $\tilde{\mathcal{S}}$.
\end{lem}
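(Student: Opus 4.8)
The plan is to verify the two defining properties of a Brownian motion relative to the stochastic basis $\tilde{\mathcal{S}}$: that $\tilde{W}$ has the law of a standard Wiener process, and that its increments over $[s,t]$ are independent of $\tilde{\mathcal{F}}_s$, so that $\tilde{W}$ is a genuine $\{\tilde{\mathcal{F}}_t\}$-Brownian motion and not merely equal in law to one.

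The first property is immediate. Since $\tilde{W}_n \sim W$ for every $n$ by \eqref{eq:laws-sim}, and $\tilde{W}_n \to \tilde{W}$ $\tilde{\mathbb{P}}$-almost surely in $C([0,T])$ by Theorem \ref{thm:skorohod}, almost sure convergence implies convergence in law, whence the law of $\tilde{W}$ coincides with that of $W$. In particular $\tilde{W}$ has continuous paths, $\tilde{W}(0)=0$, and the correct (Gaussian) finite-dimensional distributions.

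For the second property I would transfer the independence from the Galerkin level. The crucial input is that each $\tilde{W}_n$ is a Brownian motion on $\tilde{\mathcal{S}}_n$: on the original basis $W$ is a Wiener process and $u_n$ is adapted, so the increment $W(t)-W(s)$ is independent of ${\it \Sigma}\bigl(u_n|_{[0,s]},\,W|_{[0,s]},\,u_0\bigr)$, and this independence is a statement about the joint law $\mu^n$ alone, hence is inherited by the tilded variables. Concretely, fix $0 \le s \le t \le T$, $\lambda \in \R$, and let $h$ be a bounded continuous functional of the restriction $\bigl(\tilde{u},\tilde{W},\tilde{u}_0\bigr)\big|_{[0,s]}$. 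Because $\tilde{W}_n$ is a Brownian motion adapted to $\{\tilde{\mathcal{F}}^n_t\}$ and $h\bigl((\tilde{u}_n,\tilde{W}_n,\tilde{u}_{0,n})|_{[0,s]}\bigr)$ is $\tilde{\mathcal{F}}_s^n$-measurable, one has
\begin{equation*}
	\tilde{\Ex}\Bigl[ e^{i\lambda(\tilde{W}_n(t)-\tilde{W}_n(s))}\,
	h\bigl((\tilde{u}_n,\tilde{W}_n,\tilde{u}_{0,n})|_{[0,s]}\bigr)\Bigr]
	= e^{-\frac{\lambda^2}{2}(t-s)}\,
	\tilde{\Ex}\Bigl[ h\bigl((\tilde{u}_n,\tilde{W}_n,\tilde{u}_{0,n})|_{[0,s]}\bigr)\Bigr].
\end{equation*}
Using the $\tilde{\mathbb{P}}$-a.s. convergence $(\tilde{u}_n,\tilde{W}_n,\tilde{u}_{0,n}) \ton (\tilde{u},\tilde{W},\tilde{u}_0)$ in $\mathcal{X}$, the continuity of the restriction map and of $h$, and the boundedness of the integrands, dominated convergence lets me pass to the limit $n\to\infty$ and obtain the same identity with $\tilde{W}$ in place of $\tilde{W}_n$. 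This shows that the increment $\tilde{W}(t)-\tilde{W}(s)$ is $N(0,t-s)$-distributed and independent of ${\it \Sigma}\bigl((\tilde{u},\tilde{W},\tilde{u}_0)|_{[0,s]}\bigr)$; since functionals of this type generate $\tilde{\mathcal{F}}_s$ up to the adjoined $\tilde{\mathbb{P}}$-null sets, the increment is independent of $\tilde{\mathcal{F}}_s$. Together with the continuity of paths, this is exactly the statement that $\tilde{W}$ is an $\{\tilde{\mathcal{F}}_t\}$-Brownian motion.

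The routine part is the dominated-convergence transfer; the delicate point is the topological bookkeeping forced by the non-metrisable factor $C([0,T];H^1_w(\T))$ of $\mathcal{X}$. I must check that the restriction map to $[0,s]$ and the chosen functionals $h$ are continuous in the quasi-Polish topology, and that they nonetheless form a determining class generating $\tilde{\mathcal{F}}_s$; this is precisely what makes the adjunction of the $\tilde{\mathbb{P}}$-null sets in the definition of $\{\tilde{\mathcal{F}}_t\}$ necessary.
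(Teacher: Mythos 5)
Your argument is correct, and it reaches the conclusion by a somewhat different route than the paper. The paper goes through L\'evy's characterisation: it transfers, via the equality of joint laws, the martingale identity $\Ex\bigl[(W(t)-W(s))\,\gamma(\dott|_{[0,s]})\bigr]=0$ to the tilded variables and passes to the limit with the Vitali convergence theorem (uniform square-integrability of $\tilde{W}_n(t)$ substitutes for domination, since the increment is an unbounded integrand); as written this only exhibits the martingale property of $\tilde{W}$, the remaining quadratic-variation input to L\'evy's theorem being delegated to the cited standard reference. You instead transfer the conditional characteristic-function identity $\tilde{\Ex}\bigl[e^{i\lambda(\tilde{W}_n(t)-\tilde{W}_n(s))}h(\dott|_{[0,s]})\bigr]=e^{-\lambda^2(t-s)/2}\,\tilde{\Ex}\bigl[h(\dott|_{[0,s]})\bigr]$, which buys you two things: the integrand is bounded, so plain dominated convergence suffices for the limit $n\to\infty$; and the single identity simultaneously yields the Gaussian law of the increment and its independence from the generating class of $\tilde{\mathcal{F}}_s$, so no appeal to L\'evy's theorem is needed. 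Both arguments rest on the same two pillars --- equality of joint laws on $\mathcal{X}$ and $\tilde{\mathbb{P}}$-a.s.\ convergence of the Skorokhod--Jakubowski representations --- and both require the same bookkeeping you correctly flag at the end: the test functionals of the restricted paths must be continuous on the quasi-Polish factor $C([0,T];H^1_w(\T))$ and must generate $\tilde{\mathcal{F}}_s$ up to the adjoined $\tilde{\mathbb{P}}$-null sets.
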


\begin{proof}
The proof is standard (see, e.g., \cite[Lemma 4.8]{DHV2016}), relying on L\'evy's 
characterisation theorem (e.g., \cite[Thm.~ IV.3.6]{Revuz:1999wi}) 
and the equality of laws. The claim follows if we 
establishes that $\tilde{W}$ is a martingale 
relative to $\tilde{\mathcal{S}}$.

By the equivalence of laws, for $0 \le s \le t \le T$, 
\begin{align*}
	\tilde{\Ex}& 
	\bk{\bigl(\tilde{W}_n(t)-\tilde{W}_n(s)\bigr)
	\, \gamma\bigl(\tilde{u}_n\big|_{[0,s]},\tilde{u}_n\big|_{[0,s]},
	\tilde{W}_n\big|_{[0,s]}\bigr)}
	\\ & 
	=\Ex\bk{\bigl(W(t)-W(s)\bigr)
	\, \gamma\bigl(u_n\bigl|_{[0,s]},\tilde{u}_n\big|_{[0,s]},
	W\big|_{[0,s]}\bigr)}=0,
\end{align*}
because $W$ is a martingale relative to 
$\mathcal{S}=\bigl(\Omega,\mathcal{F}, 
\{\mathcal{F}_t\}_{t\ge 0},\mathbb{P}\bigr)$, 
for any continuous function 
$\gamma\colon L^2([0,s];H^1(\T))\times 
 C([0,s];H^1_w(\T))\times C([0,s])\to \R$. Moreover,
$$
\sup_{n \in \mathbb{N}} \tilde{\Ex}
\abs{\tilde{W}_n(t)}^2
=\Ex \abs{W(t)}^2=t<\infty.
$$
Therefore, by the Vitali 
convergence theorem,  
$$
\tilde{\Ex} 
\bk{\bigl(\tilde{W}(t)-\tilde{W}(s)\bigr) 
\gamma\bigl(\tilde{u}\big|_{[0,s]},\tilde{u}\big|_{[0,s]},
\tilde{W}\big|_{[0,s]}\bigr)} = 0,
$$
so that $\tilde{W}$ is a martingale (and 
hence a Brownian motion) on $\tilde{\mathcal{S}}$.
\end{proof}

Next, we collect the convergence and continuity properties 
that are needed later to prove that the 
limit $\bigl(\tilde{\mathcal{S}},\tilde{u},
\tilde{W}\bigr)$ is a weak $H^m$ solution.

\begin{lem}[Convergence]\label{thm:products_convergences}
Let $u_n$, $\tilde{u}_n$ and $\tilde{u}$ be defined as 
in Theorem \ref{thm:skorohod} and 
Remark \ref{rem:identification_remark}, 
and set $q_n := \pd_x u_n$, $\tilde{q}_n := \pd_x\tilde{u}_n$ and 
$\tilde{q}:= \pd_x \tilde{u}$. Then 
$q_n \sim \tilde{q}_n$, and the following 
convergences hold $\tilde{\mathbb{P}}$-almost surely:
\begin{subequations}\label{eq:basic-conv}
	\begin{align}
		 \tilde{u}_n &\ton \tilde{u}, \quad
		\text{in $C([0,T];H^1_w(\T))$}, \label{eq:basic-convA}\\
			\tilde{q}_n &\ton \tilde{q} \quad
		\text{in $L^2([0,T]\times \T)$},\label{eq:basic-convB}
		\\ 
		\tilde{u}_n^2 &\ton \tilde{u}^2   
		\quad 
		\text{in $L^1([0,T];W^{1,1}(\T))$}, \label{eq:basic-convC}\\
		\tilde{q}_n^2 &\ton  \tilde{q}^2
		\quad \text{in $L^1([0,T]\times \T)$},\label{eq:basic-convD}
		\\ 
		\tilde{u}_n \tilde{q}_n
		&\to \tilde{u}\tilde{q} 
		\quad 
		\text{in $L^2([0,T]\times \T)$}.\label{eq:basic-convE}
	\end{align}
\end{subequations}
\end{lem}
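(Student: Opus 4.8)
The plan is to extract the linear convergences directly from the Skorokhod--Jakubowski theorem (Theorem \ref{thm:skorohod}), working throughout with the single identified variable $\tilde{u}_n:=\tilde{u}_{n,s}=\tilde{u}_{n,w}$ (and $\tilde{u}:=\tilde{u}_s=\tilde{u}_w$) from Remark \ref{rem:identification_remark}, and then to bootstrap the nonlinear ones using an almost sure, $n$-uniform $L^\infty$ bound on $\tilde{u}_n$. For the equality of laws $q_n\sim\tilde{q}_n$, note that $\tilde{u}_{n,s}\sim u_n$ as $L^2([0,T];H^1(\T))$-valued variables by \eqref{eq:laws-sim}, while the differentiation operator $\pd_x\colon L^2([0,T];H^1(\T))\to L^2([0,T]\times\T)$ is bounded and linear, hence Borel; since $q_n=\pd_x u_n$ and $\tilde{q}_n=\pd_x\tilde{u}_n$, the law of $\tilde{q}_n$ is the pushforward under $\pd_x$ of the law of $\tilde{u}_n$, which equals the pushforward of the law of $u_n$, i.e.\ the law of $q_n$. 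The convergence \eqref{eq:basic-convA} is exactly the $\mathcal{X}_{u,w}$-component of the a.s.\ convergence in Theorem \ref{thm:skorohod}, while the $\mathcal{X}_{u,s}$-component gives $\tilde{u}_n\ton\tilde{u}$ in $L^2([0,T];H^1(\T))$, $\tilde{\prob}$-a.s.; applying the bounded operator $\pd_x$ yields \eqref{eq:basic-convB}, and in particular $\tilde{u}_n\ton\tilde{u}$ in $L^2([0,T]\times\T)$.

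Two of the quadratic convergences require no $L^\infty$ control and follow from the elementary implication that $f_n\to f$ in $L^2$ forces $f_n^2\to f^2$ in $L^1$ (via $\norm{f_n^2-f^2}_{L^1}\le\norm{f_n-f}_{L^2}\norm{f_n+f}_{L^2}$ together with the boundedness of $\norm{f_n+f}_{L^2}$). Applying this to $\tilde{q}_n\ton\tilde{q}$ in $L^2([0,T]\times\T)$ gives \eqref{eq:basic-convD}; applying it to $\tilde{u}_n\ton\tilde{u}$ in $L^2([0,T]\times\T)$ gives $\tilde{u}_n^2\ton\tilde{u}^2$ in $L^1([0,T]\times\T)$, which is one of the two ingredients for \eqref{eq:basic-convC}.

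The preparatory step for the remaining convergences is an a.s.\ bound $C(\omega):=\sup_{n\in\N}\sup_{t\in[0,T]}\norm{\tilde{u}_n(t)}_{H^1(\T)}<\infty$. I would derive this from \eqref{eq:basic-convA}: for $\tilde{\prob}$-a.e.\ $\omega$ the convergence $\tilde{u}_n\to\tilde{u}$ in $C([0,T];H^1_w(\T))$ makes the scalar family $\seq{\LL\tilde{u}_n(t),\phi\RR:n\in\N,\,t\in[0,T]}$ bounded for every $\phi\in H^{-1}(\T)$, so the uniform boundedness principle applied to $\seq{\tilde{u}_n(t)}\subseteq\bigl(H^{-1}(\T)\bigr)^*=H^1(\T)$ yields $C(\omega)<\infty$; the one-dimensional embedding $H^1(\T)\hookrightarrow L^\infty(\T)$ then gives $\sup_n\norm{\tilde{u}_n}_{L^\infty([0,T]\times\T)}\le C'(\omega)<\infty$.

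With this bound in hand, I expect \eqref{eq:basic-convE} to be the main obstacle, precisely because a product of two $L^2$ functions is in general only $L^1$; the role of the $L^\infty$ bound is to lift the product back into $L^2$. Splitting $\tilde{u}_n\tilde{q}_n-\tilde{u}\tilde{q}=\tilde{u}_n(\tilde{q}_n-\tilde{q})+(\tilde{u}_n-\tilde{u})\tilde{q}$, the first term is bounded in $L^2([0,T]\times\T)$ by $C'(\omega)\norm{\tilde{q}_n-\tilde{q}}_{L^2([0,T]\times\T)}\to0$ by \eqref{eq:basic-convB}; for the second term I would argue by dominated convergence, using $\abs{(\tilde{u}_n-\tilde{u})\tilde{q}}^2\le 4C'(\omega)^2\abs{\tilde{q}}^2\in L^1([0,T]\times\T)$ and the fact that $\tilde{u}_n\to\tilde{u}$ in $L^2([0,T]\times\T)$ yields pointwise a.e.\ convergence of the integrand along a subsequence, then upgrading to the full sequence by the usual subsequence argument. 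Finally, \eqref{eq:basic-convC} follows by writing the $L^1([0,T];W^{1,1}(\T))$-norm of $\tilde{u}_n^2-\tilde{u}^2$ as $\norm{\tilde{u}_n^2-\tilde{u}^2}_{L^1([0,T]\times\T)}+2\norm{\tilde{u}_n\tilde{q}_n-\tilde{u}\tilde{q}}_{L^1([0,T]\times\T)}$, since $\pd_x(\tilde{u}_n^2)=2\tilde{u}_n\tilde{q}_n$; the first summand tends to zero by the second paragraph, and the second by \eqref{eq:basic-convE} together with the continuous inclusion $L^2([0,T]\times\T)\hookrightarrow L^1([0,T]\times\T)$.
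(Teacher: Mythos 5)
Your proof is correct, and its overall architecture (linear convergences straight from the Skorokhod--Jakubowski theorem, nonlinear ones by product estimates) matches the paper's; the differences are in the details, and they are worth recording. For \eqref{eq:basic-convC} the paper argues in one line via the calculus inequality $\norm{fg}_{W^{1,1}}\le\norm{f}_{H^1}\norm{g}_{L^2}+\norm{g}_{H^1}\norm{f}_{L^2}$ applied to $f=\tilde{u}_n-\tilde{u}$, $g=\tilde{u}_n+\tilde{u}$, exploiting the strong convergence in $L^2([0,T];H^1(\T))$ directly, whereas you deduce it from \eqref{eq:basic-convE}; both are fine, the paper's being marginally shorter. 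For \eqref{eq:basic-convE} the paper uses the splitting $(\tilde{u}_n-\tilde{u})\tilde{q}_n+\tilde{u}(\tilde{q}_n-\tilde{q})$ and bounds it by $\norm{\tilde{u}_n-\tilde{u}}_{L^2_tL^\infty_x}\norm{\tilde{q}_n}_{L^\infty_tL^2_x}+\norm{\tilde{q}_n-\tilde{q}}_{L^2_{t,x}}\norm{\tilde{u}}_{L^\infty_{t,x}}$, citing the energy estimate \eqref{eq:galerkin_unifbd1} and equality of laws for the factor $\norm{\tilde{q}_n}_{L^\infty_tL^2_x}$; strictly, that estimate only controls $\sup_n\tilde{\Ex}\sup_t\norm{\tilde{u}_n}_{H^1}^2$ and does not by itself give the $\tilde{\prob}$-a.s.\ $n$-uniform bound $\sup_n\sup_t\norm{\tilde{u}_n(t)}_{H^1}<\infty$ that an almost-sure convergence statement requires. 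Your Banach--Steinhaus argument, extracting exactly this a.s.\ uniform bound from the $C([0,T];H^1_w(\T))$ convergence of \eqref{eq:basic-convA}, supplies that missing step cleanly, and your subsequent dominated-convergence treatment of $(\tilde{u}_n-\tilde{u})\tilde{q}$ (with the standard sub-subsequence upgrade) is a valid, slightly more elementary substitute for the paper's H\"older bound. In short: same strategy, different bookkeeping, and your version is, if anything, a little more careful about where the almost-sure uniform bounds come from.
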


\begin{proof}
Since $u_n \sim \tilde{u}_n$ on $L^2([0,T];H^1(\T))$, 
we have $\pd_x u_n \sim \pd_x \tilde{u}_n$ on 
$L^2([0,T]\times\T)$. Next, regarding the convergence 
claims \eqref{eq:basic-conv}, the limits \eqref{eq:basic-convA} and \eqref{eq:basic-convB}
 follow directly from Theorem \ref{thm:skorohod}. 

By the standard calculus inequality
$$
\norm{fg}_{W^{1,1}} \le \norm{f}_{H^1}\norm{g}_{L^2}
+\norm{g}_{H^1}\norm{f}_{L^2},
$$
we also have (with $f = \tilde{u}_n - \tilde{u}$ 
and $g = \tilde{u}_n + \tilde{u}$),
\begin{align*}
	&\norm{\tilde{u}_n^2-\tilde{u}^2}_{L^1([0,T]; W^{1,1}(\T))} 
	\\ &\qquad
	\le \norm{\tilde{u}_n+\tilde{u}}_{L^2([0,T];H^1(\T))} 
	\norm{\tilde{u}_n - \tilde{u}}_{L^2([0,T];H^1(\T))}
	\ton 0,
\end{align*}
$\tilde{\mathbb{P}}$-almost surely. 
Finally, by \eqref{eq:galerkin_unifbd1}, the embedding 
$H^1(\T) \hookrightarrow L^\infty (\T)$ and the 
equivalence of laws, 
\begin{align*}
	 \norm{\tilde{u}_n\tilde{q}_n
	-\tilde{u} \tilde{q}}_{L^2([0,T]\times\T)}
	&\le \norm{\tilde{u}_n-\tilde{u}}_{L^2([0,T] ;L^\infty(\T))}
	\norm{\tilde{q}_n}_{L^\infty([0,T];L^2(\T))}
	\\ &\quad 
	+\norm{\tilde{q}_n-\tilde{q}}_{L^2([0,T]\times \T)}
	\norm{\tilde{u}}_{L^\infty([0,T]\times\T)} \ton 0,
\end{align*}
$\tilde{\mathbb{P}}$-almost surely. This establishes 
\eqref{eq:basic-convC}--\eqref{eq:basic-convE}.
\end{proof}

\begin{thm}[Weak $H^1$ solution]\label{thm:existence_H1}
Suppose $\sigma \in W^{2,\infty}(\T)$, $p > 2$, and that 
$\Ex \norm{u_0}_{H^1(\T)}^p<\infty$. 
Let $\tilde{u}$, $\tilde{W}$, $\tilde{u}_0$ 
be the Skorokhod--Jakubowski a.s.~representations 
from Theorem \ref{thm:skorohod} (and 
Remark \ref{rem:identification_remark}), and let $\tilde{\mathcal{S}}$ 
be the corresponding stochastic basis 
\eqref{eq:stoch-basis-tilde}. Then 
$\bigl(\tilde{\mathcal{S}},\tilde{u},\tilde{W}\bigr)$ 
is a weak $H^1$ solution of \eqref{eq:u_ch_ep}  with 
initial law $\tilde{\Lambda}
:=\bk{\tilde{u}_0}_*\tilde{\mathbb{P}}$, substituting
 for (c) in Definition~\ref{def:wk_sol}  the following (here, $m = 1$):
\begin{itemize}
	\item[(c')]$\tilde{u}:\Omega\times[0,T]\to H^1(\T)$ is

	$$
	\tilde{u}(\omega,\dott)\in C([0,T];H^1_w(\T))
	$$ 
	for $\tilde{\mathbb{P}}$-almost every $\omega\in \Omega$. 
	Moreover, $\tilde{u} \in L^p(\tilde{\Omega};L^\infty([0,T];H^1(\T)))
	\cap L^2(\tilde{\Omega} \times [0,T]; H^2(\T))$.
\end{itemize}
\end{thm}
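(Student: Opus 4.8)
The plan is to verify conditions (a)--(e) of Definition \ref{def:wk_sol} (with (c) replaced by (c')) for the triple $\bigl(\tilde{\mathcal{S}},\tilde{u},\tilde{W}\bigr)$. Conditions (a) and (b) are already in hand: $\tilde{\mathcal{S}}$ was built as the augmented canonical filtration in \eqref{eq:stoch-basis-tilde}, and $\tilde{W}$ is a Brownian motion on it by Lemma \ref{thm:W_BM}. For (c'), the a.s.\ convergence \eqref{eq:basic-convA} places $\tilde{u}(\omega,\cdot)\in C([0,T];H^1_w(\T))$, while the uniform bounds of Proposition \ref{thm:galerkin_H1} and Lemma \ref{thm:galerkin_H1p}, transferred through the equality of laws $\tilde{u}_n\sim u_n$ and combined with Fatou's lemma (for the $L^p_\omega L^\infty_t H^1_x$ bound) and weak lower semicontinuity of the $L^2(\tilde{\Omega}\times[0,T];H^2(\T))$ norm (for the $\ep$-dissipation part encoded in \eqref{eq:galerkin_unifbd1}), give the stated integrability of $\tilde{u}$. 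Condition (d) follows because evaluation at $t=0$ is a continuous functional on $C([0,T];H^1_w(\T))$, so $\tilde{u}_n(0)=\tilde{u}_{0,n}$ $\tilde{\prob}$-a.s.\ by law equality with $u_n(0)=\bm{\Pi}_n u_0$; letting $n\to\infty$ yields $\tilde{u}(0)=\tilde{u}_0$, whose law is $\tilde{\Lambda}$.

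The heart of the matter is (e). Testing the Galerkin equation \eqref{eq:galerkin_n} against $\bm{\Pi}_n\varphi$ (equivalently against $e_i$, $i\le n$) and using self-adjointness of $\bm{\Pi}_n$, we may write, for $\varphi\in C^1(\T)$,
\[
\mathcal{M}_n(t):=\LL u_n(t),\varphi\RR_{L^2(\T)}-\LL u_n(0),\varphi\RR_{L^2(\T)}-\int_0^t F_n(s)\,\d s=-\int_0^t G_n(s)\,\d W(s),
\]
where $F_n$ gathers all drift contributions (integrated by parts onto $\bm{\Pi}_n\varphi$ as in (e)) and $G_n(s)=\LL\sigma\pd_x u_n(s),\bm{\Pi}_n\varphi\RR_{L^2(\T)}$. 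Thus $\mathcal{M}_n$ is a square-integrable martingale with $\LL\mathcal{M}_n\RR_t=\int_0^t G_n^2\,\d s$ and $\LL\mathcal{M}_n,W\RR_t=-\int_0^t G_n\,\d s$. The difficulty is that the It\^o integral is not a continuous functional of the paths $(u_n,W)$, so these relations cannot be transported to the tilde variables by a naive mapping argument. Instead I would proceed by the \emph{martingale-identification method}: because $\tilde{u}_n\sim u_n$ and $\tilde{W}_n\sim W$ jointly, the three bracket identities above hold verbatim for $\tilde{\mathcal{M}}_n$ (built from $\tilde{u}_n$) and $\tilde{W}_n$, in the weak sense of testing against any bounded continuous functional $\gamma$ of the paths restricted to $[0,s]$. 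Passing $n\to\infty$ using the a.s.\ convergences \eqref{eq:basic-convB}--\eqref{eq:basic-convE} of Lemma \ref{thm:products_convergences}, together with uniform integrability furnished by the $L^p_\omega$-moment bounds for $p>2$ (via Vitali's theorem), one obtains that the limit $\tilde{\mathcal{M}}$ is a continuous martingale with $\LL\tilde{\mathcal{M}}\RR_t=\int_0^t\tilde{G}^2\,\d s$ and $\LL\tilde{\mathcal{M}},\tilde{W}\RR_t=-\int_0^t\tilde{G}\,\d s$, where $\tilde{G}(s)=\LL\sigma\pd_x\tilde{u}(s),\varphi\RR_{L^2(\T)}$. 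Computing the quadratic variation of $\tilde{\mathcal{M}}+\int_0^\cdot\tilde{G}\,\d\tilde{W}$ shows it vanishes, whence $\tilde{\mathcal{M}}(t)=-\int_0^t\tilde{G}\,\d\tilde{W}$, which is precisely the It\^o identity (e).

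It remains to justify the passage $F_n\to F$ and $G_n\to\tilde{G}$ term by term, which is where the convergences of Lemma \ref{thm:products_convergences} are used: the local nonlinearity $u_n\pd_x u_n$ via \eqref{eq:basic-convE}, the nonlocal pressure $P[u_n]=K*(u_n^2+\tfrac12(\pd_x u_n)^2)$ via \eqref{eq:basic-convC}--\eqref{eq:basic-convD} and Young's convolution inequality, and the viscous term $\ep\pd_x^2 u_n$ together with the It\^o correction $\tfrac12\sigma\pd_x(\sigma\pd_x u_n)$ (after integrating by parts onto $\sigma\bm{\Pi}_n\varphi$) via the $L^2$ convergence \eqref{eq:basic-convB} of $\pd_x\tilde{u}_n$; throughout, $\bm{\Pi}_n\varphi$ is replaced by $\varphi$ using $\bm{\Pi}_n\varphi\to\varphi$ in $H^1(\T)$. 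I expect the main obstacle to be this martingale-identification step for the stochastic term---keeping track of adaptedness and of the cross-variation with $\tilde{W}$, and securing the uniform integrability needed to pass the bracket identities to the limit---rather than the comparatively routine convergence of the deterministic drift and nonlocal terms.
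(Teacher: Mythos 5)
Your proposal is correct in substance, but for the key step (e) it takes a genuinely different route from the paper. You transfer only the \emph{drift} identity and the martingale/bracket relations to the new probability space (testing against continuous functionals of the paths restricted to $[0,s]$), pass to the limit by a.s.\ convergence plus Vitali, and then recover the It\^o identity by computing the quadratic variation of $\tilde{\mathcal{M}}+\int_0^{\cdot}\tilde{G}\,\d\tilde{W}$ --- the classical martingale-identification method of Flandoli--G\c{a}tarek and Brze\'zniak--Motyl. The paper instead transfers the \emph{entire} Galerkin identity, stochastic integral included, by observing that the map $F_{\varphi,n}$ is a Baire function of class $1$ (the It\^o integral being a pointwise limit of temporally mollified, hence continuous, functionals), so that equality of joint laws preserves the null set $\{F_{\varphi,n}=0\}$; it then passes to the limit in the stochastic integral directly via the Debussche--Glatt-Holtz--Temam convergence lemma (Lemma \ref{lem:stoch-conv}), requiring only convergence in probability of $\bm{\Pi}_n(\sigma\pd_x\tilde{u}_n)$ in $L^2_{t,x}$ and of $\tilde{W}_n$ in $C([0,T])$. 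Your route avoids both the Baire-class argument and Lemma \ref{lem:stoch-conv}, at the cost of verifying the three bracket identities and their stability under the limit; note that the uniform integrability of $\tilde{\mathcal{M}}_n(t)^2$ should be obtained from the Burkholder--Davis--Gundy bound $\Ex|\mathcal{M}_n(t)|^p\lesssim\Ex\sup_t\norm{u_n}_{H^1(\T)}^p$ (using that $\mathcal{M}_n$ \emph{equals} the stochastic integral of the linear-in-$u_n$ integrand $G_n$) rather than from the drift representation, whose square involves fourth powers of the $H^1$ norm and would demand more moments than the hypothesis $p>2$ supplies. The treatment of (a), (b), (d) and the term-by-term drift convergences coincides with the paper's; for (c') the paper additionally invokes the Lusin--Souslin theorem to justify that the $L^2_tH^2_x$ and $L^\infty_tH^1_x$ norms are Borel on the path space before transferring the moment bounds by equality of laws, a measurability point your Fatou/lower-semicontinuity argument leaves implicit.
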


\begin{rem}\label{rem:wk_time_cont}
The difference between (c') and (c) of 
Definition \ref{def:wk_sol} is the weakened 
$\tilde{\mathbb{P}}$-almost sure inclusion 
$\tilde{u} \in C([0,T];H^1_w(\T))$ along with 
the lack of temporal continuity requirement in
$\tilde{u} \in L^p(\Omega ;L^\infty([0,T];H^1(\T)))$ in (c') in place of
$\tilde{u} \in L^p(\Omega;C([0,T];H^1(\T)))$ of (c). 
This continuity in $H^1(\T)$ (``strong temporal 
continuity'') is not necessary for establishing 
pathwise uniqueness in Section \ref{sec:pathwise_unique} below, 
and subsequent (stochastic) strong 
existence (Section \ref{sec:H1_existence}). 
We therefore relegate the proof of strong 
temporal continuity to Proposition~\ref{thm:st_time_cont}. 
%See also Remark \ref{rem:krylov_time_cont} 
%for another method of proving strong temporal continuity.
\end{rem}

\begin{proof}

We continue to use the notations from 
Lemma \ref{thm:products_convergences}.
By the equality of joint laws on $\mathcal{X}$, see 
\eqref{eq:laws-sim}, we also have 
\begin{equation}\label{eq:laws-sim2}
	\bigl(u_n, u_n, W,\bm{\Pi}_n u_0,q_n\bigr) 
	\sim \bigl(\tilde{u}_n,\tilde{u}_n,\tilde{W}_n,\tilde{u}_{0,n},
	\tilde{q}_n\bigr)
	\quad \text{on $\mathcal{X}\times 
	L^2([0,T]\times\T)$},
\end{equation}
because $\pd_x$ is a bounded operator from 
$H^1(\T)$ to $L^2(\T)$. For each fixed 
$n\in \N$ and $\varphi \in C^1(\T)$, consider 
the function $F_{\varphi,n}:\mathcal{X} \times 
L^2([0,T]\times \T) \to \R$ defined by
\begin{equation*}
	 F_{\varphi,n}\bigl[ \bigl(\tilde{u}_n(s),
	\tilde{W}_n(s),\tilde{u}_{0,n},\tilde{q}_n(s), 
	s\in [0,t]\bigr)\bigr] 
	= I_1^n(t) + I_2^n(t) + I_3^n(t) + I_4^n(t),
\end{equation*}
where
\begin{align*}
	I_1^n(t)&:=\int_\T \tilde{u}_n(t) \varphi \,\d x
	-\int_\T \tilde{u}_{0,n} \varphi \,\d x
	-\int_0^t \int_\T 
	\ep \, \tilde{q}_n \pd_x\varphi\,\d x\,\d s,
	\\ 
	I_2^n(t) &:=\int_0^t \int_\T 
	\Bigl[ \tilde{u}_n\,\tilde{q}_n
	+\bm{\Pi}_n \pd_x P\bigl[\tilde{u}_n\bigr]\Bigr] 
	\varphi\,\d x\,\d s, 
	\\  
	I_3^n(t) &:=-\frac12 \int_0^t \int_\T 
	\sigma
	\tilde{q}_n\, \pd_x \bk{\sigma \bm{\Pi}_n\varphi}
	\,\d x \,\d s, 
	\\
	I_4^n(t)&:=\int_0^t \int_\T \sigma \pd_x \tilde{u}_n\, 
	\bm{\Pi}_n\varphi  \,\d x \,\d \tilde{W}_n.
\end{align*}
Recall that a Baire function of class $\kappa$, where 
$\kappa$ is an ordinal number, is a function that is 
the pointwise limit of Baire functions of class $\kappa-1$, 
and class $0$ Baire functions are the continuous functions. 
We have that $F_{\varphi,n}$ is a Baire function 
of class $1$. In particular, the inclusion of the 
stochastic integral in this class can be seen 
by it being the pointwise limit of temporally 
mollified approximations along the lines 
of Benssousan \cite[Sec.~4.3.5]{Bensoussan:1995aa} 
or \cite[Lemma 2.1]{DGT2011} (see Lemma \ref{lem:stoch-conv}). 
Hence, by the equivalence of joint 
laws \eqref{eq:laws-sim2}, 
we have \cite[p.~105]{Sit2005}
\begin{align*}
	& \tilde{\mathbb{P}}
	\left(\seq{F_{\varphi,n}\bigl[ \bigl(\tilde{u}_n(s),
	\tilde{W}_n(s),\tilde{u}_{0,n},\tilde{q}_n(s), 
	s\in [0,t]\bigr)\bigr]=0}\right)
	\\ & \quad 
	= \mathbb{P}\Bigl(\seq{F_{\varphi,n}
	\bigl[ \bigl(u_n(s),
	W(s),\bm{\Pi}_n u_0,q_n(s), s\in [0,t]\bigr)
	\bigr]=0}\Bigr)\overset{\eqref{eq:galerkin_n}}{=} 1.
\end{align*}

We now establish the convergence 
of $I_1^n$,\dots,$I_4^n$ separately.

\medskip
{\it 1. Convergence of $I_2^n$.}

We estimate as follows:
\begin{align*}
	& \norm{\tilde{u} \,\tilde{q}+\pd_x P[\tilde{u}]
	-\bm{\Pi}_n \Bigl(\tilde{u}_n 
	\,\tilde{q}_n+\pd_x P\bigl[\tilde{u}_n\bigr] 
	\Bigr)}_{L^2([0,T]\times \T)}
	\\ & \quad
	\le \norm{\tilde{u} \tilde{q}
	-\bm{\Pi}_n\bk{\tilde{u}_n
	\tilde{q}_n}}_{L^2([0,T]\times \T)}  
	+\norm{\pd_x P\bigl[\tilde{u}\bigr]
	-\bm{\Pi}_n \pd_x P\bigl[\tilde{u}_n
	\bigr]}_{L^2([0,T]\times \T)}
	\\ & \quad 
	\le \norm{\tilde{u} \tilde{q}
	-\bm{\Pi}_n\bk{\tilde{u} \tilde{q}}}_{L^2([0,T]\times \T)} 
	+\norm{\bm{\Pi}_n \bk{\tilde{u}\tilde{q}
	-\tilde{u}_n \tilde{q}_n}}_{L^2([0,T]\times \T)}
	\\ &\qquad  
	+\norm{\pd_x P\bigl[\tilde{u}\bigr]
	-\bm{\Pi}_n \pd_x P\bigl[\tilde{u}\bigr]}_{L^2([0,T]\times\T)}
	+\norm{\bm{\Pi}_n\bk{\pd_x P\bigl[\tilde{u}\bigr]
	-\pd_x P\bigl[\tilde{u}_n\bigr]}}_{L^2([0,T]\times\T)}
	\\
	& \quad\le \norm{\tilde{u} \tilde{q}
	-\bm{\Pi}_n\bk{\tilde{u} \tilde{q}}}_{L^2([0,T]\times \T)} 
	+\norm{\pd_x P\bigl[\tilde{u}\bigr]-\bm{\Pi}_n 
	\pd_x P\bigl[\tilde{u}\bigr]}_{L^2([0,T]\times \T)} 
	\\ &\quad\quad 
	+\norm{\tilde{u} \tilde{q}-\tilde{u}_n 
	\tilde{q}_n}_{L^2([0,T]\times \T)}
	+\norm{\pd_x P[\tilde{u}]-\pd_x 
	P[\tilde{u}_n]}_{L^2([0,T]\times \T)} \,\, \text{(Bessel's ineq.)}
	\\
	& \quad
	\le \norm{\bk{1-\bm{\Pi}_n}
	\bk{\tilde{u}\tilde{q}}}_{L^2([0,T]\times \T)}
	+\norm{\bk{1-\bm{\Pi}_n} 
	\pd_x P[\tilde{u}]}_{L^2([0,T]\times \T)}
	\\ &\quad\quad
	+\norm{\tilde{u} \tilde{q}-\tilde{u}_n
	\tilde{q}_n}_{L^2([0,T]\times \T)}
	+\norm{\pd_x K}_{L^2(\T)}
	\norm{\tilde{u}^2-\tilde{u}_n^2}_{L^1([0,T]\times \T)}
	\\ &\quad\quad
	+\frac12\norm{\pd_x K}_{L^2(\T)}
	\norm{\tilde{q}^2-\tilde{q}_n^2}_{L^1([0,T]\times \T)}
	\ton 0, \quad\text{$\tilde{\mathbb{P}}$-a.s.}, \quad \text{(by Lemma \ref{thm:products_convergences})}
\end{align*}
using the convergence $\bm{\Pi}_n \to 1$ in the operator 
norm on $L^2(\T) \to L^2(\T)$. This implies that
\begin{align}\label{eq:galerkin_skorokhod1}
	I_2^n \ton \int_0^t \int_\T  
	\Bigl[\tilde{u}\, \pd_x \tilde{u}
	+\pd_x P\bigl[\tilde{u}\bigr] \Bigr]
	\varphi\,\d x\,\d s, \quad 
	\text{$\tilde{\mathbb{P}}$-a.s.}
\end{align}

\medskip
{\it 2.  Convergence of $I_1^n$ and $I_3^n$.}

For any $\varphi \in C^1(\T)$, 
$\tilde{\mathbb{P}}$-almost surely,
\begin{align*}
	&\abs{-\frac12\int_0^t \int_\T \sigma\tilde{q}
	\,\pd_x\bk{\sigma\varphi} \,\d x\,\d s-I_3^n}
	\\& 
	\le \frac12\int_0^t\abs{\int_\T \sigma
	\bigl(\tilde{q}-\tilde{q}_n\bigr) 
	\pd_x\bk{\sigma\varphi}\,\d x} \,\d s 
	\\ &\quad
	+ \frac12\int_0^t\abs{\int_\T \sigma\tilde{q}_n 
	\,\pd_x \bigl(\sigma \bk{1-\bm{\Pi}_n}
	\varphi\bigr)\,\d x}\,\d s
	\\ &  
	\le C_{\sigma, \varphi}\norm{\tilde{q}
	-\tilde{q}_n}_{L^2([0,T]\times \T)} 
	\\ & \quad \,\,
		+C_\sigma \norm{\tilde{q}_n}_{L^2([0,T]\times \T)} 
	\norm{\bk{1-\bm{\Pi}_n}\varphi}_{H^1(\T)}\ton 0.
\end{align*}
Similarly, by the $\tilde{\mathbb{P}}$-a.s.~$L^2_{t,x}$ 
convergence of $\tilde{q}_n$, cf.~\eqref{eq:basic-conv},
$$
\abs{\int_0^t\int_\T \bigl(\ep\tilde{q}-\ep\tilde{q}_n\bigr)
\, \pd_x \varphi \,\d x \,\d s} \ton 0,
\quad \text{$\tilde{\mathbb{P}}$-a.s.}
$$
The convergence  
$$
\int_\T \tilde{u}_n(t) \varphi \,\d x 
\to \int_\T \tilde{u}(t)\varphi 
\,\d x, \quad
\quad \text{$\tilde{\mathbb{P}}$-a.s.}
$$
follows from the $\tilde{\mathbb{P}}$-a.s.~convergence 
$\tilde{u}_n \to \tilde{u}$ in $C([0,T];H^1_w(\T))$, see
\eqref{eq:basic-conv}, noting that $\varphi 
\in C^1(\T) \subseteq H^1(\T)$. Finally, the convergence
$$
\int_\T \tilde{u}_{0,n} \varphi \,\d x  
\ton \int_\T \tilde{u}_0 \varphi \,\d x, 
\quad \text{$\tilde{\mathbb{P}}$-a.s.}
$$
is a direct consequence of Theorem \ref{thm:skorohod} 
and \eqref{eq:pathspaces}.

Combining these results we find that
\begin{equation}\label{eq:galerkin_skorokhod2}
	\begin{split}
		I_1^n + I_3^n & 
		\ton \int_\T \tilde{u}(t) \varphi \,\d x  
		-\int_\T \tilde{u}_0 \varphi \,\d x
		-\int_0^t \int_\T \ep\pd_x\tilde{u}\,
		\pd_x\varphi\,\d x\,\d s
		\\ & \qquad 
		-\frac12\int_0^t \int_\T \sigma\,\pd_x \tilde{u}\,
		\pd_x \bk{\sigma \varphi}\,\d x \,\d s, 
		\quad \text{$\tilde{\mathbb{P}}$-a.s.}
	\end{split}
\end{equation}

\medskip
{\it 3.  Convergence of $I_4^n$.} 

First, $\tilde{\mathbb{P}}$-almost surely., we have
\begin{align*}
	\norm{\sigma \tilde{q}
	-\bm{\Pi}_n\bk{\sigma \tilde{q}_n}}_{L^2([0,T]\times\T)}
	&\le \norm{\sigma
	\bk{\tilde{q}-\tilde{q}_n}}_{L^2([0,T]\times\T)}
	\\ & \quad 
	+\norm{\sigma \tilde{q}_n
	-\bm{\Pi}_n\bk{\sigma \tilde{q}_n}}_{L^2([0,T]\times\T)} 
	\ton 0,
\end{align*}
and so $\bm{\Pi}_n\bk{\sigma\,\tilde{q}_n}\ton \sigma
\tilde{q}$ in $L^2([0,T]\times \T)$ in probability. 
Besides, $\tilde{W}_n \ton \tilde{W}$ in 
$C([0,T])$ $\tilde{\mathbb{P}}$-almost surely, and thus 
in probability. Therefore, by 
Lemma \ref{lem:stoch-conv},
\begin{align}\label{eq:DGHT_convergence}
	I_4^n \ton \int_0^t 
	\sigma \tilde{q} \,\d \tilde{W}, 
	\quad\mbox{in $L^2([0,T]\times \T)$},
\end{align}
in probability and hence $\tilde{\mathbb{P}}$-almost 
surely along a subsequence.

\medskip
{\it 4. Weak formulation.}

Gathering \eqref{eq:galerkin_skorokhod1}, 
\eqref{eq:galerkin_skorokhod2}, and \eqref{eq:DGHT_convergence}, 
we have shown that $\tilde{u}$, $\tilde{W}$, 
$\tilde{u}_0$ satisfy, for any $\varphi \in C^1_c(\T)$,
\begin{equation} \label{eq:weak_solution_eq}
	\begin{aligned}
		 0 &= \int_\T \varphi \tilde{u}\,\d x \bigg|_0^T
		 +\int_0^T \int_\T \varphi \,\tilde{u} \,\pd_x \tilde{u} \,\d x \,\d t
		 -\ep  \int_0^T \int_\T \varphi \pd_x^2 \tilde{u}\,\d x \,\d t 
		 \\ &\quad 
		 +\int_0^T\int_\T \varphi \pd_x P\bigl[\tilde{u}\bigr]
		 \,\d x \,\d t
		 -\frac{1}{2}\int_0^T \int_\T \sigma\, \pd_x\bk{\sigma\pd_x\tilde{u}}
		 \,\d x\, \d t
		 \\ &\quad 
		 +\int_0^T \int_\T \sigma \pd_x \tilde{u} 
		 \,\d x \, \d \tilde{W},
		 \\ \tilde{u}(0) &= \tilde{u}_0
	\end{aligned}
\end{equation}
as in Definition \ref{def:wk_sol}(e).

\medskip
{\it 5. Appropriate inclusions.}

The $\tilde{\mathbb{P}}$-almost sure inclusion 
$\tilde{u} \in C([0,T];H^1_w(\T)) $
follows directly from the Skorokhod argument of 
Theorem \ref{thm:skorohod}. 
Therefore, we are left to show that 
$\tilde{u} \in L^p(\Omega ;L^\infty([0,T];H^1(\T)))
\cap L^2(\Omega \times [0,T]; H^2(\T))$. 

By the Lusin--Souslin theorem \cite[Thm.~15.1]{Kec1995}, the inclusion 
$L^2([0,T];H^2(\T)) \hookrightarrow L^2([0,T];H^1(\T))$ is Borel. 
We can then invoke the equality of laws to obtain
$$
\tilde{\Ex}\norm{\tilde{u}_n}_{L^2([0,T];H^2(\T))}^2
={\Ex}\norm{{u}_n}_{L^2([0,T];H^2(\T))}^2 < C,
$$
where $C$ is independent of $n$, 
by Theorem \ref{thm:galerkin_H1}.  
This implies that $\tilde{q}_n$ are uniformly bounded in 
$L^2(\Omega\times [0,T]\times \T)$. Therefore, by reflexivity, 
any weak limit is also in $L^2(\Omega\times [0,T]\times \T)$.

The inclusion $L^\infty([0,T];H^1(\T)) \hookrightarrow C([0,T];H^1_w(\T))$ 
is continuous because for any $\varphi \in H^1(\T)^*= H^{-1}(\T)$, 
$$
\sup_{t \in [0,T]} \abs{\LL u, \varphi\RR_{H^{-1},H^1,}} 
\le \norm{\varphi}_{H^{-1}(\T)} 
\sup_{t \in [0,T]} \norm{u}_{H^1(\T)}.
$$
Therefore, by the quasi-Polish version 
of the Lusin--Souslin theorem \cite[Cor.~A.2]{Ond2010},
we maintain as before the higher moment bound
$$
\tilde{\Ex}\norm{\tilde{u}_n}_{L^\infty([0,T];H^1(\T))}^p
	 ={\Ex}\norm{{u}_n}_{L^\infty([0,T];H^1(\T))}^p < C.
$$ 
\end{proof}

\section{Pathwise uniqueness and proof of 
Theorem \ref{thm:main1}}\label{sec:pw_uniqueness}

In this section, we will show pathwise uniqueness and, 
consequently, the existence of strong solutions in 
the energy space $L^2\left(\Omega;L^\infty([0,T];
H^1(\T))\right)$ (Theorem~\ref{thm:st_uniqueness}). This 
will involve estimates similar to the energy inequality in 
Proposition \ref{thm:galerkin_H1}. However, as we 
are dealing with solutions a.s.~in 
$L^\infty([0,T];H^1(\T))$, calculations using smooth  Galerkin 
approximations cannot be reproduced here. 
To keep using the standard (finite-dimensional) 
It\^o formula, we convolve the SPDE against a 
standard Friedrichs mollifier $J_\delta$, making it 
possible to interpret the SPDE pointwise in $x$.
Mollification introduces error terms to 
the equation, (see ~\eqref{eq:u_ch_ep_delta}). 
We will first state and prove convergence results 
for these error terms. 

\subsection{Regularisation errors}\label{sec:commutators}
We begin this subsection by proving first order 
commutator estimates in the stochastic setting. 
Notice that the fourth moment assumption is made. 
This assumption is the reason that a bounded $p > 4$ 
moment is needed on the initial condition (e.g., 
in Theorem \ref{thm:main1}). 
The assumption itself arises from \eqref{eq:L4}, 
where the $L^\infty_t L^2_x$ boudedness of 
$\pd_x u$ is exploited in applying Young's convolution inequality. 
It is true that $\pd_x u$ is in  $L^{3 -}_{\omega, t, x}$ 
uniformly in $\ep$, but because of the extra square in the exponent, 
this is difficult to exploit. Higher integrability bounds for fixed 
$\ep > 0$ exist but may only hold up to stopping time.

Throughout the paper we let $J_\delta$ 
be a standard Friedrichs (spatial) mollifier, and set 
$u_\delta := u *J_\delta$, and use $\delta$ as subscript to denote a mollified
function.

\begin{lem}[Commutator estimates]\label{thm:commutator1}
Let $u,v,w \in L^4(\Omega; L^\infty([0,T];H^1(\T)))$, and 
suppose $\sigma \in W^{1,\infty}(\T)$. 
Finally, let $K \in W^{1,\infty}(\T)$ be a 
given kernel function. Define the commutator functions:
\begin{equation}\label{eq:commutator_defin1}
	\begin{aligned}
		 E^1_\delta=E^1_\delta(u,v)
		&:= \Bigl(u \,\pd_x u 
		-v \,\pd_x v \Bigr)*J_\delta
		-\bk{u_\delta \,\pd_x u_\delta 
		-v_\delta \,\pd_x v_\delta} 
		\\ & \quad
		+\pd_x K*\bk{u^2-v^2
		+\frac12 \bk{\bk{\pd_xu}^2-\bk{\pd_x v}^2}}*J_\delta
		\\ & \quad
		-\pd_x K *\bk{u_\delta^ 2-v_\delta^2 
		+\frac12 \bk{\bk{\pd_x u_\delta}^2
		-\bk{\pd_x v_\delta}^2}},
		\\
		 E^2_\delta= E^2_\delta(w)
		&:=\bigl( \sigma \,\pd_x w\bigr)
		*J_\delta -\sigma \,\pd_x w_\delta,
		\\ 
		E^3_\delta=E^3_\delta(w) 
		&:=-\frac12 \Bigl(\sigma\,
		\pd_x\bk{\sigma\,\pd_x w} \Bigr)*J_\delta  
		+\frac12 \sigma\,\pd_x\bk{\sigma\,\pd_x w_\delta}.
	\end{aligned}
\end{equation}

The following convergences hold:
\begin{align}\label{eq:commutator_converge1}
	\Ex \norm{E^1_\delta}_{L^2([0,T]\times \T)}^2, 
	\, \Ex \norm{E^2_\delta}_{L^2([0,T];H^1 (\T))}^2, 
	\, \Ex \norm{E^3_\delta}_{L^2( [0,T] \times \T)}^2 
	\todelta 0.
\end{align}
\end{lem}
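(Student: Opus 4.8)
The plan is to reduce all three convergences to \emph{deterministic} commutator estimates, valid for a.e.\ fixed $(\omega,t)$, and then to conclude by dominated convergence in $(\omega,t)$. Writing $J_\delta*$ for spatial mollification, the two workhorses are the following standard Friedrichs/DiPerna--Lions facts on $\T$: for $b\in L^\infty(\T)$ and $f\in L^2(\T)$ the zeroth-order commutator
$$
[J_\delta,b]f:=(bf)*J_\delta-b\,(f*J_\delta)
$$
obeys $\norm{[J_\delta,b]f}_{L^2(\T)}\le C\norm{b}_{L^\infty}\norm{f}_{L^2}$ and tends to $0$ in $L^2(\T)$ (with rate $\le C\delta\norm{b}_{W^{1,\infty}}\norm{f}_{L^2}$ when $b\in W^{1,\infty}$, since $b(y)-b(x)=O(\delta)$ on $\supp J_\delta$); while for $b\in W^{1,\infty}(\T)$ the first-order commutator $(bf)*\pd_x J_\delta-b\,(f*\pd_x J_\delta)$ is bounded by $C\norm{b}_{W^{1,\infty}}\norm{f}_{L^2}$ uniformly in $\delta$ and converges in $L^2(\T)$ to $(\pd_x b)\,f$. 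The essential point in both is that \emph{no derivative of $f$ is lost}.

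For $E^2_\delta(w)=(\sigma\pd_x w)*J_\delta-\sigma\pd_x w_\delta$, I would set $g:=\pd_x w\in L^\infty([0,T];L^2(\T))$, so $E^2_\delta=[J_\delta,\sigma]g$ and the quantitative rate gives $\norm{E^2_\delta}_{L^2}\le C\delta\norm{\sigma}_{W^{1,\infty}}\norm{g}_{L^2}\to 0$. Since the norm here is $L^2_tH^1_x$, I must also control $\pd_x E^2_\delta$; differentiating,
$$
\pd_x E^2_\delta=\bigl[(\sigma g)*\pd_x J_\delta-\sigma(g*\pd_x J_\delta)\bigr]-(\pd_x\sigma)(g*J_\delta),
$$
the bracket converges to $(\pd_x\sigma)g$ by the first-order fact while the last term converges to $-(\pd_x\sigma)g$, so the two top-order limits cancel and $\pd_x E^2_\delta\to 0$ in $L^2$. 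For $E^3_\delta$ I would expand $\sigma\pd_x(\sigma\pd_x w)=\sigma\,\pd_x\sigma\,\pd_x w+\sigma^2\pd_x^2w$: the first piece yields $-\tfrac12[J_\delta,\sigma\,\pd_x\sigma]\,\pd_x w\to 0$ in $L^2$, and for the genuinely second-order piece I would write $\sigma^2\pd_x^2w=\pd_x(\sigma^2\pd_x w)-\pd_x(\sigma^2)\,\pd_x w$ to recast $(\sigma^2\pd_x^2w)*J_\delta-\sigma^2\pd_x^2w_\delta$ as the first-order commutator in $g=\pd_x w$ minus $(\pd_x(\sigma^2)\,g)*J_\delta$, whose limits $\pd_x(\sigma^2)\,g$ again cancel. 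Only $\sigma\in W^{1,\infty}$ is used in $E^2_\delta,E^3_\delta$.

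The term $E^1_\delta$ is the commutator of the full drift nonlinearity evaluated on a difference, and I would split it into its local (Burgers) and nonlocal ($K$) parts. For the local part, using $a\pd_x a=\tfrac12\pd_x a^2$,
$$
(u\pd_x u-v\pd_x v)*J_\delta-(u_\delta\pd_x u_\delta-v_\delta\pd_x v_\delta)=\tfrac12\pd_x\Bigl[(u^2-v^2)*J_\delta-(u_\delta^2-v_\delta^2)\Bigr],
$$
and since the one-dimensional embedding $H^1(\T)\hookrightarrow L^\infty(\T)$ puts $u^2,v^2\in H^1(\T)$, both $(u^2-v^2)*J_\delta$ and $u_\delta^2-v_\delta^2$ converge to $u^2-v^2$ in $H^1$, so this part $\to 0$ in $L^2$. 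For the nonlocal part I would pull $\pd_x K$ out by Young's convolution inequality,
$$
\norm{\pd_x K*\bigl[G*J_\delta-G_\delta\bigr]}_{L^2}\le\norm{\pd_x K}_{L^2}\,\norm{G*J_\delta-G_\delta}_{L^1},\qquad G:=u^2-v^2+\tfrac12\bigl((\pd_x u)^2-(\pd_x v)^2\bigr),
$$
and note $G*J_\delta\to G$ and $G_\delta\to G$ in $L^1$ (the squared terms converge in $L^1$ because $\pd_x u_\delta\to\pd_x u$ in $L^2$).

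In each case the deterministic estimates supply, for $\prob$-a.e.\ $\omega$ and a.e.\ $t$, both the pointwise convergence $\norm{E^i_\delta(\omega,t)}\todelta 0$ in the relevant spatial norm and a $\delta$-uniform dominating bound by $\norm{\sigma}$-dependent constants times $\norm{u(\omega,t)}_{H^1}^2+\norm{v(\omega,t)}_{H^1}^2$ (for $E^1$) or $\norm{w(\omega,t)}_{H^1}^2$ (for $E^2,E^3$); integrating in $t$ and invoking $u,v,w\in L^4(\Omega;L^\infty([0,T];H^1(\T)))$ produces an $L^1(\Omega\times[0,T])$ dominating function, whence the dominated convergence theorem yields \eqref{eq:commutator_converge1}. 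I expect the main obstacle to be precisely the nonlocal part of $E^1_\delta$: because we must estimate $\Ex\norm{E^1_\delta}_{L^2}^2$ and the $L^2$ norm already costs the square of an $H^1$ norm (through the $(\pd_x u)^2$ entering $P$), squaring forces a fourth power $\norm{G}_{L^1}^2\lesssim(\norm{u}_{H^1}^2+\norm{v}_{H^1}^2)^2$, which is exactly why the fourth-moment hypothesis — and hence $p_0>4$ in Theorem \ref{thm:main1} — is indispensable. The second genuine subtlety, shared by $E^2_\delta$ and $E^3_\delta$, is the cancellation of the top-order DiPerna--Lions limits, which must be bookkept carefully so that no derivative of $u$ or $w$ is lost.
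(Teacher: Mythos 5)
Your proposal is correct, and for $E^2_\delta$, $E^3_\delta$ and the nonlocal ($K$-convolution) part of $E^1_\delta$ it runs essentially parallel to the paper: the paper likewise reduces $\pd_x E^2_\delta$ and the genuinely second-order piece of $E^3_\delta$ to the DiPerna--Lions first-order commutator (citing Lemma II.1 of that paper, where you instead exhibit the cancellation of the two top-order limits explicitly — same content), and likewise pulls $\pd_x K$ out by Young's inequality and converges the quadratic terms in $L^1(\T)$. The one genuine divergence is the transport part of $E^1_\delta$: you write $u\,\pd_x u=\tfrac12\pd_x u^2$ and use that $H^1(\T)$ is an algebra in one dimension to get $u_\delta^2\to u^2$ in $H^1(\T)$ pointwise in $(\omega,t)$, concluding by dominated convergence; the paper instead uses the difference-quotient representation $\int\frac{u(x)-u(y)}{x-y}\,\pd_y u(y)\,(x-y)J_\delta(x-y)\,\d y$ together with $\delta\norm{J_\delta}_{L^2}\lesssim\sqrt{\delta}$, which buys a quantitative rate $\Ex\abs{I_1^\delta}\lesssim_T\delta$ (not needed for the statement, but sharper). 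Both are valid; yours is softer but exploits the 1D embedding more directly. One small correction of emphasis: the fourth-moment hypothesis is not specific to the nonlocal part — the local part costs $\norm{u}_{H^1}^2$ in $L^2_x$ just as $P$ does, so squaring forces $\norm{\pd_x u}_{L^2}^4$ there too (this is exactly the paper's estimate \eqref{eq:L4}), and your own dominating function for the Burgers piece is also quartic in $\norm{u}_{H^1}$.
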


\begin{proof}

Whilst these commutator estimates are 
similar to the classical ones of \cite{DL1989}, 
we prove them here both because we are in 
the stochastic setting, with an extra 
integral in $\d \mathbb{P}$, and also 
because the extra temporal integrability 
on $\norm{u}_{H^1(\T)}$  permits for the slightly stronger results that 
we shall be using. In particular, we 
have bounds in $L^2_x$, and not 
only $L^1_x$, for $E^1_\delta$.

\medskip
{\it 1. Convergence of $E^1_\delta$.}
\medskip

For the transport terms we have
\begin{align*}
	&\norm{\bk{u\,\pd_x u}*J_\delta
	-u_\delta\,\pd_x u_\delta}_{L^2([0,T]\times \T)}^2
	\\ & \quad \lesssim  
	\norm{\bk{u\,\pd_x u}*J_\delta
	-u\,\pd_x u_\delta}_{L^2([0,T]\times \T)}^2
	+\norm{ u\,\pd_x u_\delta-u_\delta\,
	\pd_x u_\delta}_{L^2([0,T]\times \T)}^2
	\\ & \quad = \norm{\int_\T \frac{u(\dott)
	-u(y)}{\dott-y} \pd_y u(y) \,(\dott-y) 
	J_\delta(\dott-y) \,\d y}_{L^2([0,T]\times \T)}^2
	\\ &\quad\quad 
	+ \norm{\int_\T \frac{u(\dott)-u(y)}{\dott-y} 
	\pd_x u_\delta(\dott) \,(\dott-y) 
	\,J_\delta(\dott-y)\,\d y}_{L^2([0,T]\times \T)}^2
	\\ & \quad = : I_1^\delta + I_2^\delta.
\end{align*}

By Young's convolution inequality, and 
the fact that $\delta \norm{J_\delta}_{L^2(\T)} 
\lesssim \sqrt{\delta}$,
\begin{equation}\label{eq:L4}
	\begin{aligned}
		\Ex\abs{I_1^\delta} & 
		\lesssim \Ex \int_0^T \norm{\abs{\pd_xu}\, 
		\sup_{|h| \le \delta} 
		\abs{\frac{u(\dott+h)-u(\dott)}{h}}}_{L^1(\T)}^2 
		\delta^2 \norm{J_\delta}_{L^2(\T)}^2\,\d s 
		\\ & \lesssim \Ex \, \delta \int_0^T 
		\norm{\pd_x u}_{L^2(\T)}^4 \,\d s
		\lesssim_T \delta.
	\end{aligned}
\end{equation}
Similarly,
\begin{align*}
	\Ex \abs{I_2^\delta} & \lesssim 
	\delta\,\Ex \abs{\int_0^T
	\norm{\pd_x u}_{L^2}^2 
	\norm{\pd_x u_\delta}_{L^2(\T)}^2 \,\d s} 
	\\ & 
	\lesssim \delta\,\Ex \int_0^T 
	\norm{\pd_x u}_{L^2(\T)}^4 \,\d s
	\lesssim_T \delta.
\end{align*}
Therefore, 
$$
\Ex \norm{\bk{u\,\pd_x u}*J_\delta
-u_\delta\,\pd_x u_\delta}_{L^2([0,t]\times \T)}^2 
\le \Ex \bigl( I_1^\delta + I_2^\delta\bigr) 
\todelta 0.
$$

Consider the terms in $E^1_\delta$ involving 
the kernel $K$, for which $\norm{\pd_x K}_{L^2(\T)}\lesssim 1$. For any 
$\xi \in L^4\left(\Omega;L^\infty([0,T];L^2(\T))\right)$, we find 
\begin{align*}
	&\norm{\pd_x K * \xi^2 * J_\delta
	-\pd_x K *\xi_\delta^2}_{L^2(\T)}^2
	\\ & \qquad\quad
	\le \norm{\pd_x K}_{L^2(\T)}^2 
	\norm{\xi^2*J_\delta-\xi_\delta^2}_{L^1(\T)}^2
	\\ & \qquad\quad \lesssim 
	\norm{\xi^2*J_\delta-\xi^2}_{L^1(\T)}^2
	+\norm{\xi^2 - \xi \xi_\delta}_{L^1(\T)}^2
	+\norm{\xi \xi_\delta-\xi_\delta^2}_{L^1(\T)}^2.
\end{align*}
By standard properties of Friedrichs mollifiers, the 
terms on the right-hand side all tend to zero 
as $\delta \to 0$. We take $\xi = u, v$ or 
$\xi = \pd_x u, \pd_x v$ in the calculation above. 

Combining the foregoing calculations, we arrive at
$$
\Ex \norm{E^1_\delta}_{L^2([0,T]\times \T)}^2 
\todelta 0.
$$

\medskip
{\it 2. Convergence of $E^2_\delta$.}
\medskip

For any $\xi \in L^2(\Omega \times [0,T]\times \T)$ 
(such as $\xi = u$ or $\xi = \pd_x u$), the convergence 
$$
 \bigl(\sigma \xi \bigr)*J_\delta- \bk{\sigma \,\xi_\delta }
\todelta 0 \quad \text{in $L^2( \Omega \times [0,T] \times \T)$}.
$$
is a direct result of the dominated convergence theorem.

The convergence 
$$
\pd_x \bigl(\sigma \xi \bigr)*J_\delta-\pd_x \bk{\sigma \,\xi_\delta }
\todelta 0 \quad \text{in $L^2( [0,T] \times \T)$}
$$
follows directly from \cite[Lemma II.1]{DL1989}, 
where it was shown that
\begin{align*}
	\norm{\pd_x\bigl(\sigma \xi \bigr)*J_\delta
	-\pd_x\bk{\sigma \,\xi_\delta}}_{L^2([0,T]\times \T)} 
	\le C_{\delta,\sigma} 
	\norm{\xi}_{L^2([0,T] \times \T)},
\end{align*}
where $C_{\delta, \sigma} \todelta 0$ and 
is independent of $\xi$ (and therefore deterministic). 
This gives
$$
\Ex \norm{E^2_\delta}_{L^2([0,T] ;H^1( \T))}^2
\todelta 0.
$$

\medskip

{\it 3. Convergence of $E^3_\delta$ in $L^2(\Omega;L^\infty([0,T];L^2(\T)))$.}

\medskip

Since $w \in L^4\left(\Omega;L^\infty([0,T];
H^1(\T))\right)$, again setting $\xi = \pd_x w$, so 
that $\xi$ belongs to $L^4\left(\Omega;L^\infty([0,T];
L^2(\T))\right)$, the commutator can be written as
\begin{align*}
	-2 E^3_\delta & = \Bigl(\sigma\,\pd_x \bk{\sigma \xi}
	\Bigr) *J_\delta-\sigma \,\pd_x \bk{\sigma \xi_\delta}
	\\ & \quad 
	= \Bigl[\Bigl(\sigma\,\pd_x \sigma \xi \Bigr) *J_\delta 
	-\sigma \,\pd_x \sigma \xi_\delta \Bigr]
	+  \Bigl[\Bigl({\sigma^2 \,\pd_x \xi} 
	\Bigr) *J_\delta - \sigma^2\,\pd_x \xi_\delta\Bigr].
\end{align*}
We can then apply step 2 of the proof to conclude 
that \eqref{eq:commutator_converge1} holds.
\end{proof}

Next, we introduce an operator notation 
that will be indispensable in the next two results. 
For $f \in L^p(\T)$, $1 \le p \le \infty$, 
set 
\begin{equation}\label{eq:boldface}
\JJ_\delta f := f_\delta = f *J_\delta, \quad \KK f := \pd_x \bk{\sigma f}
\end{equation} 
where $J_\delta$ is the mollifier used in 
the definition of $E^3_\delta$. 
Finally, we define 
$$
\bigl[\KK,\JJ_\delta\bigr](f)
:=\KK\JJ_\delta f-\JJ_\delta \KK f
=\pd_x \bigl( \sigma f_\delta\bigr) 
- \pd_x (\sigma f) *  J_\delta.
$$

\begin{lem}[Double commutator estimate]\label{thm:commutator2}
Let $\xi \in L^2(\Omega \times [0,T] \times \T)$, 
and suppose $\sigma \in W^{2,\infty}(\T)$. 
Then 
\begin{align*}%\label{eq:commutator_defin2}
	R_\delta :=J_\delta*\pd_x 
	\bigl(\sigma\,\pd_x\bk{\sigma \xi}\bigr) 
	-2 \pd_x\bigl(\sigma\,J_\delta*\pd_x\bk{\sigma \xi}\bigr)
	+\pd_x\bigl(\sigma \,\pd_x\bk{\sigma \xi_\delta}\bigr)
	\todelta 0
\end{align*}
in $L^2(\Omega \times [0,T]\times \R)$.
\end{lem}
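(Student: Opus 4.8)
The plan is to read $R_\delta$ as the iterated (second-order) commutator of the first-order operator $\KK=\pd_x(\sigma\,\cdot\,)$ against the mollification $\JJ_\delta$. Indeed, in the notation of \eqref{eq:boldface} a direct rearrangement gives
\begin{align*}
	R_\delta=\bigl[\KK,[\KK,\JJ_\delta]\bigr]\xi
	=\KK^2\JJ_\delta\xi-2\,\KK\JJ_\delta\KK\xi+\JJ_\delta\KK^2\xi .
\end{align*}
Since $\xi$ is merely $L^2$, I would not try to estimate $R_\delta$ termwise; instead the strategy is the standard two-step commutator argument. First, establish a bound $\norm{R_\delta\xi}_{L^2(\T)}\le C\norm{\xi}_{L^2(\T)}$ that is \emph{uniform in} $\delta$, with $C=C\bk{\norm{\sigma}_{W^{2,\infty}(\T)}}$ deterministic. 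Second, show $R_\delta\phi\todelta 0$ in $L^2(\T)$ for $\phi$ in a dense class of $x$-smooth functions, and extend to all $\xi\in L^2$ by density. Because $C$ is deterministic, a final dominated-convergence step over $\Omega\times[0,T]$ upgrades the pointwise (in $(\omega,t)$) $L^2_x$-convergence to the asserted convergence of $\Ex\int_0^T\norm{R_\delta}_{L^2(\T)}^2\,\d t$.

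The first concrete step is to derive an integral-kernel representation. As $\JJ_\delta$ sits in every summand, all spatial derivatives may be transferred onto the mollifier by integrating by parts on $\T$, so that none falls on $\xi$; grouping the result by the order of derivative landing on $J_\delta$ yields
\begin{align*}
	R_\delta\xi(x)
	&=\int_\T \pd_x^2 J_\delta(x-y)\,\bk{\sigma(x)-\sigma(y)}^2\,\xi(y)\,\d y
	+\int_\T \pd_x J_\delta(x-y)\,B(x,y)\,\xi(y)\,\d y\\
	&\quad+\bk{\bk{\pd_x\sigma(x)}^2+\sigma(x)\,\pd_x^2\sigma(x)}\,\xi_\delta(x),
\end{align*}
where $B$ is a bounded expression built from $\sigma$ and $\pd_x\sigma$ evaluated at $x,y$. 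The two algebraic facts that drive the estimate are that the coefficient of $\pd_x^2 J_\delta$ collapses to the perfect square $\bk{\sigma(x)-\sigma(y)}^2$ and that $B$ vanishes on the diagonal, $B(x,x)=0$; both follow from the $1,-2,1$ weighting intrinsic to the double commutator, and checking them is the only genuinely computational part.

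The crux, and the place where $\sigma\in W^{2,\infty}(\T)$ is indispensable, is the uniform bound. On the support $\abs{x-y}\le\delta$ one has $\abs{\pd_x^k J_\delta(x-y)}\lesssim\delta^{-1-k}$, so a priori the first kernel is of size $\delta^{-3}$ and the second of size $\delta^{-2}$. The two cancellations restore the missing powers of $\delta$: the perfect square gives $\bk{\sigma(x)-\sigma(y)}^2\le\norm{\pd_x\sigma}_{L^\infty}^2\abs{x-y}^2\lesssim\delta^2$, while $B(x,x)=0$ together with Taylor's theorem gives $\abs{B(x,y)}\le C\norm{\sigma}_{W^{2,\infty}(\T)}^2\abs{x-y}\lesssim\delta$; here the mean-value bound for $B$ needs $\pd_x^2\sigma\in L^\infty$, and the zeroth-order coefficient $\bk{\pd_x\sigma}^2+\sigma\,\pd_x^2\sigma$ is itself bounded only because $\pd_x^2\sigma\in L^\infty$. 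Hence all three kernels are dominated by $C\norm{\sigma}_{W^{2,\infty}(\T)}^2\,\delta^{-1}\mathbf{1}_{\abs{x-y}\le\delta}$, whose $L^1$-norm in $x-y$ is bounded uniformly in $\delta$; Young's convolution inequality (or a Schur test) then yields $\norm{R_\delta\xi}_{L^2(\T)}\le C\norm{\sigma}_{W^{2,\infty}(\T)}^2\norm{\xi}_{L^2(\T)}$ uniformly in $\delta$. I expect this to be the main obstacle: it is exactly the cancellation of the a priori $\delta^{-3}$ and $\delta^{-2}$ singularities that forces the second-derivative hypothesis on $\sigma$.

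For convergence on smooth functions it is cleanest to argue at the operator level. For $x$-smooth $\phi$ one has $\KK\phi\in H^1(\T)$ and $\KK^2\phi\in L^2(\T)$ (using $\pd_x^2\sigma\in L^\infty$), and standard mollifier convergence together with the boundedness $\KK\colon H^1(\T)\to L^2(\T)$ gives $\JJ_\delta\KK^2\phi\to\KK^2\phi$, $\KK\JJ_\delta\KK\phi\to\KK^2\phi$, and $\KK^2\JJ_\delta\phi\to\KK^2\phi$ in $L^2(\T)$; therefore $R_\delta\phi\to\KK^2\phi-2\KK^2\phi+\KK^2\phi=0$. For general $\xi$, approximating by such $\phi$ and using the uniform bound gives $\norm{R_\delta\xi}_{L^2(\T)}\le C\norm{\sigma}_{W^{2,\infty}(\T)}^2\norm{\xi-\phi}_{L^2(\T)}+\norm{R_\delta\phi}_{L^2(\T)}$, so $R_\delta\xi\to0$ in $L^2(\T)$ for each fixed $(\omega,t)$. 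A final application of dominated convergence on $\Omega\times[0,T]$, with the $\delta$-independent dominating function $C^2\norm{\sigma}_{W^{2,\infty}(\T)}^4\norm{\xi(\omega,t)}_{L^2(\T)}^2\in L^1(\Omega\times[0,T])$, completes the proof.
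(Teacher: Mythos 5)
Your proposal is correct and follows essentially the same route as the paper: both identify $R_\delta$ as the iterated commutator of $\KK=\pd_x(\sigma\,\cdot)$ with $\JJ_\delta$, expand it into kernels graded by how many derivatives fall on $J_\delta$, extract the two cancellations (the perfect square $(\sigma(x)-\sigma(y))^2$ against $\pd_x^2 J_\delta$, and the vanishing on the diagonal of the first-order coefficient, which is exactly where $\pd_x^2\sigma\in L^\infty$ enters), deduce a $\delta$-uniform $L^2_x\to L^2_x$ bound by Young's convolution inequality, and conclude by density plus dominated convergence in $(\omega,t)$. The only (harmless) deviation is in the dense-class step: the paper smooths both $\sigma$ and $\xi$ and computes the limits of the three kernel integrals individually \`a la DiPerna--Lions, whereas you keep $\sigma\in W^{2,\infty}(\T)$ fixed, approximate only $\xi$, and check at the operator level that each of $\KK^2\JJ_\delta\phi$, $\KK\JJ_\delta\KK\phi$, $\JJ_\delta\KK^2\phi$ tends to $\KK^2\phi$ in $L^2$ for smooth $\phi$ --- a slightly cleaner variant that sidesteps approximating $\sigma$ in $W^{2,\infty}$.
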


\begin{rem}
An almost sure version of this lemma (instead 
of convergence in $L^2(\Omega)$) was stated with 
a similar proof in \cite[Lemma B.3]{HKP2021}. 
Furthermore, we allow for a more general $\sigma$ here 
than in \cite{PS2018}; the work \cite{PS2018} 
imposes a divergence-free condition on $\sigma$ 
(with $\T$ replaced by $\R^d$).
\end{rem}

\begin{proof}
Using the operator notation defined 
in \eqref{eq:boldface}, we can write 
$-R_\delta$ as a double commutator:
\begin{align}\label{eq:commutators}
	-R_\delta & =\Bigl[
	\bigl[\KK,\JJ_\delta\bigr],\KK\Bigr](\xi) 
	= \bigl[\KK,\JJ_\delta\bigr](\KK \xi)
	-\KK\bigl[\KK,\JJ_\delta\bigr](\xi)
	\notag \\ & 
	= 2 \KK \JJ_\delta \KK \xi-\JJ_\delta \KK \KK \xi
	-\KK \KK \JJ_\delta \xi.
\end{align}
Term-by-term we have
\begin{align}
	2\KK\JJ_\delta\KK \xi(x)
	& = 2\int_\R \pd_{xx}^2 J_\delta(x-y) 
	\sigma(x)\sigma(y) \xi(y) \;\d y 
	\label{eq:commute1}
	\\ & \quad 
	+ 2 \int_\R \pd_x J_\delta(x-y) 
	\pd_x \sigma(x) \sigma(y) \xi(y) \;\d y,
	\label{eq:commute2}
	\\ \JJ_\delta \KK \KK \xi (x)
	& = \int_\R \pd_{xx}^2 J_\delta(x-y) 
	\sigma^2(y) \xi(y) \;\d y
	\label{eq:commute3}
	\\ & \quad 
	- \int_\R \pd_x J_\delta(x-y)\sigma(y)
	\pd_y \sigma(y)\xi(y)\;\d y,
	\label{eq:commute4}
	\\ \intertext{and}
	\KK\KK \JJ_\delta \xi (x)
	& =\int_\R  J_\delta(x-y) \pd_x \big(\sigma(x) 
	\pd_x \sigma(x)\big) \xi(y) \;\d y
	\label{eq:commute5}
	\\ & \quad +3\int_\R \pd_x J_\delta(x-y) 
	\sigma(x)\pd_x \sigma(x) \xi(y)\;\d y 
	\label{eq:commute6}
	\\ & \quad
	+\int_\R \pd_{xx}^2 
	J_\delta(x-y) \sigma^2(x)\xi(y)\;\d y 
	\label{eq:commute7}.
\end{align}

We will estimate \eqref{eq:commute1} to 
\eqref{eq:commute7} by considering the sums
$$
\mathfrak{I}_1:= \eqref{eq:commute2} 
-\eqref{eq:commute4} -\eqref{eq:commute6},
\quad
\mathfrak{I}_2:= \eqref{eq:commute1} 
-\eqref{eq:commute3} - \eqref{eq:commute7},
$$
and the stand-alone integral \eqref{eq:commute5}, 
where, from \eqref{eq:commutators}, 
we see that
\begin{align}\label{eq:commutator_decomposition}
	-R_\delta=\Bigl[\bigl[\KK,\JJ_\delta\bigr],\KK\Bigr](\xi)
	=\mathfrak{I}_1+\mathfrak{I}_2
	-\eqref{eq:commute5}.
\end{align}
We will use \cite[Lemma II.1]{DL1989} 
to establish that \eqref{eq:commutator_decomposition} 
tends to zero in an appropriate sense. 
Estimating the terms in \eqref{eq:commutator_decomposition} 
separately, we have
{\small
\begin{align*}
\norm{\mathfrak{I}_1}_{L^2(\R)}
	 & 
	=\biggl\|\int_\R \pd_x J_\delta(\dott-y) 
	\\ &\quad \quad \times 
	\Bigl(2 \sigma(y) \pd_x \sigma(\dott)
	+\sigma(y) \pd_y \sigma(y)
	-3 \sigma(\dott) \pd_x \sigma(\dott)\Bigr) 
	\xi(y) \;\d y\biggr\|_{L^2(\R)}
	\\ & \quad 
	=\biggl\| \int_\R \pd_x J_\delta(\dott-y) 
	\\ & \quad \qquad\times 
	\Big(2 \bigl(\sigma(y)-\sigma(\dott)\bigr)
	\pd_x \sigma(\dott)+\big(\sigma(y) \pd_y \sigma(y) 
	-\sigma(\dott) \pd_x \sigma(\dott)\big)\Bigr) 
	\xi(y) \;\d y\bigg\|_{L^2(\R)}
	\\ & \quad \le 
	\biggl\| \int_\R \abs{\pd_x J_\delta(\dott-y)} 
	\\ & \quad \quad\times
 	\Big(2 \abs{\sigma(y)-\sigma(\dott)}
 	\, \abs{\pd_x \sigma(\dott)}
 	+\abs{\sigma(y) \pd_y \sigma(y) 
 	-\sigma(\dott)\pd_x\sigma(\dott)}\Big) 
 	\abs{\xi(y)} \;\d y\biggr\|_{L^2(\R)}
 	\\	& \quad 
 	\le C \biggl\| \int_\R
 	\abs{\dott-y}\abs{\pd_x J_\delta(\dott - y)} 
 	\\ & \quad \quad\times
 	\left(2\abs{\frac{\sigma(y)
 	-\sigma(\dott)}{y-\dott}\pd_x\sigma(\dott)} 
 	+\abs{\frac{\sigma(y)\pd_y \sigma(y) 
 	-\sigma(\dott) \pd_x \sigma(\dott)}{y-\dott}}
 	\right) \abs{\xi(y)} \;\d y\biggr\|_{L^2(\R)}
 	\\ & \quad \le C\left(\norm{\pd_x\sigma}_{L^\infty(\R)}^2 
 	+\norm{\pd_x\bigl(\sigma 
 	\pd_x \sigma\bigr)}_{L^\infty(\R)} \right)
 	\\ & \quad \quad \times 
 	\biggl\| \int_\R  \abs{\dott-y} 
 	\abs{\pd_x J_\delta(\dott-y)}\abs{\xi(y)} 
 	\;\d y\biggr\|_{L^2(\R)} 
 	\\	 & \quad \le C\Bigl(\norm{\pd_x\sigma}_{L^\infty(\R)}^2 
 	+ \norm{\pd_x\bigl(\sigma 
 	\pd_x \sigma\bigr)}_{L^\infty(\R)} \Bigr) 
 	\Bigl\|\abs{\dott} \pd_x J_\delta(\dott)\Bigr\|_{L^1(\R)}
 	\norm{\xi}_{L^2(\R)} 
 	\\ & \quad  
 	\le C \Big(\norm{\pd_x\sigma}_{L^\infty(\R)}^2
 	+\norm{\pd_x\bigl(\sigma \pd_x \sigma\bigr)}_{L^\infty(\R)}
 	\Bigr)\norm{\xi}_{L^2(\R)},
\end{align*}
}where we have used Young's convolution 
inequality and subsequently the basic estimate 
$\bigl\|\abs{\dott} \pd_x J_\delta(\dott)
\bigr\|_{L^1(\R)}\lesssim 1$. Similarly,
{\small
\begin{align*}
	\norm{\mathfrak{I}_2}_{L^2(\R)} 
	& =\norm{\int_\R \pd_{xx}^2 
	J_\delta(\dott-y)\big(2 \sigma(\dott)\sigma(y)
	-\sigma^2(\dott)-\sigma^2(y)\big)
	\xi(y) \;\d y}_{L^2(\R)}
	\\ & 
	= \norm{\int_\R \pd_{xx}^2 J_\delta(\dott-y) 
 	\bigl(\sigma(\dott)-\sigma(y)\bigr)^2 
 	\xi(y)\;\d y}_{L^2(\R)}
 	\\ & 
 	\le \norm{\int_\R 
 	\abs{\pd_{xx}^2 J_\delta(\dott-y)}
 	\abs{2 \sigma(\dott) \sigma(y) - \sigma^2(\dott)
 	-\sigma^2(y)}\, \abs{\xi(y)}\;\d y}_{L^2(\R)}
 	\\	 & 
 	\le C \norm{\int_\R \bigl(\dott-y\bigr)^2
 	\abs{\pd_{xx}^2J_\delta(\dott-y)}
 	\abs{\frac{\sigma(\dott) - \sigma(y)}{\dott-y}}^2 
 	\abs{\xi(y)} \;\d y}_{L^2(\R)}
 	\\ & 
 	\le C	\norm{\pd_x \sigma}_{L^\infty(\R)}^2
 	\norm{\int_\R
 	(\dott - y)^2\abs{\pd_{xx}^2 J_\delta(\dott-y)}
 	\abs{\xi(y)} \;\d y}_{L^2(\R)}
 	\\ & 
 	\le C\norm{\pd_x \sigma}_{L^\infty(\R)}^2
 	\Bigl\|(\dott)^2\pd_{xx}^2J_\delta(\dott)\Bigr\|_{L^1(\R)}
 	\norm{\xi}_{L^2(\R)} 
 	\\& \le 
 	C \norm{\pd_x \sigma}_{L^\infty(\R)}^2
 	\norm{\xi}_{L^2(\R)}.
\end{align*}}
We also have
$$
\norm{\eqref{eq:commute5}}_{L^2(\R)}
\le C \norm{J_\delta}_{L^1(\R)}
\norm{\pd_x(\sigma\pd_x \sigma)}_{L^\infty(\R)}
\norm{\xi(t)}_{L^2(\R)}.
$$

Given the last three ($\delta$-independent) bounds, 
it is sufficient to establish convergence of
\eqref{eq:commutators} under the 
assumption that $\sigma$, $\xi$ are smooth (in $x$). 
The general case follows by density using the 
established bounds. Under this assumption, we have
\begin{align*}
	\mathfrak{I}_2 
	& =\int_\R \pd_{xx}^2 J_\delta(x - y)
	\bigl( 2 \sigma(x)\sigma(y)-\sigma^2(x)
	-\sigma^2(y)\bigr) \xi(y) \;\d y
	\\ & 
	=- 2\int_\R\pd_{xx}^2 J_\delta(x-y) 
	\frac{(x - y)^2}{2}
	\left(\frac{\sigma(y)-\sigma(x)}{y-x} 
	\right)^2 \xi(y) \;\d y
	\\ & = -2 \bigl(\pd_x \sigma(x)\bigr)^2 
	\xi(x)\int_\R \frac{z^2}{2}\pd_{zz}^2 
	J_\delta(z) \; \d z+o_\delta(1),
\end{align*}
where $\int_\R \frac{z^2}{2}
\pd_{zz}^2 J_\delta(z) \; \d z=1$. A similar 
calculation can be done for $\mathfrak{I}_1$, 
in which case there is only one derivative on the mollifier 
and then the calculation can be found 
in the proof of \cite[Lemma II.1]{DL1989}. 
The limit of \eqref{eq:commute5} is standard. 
Reasoning as in the proof 
of \cite[Lemma II.1]{DL1989}, we arrive at
\begin{align*}
	\mathfrak{I}_1&\overset{\delta \downarrow 0}{\to}
	\pd_x \bigl(\sigma \pd_x \sigma\bigr)
	\xi+2\bigl(\pd_x \sigma\bigr)^2 \xi, 
	\quad \mathfrak{I}_2  \overset{\delta \downarrow 0}{\to} 
	- 2\bigl(\pd_x \sigma\bigr)^2 \xi, 
	\\ 
	-\eqref{eq:commute5}&\overset{\delta \downarrow 0}{\to} 
	-\pd_x \bigl(\sigma \pd_x \sigma\bigr) \xi
	\quad \text{in $L^2(\R)$, for 
	$\d\mathbb{P}\otimes \d t$-a.e.}%~$(\omega,t)$}. 
\end{align*}

Adding these terms together, with 
reference to \eqref{eq:commutator_decomposition}, and 
using the dominated convergence theorem, we conclude that   
$-R_\delta\todelta 0$ in $L^2(\Omega \times [0,T] \times\R)$.
\end{proof}

\begin{prop}[It\^{o}--Stratonovich conversion terms 
and regularisation errors]\label{thm:commutator3}
Let $S \in C^1(\R) \cap \dot{W}^{2,\infty}(\R)$ 
satisfy $S'(r)=O(r)$ and $\sup_r {\abs{S''(r)}} 
<\infty$. Let $\varphi \in C^\infty([0,T]\times \T)$.
 With $w$, $w_\delta$, $E^2_\delta$, 
and $E^3_\delta$ defined as in \eqref{eq:commutator_defin1} 
of Lemma \ref{thm:commutator1}, we have 
\begin{equation}\label{eq:commutator_converge2}
	\begin{aligned}
		\Ex \int_0^T\biggl| \int_\T &
		-\varphi S'(\pd_x w_\delta) \pd_x E^3_\delta 
		\\ & +\varphi  S''(\pd_x w_\delta) \bk{\frac12 
		\,\abs{\pd_x E^2_\delta}^2
		+\pd_x \bk{\sigma \,\pd_x w_\delta}
		\,\pd_x E^2_\delta}
		\,\d x\biggr| \,\d t\todelta 0.
	\end{aligned}
\end{equation}
\end{prop}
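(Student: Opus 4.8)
The plan is to reduce everything to the two convergences already proved in Lemmas \ref{thm:commutator1} and \ref{thm:commutator2}, the crucial point being an \emph{exact cancellation} of the only genuinely dangerous contributions. The danger is that the integrand in \eqref{eq:commutator_converge2} secretly contains the factor $\pd_x^2 w_\delta=(\pd_x w)*\pd_x J_\delta$, whose $L^2(\T)$-norm is only bounded by $\norm{\pd_x w}_{L^2(\T)}/\delta$ and therefore blows up as $\delta\downarrow 0$. Since $E^2_\delta$ and $E^3_\delta$ vanish without a quantitative rate, no term containing $\pd_x^2 w_\delta$ can be controlled in isolation; the proof must instead exhibit the two such terms with opposite signs so that they annihilate each other.

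First I would record the algebraic backbone. Writing $\xi:=\pd_x w$, so that $\xi_\delta=\pd_x w_\delta$, the definitions \eqref{eq:commutator_defin1} give directly
\[
\pd_x E^2_\delta = J_\delta*\pd_x\bk{\sigma\xi}-\pd_x\bk{\sigma\xi_\delta},
\qquad
-2\pd_x E^3_\delta = J_\delta*\pd_x\bk{\sigma\pd_x\bk{\sigma\xi}}-\pd_x\bk{\sigma\pd_x\bk{\sigma\xi_\delta}}.
\]
Subtracting $-2\pd_x E^3_\delta$ from the quantity $R_\delta$ of Lemma \ref{thm:commutator2} (with the same $\xi$) cancels the two identical terms $J_\delta*\pd_x(\sigma\pd_x(\sigma\xi))$; recognising in the remainder that $\pd_x\bk{\sigma\xi_\delta}-J_\delta*\pd_x\bk{\sigma\xi}=-\pd_x E^2_\delta$, one is left with the identity
\[
\pd_x E^3_\delta = -\tfrac12 R_\delta-\pd_x\bk{\sigma\,\pd_x E^2_\delta}.
\]
This is the lever that converts the ill-behaved $\pd_x E^3_\delta$ into $R_\delta$ (which vanishes by Lemma \ref{thm:commutator2}) plus a divergence of the better-behaved $\sigma\,\pd_x E^2_\delta$.

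Next I would substitute this identity into the first term of \eqref{eq:commutator_converge2} and integrate by parts once to move the outer $\pd_x$ onto $\varphi S'(\pd_x w_\delta)$, using $\pd_x[\varphi S'(\pd_x w_\delta)]=\pd_x\varphi\,S'(\pd_x w_\delta)+\varphi S''(\pd_x w_\delta)\pd_x^2 w_\delta$; simultaneously I would expand $\pd_x\bk{\sigma\pd_x w_\delta}=\pd_x\sigma\,\pd_x w_\delta+\sigma\,\pd_x^2 w_\delta$ inside the second term of \eqref{eq:commutator_converge2}. The two resulting contributions carrying $\pd_x^2 w_\delta$, namely $-\int_\T\varphi S''(\pd_x w_\delta)\,\sigma\,\pd_x^2 w_\delta\,\pd_x E^2_\delta\,\d x$ from the first term and $+\int_\T\varphi S''(\pd_x w_\delta)\,\sigma\,\pd_x^2 w_\delta\,\pd_x E^2_\delta\,\d x$ from the second, are exactly opposite and cancel. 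This cancellation is the heart of the argument and the step I expect to require the most care to set up with the correct signs.

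What remains is the sum of four harmless terms: $\tfrac12\int_\T\varphi S'(\pd_x w_\delta)R_\delta\,\d x$, the term $-\int_\T\pd_x\varphi\,S'(\pd_x w_\delta)\,\sigma\,\pd_x E^2_\delta\,\d x$, the term $\tfrac12\int_\T\varphi S''(\pd_x w_\delta)\abs{\pd_x E^2_\delta}^2\,\d x$, and $\int_\T\varphi S''(\pd_x w_\delta)\,\pd_x\sigma\,\pd_x w_\delta\,\pd_x E^2_\delta\,\d x$. Invoking the growth hypotheses $\abs{S'(r)}\lesssim\abs{r}$ and $\sup_r\abs{S''(r)}<\infty$, the mollifier bound $\norm{\pd_x w_\delta}_{L^2(\T)}\le\norm{\pd_x w}_{L^2(\T)}$, and $\varphi,\sigma\in W^{1,\infty}(\T)$, each integrand is bounded pointwise in $(\omega,t)$ by a constant times $\norm{\pd_x w}_{L^2(\T)}\norm{R_\delta}_{L^2(\T)}$, $\norm{\pd_x w}_{L^2(\T)}\norm{\pd_x E^2_\delta}_{L^2(\T)}$, or $\norm{\pd_x E^2_\delta}_{L^2(\T)}^2$. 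Integrating in $t$, taking expectation, and applying the Cauchy--Schwarz inequality in $(\omega,t)$ (the moment $w\in L^4(\Omega;L^\infty([0,T];H^1(\T)))\subseteq L^2(\Omega\times[0,T];H^1(\T))$ supplies the factor $\norm{\pd_x w}_{L^2}$), each product tends to $0$ thanks to $\Ex\norm{R_\delta}_{L^2([0,T]\times\T)}^2\todelta 0$ from Lemma \ref{thm:commutator2} and $\Ex\norm{E^2_\delta}_{L^2([0,T];H^1(\T))}^2\todelta 0$ from Lemma \ref{thm:commutator1}. Summing the four estimates yields \eqref{eq:commutator_converge2}.
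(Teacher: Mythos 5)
Your proposal is correct and follows essentially the same route as the paper: both arguments hinge on expressing $\pd_x E^3_\delta$ through the double-commutator quantity $R_\delta$ of Lemma \ref{thm:commutator2} together with $\pd_x\bigl(\sigma\,\pd_x E^2_\delta\bigr)$, cancelling exactly the two terms carrying $\sigma\,\pd_x^2 w_\delta\,\pd_x E^2_\delta$ (the paper performs this cancellation via the bracket algebra $[\KK,\JJ_\delta]$ and the auxiliary quantity $E^4_\delta$, which is your computation in operator notation), and then estimating the same four residual terms by Cauchy--Schwarz using $S'(r)=O(r)$, $S''$ bounded, and Lemmas \ref{thm:commutator1} and \ref{thm:commutator2}. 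Your identity $\pd_x E^3_\delta=-\tfrac12 R_\delta-\pd_x\bigl(\sigma\,\pd_x E^2_\delta\bigr)$ is a correct and transparent reformulation of the paper's decomposition, so no gap remains.
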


\begin{rem}
An almost sure version of this proposition, 
instead of convergence in $L^1(\Omega)$, 
and with $\varphi \equiv 1$, was 
stated with a similar proof in \cite[Lemma B.3]{HKP2021}. 
Moreover, the result in \cite{HKP2021} was 
stated with slightly more stringent conditions 
on $S$, requiring $\abs{S'(r)}=O(1)$ instead 
of $\abs{S'(r)} = O(r)$.	
For the remainder of this paper, 
we shall use  $\varphi \equiv 1$.
\end{rem}

\begin{proof}
In the following, we continue using the 
operator notation consisting of $\JJ_\delta$ 
and $\KK$ defined in \eqref{eq:boldface}. 
The estimate \eqref{eq:commutator_converge2} 
takes inspiration from the proof 
of \cite[Prop.~3.4]{PS2018}. However, whereas they 
considered the commutator between the operators 
$\tilde{\KK} f:= \sigma \pd_x f$ and 
${\JJ_\delta} f$, %=f_\delta = f*J_\delta$, 
we have to consider the analogous question 
for $\KK f= \pd_x( \sigma f)$ and $\JJ_\delta$.  
Insofar as $\pd_x w$ can be any element $\xi \in 
	L^2(\Omega;L^\infty([0,T];L^2(\T)))$ (in fact, even 
just $\xi \in L^2(\Omega \times [0,T] \times \T)$!)  
for the purpose of the convergence, denote 
$\pd_x w$ by $\xi$, and $\pd_x w_\delta$ 
by $\xi_\delta$, since mollification 
commutes with (weak) differentiation.

We can express $\pd_x E^\delta_3$ in terms 
of commutator brackets as follows:
\begin{align}
	\pd_x E^3_\delta(\xi) &
	= \frac12\Bigl(\KK \KK \JJ_\delta \xi
	-\JJ_\delta \KK \KK \xi \Bigr)= \frac12 
	\Bigl( \KK \bigl[\KK, \JJ_\delta\bigr](\xi)
	+\bigl[\KK, \JJ_\delta\bigr] \KK (\xi) \Bigr). \label{eq:rho_ep_1}
\end{align}
Similarly, we can write the remaining part of the integrand 
of \eqref{eq:commutator_converge2} 
in the form $\frac12 S''(\xi_\delta) E^4_\delta$, where
\begin{align*}%\label{eq:rho_ep_extra}
	E^4_\delta(\xi)
	:= \bigl( \pd_x(\sigma \xi)*J_\delta\bigr)^2
	-\bigl(\pd_x(\sigma \xi_\delta)\bigr)^2 
	=  \bigl(\JJ_\delta \KK \xi\bigr)^2
	-\bigl(\KK \JJ_\delta \xi\bigr)^2 .
\end{align*}
Therefore, following the calculations 
in \cite[p.~655]{PS2018},
\begin{align*}
	- \frac{1}{2} S''(\xi_\delta) E^4_\delta
	& = \frac{1}{2} S''(\xi_\delta)
	\bigl( \KK \JJ_\delta \xi-\JJ_\delta \KK \xi\bigr)
	\bigl(\KK \JJ_\delta \xi+\JJ_\delta \KK \xi\bigr)
	\\ & 
	= -\frac{1}{2} S''(\xi_\delta) 
	\bigl(\bigl[\KK,\JJ_\delta\bigr](\xi)\bigr)^2
	+S''(\xi_\delta)\bigl(\KK \JJ_\delta \xi\bigr)
	\bigl[\KK,\JJ_\delta\bigr](\xi) 
	\\ & = -\frac{1}{2} S''(\xi_\delta) 
	\bigl(\bigl[\KK,\JJ_\delta\bigr](\xi)\bigr)^2 
	+S''(\xi_\delta) \pd_x \sigma \xi_\delta 
	\bigl[\KK,\JJ_\delta\bigr](\xi)
	\\ &\quad
	+\sigma \pd_x (S'(\xi_\delta)) 
	\bigl[\KK,\JJ_\delta\bigr](\xi)
	\\ &= -\frac{1}{2} S''(\xi_\delta) 
	\bigl(\bigl[\KK,\JJ_\delta\bigr](\xi)\bigr)^2 
	+S''(\xi_\delta) \pd_x \sigma \xi_\delta 
	\bigl[\KK,\JJ_\delta\bigr](\xi)
	\\ & \quad
	+\pd_x \bigl(\sigma S'(\xi_\delta) 
	\bigl[\KK,\JJ_\delta\bigr](\xi)\bigr) 
	- S'(\xi_\delta) \pd_x \bigl(\sigma
	\bigl[\KK,\JJ_\delta\bigr](\xi)\bigr)
	\\ & = -\frac{1}{2} S''(\xi_\delta) 
	\bigl(\bigl[\KK,\JJ_\delta\bigr](\xi)\bigr)^2 
	+S''(\xi_\delta) \pd_x \sigma \xi_\delta 
	\bigl[\KK,\JJ_\delta\bigr](\xi)
	\\ & \quad
	+ \pd_x \bigl(\sigma S'(\xi_\delta) 
	\bigl[\KK,\JJ_\delta\bigr](\xi)\bigr) 
	-S'(\xi_\delta)\KK\bigl[\KK,\JJ_\delta\bigr](\xi),
\end{align*}
by invoking the definition of $\KK$. 
Adding this to \eqref{eq:rho_ep_1}, we find that
\begin{align*}
	-\frac{1}{2} &S''(\xi_\delta)
	E^4_\delta + S'(\xi_\delta)\, \pd_x E^3_\delta
	\\ &
	= -\frac{1}{2} S''(\xi_\delta) 
	\bigl(\bigl[\KK, \JJ_\delta \bigr](\xi)\bigr)^2
	+S''(\xi_\delta) \pd_x \sigma 
	\xi_\delta \bigl[\KK,\JJ_\delta\bigr](\xi)  
	+\pd_x \bigl(\sigma S'(\xi_\delta)
	\bigl[\KK,\JJ_\delta\bigr](\xi)\bigr) 
	\\ & \quad 
	-S'(\xi_\delta) \KK \bigl[\KK,\JJ_\delta\bigr](\xi) 
	+\frac{1}{2}S'(\xi_\delta)
	\bigl(\bigl[\KK, \JJ_\delta\bigr](\KK \xi) 
	- \KK \bigl[\KK ,\JJ_\delta\bigr](\xi)\bigr)
	\\ & 
	= -\frac{1}{2} S''(\xi_\delta) 
	\bigl(\bigl[\KK, \JJ_\delta\bigr](\xi)\bigr)^2
	+S''(\xi_\delta) \pd_x \sigma \xi_\delta 
	\bigl[\KK,\JJ_\delta\bigr](\xi) 
	\\ & \quad 
	+ \pd_x \bigl(\sigma S'(\xi_\delta) 
	\bigl[\KK,\JJ_\delta\bigr](\xi)\bigr)
	+\frac{1}{2}S'(\xi_\delta)
	\bigl(\bigl[\KK, \JJ_\delta\bigr](\KK \xi) 
	- \KK \bigl[\KK ,\JJ_\delta\bigr](\xi)\bigr)
	\\ & 
	=-\frac{1}{2} S''(\xi_\delta) 
	\bigl(\bigl[\KK,\JJ_\delta\bigr](\xi)\bigr)^2
	+S''(\xi_\delta) \pd_x \sigma \xi_\delta 
	\bigl[\KK,\JJ_\delta\bigr](\xi)
	\\ & \quad 
	+\pd_x \big(\sigma S'(\xi_\delta)
	\bigl[\KK,\JJ_\delta\bigr](\xi)\bigr)
	+\frac{1}{2}S'(\xi_\delta)
	\Bigl[\bigl[\KK,\JJ_\delta\bigr],\KK\Bigr](\xi).
\end{align*}
For the term $\pd_x \big(\sigma S'(\xi_\delta)
	\bigl[\KK,\JJ_\delta\bigr](\xi)\bigr)$, we integrate-
by-parts in $x$ against $\varphi$. 
We know already that 
$\bigl[\KK,\JJ_\delta\bigr](\xi) =  \pd_x E^2_\delta \todelta 0$ 
in $L^2(\Omega\times [0,T]\times \T)$, cf.~Lemma 
\ref{thm:commutator1}. (So in fact cancellation only 
occurs between the $S'(\xi_\delta)\pd_x E^3_\delta$ 
and the $S''(\xi_\delta) \pd_x(\sigma\xi_\delta) \,\pd_x E^2_\delta$ terms.)
Convergence of the double commutator 
bracket is established in Lemma~\ref{thm:commutator2}. 
Now the entire claim \eqref{eq:commutator_converge2} 
follows, as $S \in C^1(\R) \cap \dot{W}^{2,\infty}(\R)$ 
and $S'(r) = O(r)$, so that $S'(\xi_\delta) \in L^2(\Omega 
\times [0,T]\times \T)$ and $S''(\xi_\delta) 
\in L^\infty(\Omega \times [0,T]\times \T)$.
\end{proof}

\subsection{Pathwise uniqueness}\label{sec:pathwise_unique}
To quickly establish pathwise uniqueness 
of solutions in the energy space 
$L^2([0,T];H^1(\T))$, we would need bounds 
on the solution in $L^2\left([0,T];
L^\infty(\Omega;W^{1,\infty}(\T))\right)$ 
to control exponential moments of cubic terms that 
appear in the exponent resulting from a Gronwall inequality. 
Unfortunately, such bounds are not available unless 
$T$ is replaced by a stopping time 
$\eta_R<T$ that converges a.s.~to $T$ as $R \to \infty$. 
However, integrating up to a stopping time, 
it is not possible to interchange the 
expectation (integral w.r.t.~$\d \mathbb{P}$) 
with the temporal integral and appeal 
to a standard Gronwall inequality. We will therefore 
rely on the stochastic Gronwall inequalities, see 
Lemmas \ref{thm:stochastic_gronwall} and \ref{thm:stochastic_gronwall_st}. 
Having shown uniqueness on $\bigl[0,\eta_R\bigr]$, we 
send $R \to \infty$ to conclude uniqueness on $[0,T]$.

\begin{thm}[Pathwise uniqueness in $H^1$]
\label{thm:st_uniqueness}
Let $u$, $v$ be strong $H^1$ solutions to 
the viscous stochastic Camassa--Holm equation \eqref{eq:u_ch_ep} 
with $\sigma \in W^{2,\infty}(\T)$ and intial condition 
$u_0 \in L^{p_0}(\Omega; H^1(\T))$ 
for some $p_0 > 4$. Then $\Ex 
\norm{u-v}_{L^\infty([0,T];H^1(\T))}=0$.
\end{thm}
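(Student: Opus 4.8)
The plan is to derive a closed pathwise differential inequality for $\norm{u-v}_{H^1(\T)}^2$ and to close it with a stochastic Gronwall argument up to stopping times. Write $w:=u-v$. Since $u,v$ are only almost surely in $H^1(\T)$, I cannot apply It\^o's formula directly to functions of $w$ or $\pd_x w$; instead I convolve the equation \eqref{eq:u_ch_ep} satisfied by $u$ and by $v$ against $J_\delta$ and subtract, obtaining a pointwise-in-$x$ SDE for $w_\delta=u_\delta-v_\delta$ whose drift and diffusion coefficients are genuine functions rather than distributions. The price is the appearance of exactly the regularisation errors of Lemma \ref{thm:commutator1}: the transport-plus-pressure difference contributes $E^1_\delta(u,v)$, the Stratonovich--It\^o correction $\tfrac12\sigma\pd_x(\sigma\pd_x w)$ contributes $-2E^3_\delta(w)$, and the martingale coefficient $(\sigma\pd_x w)*J_\delta$ equals $\sigma\pd_x w_\delta+E^2_\delta(w)$, cf.~\eqref{eq:commutator_defin1}. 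Since $u_0=v_0$ we have $w_\delta(0)=0$, so no initial term survives.

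Next I apply the finite-dimensional It\^o formula pointwise in $x$ to $\tfrac12 w_\delta^2$ and to $S(\pd_x w_\delta)$ with $S(r)=\tfrac12 r^2$ (this is the renormalisation), integrate in $x\in\T$, and add the two identities. The delicate point is the interaction of the second-order $\sigma$-drift with the martingale quadratic variation: precisely as in parts~2--3 of the energy estimate in Proposition~\ref{thm:galerkin_H1}, integrations by parts make the dangerous $\int_\T\sigma^2\abs{\pd_x^2 w_\delta}^2\,\d x$ contributions cancel, so that the pooled $\sigma$-terms are controlled by $C_\sigma\norm{w_\delta}_{H^1(\T)}^2$ rather than by a quantity requiring dissipation to absorb. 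The viscosity $\ep\pd_x^2 w_\delta$ yields a good negative term $-\ep\norm{w_\delta}_{H^2(\T)}^2$, which I simply discard. What remains from the nonlinearity is the transport and pressure contribution, which after integration by parts (using $S'(r)=r$) produces the cubic term $\tfrac12\int_\T\pd_x u\,(\pd_x w)^2\,\d x$ together with terms of the type $\int_\T w\,\pd_x^2 v\,\pd_x w\,\d x$; all of these are bounded by $G(t)\,\norm{w}_{H^1(\T)}^2$, where $G(t):=C\bigl(1+\norm{u}_{H^2(\T)}+\norm{v}_{H^2(\T)}\bigr)$ controls $\norm{\pd_x u}_{L^\infty(\T)}$ and $\norm{\pd_x v}_{L^\infty(\T)}$ via the embedding $H^2(\T)\hookrightarrow W^{1,\infty}(\T)$.

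I then send $\delta\to0$: by Lemma~\ref{thm:commutator1}, Lemma~\ref{thm:commutator2} and Proposition~\ref{thm:commutator3} (applied with $\varphi\equiv1$ and $S(r)=r^2/2$), all contributions involving $E^1_\delta$, $E^2_\delta$, $E^3_\delta$ and the double commutator $\bigl[[\KK,\JJ_\delta],\KK\bigr]$ vanish in $L^1(\tilde\Omega\times[0,T])$, hence $\tilde{\mathbb{P}}$-almost surely along a subsequence. This is exactly where the fourth-moment hypothesis is needed, since the convergence of $E^1_\delta$ rests on the $L^4_{\omega}$ control of $\norm{\pd_x u}_{L^2}$ exploited in \eqref{eq:L4}; the bound $p_0>4$ on $u_0$ passes to $u,v$ through the a priori estimates. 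In the limit I obtain, writing $X(t):=\norm{w(t)}_{H^1(\T)}^2$ with $X(0)=0$, a pathwise inequality of the form $X(t)\le\int_0^t G(s)X(s)\,\d s+N(t)$, where $N$ is a continuous local martingale with $N(0)=0$, namely the limit of the martingale terms $-\int_0^{\cdot}\int_\T\bigl(w\,\sigma\pd_x w+\pd_x w\,\pd_x(\sigma\pd_x w)\bigr)\,\d x\,\d W$.

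The main obstacle is that $G$ is only almost surely finite: indeed $G\in L^1([0,T])$ $\tilde{\mathbb{P}}$-a.s.\ because $u,v\in L^2([0,T];H^2(\T))$ a.s.\ by the viscous energy estimate, but $G$ carries no uniform moment bounds in $\omega$, so expectation and time integration cannot be interchanged to invoke the deterministic Gronwall lemma. To circumvent this I introduce the stopping times $\eta_R:=\inf\bigl\{t\in[0,T]:\int_0^t G(s)\,\d s>R\bigr\}\wedge T$, which satisfy $\eta_R\uparrow T$ a.s.\ as $R\to\infty$, and apply the stochastic Gronwall inequality of Lemma~\ref{thm:stochastic_gronwall_st} on $[0,\eta_R]$. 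Because $X(0)=0$ and there is no additive forcing, this yields $\tilde\Ex\,\sup_{t\in[0,\eta_R]}\norm{w(t)}_{H^1(\T)}=0$ for every $R$; letting $R\to\infty$ and using monotone convergence gives $\tilde\Ex\,\norm{u-v}_{L^\infty([0,T];H^1(\T))}=0$, as claimed.
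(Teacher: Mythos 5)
Your proposal is correct and follows essentially the same route as the paper: mollify the equation for $w=u-v$, apply the pointwise It\^o formula to $\tfrac12 w_\delta^2$ and $\tfrac12(\pd_x w_\delta)^2$, dispose of the regularisation errors via Lemmas \ref{thm:commutator1}--\ref{thm:commutator2} and Proposition \ref{thm:commutator3}, and close with the stochastic Gronwall inequality up to the stopping times $\eta_R$. The only (cosmetic) differences are that the paper uses the $\ep$-dissipation to absorb the second derivatives of $w_\delta$ coming from the transport and pressure terms, so its Gronwall coefficient is $1+\norm{u_\delta+v_\delta}_{W^{1,\infty}(\T)}^2$ rather than your $1+\norm{u}_{H^2(\T)}+\norm{v}_{H^2(\T)}$, and it applies the stochastic Gronwall lemma at fixed $\delta$ --- treating the commutator errors as a vanishing nonnegative forcing $\varrho_\delta$ --- before sending $\delta\to 0$, which avoids having to justify passing to the limit in the pathwise inequality and identifying the limiting martingale.
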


\begin{proof}
Suppose $u$ and $v$ are strong solutions defined 
relative to the (same) stochastic basis $\bigl(\Omega,
\mathcal{F},\{\mathcal{F}_t\}_{t \ge 0},\mathbb{P}\bigr)$ 
and Brownian motion $W$. The difference $w=u-v$ 
obeys
\begin{align*}
	0 & = \d w-\ep \pd_x^2 w\,\d t
	+\bk{u \,\pd_x u-v \,\pd_x v} \,\d t
	\\ &\quad 
	+\pd_x K*\bk{u^ 2-v^2+
	\frac12\bk{\bk{\pd_x u}^2-\bk{\pd_x v}^2}}\,\d t
	\\ &\quad
	- \frac12 \sigma\,\pd_x\bk{\sigma\,\pd_x w} \,\d t
	+ \sigma \,\pd_x w\,\d W.
\end{align*}
The spatial derivative satisfies 
\begin{align*}
	0 & = \d \pd_xw-\ep \pd_x^3 w\,\d t
	+\pd_x \bk{u \,\pd_x u-v\,\pd_x v} \,\d t
	\\ & \quad
	+ \pd_x^2 K*\bk{u^2- v^2
	+\frac12 \bk{\bk{\pd_x u}^2-\bk{\pd_x v}^2} }\,\d t
	\\ & \quad
	-\frac12 \pd_x\bk{\sigma\,\pd_x\bk{\sigma\,\pd_x w}}\,\d t
	+\pd_x\bk{\sigma \,\pd_x w}\,\d W.
\end{align*}
Recall that $J_\delta$ is a standard Friedrichs mollifier
and $w_\delta=w*J_\delta$. 
We convolve both the foregoing equations 
against $J_\delta$ in order to obtain SPDEs that 
can be understood in the pointwise sense (in $x$):
\begin{equation}\label{eq:u_ch_ep_delta}
	\begin{aligned}
		0&= \d w_\delta-\ep \pd_x^2 w_\delta\,\d t
		+\bk{u_\delta \,\pd_x u_\delta
		-v_\delta \,\pd_x v_\delta} \,\d t
		\\ &\quad
		+\pd_x K*\bk{u_\delta^2-v_\delta^2
		+\frac12 \bk{\bk{\pd_x u_\delta}^2
		-\bk{\pd_x v_\delta}^2}}\,\d t
		\\ &\quad
		- \frac12 \sigma\,\pd_x\bk{\sigma\,\pd_x w_\delta} \,\d t
		+ \sigma \,\pd_x w_\delta\,\d W 
		+ E^1_\delta \,\d t + E^2_\delta\,\d W 
		+ E^3_\delta\,\d t,
		\\[3mm] 
		0 & = \d \pd_xw_\delta-\ep \pd_x^3 w_\delta \,\d t
		+\pd_x \bk{u_\delta \,\pd_x u_\delta
		-v_\delta \,\pd_x v_\delta } \,\d t
		\\ & \quad+ \pd_x^2 K*
		\bk{u_\delta^ 2 - v_\delta^2 
		+ \frac12 \bk{\bk{\pd_x u_\delta}^2
		-\bk{\pd_x v_\delta}^2} }\,\d t
		\\ & \quad
		-\frac12 \pd_x\bk{\sigma\,\pd_x
		\bk{\sigma\,\pd_x w_\delta}} \,\d t
		+\pd_x\bk{\sigma \,\pd_x w_\delta}\,\d W
		\\ & \quad
		+\pd_x E^1_\delta\,\d t+\pd_x E^2_\delta\,\d W 
		+\pd_x E^3_\delta\,\d t,
	\end{aligned}
\end{equation}
where $E^1_\delta$, $E^2_\delta$ and $E^3_\delta$ are as in \eqref{eq:commutator_defin1}.

Apart from the technical addition of the 
commutator terms $E^i_\delta$, $i=1,2,3$, uniqueness follows 
from a straightforward calculation. 
The quantities $u_\delta$, $v_\delta$, 
and $w_\delta$ are necessarily
$\tilde{\mathbb{P}}$-almost surely in 
$C([0,T];H^1(\T))$ by the inclusion of $u$, $v$, 
and consequently $w$ in $C([0,T];H^1_w(\T))$. 
As in deriving the energy inequality of 
Proposition \ref{thm:galerkin_H1} for 
the Galerkin approximations, 
repeated applications of the 
(finite-dimensional) It\^o formula gives
\begin{align}
	\frac12 &\norm{w_\delta(t)}_{H^1(\T)}^2
	+\ep \int_0^t\norm{\pd_x w_\delta(s)}_{H^1(\T)}^2
	\,\d s \label{eq:difference_1}
	\\ & 
	=-\int_0^t \int_\T \big[w_\delta 
	\bk{u_\delta \,\pd_x u_\delta-v_\delta \,\pd_x v_\delta}
	+\pd_x w_\delta\, \pd_x \bk{u_\delta 
	\,\pd_x u_\delta-v_\delta \,\pd_x v_\delta}\big] 
	\,\d x \,\d s\notag
	\\ & \quad 
	-\int_0^t \int_\T \Big[w_\delta \,\pd_x K*
	\bk{u_\delta^2-v_\delta^2
	+\frac12 \bk{\bk{\pd_x u_\delta}^2
	-\bk{\pd_x v_\delta}^2}} \notag
	\\ &\quad\qquad\qquad\qquad\quad
	+\pd_x w_\delta \, \pd_x^2 K*
	\bk{u_\delta^ 2-v_\delta^2
	+\frac12 \bk{\bk{\pd_x u_\delta}^2
	-\bk{\pd_x v_\delta}^2} }\Big]\,\d x \,\d s\notag
	\\ &\quad
	+\frac12 \int_0^t \int_\T\big[ \sigma w_\delta
	\,\pd_x\bk{\sigma\,\pd_x w_\delta}
	+\pd_x w_\delta\, \pd_x\bk{\sigma
	\,\pd_x\bk{\sigma\,\pd_x w_\delta}} \big]\,\d x\,\d s\notag
	\\ & \quad
	-\int_0^t \int_\T \big[w_\delta 
	\bigl(E^1_\delta + E^3_\delta\bigr) 
	+ \pd_x w_\delta \bk{\pd_x E^1_\delta+\pd_x E^3_\delta}\big]
	\,\d x\,\d s\notag
	\\ &\quad
	+\frac12 \int_0^t \int_\T\big[\bk{\sigma \,\pd_x w_\delta 
	+ E^2_\delta}^2 + \bk{\pd_x \bk{\sigma\,\pd_x w_\delta}
	+\pd_x E^2_\delta}^2\big]\,\d x\,\d s\notag
	\\ &\quad
	+ \int_0^t \int_\T\big[\sigma \, w_\delta\,\pd_x w_\delta 
	+\pd_x w_\delta\, \pd_x\bk{\sigma \,\pd_x w_\delta}\big]\,\d x\,\d W\notag
	\\ &\quad
	+\int_0^t \int_\T \big[w_\delta E^2_\delta
	+\pd_x w_\delta \,\pd_x E^2_\delta\big]\,\d x\,\d W\notag
	\\ & 
	=: I_1^\delta+I_2^\delta+I_3^\delta 
	+I_4^\delta+I_5^\delta+M_1^\delta+M_2^\delta,
	\notag
\end{align}
recalling that $\norm{w_\delta(0)}_{H^1(\T)}^2=0$. 
We split the remaining analysis into two parts --- one 
for $I_1^\delta$ and $I_2^\delta$, consisting of the mollified terms 
from the ``deterministic" part of the equation, 
and another for the remaining integrals consisting 
of the effects of the convective noise and all the 
mollification error terms.

\medskip
{\it 1. Estimating $I_1^\delta$ and $I_2^\delta$.} 

For $I_1^\delta$ we have
\begin{align*}
	\abs{I_1^\delta} & = 
	\frac12 \abs{\int_0^t\int_\T \big[w_\delta\,\pd_x 
	\bk{u_\delta^2-v_\delta^2}+\pd_x w_\delta\, 
	\pd_x^2\bk{u_\delta^2-v_\delta^2}\big] \,\d x \,\d s}
	\\ & = 
	\frac12 \abs{\int_0^t \int_\T 
	\big[\pd_x w_\delta \, w_\delta \bigl(u_\delta+v_\delta\bigr)
	+ \pd_x^2 w_\delta
	\, \pd_x\bk{w_\delta 
	\bigl(u_\delta+v_\delta\bigr)}\big] \,\d x \,\d s}
	\\ & 
	\le \frac12\int_0^t \Big[
	\norm{\pd_x w_\delta}_{L^2(\T)}
	\norm{w_\delta}_{L^2(\T)}
	\norm{u_\delta+v_\delta}_{L^\infty(\T)}
	\\ &\qquad\qquad\quad
	+\norm{\pd_x^2w_\delta}_{L^2(\T)}
	\norm{w_\delta}_{H^1(\T)}
	\norm{u_\delta+v_\delta}_{W^{1,\infty}(\T)}\Big]\,\d s
	\\ & 
	\le \frac12\int_0^t \Big[\norm{w_\delta}_{H^1(\T)}^2 
	\norm{u_\delta+v_\delta}_{L^\infty(\T)}
	\\ &\qquad\qquad\quad
	+\frac\ep2\norm{\pd_x^2w_\delta}_{L^2(\T)}^2
	+\frac1{2\ep}\norm{w_\delta}_{H^1(\T)}^2
	\norm{u_\delta+v_\delta}_{W^{1,\infty}(\T)}^2\Big]\,\d s.
\end{align*}
Using the identity $\bk{K-\pd_x ^2 K}*f= f$,
\begin{align*}
	\abs{I_2^\delta} & = 
	\abs{\int_0^t \int_\T \pd_x w_\delta
	\,\bk{u_\delta^ 2-v_\delta^2
	+\frac12 \bk{\bk{\pd_x u_\delta}^2
	-\bk{\pd_x v_\delta}^2}} \,\d x\,\d s}
	\\ & 
	=\abs{\int_0^t \int_\T \pd_x w_\delta 
	\,\bk{ w_\delta \bigl(u_\delta+v_\delta\bigr)
	+\frac12 \pd_x w_\delta \,\pd_x 
	\bigl(u_\delta+v_\delta\bigr)} \,\d x\,\d s}
	\\ & 
	= \abs{\int_0^t \int_\T \big[w_\delta
	\, \pd_x w_\delta\, \bigl(u_\delta + v_\delta\bigr)
	-\pd_x w_\delta\, \pd^2_x w_\delta 
	\bigl(u_\delta + v_\delta\bigr)\big] \,\d x\,\d s}
	\\ & 
	\le \int_0^t\Big[\norm{w_\delta}_{H^1(\T)}^2
	\norm{u_\delta+v_\delta}_{L^\infty(\T)}
	\\&\qquad\qquad\quad
	+\frac\ep4 \norm{\pd_x^2 w_\delta}_{L^2(\T)}^2 
	+\frac1{4\ep} \norm{\pd_x w_\delta}_{L^2(\T)}^2
	\norm{u_\delta+v_\delta}_{L^\infty(\T)}^2\Big] \,\d s.
\end{align*}
In the right-hand sides of $\abs{I_1^\delta}$ 
and $\abs{I_2^\delta}$, the terms involving %the dissipation 
$\ep\int_0^t \norm{\pd_x^2 w_\delta}_{L^2(\T)}^2\,\d s$ 
can be absorbed into \eqref{eq:difference_1}.

\medskip
{\it 2. Estimating $I_3^\delta+I_4^\delta+I_5^\delta$.}

Next, we have
\begin{align*}
	I_3^\delta & =
	-\frac12 \int_0^t \int_\T 
	\abs{\sigma\,\pd_x w_\delta} ^2 \,\d x\,\d s
	-\frac12\int_0^t  \int_\T \sigma\, \pd_x \sigma
	\, w_\delta\, \pd_x w_\delta\,\d x \,\d s
%	\notag
	\\ & \quad
	-\frac12 \int_0^t \int_\T 
	\abs{\sigma \pd_x^2 w_\delta}^2\,\d x\,\d s 
	-\frac12 \int_0^t \int_\T \sigma 
	\,\pd_x \sigma \,\pd_x w_\delta
	\,\pd_x^2 w_\delta \,\d x\,\d s
%	\notag
	\\ & = -\frac12 \int_0^t 
	\int_\T \big[\abs{\sigma\,\pd_x w_\delta} ^2
	+\abs{\sigma \pd_x^2 w_\delta}^2\big]\,\d x\,\d s 
	-\frac12\int_0^t \int_\T \sigma\, 
	\pd_x \sigma\, w_\delta\, \pd_x w_\delta\,\d x \,\d s
%	\notag
	\\ & \quad
	+\frac14 \int_0^t \int_\T 
	\pd_x \bk{\sigma \,\pd_x \sigma} 
	\,\abs{\pd_x w_\delta}^2\,\d x\,\d s.
%	\label{eq:commutator_I3}
\end{align*}

We add $I_4^\delta$ and $I_5^\delta$ together and use 
Lemma \ref{thm:commutator1} and Proposition 
\ref{thm:commutator3}, yielding
\begin{align*}
	I_4^\delta+I_5^\delta 
	& = \int_0^t \int_\T \big[
	-w_\delta \bk{E^1_\delta 
	+E^3_\delta}+\frac12\bk{\sigma 
	\pd_x w_\delta+E^2_\delta}^2\big]\,\d x \,\d s
	\\ &\quad
	+\int_0^t \int_\T\big[ -\pd_x w_\delta\, \pd_x E^1_\delta 
	+\frac12 \abs{\pd_x \bk{\sigma \,\pd_x w_\delta}}^2 \big]
	\,\d x \,\d s 
	\\ & \quad 
	+\int_0^t \int_\T\big[-\pd_x w_\delta \,\pd_x E^3_\delta
	+\frac12 \abs{\pd_x E^2_\delta}^2
	+\pd_x \bk{\sigma \,\pd_x w_\delta}
	\,\pd_x E^2_\delta\big] \,\d x \,\d s
	\\ & 
	=:I_{4+5,1}^\delta+I_{4+5,2}^\delta
	+\int_0^t I_{4+5,3}^\delta \, \d s.
\end{align*}
For $I_{4+5,1}^\delta$, we have
\begin{align*}
	\abs{I_{4+5,1}^\delta} 
	& \le \norm{w_\delta}_{L^2([0,t]\times \T)}^2
	+\norm{E^1_\delta + E^3_\delta}_{L^2([0,t]\times \T)}^2
	\\ &\quad
	+\norm{\sigma}_{L^\infty(\T)}^2
	\norm{\pd_x w_\delta}_{L^2([0,t]\times \T)}^2
	+\norm{E^2_\delta}_{L^2([0,t]\times \T)}^2
	\\ & \le 
	C_\sigma \norm{w_\delta}_{L^2([0,t];H^1(\T))}^2
	+2\norm{E_\delta^1}_{L^2([0,t]\times \T)}^2
	\\ & \quad+\norm{E_\delta^2}_{L^2([0,t]\times \T)}^2 
	+ 2\norm{E_\delta^3}_{L^2([0,t]\times \T)}^2.
\end{align*}
Furthermore,
\begin{equation*}
	\begin{aligned}
		I_{4+5,2}^\delta &= \int_0^t\int_\T 
		\big[-\pd_x w_\delta \,\pd_x E^1_\delta
		+\sigma \pd_x \sigma \,\pd_x w_\delta 
		\,\pd_x^2 w_\delta\big]\,\d x\,\d s
		\\ &\quad
		+\frac12 \int_0^t\int_\T \big[\abs{\sigma
		\, \pd_x^2 w_\delta}^2
		+\abs{\pd_x \sigma\,\pd_xw_\delta}^2\big]\,\d x\,\d s
		\\ & =  - \int_0^t\int_\T \big[\pd_x w_\delta 
		\,\pd_x E^1_\delta + \frac12 
		\,\pd_x \bk{\sigma \pd_x \sigma}
		\,\abs{\pd_x w_\delta}^2\big] \,\d x\,\d s
		\\ &\quad 
		+\frac12 \int_0^t\int_\T \big[\abs{\sigma
		\, \pd_x^2 w_\delta}^2
		+\abs{\pd_x \sigma\,\pd_xw_\delta}^2\big]\,\d x\,\d s.
	\end{aligned}
\end{equation*}

Adding $I_{4+5,2}^\delta$ to $I_3^\delta$, we obtain 
\begin{align*}
	I_3^\delta  + I_{4 + 5, 2}^\delta
	& = -\frac12 \int_0^t \int_\T 
	\abs{\sigma\,\pd_x w_\delta} ^2\,\d x\,\d s 
	-\frac12\int_0^t  \int_\T 
	\sigma\, \pd_x \sigma\, w_\delta
	\, \pd_x w_\delta\,\d x \,\d s
	\\ &\quad
	+ \int_0^t\int_\T \pd_x^2 w_\delta 
	\,E^1_\delta \,\d x\,\d s
	+\frac12 \int_0^t\int_\T 
	\abs{\pd_x \sigma\,\pd_x w_\delta}^2\,\d x\,\d s \\
	&\quad  -\frac14 \int_0^t\int_\T
	\pd_x(\sigma \pd_x\sigma)(\pd_x w_\delta)^2\,\d x\,\d s 
	\\ & \le \bk{ \norm{\pd_x^2 \sigma}_{L^\infty(\T)}^2 
	+\norm{\pd_x \sigma}_{L^\infty(\T)}^2
	+\norm{\sigma}_{L^2(\T)}^2+1}
	\norm{w_\delta}_{L^2([0,t];H^1(\T))}^2
	\\ &\quad
	+\frac\ep8 \norm{\pd_x^2 w_\delta}_{L^2([0,t]\times \T)}^2
	+\frac{16}\ep\norm{E^1_\delta}_{L^2([0,t]\times \T)}^2.
\end{align*}
The term involving 
$\ep\norm{\pd_x^2 w_\delta}_{L^2([0,t]\times \T)}^2$ 
can be absorbed into \eqref{eq:difference_1}.

Given Proposition \ref{thm:commutator3}, 
taking $S(r) = r^2/2$, we immediately get that
$$
\Ex\int_0^T \abs{I_{4+5,3}^\delta} \,\d t 
\todelta 0.
$$
Therefore, by Lemma \ref{thm:commutator1},
$$
I_3^\delta+I_4^\delta +I_5^\delta \le C_\sigma  
\norm{w_\delta}_{L^2([0,t];H^1(\T))}^2 
+\varrho_\delta(t),
$$
where $\varrho_\delta(t) \ge 0$ and 
$\sup\limits_{t \in [0,T]}
\Ex \, \varrho_\delta(t) \todelta 0$.

\medskip
{\it 3. Conclusion.}

Putting the estimates for $I_1^\delta$ 
through $I_5^\delta$ together we arrive at
\begin{equation}\label{eq:unique_pre_gronwall}
	\begin{aligned}
		 \frac12&\norm{w_\delta(t)}_{H^1(\T)}^2
		+\frac\ep4 \int_0^t 
		\norm{\pd_x w_\delta(s)}_{H^1(\T)}^2\,\d s
		\\ & \qquad\qquad\qquad\qquad
		\le C_{\ep, \sigma} \int_0^t 
		\norm{w_\delta}_{H^1(\T)}^2
		\bk{1+\norm{u_\delta
		+v_\delta}_{W^{1,\infty}(\T)}^2}\,\d s
		\\ & \qquad\qquad\qquad\qquad\quad 
		+M_\delta(t)+\varrho_\delta(t),
	\end{aligned}
\end{equation}
where $M_\delta(t):=M_1^\delta(t)+M_2^\delta(t)$.

The estimates on $I_1^\delta$ 
through $I_5^\delta$ also show, by the equality \eqref{eq:difference_1},
that the process $t \mapsto \norm{w_\delta(t)}_{H^1(\T)}^2$
is $\mathbb{P}$-almost surely continuous.

By Young's convolution inequality, 
\begin{align*}
	\int_0^t \norm{u_\delta(s)
	+v_\delta(s)}_{W^{1,\infty}(\T)}^2\,\d s 
	\le \int_0^t \norm{J_\delta}_{L^1(\T)}^2
	\norm{u(s)+v(s)}_{W^{1,\infty}(\T)}^2\,\d s,
\end{align*}
and $\norm{J_\delta}_{L^1(\T)}=1$ by construction. 
Let us therefore introduce the stopping time 
\begin{align*}%\label{eq:stoptime3}
	\eta_R=\inf\seq{t \in \R_+: \int_0^{t \wedge T}
	\norm{u(s)+v(s)}_{W^{1,\infty}(\T)}^2\,\d s>R},
\end{align*}
with $\eta_R =\infty $ if the set on the right-hand side 
is empty.  We have that $\eta_R \toR T$ a.s.~(for  
fixed $\ep>0$). Indeed, from part (c) of Definition 
\ref{def:wk_sol} --- which says that 
$u,v\in L^2_\omega L^2_t H^2_x$ --- and the embedding 
$H^2(\T) \hookrightarrow W^{1,\infty}(\T)$, 
\begin{equation}\label{eq:stoptime_2_bd}
	\begin{aligned}
		\prob\bk{\seq{\eta_R<T}} 
		& \le \prob\bk{\seq{\int_0^T
		\norm{u(t)+v(t)}_{W^{1,\infty}(\T)}^2\,\d t>R}}
		\\ & 
		\le \frac1R \Ex \int_0^T 
		\norm{u(t)+v(t)}_{W^{1,\infty}(\T)}^2\,\d t
		\\ & 
		\le \frac{\tilde{C}}{R}\Ex \int_0^T
		\norm{u(t)}_{H^2(\T)}^2
		+\norm{v(t)}_{H^2(\T)}^2\,\d t
		\le \frac{C}{R}\toR 0,
	\end{aligned}
\end{equation}
where $C$ depends on $\ep$, cf.~\eqref{eq:galerkin_unifbd1}. 

With this stopping time, $M_\delta(t \wedge \eta_R)$ 
is a square-integrable martingale term.

Specifying $t=\eta_R$ in \eqref{eq:unique_pre_gronwall}, 
noting that
$$
\int_0^{\eta_R} 1+\norm{u_\delta(s)
+v_\delta(s)}_{W^{1,\infty}(\T)}^2\,\d s \le T+R,
$$
we can use the stochastic Gronwall inequality 
(Lemma \ref{thm:stochastic_gronwall_st} 
with $\nu = 1/2$ and any $1/2<r<1$) 
to conclude that
\begin{equation}\label{eq:H1-unique-tmp}
	\lim_{\delta \to 0}\bk{{\Ex} 
	\sup_{t \in \left[0,\eta_R\right]}
	\norm{w_\delta (t)}_{H^1(\T)}}^2=0.
\end{equation}
Recalling that the stopping times $\eta_R\toR T$ are 
independent of $\delta$ and, by the properties of 
mollification, $w_\delta(t)\to w(t)$ in $H^1(\T)$ 
for $\d \mathbb{P}\otimes \d t$-a.e.~$(\omega,t)\in 
\Omega\times [0,T]$, combining \eqref{eq:H1-unique-tmp} 
with the dominated convergence theorem implies that 
$$
\Ex
\norm{w}_{L^\infty([0,T];H^1(\T))}=0.
$$
\end{proof}

\subsection{Strong $H^1$-existence}\label{sec:H1_existence}

To establish the existence of strong $H^1$ solutions, 
and thereby concluding the proof of Theorem \ref{thm:main1}, 
we shall use an infinite dimensional version 
of the Yamada--Watanabe principle, following from 
Lemma \ref{thm:gyongy_krylov}.  As the 
path space $\mathcal{X}$ constructed immediately 
following the definitions \eqref{eq:pathspaces} is not 
a Polish space, we provide a slightly refined argument.

\begin{proof}[Concluding the proof of 
Theorem \ref{thm:main1}]
Recalling \eqref{eq:pathspaces}, we consider 
the extended path space
$\mathcal{Y}:= \bk{\mathcal{X}_{u,s} \times \mathcal{X}_{u,w}} \times 
\bk{\mathcal{X}_{u,s} \times \mathcal{X}_{u,w}}
\times \mathcal{X}_{W} \times \mathcal{X}_{0}$. 
Let $\seq{u_n}$ be the Galerkin solutions 
with initial conditions $\seq{\bm{\Pi}_n u_0}$, 
cf.~\eqref{eq:galerkin_n}. Set
\begin{align*}
\mu_{u,s}^n := \bk{u_n: \Omega \to \mathcal{X}_{u,s}}_* \mathbb{P},&
\quad 
\mu_{u,w}^n := \bk{u_n: \Omega \to \mathcal{X}_{u,w}}_* \mathbb{P},
\\
\mu_W^n := \bk{W}_* \mathbb{P},& 
\quad 
\mu_0^n := \bk{\bm{\Pi}_n u_0}_*\prob,
\end{align*}
as probability measures respectively 
on $\mathcal{X}_{u,s}$, $\mathcal{X}_{u,w}$, 
$\mathcal{X}_W$, and $\mathcal{X}_0$. 
Finally, define on $\mathcal{Y}$ 
the product measure
$$
{\mu}^{m,n} := \mu_{u,s}^{m} \otimes 
\mu_{u,w}^{m} \otimes \mu_{u,s}^{n} \otimes 
\mu_{u,w}^{n} \otimes \mu_W^{m} \otimes \mu_0^{m} .
$$

Consider an arbitrary subsequence 
$\seq{\mu^{m_k,n_k}}_{k \in \N}$ so that 
$\seq{m_k}_{k \in \N}$ and $\{n_k\}_{k \in \N}$ are 
increasing sequences. The tightness of 
$\seq{\mu_j^n}$ in $\mathcal{X}_j$,  taking 
$j$ equal to $(u,s)$, $(u,w)$, $W$, $0$, respectively, see 
Lemma \ref{thm:jointlaw_tightness}, implies 
the tightness of $\seq{\mu^{m_k,n_k}}_{k \in \N}$ on 
$\mathcal{Y}$. By Prohorov's theorem, this subsequence 
converges weakly to a probability measure $\tilde{\mu}$ 
on $\mathcal{Y}$.

By the Skorokhod--Jakubowski representation 
theorem (cf.~Theorem \ref{thm:skorohod}) (and 
the identification of Lemma \ref{thm:identification_lemma}), 
there exist a new probability space $\bigl(\tilde{\Omega},
\tilde{\mathcal{F}},\tilde{\prob}\bigr)$ and, passing 
to a further subsequence (not relabelled), new 
random variables
\begin{align}\label{eq:subsequence1}
	\bigl(\tilde{u}_{m_k}, \tilde{u}_{m_k},
	\tilde{u}_{n_k},\tilde{u}_{n_k},\tilde{W},
	\tilde{u}_{0,m_k}\bigr), \quad 
	\text{with joint laws $\mu^{m_k,n_k}$}, 
\end{align}
converging in $\mathcal{Y}$ to a limit 
$\bigl(\tilde{u}^\alpha, \tilde{u}^\alpha,
\tilde{u}^\beta, \tilde{u}^\beta, \tilde{W}, 
\tilde{u}_0\bigr)$, $\tilde{\mathbb{P}}$-a.s., 
whose joint law is $\tilde{\mu}$.

Construct now a (filtered) stochastic 
basis $\tilde{\mathcal{S}}$ as in the paragraph 
following Theorem \ref{thm:skorohod}. It then 
follows (as in Theorem \ref{thm:existence_H1}) 
that $\bigl(\tilde{u}^\alpha, \tilde{W}\bigr)$ 
and $\bigl(\tilde{u}^\beta, \tilde{W}\bigr)$ 
are weak (martingale) $H^1$ solutions with initial condition 
$\tilde{u}_0$ on $\tilde{\mathcal{S}}$. Therefore, 
by pathwise uniqueness (cf.~Theorem \ref{thm:st_uniqueness}), 
\begin{align*}%\label{eq:diagonal1}
	\tilde{\mu}\Bigl((u,u,v,v) \in 
	\mathcal{X}_{u,s} \times &\mathcal{X}_{u,w}
	\times \mathcal{X}_{u,s} 
	\times \mathcal{X}_{u,w}:u=v\Bigr) 
	\\ &	
	=\tilde{\mathbb{P}}\Big(\tilde{u}^\alpha 
	=\tilde{u}^\beta \mbox{ in }
	\mathcal{X}_{u,s} \mbox{ and in }\mathcal{X}_{u,w} \Big)=1.
\end{align*}

We have constructed a pair of weak $H^1$ 
solutions that coincide almost surely. 
To use the Gy\"ongy--Krylov theorem to conclude 
convergence on the initial probability 
space $(\Omega, \mathcal{F}, \mathbb{P})$, 
a Polish space is needed,  whereas our path space 
$\mathcal{Y}$ is only quasi-Polish. We can, however, map 
$\mathcal{Y}$ into the Polish space $[-1,1]^\mathbb{N}$. 
This is the technique Jakubowski used  
to extend the Skorokhod representation theorem 
to non-metric (quasi-Polish) spaces \cite{Jak1998}.  
Indeed, because $\mathcal{Y}$ is a quasi-Polish space,
there is a countable family of continuous functions 
$\seq{f_\ell:\mathcal{Y}\to [-1,1]}_{\ell \in \mathbb{N}}$ 
that separate points, see \eqref{eq:countable}. 
Introduce the continuous map 
$f:\mathcal{Y} \to [-1,1]^\mathbb{N}$ 
(equipped with the product topology) by 
$f:u\mapsto \seq{f_\ell(u)}_{\ell \in \mathbb{N}}$. 
The map $f$ is a measurable bijective function 
when restricted to a $\sigma$-compact subspace of 
$\mathcal{Y}$ (i.e., a countable union of compact 
subspaces) of $\mathcal{Y}$, see \cite[Sec.~2]{Jak1998} and 
\cite[Cor.~3.1.14, p. 126]{Eng1989}.

Considering the first and third 
entries $(u_{m_k}, u_{n_k})$ 
of \eqref{eq:subsequence1}, also recalling 
their limits $\bigl(u^\alpha, u^\alpha\bigr)$,
 and using $f$ defined 
in the foregoing paragraph, we find by continuity of $f$ that 
$\bigl(f(u_{m_k}), f(u_{n_k})\bigr)$ converge 
in distribution (law) to $(f(u^\alpha),f(u^\alpha))$ 
as $k \to \infty$. By Lemma \ref{thm:gyongy_krylov}, 
there is a subsequence $\bigl\{f(u_{n_{k_j}})\bigr\}_{j \in\N}$ 
that converges in probability. Since $f$ separates 
points of $\mathcal{Y}$, we must necessarily have that 
also $\bigl\{u_{n_{k_j}}\bigr\}_{j \in\N}$ converges 
in probability on $\bigl(\Omega,\mathcal{F},\mathbb{P}\bigr)$, 
and hence $\mathbb{P}$-almost surely along 
a further subsequence.
\end{proof}

\subsection{Strong temporal continuity}\label{sec:temporalcont}

In this subsection, we finally establish the 
strong continuity of solutions as stipulated in 
(c) of Definition \ref{def:wk_sol}, completing 
the omission described in Remark \ref{rem:wk_time_cont}.

\begin{lem}[Energy equality]\label{thm:energy_eq}
Let ${u}$ be the solution found in 
Theorem \ref{thm:existence_H1}, and ${q}$ 
be the weak $x$-derivative of ${u}$. 
The following energy equality holds 
for every $s,t \in [0,T]$:
\begin{equation}\label{eq:energyeq_exp}
	\begin{aligned}
		&{\Ex} \norm{{u}(r)}_{H^1(\T)}^2 
		\bigg|_s^t+  \ep {\Ex} \int_s^t \norm{{q}(r)}_{H^1(\T)}^2\,\d r
		\\ &\qquad =-\Ex   \int_s^t \int_\T 
		\sigma \,\pd_x \sigma\,u \,q \,\d x \,d r
		+\Ex \int_s^t \int_\T \bk{ \frac14 \pd_x^2 \sigma^2
		-\abs{\pd_x \sigma}^2} q^2\,\d x \,\d r.
	\end{aligned}
\end{equation}
\end{lem}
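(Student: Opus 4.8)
The plan is to prove \eqref{eq:energyeq_exp} by the same regularisation (DiPerna--Lions) device employed for pathwise uniqueness in Theorem \ref{thm:st_uniqueness}, thereby circumventing an infinite-dimensional It\^o formula for $\norm{u}_{H^1(\T)}^2$. Since $u$ (from Theorem \ref{thm:existence_H1}) is known only to lie $\tilde{\mathbb{P}}$-a.s.\ in $L^\infty([0,T];H^1(\T))\cap L^2([0,T];H^2(\T))$ and in $C([0,T];H^1_w(\T))$, I first convolve the weak equation for $u$, and that for $q=\pd_x u$, against the Friedrichs kernel $J_\delta$. This produces pointwise-in-$x$ SPDEs for $u_\delta$ and $\pd_x u_\delta$ carrying the commutator errors $E^1_\delta=E^1_\delta(u,0)$, $E^2_\delta=E^2_\delta(u)$, $E^3_\delta=E^3_\delta(u)$ of \eqref{eq:commutator_defin1}, exactly as in \eqref{eq:u_ch_ep_delta} with $v\equiv0$, $w=u$. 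As $u_\delta$ is smooth in $x$, the finite-dimensional It\^o formula applies to $\tfrac12\norm{u_\delta}_{H^1(\T)}^2=\tfrac12\int_\T u_\delta^2\,\d x+\tfrac12\int_\T(\pd_x u_\delta)^2\,\d x$, yielding an identity analogous to \eqref{eq:difference_1}.

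I then take expectations over $[s,t]$. The transport-plus-nonlocal contribution vanishes by the exact algebraic cancellation \eqref{eq:H1_estm_A}, which holds for any fixed function and in particular for $u_\delta$; the viscous term supplies $\ep\,\Ex\int_s^t\norm{\pd_x u_\delta}_{H^1(\T)}^2\,\d r$; and the $\sigma$-drift together with the (clean part of the) It\^o correction collapses, after the integrations by parts performed in the proofs of Proposition \ref{thm:galerkin_H1} and Lemma \ref{thm:galerkin_H1p}, to the $\sigma$-dependent right-hand side of \eqref{eq:energyeq_exp}. The martingale increment $M_\delta$ has integrand controlled by $\norm{u_\delta}_{H^1(\T)}^2$ together with commutator terms; since $u\in L^{p_0}(\Omega;L^\infty([0,T];H^1(\T)))$ with $p_0>4$, this is a square-integrable martingale whose expectation vanishes. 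Note that---unlike in Theorem \ref{thm:st_uniqueness}---no stopping time is needed here, the relevant drift being only quadratic in $u$ rather than involving cubic $W^{1,\infty}$ factors.

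It remains to let $\delta\downarrow0$ in the explicit terms. I will use that $u_\delta(t)\to u(t)$ strongly in $H^1(\T)$ for each fixed $t$ (legitimate because $u(t)\in H^1(\T)$ for every $t$, as $u\in C([0,T];H^1_w(\T))$), that $\pd_x u_\delta\to q$ in $L^2([0,T];H^1(\T))$ $\tilde{\mathbb{P}}$-a.s.\ from $u\in L^2([0,T];H^2(\T))$, and the domination $\norm{u_\delta}_{H^s(\T)}\le\norm{u}_{H^s(\T)}$ furnished by Young's convolution inequality. Vitali's convergence theorem, with uniform integrability provided by the moment bounds of Lemma \ref{thm:galerkin_H1p}, then carries the limit through the expectations on both sides. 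The lower-order error contributions built from $E^1_\delta$, $E^2_\delta$ and $u_\delta E^3_\delta$ are discarded by Cauchy--Schwarz and Lemma \ref{thm:commutator1}, precisely as in step 2 of the proof of Theorem \ref{thm:st_uniqueness}.

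The main obstacle, and the reason the commutator apparatus of Section \ref{sec:commutators} is indispensable, is the second-order error combination arising from the $\pd_x u_\delta$-equation, namely $-\int_\T\pd_x u_\delta\,\pd_x E^3_\delta+\tfrac12\int_\T\abs{\pd_x E^2_\delta}^2+\int_\T\pd_x\bk{\sigma\,\pd_x u_\delta}\,\pd_x E^2_\delta\,\d x$ (the quantity denoted $I_{4+5,3}^\delta$ in the proof of Theorem \ref{thm:st_uniqueness}). Here $\pd_x E^3_\delta$ does \emph{not} tend to zero on its own, being effectively a double commutator in which two derivatives fall on the mollifier; convergence holds only through the precise cancellation of this term against the Stratonovich-to-It\^o correction. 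This is exactly Proposition \ref{thm:commutator3} taken with $S(r)=r^2/2$, whose proof rests in turn on the double-commutator estimate of Lemma \ref{thm:commutator2}; it gives $\Ex\int_0^T\abs{I_{4+5,3}^\delta}\,\d t\todelta0$ and thereby closes the passage to the limit.
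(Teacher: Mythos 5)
Your proposal is correct and follows essentially the same route as the paper's proof: mollify with $v\equiv 0$, apply the pointwise It\^o formula to $\tfrac12\norm{u_\delta}_{H^1(\T)}^2$, take expectations so the martingale vanishes and the deterministic CH terms cancel, dispose of the first-order errors via Lemma \ref{thm:commutator1} and of the critical second-order combination via Proposition \ref{thm:commutator3} with $S(r)=r^2/2$, and compute the surviving $\sigma$-terms by integration by parts before letting $\delta\downarrow 0$. The only cosmetic difference is that you invoke Vitali with the moment bounds where the paper uses dominated convergence; both close the limit.
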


\begin{proof}
We can derive the energy inequality 
by mollification as in \eqref{eq:u_ch_ep_delta} (e.g., 
by taking $v_0, v \equiv 0$, which is clearly a solution). 
That is, we have
\begin{align*}
	0 & = \d u_\delta-\ep \pd_x^2 u_\delta
	+\bk{u_\delta \,\pd_x u_\delta} \,\d t 
	+\pd_x K*\bk{u_\delta^2 +\frac12 \bk{\pd_x u_\delta}^2}  \,\d t\\ 
	&\quad 	- \frac12 \sigma\,\pd_x\bk{\sigma\,\pd_x u_\delta} \,\d t
	+ \bk{\sigma \,\pd_x u_\delta + E^2_\delta}\,\d W 
	+ E^1_\delta \,\d t + E^3_\delta\,\d t,
	\\[3mm] 
	0 & = \d q_\delta-\ep \pd_x^2 q_\delta 
	+\pd_x \bk{u_\delta \,q_\delta } \,\d t
	+ \pd_x^2 K*
	\bk{u_\delta^ 2 	+ \frac12 \bk{q_\delta}^2 }\,\d t
	\\ & \quad
	-\frac12 \pd_x\bk{\sigma\,\pd_x
	\bk{\sigma\,q_\delta}} \,\d t
	+\bk{ \pd_x\bk{\sigma \,q_\delta}+ \pd_x E^2_\delta }\,\d W
	\\ & \quad 
	+\pd_x E^1_\delta\,\d t +\pd_x E^3_\delta\,\d t,
\end{align*}
where, again,
\begin{align*}
	E^1_\delta & :=  (u \,\pd_x u) *J_\delta- u_\delta \,\pd_x u_\delta \\
	 & \quad +\pd_x K*\bk{u^2+\frac12 \bk{\pd_xu}^2}*J_\delta
	 -\pd_x K *\bk{u_\delta^ 2+ \frac12 \bk{\pd_x u_\delta}^2},
	 \\
	 E^2_\delta & :=  \bigl(\sigma \,\pd_x u\big)*J_\delta-\sigma \,\pd_x u_\delta,
	 \\
	 E^3_\delta & := 	-\frac12 \bigl(\sigma\,\pd_x\bk{\sigma\,\pd_x u}\bigr)
	*J_\delta +\frac12 \sigma 	\,\pd_x\bk{\sigma\,\pd_x u_\delta}.
\end{align*}

We multiply the equation for $\d u_\delta$ by $u_\delta$ 
and the equation for $\d q_\delta$ by $q_\delta$. 
Manipulations using the pointwise It\^o formula 
as in \eqref{eq:H1_intermediate1} of Proposition 
\ref{thm:galerkin_H1} then leads us upon integration to:
\begin{align*}
\frac12 \norm{u_\delta}_{H^1(\T)}^2 \bigg|_s^t
	+  \int_s^ t \ep \norm{q_\delta}_{H^1(\T)}^2\,\d r 
= \sum_{i = 1}^5 I^\delta_i + M^\delta,
\end{align*}
where
\begin{align*}
	I_1^\delta & :=  \frac12 \int_s^t \int_\T u_\delta
	\, \sigma\,\pd_x\bk{\sigma\,\pd_x u_\delta} 
	\,\d x \,\d r + \frac12\int_s^t \int_\T 
	q_\delta \pd_x\bk{\sigma\,\pd_x \bk{\sigma\,q_\delta}}\,\d x  \,\d r,\\
	I_2^\delta &:= -\int_s^t \int_\T u_\delta 
	\bk{E^1_\delta + E^3_\delta}\,\d x\,\d r 
	-\int_s^t \int_\T q_\delta \pd_x E^1_\delta\,\d x \,\d r,\\
	I_3^\delta &:=  -\int_s^t \int_\T q_\delta\, \pd_x E^3_\delta \,\d x \,\d r,
	\\
	I_4^\delta &:=   \int_s^t \int_\T \left[\frac12 \abs{\pd_x E^2_\delta}^2 
	+ \pd_x\bk{\sigma q_\delta} \,\pd_x E^2_\delta \right]\,\d x \,\d r,\\
	I_5^\delta &:=  \int_s^t \int_\T \left[\frac12  \abs{E^2_\delta}^2 
	+ \sigma \,q_\delta E^2_\delta \right]\,\d x \, \d r,\\
	I_6^\delta & := \frac12 \int_s^t \int_\T \left[\abs{\sigma\,q_\delta}^2  
	+ \abs{\pd_x\bk{\sigma q_\delta}}^2\right] \,\d x\,\d r,\\
	M^\delta & := -\int_s^t \int_\T \Big[\bk{\sigma \,\pd_x u_\delta 
	+ E^2_\delta} +  \bk{ \pd_x\bk{\sigma \,q_\delta}+ \pd_x E^2_\delta }
	\Big] \,\d x \,\d W.
\end{align*}
Terms associated with the deterministic 
CH equation (where $\sigma$ does not appear) 
cancel out due to the structure of the equation 
as in the proof of Proposition \ref{thm:galerkin_H1}. 
$I_1^\delta$ to $I_3^\delta$ arise from the 
standard chain rule, and $I_4^\delta$ to $I_6^\delta$ 
are It\^o correction terms. 
$M^\delta$ is a martingale term.

As in the proof of Theorem  \ref{thm:st_uniqueness}, 
by Lemma \ref{thm:commutator1} and 
Proposition \ref{thm:commutator3},  
as $\delta \downarrow 0$,
$$
\Ex I_2^\delta , \, \Ex I_5^\delta \to 0, 
\,\qquad\,  \Ex \left[ I_3^\delta + I_4^\delta\right] \to 0.
$$

Adding $I_1^\delta$ to $I_6^\delta$, 
and performing integration-by-parts multiple times, 
\begin{align*}
	-I^\delta_1 - I^\delta_6 
	&= \frac12 \int_s^t \int_\T \pd_x \bk{\sigma \,u_\delta} 
	\,\sigma \, q_\delta \,\d  x \,\d r 
	- \frac12  \int_s^t \int_\T \abs{\sigma q_\delta}^2\,\d x \,\d r \\
	& \quad+ \frac12 \int_s^t \int_\T  \sigma\, 
	\pd_xq_\delta\,\pd_x \bk{\sigma\,q_\delta}\,\d x\,\d r 
	- \frac12  \int_s^t \int_\T \abs{\pd_x\bk{\sigma q_\delta}}^2\,\d x \,\d r \\
	& = \int_s^t \int_\T  \sigma \,\pd_x \sigma\,u_\delta \,q_\delta \,\d x \,d r 
	+ \int_s^t \int_\T \bk{\abs{\pd_x \sigma}^2
	- \frac14 \pd_x^2 \sigma^2} q_\delta^2\,\d x \,\d r
\end{align*}

We need now to take $\delta \to 0$ in 
$\Ex [I^\delta_1 + I^\delta_6]$. Since 
$u \in L^2(\Omega\times [0,T];H^2(\T))$, 
it holds that $u, q \in L^2(\Omega \times [0,T]\times \T)$, 
so by Young's inequality and the 
dominated convergene theorem,
$$
-\Ex  \left[I^\delta_1
+I^\delta_6\right] 
\overset{\delta \downarrow 0}{\longrightarrow} 
\Ex \int_s^t \int_\T  \sigma \,\pd_x \sigma\,u \,q \,\d x \,d r 
+ \Ex \int_s^t \int_\T \bk{\abs{\pd_x \sigma}^2 
- \frac14 \pd_x^2 \sigma^2} q^2\,\d x \,\d r.
$$

Finally, $\Ex M^\delta = 0$, since its 
quadratic variation satisfies
$$
\Ex \int_0^T \abs{\int_\T \Big[\bk{\sigma 
\,\pd_x u_\delta + E^2_\delta} 
+\bk{ \pd_x\bk{\sigma \,q_\delta}
+\pd_x E^2_\delta }\Big] \,\d x}^2\,\d r < \infty.
$$

On the left-hand side, we can also 
pass $\delta \to 0$ at every $t \in [0,T]$ using
$$
\Ex \lim_{\delta \to 0}\norm{u_\delta(t)}_{H^1(\T)}^2 
= \lim_{\delta \to 0} \Ex \norm{u_\delta(t)}_{H^1(\T)}^2,
$$
(by the Lebesgue dominated convergence theorem) 
and the energy bound \eqref{eq:galerkin_unifbd1}. 
The passage in $\delta \to 0$ for the 
temporal integral $\ep \Ex \int_{s}^t
 \norm{q_\delta(r)}_{H^1(\T)}^2\,\d r$ is similar. 
We therefore arrive at \eqref{eq:energyeq_exp}.
\end{proof}

\begin{prop}\label{thm:st_time_cont}
Let $u$ be the solution of Theorem \ref{thm:existence_H1}. 
For any $t_0 \in (0,T)$, 
$$
\lim_{t\to t_0} {\Ex} \norm{{u}(t)-u(t_0)}_{H^1(\T)}^2 = 0,
$$
with corresponding one-sided limits $t \downarrow 0$ and $t \uparrow T$ 
at the end-points $t_0 = 0$ and $t_0 = T$, respectively.
Moreover, for a $p_0 > 4$, $u \in L^{p_0}(\Omega;C([0,T];H^1(\T)))$.
\end{prop}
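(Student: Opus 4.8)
The plan is to prove the $L^2(\Omega)$-continuity first, and then bootstrap it to almost sure strong path-continuity, from which the $L^{p_0}$-integrability follows immediately.

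For the first claim I would expand
\begin{align*}
	\Ex\norm{u(t)-u(t_0)}_{H^1(\T)}^2
	&=\Ex\norm{u(t)}_{H^1(\T)}^2+\Ex\norm{u(t_0)}_{H^1(\T)}^2
	\\ &\quad
	-2\,\Ex\LL u(t),u(t_0)\RR_{H^1(\T)}.
\end{align*}
The two norm terms are controlled by the energy equality of Lemma \ref{thm:energy_eq}: since its right-hand side consists of integrals over $[s,t]$ with integrands that are integrable over $\Omega\times[0,T]\times\T$ (using $u\in L^2(\Omega\times[0,T];H^2(\T))$ and $\sigma\in W^{2,\infty}(\T)$), the map $t\mapsto\Ex\norm{u(t)}_{H^1(\T)}^2$ is continuous, so $\Ex\norm{u(t)}_{H^1(\T)}^2\to\Ex\norm{u(t_0)}_{H^1(\T)}^2$. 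For the cross term I would use the almost sure weak continuity $u\in C([0,T];H^1_w(\T))$ from Theorem \ref{thm:existence_H1}: for a.e.\ $\omega$ we have $u(t)\rightharpoonup u(t_0)$ in $H^1(\T)$ as $t\to t_0$, whence $\LL u(t),u(t_0)\RR_{H^1(\T)}\to\norm{u(t_0)}_{H^1(\T)}^2$ pointwise. As $\abs{\LL u(t),u(t_0)\RR_{H^1(\T)}}\le\norm{u}_{L^\infty([0,T];H^1(\T))}^2\in L^1(\Omega)$ (because $p_0>4\ge2$, cf.\ Theorem \ref{thm:existence_H1}), dominated convergence gives $\Ex\LL u(t),u(t_0)\RR_{H^1(\T)}\to\Ex\norm{u(t_0)}_{H^1(\T)}^2$. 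Adding the three limits yields $\Ex\norm{u(t)-u(t_0)}_{H^1(\T)}^2\to0$, and the one-sided limits at $t_0=0,T$ are obtained verbatim with one-sided limits.

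For the second claim I would upgrade this to pathwise strong continuity via the Radon--Riesz property of the Hilbert space $H^1(\T)$: if $u(t)\rightharpoonup u(t_0)$ weakly and $\norm{u(t)}_{H^1(\T)}\to\norm{u(t_0)}_{H^1(\T)}$, then $u(t)\to u(t_0)$ strongly. Weak path-continuity is already in hand, so it suffices to show $t\mapsto\norm{u(t)}_{H^1(\T)}^2$ is $\mathbb{P}$-almost surely continuous. To this end I would revisit the derivation of Lemma \ref{thm:energy_eq}, but \emph{retain} the martingale term $M^\delta$ rather than take expectations, obtaining the pathwise mollified identity $\norm{u_\delta(t)}_{H^1(\T)}^2=\norm{u_\delta(0)}_{H^1(\T)}^2+\int_0^t D_\delta(r)\,\d r+M^\delta(t)$, whose right-hand side is continuous in $t$. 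For each fixed $t$ the mollifier property gives $\norm{u_\delta(t)}_{H^1(\T)}^2\to\norm{u(t)}_{H^1(\T)}^2$, while the drift converges by the commutator estimates of Lemma \ref{thm:commutator1} and Proposition \ref{thm:commutator3} exactly as in the proof of Theorem \ref{thm:st_uniqueness}, and the Burkholder--Davis--Gundy inequality identifies the limit of $M^\delta$ as a continuous martingale, with convergence uniform in $t\in[0,T]$ in $L^2(\Omega)$, hence $\tilde{\mathbb{P}}$-almost surely along a subsequence. The limiting right-hand side is therefore an almost surely continuous function of $t$ to which $\norm{u(t)}_{H^1(\T)}^2$ is equal; combined with weak continuity and the Radon--Riesz property this yields $u\in C([0,T];H^1(\T))$, $\mathbb{P}$-almost surely.

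Finally, once strong path-continuity is known, $\norm{u}_{C([0,T];H^1(\T))}=\norm{u}_{L^\infty([0,T];H^1(\T))}$ pointwise in $\omega$, so the bound $u\in L^{p_0}(\Omega;L^\infty([0,T];H^1(\T)))$ from Theorem \ref{thm:existence_H1} upgrades at once to $u\in L^{p_0}(\Omega;C([0,T];H^1(\T)))$. The main obstacle is the pathwise energy identity, and specifically the passage $\delta\downarrow0$ in the martingale term: while the drift converges by the commutator estimates, identifying the limiting martingale and thereby securing the \emph{uniform-in-time} continuity of the norm process requires the uniform $L^2(\Omega)$-control furnished by Burkholder--Davis--Gundy together with the $p_0>4$ moment bounds.
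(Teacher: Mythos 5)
Your proof of the first claim is essentially the paper's: both expand $\Ex\norm{u(t)-u(t_0)}_{H^1(\T)}^2$ by polarisation, obtain convergence of $\Ex\norm{u(t)}_{H^1(\T)}^2$ from the energy equality of Lemma \ref{thm:energy_eq} (via integrability of its right-hand side), and handle the cross term by weak temporal continuity plus dominated convergence. Where you genuinely diverge is in upgrading to almost sure strong path-continuity. The paper does this in one line by applying Fatou's lemma to $\lim_{t\to t_0}\Ex\norm{u(t)-u(t_0)}_{H^1(\T)}^2=0$; you instead rederive the mollified energy balance \emph{pathwise}, retaining the martingale term, pass $\delta\downarrow 0$ uniformly in $t$ (drift via Lemma \ref{thm:commutator1} and Proposition \ref{thm:commutator3}, martingale via Burkholder--Davis--Gundy), and conclude continuity of $t\mapsto\norm{u(t)}_{H^1(\T)}^2$ almost surely, whence strong continuity follows from weak continuity and the Radon--Riesz property. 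Your route costs more work, but it buys something real: the Fatou step only controls $\liminf_{t\to t_0}\norm{u(t)-u(t_0)}_{H^1(\T)}^2$ at each fixed $t_0$, whereas the final assertion $u\in L^{p_0}(\Omega;C([0,T];H^1(\T)))$ requires continuity at \emph{all} times simultaneously for $\mathbb{P}$-a.e.\ $\omega$, which your pathwise norm-continuity delivers directly (and which the paper itself implicitly relies on, cf.\ the remark after \eqref{eq:unique_pre_gronwall} that $t\mapsto\norm{w_\delta(t)}_{H^1(\T)}^2$ is a.s.\ continuous). Two minor points: the reference to $\tilde{\mathbb{P}}$ should be to the measure on whichever basis $u$ is defined, and in extracting the a.s.\ identity for all $t$ you should state explicitly that the subsequence in $\delta$ is chosen so that the uniform-in-$t$ $L^1(\Omega)$ (resp.\ $L^2(\Omega)$) convergences of drift and martingale hold a.s.\ simultaneously; with that, the argument is complete.
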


\begin{proof}
We can upgrade the weak $H^1_x$-continuity into strong 
continuity using the Brezis--Lieb lemma (for $L^2$). 
Since we already know that $u\in C([0,T];L^2(\T))$ 
(see Lemma \ref{thm:u_Ctheta_L2} and argue as in 
Part 6 of the proof to Theorem \ref{thm:existence_H1}), 
we need only establish temporal 
continuity for $q=\pd_x {u}$ in $L^2(\T)$.

From ${u} \in C([0,T];H^1_w(\T))$, 
${q}(t) \rightharpoonup {q}(s)$ in $L^2(\T)$ as $t \to s$. 
Since ${\Ex}\norm{{q}}_{H^1(\T)}^2 \in L^1([0,T])$ 
(Theorem \ref{thm:existence_H1}), the map
$t \mapsto  \int_{0}^t{\Ex} \norm{{q}(r)}_{H^1(\T)}^2\,\d r$ 
is absolutely continuous on $[0,T]$, 
and from the energy equality of 
Lemma \ref{thm:energy_eq},  we obtain
\begin{align}\label{eq:limit_of_norms}
{\Ex} \norm{{u}(t)}_{H^1(\T)}^2 
	\overset{t \to t_0}{\longrightarrow} 
		{\Ex} \norm{{u}(t_0)}_{H^1(\T)}^2
		\qquad \mbox{for a.e. } t_0 \in [0,T].
\end{align}

Finally,
\begin{align*}
{\Ex} \norm{{u}(t) - {u}(t_0)}_{H^1(\T)}^2 
= {\Ex} \norm{{u}(t)}_{H^1(\T)}^2 
		- 2 {\Ex} \LL {u}(t), {u}(t_0)\RR_{H^1(\T)} 
		+ {\Ex} \norm{{u}(t_0)}_{H^1(\T)}^2.
\end{align*}
By weak continuity, the middle term tends 
to $- 2 {\Ex}\norm{{u}(t_0)}_{H^1(\T)}^2$; 
together with \eqref{eq:limit_of_norms}, 
we attain the lemma statement.
\bigskip

Since $ \lim_{t \to t_0}{\Ex} \norm{{u}(t) - {u}(t_0)}_{H^1(\T)}^2 = 0$, 
by Fatou's lemma, we also have
$$
{\Ex}  \lim_{t \to t_0}\norm{{u}(t) - {u}(t_0)}_{H^1(\T)}^2 = 0,
$$
and therefore $\lim_{t \to t_0}\norm{{u}(t) - {u}(t_0)}_{H^1(\T)}^2$, 
${\mathbb{P}}$-almost surely.

Since $u \in C([0,T];H^1(\T))$, $\mathbb{P}$-almost 
surely, and for initial conditions 
$u_0 \in L^{p_0}(\Omega; H^1(\T))$, 
$u \in L^{p_0}(\Omega;L^\infty([0,T];H^1(\T)))$, 
we can readily conclude that $u \in L^{p}(\Omega;C([0,T];H^1(\T)))$.
This follows from  the fact that the 
$C([0,T];H^1(\T))$ norm coincides with the 
$L^\infty([0,T];H^1(\T))$ for any element in $C([0,T];H^1(\T))$.
\end{proof}

\section{Higher regularity solutions 
(Theorem \ref{thm:main2})}\label{sec:higherreg}

In this section we fix $m \ge 2$ and 
consider the well-posedness of strong $H^m$ solutions.
We will emphasise the parts that differ 
from the well-posedness theory for strong $H^1$ solutions.
Throughout this section we will require that the noise function 
$\sigma$ belongs to $W^{m + 1,\infty}(\T)$.

\subsection{Weak existence}\label{sec:higherreg_existence} 

We begin by proving the existence of weak 
(martingale) $H^m$ solutions. Cubic nonlinearities in the 
SDE for $\d \norm{u_n(t)}_{H^1(\T)}^2$, which 
disappear due to the structure of the equation 
if $m=1$, are retained at the level of $H^m(\T)$. 
Therefore standard calculations, involving first taking 
the expectation and then applying a standard 
Gronwall inequality, oblige us to use 
Gagliardo--Nirenberg inequalities. 
These fail to give sufficiently controllable 
powers on certain norms due to the extra 
expectation integral under which these norms 
are bounded. In particular, there are no uniform bounds 
in $L^\infty(\Omega)$ for any stochastic quantity. 
As in Theorem \ref{thm:st_uniqueness}, we introduce 
stopping times $\eta^n_R<T$ to control exponential moments, 
so that the estimates derived below hold only 
on $\left[0,\eta^n_R\right]$, where
the stopping time $\eta^n_R$ depend on $n$ and
an ``auxiliary parameter" $R$.   
These stopping times converge a.s.~to the final time 
$T$ as $R\to \infty$. Finally, we use the obtained 
estimate to conclude uniform-in-$n$ stochastic 
boundedness in $L^2([0,T];H^m(\T))$, which is precisely 
the tightness condition required to apply the 
Skorokhod--Jakubowski procedure. 

We make here the technical observation that 
the projection operator $\bm{\Pi}_n$ acting 
on $f \in H^{m}(\T)$ also satisfies
\begin{equation}\label{eq:proj_Hm_conv}
	\norm{\bm{\Pi}_nf-f}_{H^{m}(\T)}\ton 0,
\end{equation}
because the basis in $H^1(\T)$ of trigonometric functions 
of integral frequencies forms a basis in $H^m(\T)$ as well.

\begin{prop}[$H^m$ and $H^{m+1}$ estimates up to a stopping time]
\label{thm:galerkinHm}
Let $u_n$ be a solution to \eqref{eq:galerkin_n} 
with $\sigma \in W^{m + 1, \infty}(\T)$ 
and initial condition ${u_0} \in L^{2p}(\Omega; H^{m}(\T))$, 
for some $p \in [1,\infty)$. For $R>1$, let 
$\eta_R^n$ be the stopping time
\begin{align}\label{eq:stoptime2}
	\eta_R^n := \inf \left\{t  \in [0,T] : 
	\int_0^t \norm{u_n(s)}_{W^{1,\infty}(\T)}^2
	\,\d s > R\right\},
\end{align}
setting $\eta_R^n = T$ if the set on the 
right-hand side is empty.  Then $\eta_R^n \toR T$, 
$\mathbb{P}$-a.s., uniformly in $n$. Moreover, there 
exists a constant
$$
C=C\left(p,T,R,\ep, \Ex \norm{u_0}_{H^m(\T)}^{2p}, 
\norm{\sigma}_{W^{m+1,\infty}(\T)}\right),
$$
independent of $n \in \mathbb{N}$, such that
\begin{equation}\label{eq:galerkin_Hm}
	\Ex \norm{u_n}_{L^\infty([0,\eta_R^n];H^m(\T))}^p 
	\le C.
\end{equation}
Finally, there is a constant 
$C=C\left(T,R,\ep,\Ex \norm{u_0}_{H^m(\T)}^2, 
\norm{\sigma}_{W^{m+1,\infty}(\T)}\right)$, which is
independent of $n$, such that 
\begin{equation}\label{eq:diss-term-new}
	\Ex\left(\int_0^{\eta_R^n} 
	\norm{u_n(t)}_{H^{m+1}(\T)}^2
	\,\d t\right)^{1/2}\le C.
\end{equation}

\end{prop}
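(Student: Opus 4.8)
The plan is to follow the blueprint of Propositions~\ref{thm:galerkin_H1} and \ref{thm:galerkin_H1p}, now tracking the top-order derivative $\pd_x^m u_n$, and to absorb the cubic nonlinearities that no longer cancel for $m\ge 2$ into the stopping time $\eta_R^n$. For the uniform convergence $\eta_R^n\toR T$, the dissipation term in \eqref{eq:galerkin_unifbd1} gives $\ep\,\Ex\int_0^T\norm{u_n}_{H^2(\T)}^2\,\d t\le C$ with $C$ independent of $n$; combined with the one-dimensional embedding $H^2(\T)\hookrightarrow W^{1,\infty}(\T)$ this yields $\Ex\int_0^T\norm{u_n}_{W^{1,\infty}(\T)}^2\,\d t\le C/\ep$, uniformly in $n$. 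Markov's inequality then gives $\prob\bk{\seq{\eta_R^n<T}}\le\frac1R\,\Ex\int_0^T\norm{u_n}_{W^{1,\infty}(\T)}^2\,\d t\le\frac{C}{\ep R}\toR 0$ uniformly in $n$, and since $R\mapsto\eta_R^n$ is nondecreasing the $\mathbb{P}$-a.s.\ convergence follows.

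For the main estimate \eqref{eq:galerkin_Hm}, I would apply the finite-dimensional It\^o formula to $u\mapsto\norm{u}_{H^m(\T)}^2$ along \eqref{eq:galerkin_n}, using that $\bm{\Pi}_n$ is self-adjoint and commutes with $\pd_x$ (cf.~\eqref{eq:Pin-SA}, \eqref{eq:Pin_cancellations}), to obtain a decomposition
$$\tfrac12\,\d\norm{u_n}_{H^m(\T)}^2+\ep\,\norm{u_n}_{H^{m+1}(\T)}^2\,\d t = \bk{J_1+J_2}\,\d t+J_3\,\d W,$$
where $J_1$ collects the transport and nonlocal contributions, $J_2$ the Stratonovich--It\^o correction together with the martingale's quadratic variation, and $J_3$ is the martingale integrand. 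In $J_1$, pairing $\pd_x^m(u_n\pd_x u_n)$ with $\pd_x^m u_n$ and integrating by parts turns the leading term $\int_\T u_n\,\pd_x^{m+1}u_n\,\pd_x^m u_n\,\d x$ into $-\tfrac12\int_\T\pd_x u_n\,\bk{\pd_x^m u_n}^2\,\d x$, bounded by $\norm{u_n}_{W^{1,\infty}(\T)}\norm{u_n}_{H^m(\T)}^2$, while the commutator $[\pd_x^m,u_n]\pd_x u_n$ and the nonlocal term $\pd_x^{m+1}P[u_n]$ (controlled by elliptic regularity, $\norm{\pd_x P[u_n]}_{H^m(\T)}\lesssim\norm{u_n^2+\tfrac12(\pd_x u_n)^2}_{H^{m-1}(\T)}$, together with Moser/Gagliardo--Nirenberg products) are all dominated by $\bk{1+\norm{u_n}_{W^{1,\infty}(\T)}^2}\norm{u_n}_{H^m(\T)}^2$. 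In $J_2$ the decisive point is the same ``balance'' exploited for $m=1$: the top-order piece $\sigma^2(\pd_x^{m+1}u_n)^2$ of the quadratic variation cancels against the corresponding piece of $-\ang{u_n,\sigma\pd_x\bk{\sigma\pd_x u_n}}_{H^m}$, and after Bessel's inequality for $\bm{\Pi}_n$ and integration by parts what survives is $\le C_\sigma\norm{u_n}_{H^m(\T)}^2$ with $C_\sigma$ depending on $\norm{\sigma}_{W^{m+1,\infty}(\T)}$; the same integration by parts shows $\abs{J_3}\lesssim C_\sigma\norm{u_n}_{H^m(\T)}^2$, so the bracket $M$ with $\d M=J_3\,\d W$ satisfies $\d\ang{M}\lesssim\norm{u_n}_{H^m(\T)}^4\,\d t$. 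Keeping $\ep\norm{u_n}_{H^{m+1}(\T)}^2$ on the left as a good dissipative term, this gives on $[0,\eta_R^n]$ the inequality $\d\norm{u_n}_{H^m(\T)}^2\le C\bk{1+\norm{u_n}_{W^{1,\infty}(\T)}^2}\norm{u_n}_{H^m(\T)}^2\,\d t+\d M$.

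The hard part is closing this into a $p$-th moment bound: the coefficient $1+\norm{u_n}_{W^{1,\infty}(\T)}^2$ is random with no $L^\infty(\Omega)$ control, so after raising to the power $p/2$, taking the supremum in time up to $\eta_R^n$ and using Burkholder--Davis--Gundy with Young's inequality to absorb $\tfrac12\,\Ex\sup\norm{u_n}_{H^m(\T)}^p$, the surviving forcing still carries the random factor inside the time integral and a deterministic Gronwall is unavailable. Here I would use that on $[0,\eta_R^n]$ one has $\int_0^{\eta_R^n}\bk{1+\norm{u_n}_{W^{1,\infty}(\T)}^2}\,\d s\le T+R$ and invoke the stochastic Gronwall inequality (Lemma~\ref{thm:stochastic_gronwall_st}); this yields \eqref{eq:galerkin_Hm} with a constant depending on $p,T,R,\ep,\norm{\sigma}_{W^{m+1,\infty}(\T)}$ and $\Ex\norm{u_0}_{H^m(\T)}^{2p}$ (the doubled moment matching the form of that lemma), using $\Ex\norm{\bm{\Pi}_n u_0}_{H^m(\T)}^{2p}\le\Ex\norm{u_0}_{H^m(\T)}^{2p}$ by \eqref{eq:proj_Hm_conv}. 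The range $p\in[1,2)$ follows from the case $p=2$ since $(\Omega,\prob)$ is a finite measure space, exactly as in the remark after Lemma~\ref{thm:galerkin_H1p}.

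Finally, \eqref{eq:diss-term-new} is a by-product of the same energy identity. Integrating to $\eta_R^n$ and retaining the dissipation gives
$$2\ep\int_0^{\eta_R^n}\norm{u_n}_{H^{m+1}(\T)}^2\,\d s\le\norm{\bm{\Pi}_n u_0}_{H^m(\T)}^2+C(T+R)\sup_{[0,\eta_R^n]}\norm{u_n}_{H^m(\T)}^2+\sup_{t\in[0,T]}\abs{M\bk{t\wedge\eta_R^n}}.$$
Taking the square root, using subadditivity of $\sqrt{\cdot}$ and Cauchy--Schwarz, every term on the right has finite expectation independent of $n$: the first by the moment assumption on $u_0$, the second by \eqref{eq:galerkin_Hm} with $p=2$, and the third by Burkholder--Davis--Gundy together with $\d\ang{M}\lesssim\norm{u_n}_{H^m(\T)}^4\,\d t$ and again \eqref{eq:galerkin_Hm}. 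This yields \eqref{eq:diss-term-new}.
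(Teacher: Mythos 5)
Your proposal follows the paper's proof in all essentials: the uniform convergence $\eta_R^n\toR T$ via the $\ep$-dependent $H^2$ dissipation bound of \eqref{eq:galerkin_unifbd1} and Markov's inequality, the $H^m$ energy identity with the top-order cancellation between the quadratic variation $\tfrac12\int_\T\abs{\pd_x^m\bm{\Pi}_n(\sigma\pd_x u_n)}^2\,\d x$ and the Stratonovich--It\^{o} correction, the Gagliardo--Nirenberg/Moser bounds reducing everything to $\bk{1+\norm{u_n}_{W^{1,\infty}(\T)}^2}\norm{u_n}_{H^m(\T)}^2$, and the closure via Lemma~\ref{thm:stochastic_gronwall_st} on $[0,\eta_R^n]$ using $A(\eta_R^n)\le C(T+R)$, with the doubled moment $\Ex\norm{u_0}_{H^m(\T)}^{2p}$ arising from the restriction $\nu<1$ in that lemma. (The paper works derivative-by-derivative with $\norm{\pd_x^\ell u_n}_{L^2(\T)}^{2p}$ rather than with $\norm{u_n}_{H^m(\T)}^2$ raised to a power afterwards, but this is cosmetic; do make explicit the intermediate It\^{o} step $r\mapsto r^p$ producing the SDE inequality for the $2p$-th power before invoking the stochastic Gronwall lemma with $\nu=1/2$, since applying that lemma directly to $\norm{u_n}_{H^m(\T)}^2$ with $\nu=p/2$ only covers $p<2$.)

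The one genuinely different step is your derivation of \eqref{eq:diss-term-new}: the paper simply reads it off the stochastic Gronwall output \eqref{eq:parabolic_2_m} with $p=1$, because the dissipation integral is carried inside the process $\xi$ to which the lemma is applied, whereas you re-integrate the energy identity up to $\eta_R^n$ and estimate the martingale supremum separately. Your route works, but as written it costs a moment: bounding $\Ex\sup_t\abs{M(t\wedge\eta_R^n)}$ by BDG and $\d\ang{M}\lesssim\norm{u_n}_{H^m(\T)}^4\,\d t$ requires \eqref{eq:galerkin_Hm} with $p=2$, hence $u_0\in L^4(\Omega;H^m(\T))$, while the statement of \eqref{eq:diss-term-new} claims dependence only on $\Ex\norm{u_0}_{H^m(\T)}^2$. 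This is repaired by applying the Burkholder--Davis--Gundy inequality at exponent $1/2$, i.e.\ $\Ex\sup_t\abs{M}^{1/2}\lesssim\Ex\ang{M}^{1/4}\lesssim T^{1/4}\,\Ex\sup_t\norm{u_n}_{H^m(\T)}$, which only needs \eqref{eq:galerkin_Hm} with $p=1$; alternatively, adopt the paper's shortcut of keeping the dissipation inside the Gronwall process.
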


\begin{rem}\label{rem:L8_omega}
Subsequently, we will take $2p = 2$ to show existence, 
but require $2p = 8$ for uniqueness, to 
use Lemmas \ref{thm:commutator1}, \ref{thm:commutator2}, 
and Proposition \ref{thm:commutator3}. 
From Lemma \ref{thm:commutator1}, to establish
uniqueness, we require $4$th moments on $\norm{u}_{L^\infty([0,T];H^1(\T))}$,
which in turn is bounded by the $4$th moment of the initial
condition. Application of the stochastic Gronwall inequality requires that
a strictly higher moment be boundeed. It is possible simply to take 
$2p  > 4$, but it is more convenient simply to take $2p = 8$.
\end{rem}

\begin{proof}  We divide the proof into several steps.

{\it 1. Pointwise limit of the stopping times.}

The fact that $\eta^n_R$ as defined in \eqref{eq:stoptime2} 
converges a.s.~ to $T$ as $R \to \infty$, uniformly in $n$, 
follows from a calculation similar to that 
in \eqref{eq:stoptime_2_bd}. More precisely, 
given the $n$-uniform (but $\ep$-dependent) 
$H^2$ estimate implied by \eqref{eq:galerkin_unifbd1} 
and the embedding $H^2(\T) \hookrightarrow 
W^{1,\infty}(\T)$, we have
\begin{align}
	\prob\bk{\seq{\eta_R^n<T}}
	&\le \prob\bk{\seq{\int_0^T
	\norm{u_n(t)}_{W^{1,\infty}(\T)}^2\,\d t>R}} 
	\label{eq:stoptime_2_bd-new} 
	\\ & 
	\le \frac1R \Ex \int_0^T 
	\norm{u_n(t)}_{W^{1,\infty}(\T)}^2\,\d t
	\le \frac{\tilde{C}}{R}
	\Ex \int_0^T\norm{u_n(t)}_{H^2(\T)}^2\,\d t \notag\\
	&\le \frac{C_\ep}{R}\toR 0.
	\notag
\end{align}

{\it 2. Bounds on higher regularity norms.}

Next we prove the bound \eqref{eq:galerkin_Hm}. 
Taking the $\ell$th derivative of \eqref{eq:galerkin_n}, 
we find that 
\begin{align*}
	0 & = \d \pd_x^\ell u_n - \ep \pd_x^{\ell+2} u_n \,\d t
	+\left[ \pd_x^\ell \bm{\Pi}_n \bk{u_n \,\pd_x u_n} 
	+ \pd_x^{\ell + 1} P[u_n] \right]\,\d t\\
	&\quad - \frac12 \pd_x^\ell\bm{\Pi}_n\bk{\sigma 
	\,\pd_x\bk{\sigma\,\pd_x u_n}} \,\d t
	+ \pd_x^\ell \bm{\Pi}_n \bk{\sigma\,\pd_x u_n}\,\d W.
\end{align*}
First we multiply through by $\pd_x^\ell u_n$, 
integrate in space, and use the commutativity 
between the projection and the derivative. 
Then we apply the It\^o formula 
for $r \mapsto r^p$. The result is
\begin{align*}
	\frac1{2p}&\norm{\pd_x^\ell u_n(s)}_{L^2(\T)}^{2p}
	\bigg|_0^t + \ep \int_0^t 
	\norm{\pd_x^\ell u_n(s)}_{L^2(\T)}^{2p-2}
	\norm{\pd_x^{\ell + 1} u_n(s)}_{L^2(\T)}^2\,\d s
	\\ & =\frac12  \int_0^t
	\norm{\pd_x^\ell u_n(s)}_{L^2(\T)}^{2p-2}
	\int_\T\pd_x u_n \bk{\pd_x^{\ell} u_n}^2 \,\d x\,\d s 
	\\ &\quad -\int_0^t
	\norm{\pd_x^\ell u_n(s)}_{L^2(\T)}^{2p - 2}
	\int_\T \pd_x^{\ell + 1} P [u_n] \,\pd_x^{\ell} 
	u_n \,\d x\,\d s
	\\ & \quad
	+ \int_0^t\norm{\pd_x^\ell u_n(s)}_{L^2(\T)}^{2p-2} 
	\int_\T \bk{u_n\pd_x^{\ell + 1} u_n -\pd_x^\ell
	\bk{u_n\,\pd_x u_n}}\pd_x^{\ell} u  \,\d x\,\d s
	\\ & \quad + \frac12 \int_0^t 
	\norm{\pd_x^\ell u_n(s)}_{L^2(\T)}^{2p - 2}
	\int_\T \pd_x^{\ell} u_n  \, 
	\pd_x^\ell \bk{\sigma \,\pd_x
	\bk{\sigma\,\pd_x u_n}} \,\d x \,\d s 
	\\ &\quad+ \frac12 \int_0^t 
	\norm{\pd_x^\ell u_n(s)}_{L^2(\T)}^{2p - 4} 
	\abs{\int_\T \pd_x^{\ell}u_n 
	\,\pd_x^\ell \bk{\sigma\,\pd_x u_n}
	\,\d x}^2\,\d s\\ &\quad
	+ \int_0^t \norm{\pd_x^\ell u_n(s)}_{L^2(\T)}^{2p-2}
	\int_\T \pd_x^{\ell}u_n \,\pd_x^\ell 
	\bk{\sigma\,\pd_x u_n}\,\d x\,\d W
	\\ & = : \sum_{i=1}^5\int_0^t I_i^n\,\d s
	+ \int_0^t I_6^n \,\d W.
\end{align*}

We again estimate  $I_1^n$ to $I_5^n$, leaving 
the martingale term $\int_0^t I_6^n\,\d W$ 
to be handled by the stochastic Gronwall inequality. 
We have readily that
\begin{align*}
	\abs{I_1^n} \le \frac12 
	\norm{\pd_x u_n}_{L^\infty(\T)} 
	\norm{\pd_x^{\ell} u_n}_{L^2(\T)}^{2p}.
\end{align*}
By the Cauchy--Schwarz and Young's inequalites,
\begin{align*}
	\abs{I_2^n}& \le C_\ep
	\norm{\pd_x^{\ell} u_n}_{L^2(\T)}^{2p-2}
	\norm{\pd_x^\ell P[u_n]}_{L^2(\T)}^2 
	+\frac\ep2 \norm{\pd_x^{\ell} u_n}_{L^2(\T)}^{2p -2}
	\norm{\pd_x^{\ell+1} u_n}_{L^2(\T)}^2.
\end{align*}

By the Leibniz rule and the 
Gagliardo--Nirenberg inequality, 
\begin{align}
	\norm{\pd_x^\ell u_n^2}_{L^2(\T)}
	& =\norm{u_n\pd_x^\ell u_n
	+\ell \pd_x u_n \,\pd_x^{\ell-1} u_n 
	+\ldots+\pd_x^\ell u_n\,u_n}_{L^2(\T)}\notag
	\\ & \lesssim 
	\norm{u_n}_{L^\infty(\T)} 
	\norm{\pd_x^\ell u_n}_{L^2(\T)},\label{eq:LGN}
	\\ \intertext{and likewise}
	\norm{\pd_x^{\ell-1}\bk{\pd_x u_n}^2}_{L^2(\T)} 
	& \lesssim \norm{\pd_x u_n}_{L^\infty(\T)}
	\norm{\pd_x^\ell u_n}_{L^2(\T)}. \notag
\end{align}
%--------------- remark -----------------
\begin{rem}
We add some details on the estimate \eqref{eq:LGN}. It suffices to show
\begin{equation*}
\norm{u^{(j)}u^{(\ell-j)}}_{L^2(\T)}\lesssim  \norm{u}_{L^\infty(\T)}\norm{u^{(\ell)}}_{L^2(\T)}, \quad j=0,\dots,\ell,
\end{equation*}
with $u^{(j)}=\pd_x^j u$. H\"older's inequality gives
\begin{equation*}
\norm{u^{(j)}u^{(\ell-j)}}_{2}\le  \norm{u^{(j)}}_{r}\norm{u^{(\ell-j)}}_{r'}, \quad \frac1r+\frac1{r'}=\frac12.
\end{equation*}
Apply now the 
Gagliardo--Nirenberg inequality \cite[Lemma 2.1]{Boritchev:2014} to find
\begin{equation*}
\norm{u^{(\beta)}}_{r}\lesssim  \norm{u^{(m)}}_{p}^\theta\norm{u}_{q}^{1-\theta}, 
\end{equation*}
assuming $\int_\T u\, \d x=0$. Here
\begin{equation*}
\frac1r=\beta-\theta(m-\frac1p)+(1-\theta)\frac1q, \quad m>\beta.
\end{equation*}
If $p$ equals $1$ or $\infty$, then $\theta=\beta/m$. Assume for the moment that $\int_\T u\,\d x=0$. We get
\begin{equation*}
\norm{u^{(j)}}_{r}\lesssim  \norm{u^{(\ell)}}_{2}^{j/\ell}\norm{u}_{\infty}^{1-j/\ell}, 
\end{equation*}
with
\begin{equation*}
\frac1r=j-\frac{j}{\ell}(\ell-\frac12).
\end{equation*}
Similarly,
\begin{equation*}
\norm{u^{(\ell-j)}}_{r'}\lesssim  \norm{u^{(\ell)}}_{2}^{\ell-j/\ell}\norm{u}_{\infty}^{j/\ell}, 
\end{equation*}
with
\begin{equation*}
\frac1{r'}=\ell-j-\frac{\ell-j}{\ell}(\ell-\frac12).
\end{equation*}
Note that $1/r+1/r'=1/2$.  Furthermore,
\begin{align*}
\norm{u^{(j)}}_{r}\norm{u^{(\ell-j)}}_{r'}&\lesssim \norm{u^{(\ell)}}_{2}^{j/\ell}\norm{u}_{\infty}^{1-j/\ell} \norm{u^{(\ell)}}_{2}^{\ell-j/\ell}\norm{u}_{\infty}^{j/\ell} \\
&=\norm{u^{(\ell)}}_{2}\norm{u}_{\infty}. 
\end{align*}
In the general case, let $v=u-\abs{\T}^{-1}\int_\T u\, \d x$. Then we find
\begin{align*}
\norm{u^{(j)}}_{r}=\norm{v^{(j)}}_{r} &\lesssim  \norm{v^{(\ell)}}_{2}^{j/\ell}\norm{v}_{\infty}^{1-j/\ell} \\
&\lesssim \norm{u^{(\ell)}}_{2}^{j/\ell}\norm{u}_{\infty}^{1-j/\ell}, 
\end{align*}
and similar for the other estimate.  This justifies \eqref{eq:LGN}.
\end{rem}
\smallskip

%------------------------ end remark ------------------------------

Writing the term $\pd_x^\ell P$ above as 
$$
\pd_x^\ell K *\bk{u_n^2+\frac12 \bk{\pd_x u_n}^2} 
= K* \pd_x^\ell u_n^2 
+ \frac12 \pd_x K * \pd_x^{\ell-1}\bk{\pd_x u_n}^2,
$$
the $L^2_x$ norm can be bounded by the 
Young convolution and Gagliardo--Nirenberg 
inequalities, see \cite[(2.6), (2.8)]{XZ2000}:
\begin{align*}
	\norm{\pd_x^\ell P[u_n]}_{L^2(\T)} 
	& \lesssim \bk{\norm{\pd_x^\ell u_n^2}_{L^2(\T)} 
	+ \norm{\pd_x^{\ell-1}
	\bk{\pd_x u_n}^2}_{L^2(\T)}}
	\\ & \lesssim 
	\bk{\norm{u_n}_{L^\infty(\T)} 
	+ \norm{\pd_x u_n}_{L^\infty}} 
	\norm{\pd_x^\ell u_n}_{L^2(\T)}.
\end{align*}
Because $u_n\pd_x^{\ell + 1} u_n
-\pd_x^\ell \bk{u_n\,\pd_x u_n}$ precisely removes 
all instances of the $(\ell+1)$st derivative, 
by the Gagliardo--Nirenberg inequality, 
as in \cite[(2.7)]{XZ2000},
\begin{align*}
	\abs{I_3^n} & \le  
	\norm{\pd_x^{\ell} u_n}_{L^2(\T)}^{2p -2}
	\norm{u_n\pd_x^{\ell + 1} u_n
	-\pd_x^\ell \bk{u_n\,\pd_x u_n}}_{L^2(\T)}
	\norm{\pd_x^{\ell} u_n}_{L^2(\T)}
	\\ & \le C \norm{\pd_x u_n}_{L^\infty(\T)}
	\norm{\pd_x^{\ell} u_n}_{L^2(\T)}^{2p}.
\end{align*}

In the parts involving $\sigma$, for $I_4^n$ 
we have
\begin{align*}
	& 2I_4^n \norm{\pd_x^{\ell} u_n}_{L^2(\T)}^{-2p + 2}
	\\ &\quad 
	= -\int_\T \pd_x^{\ell+1} u_n  
	\,\pd_x^{\ell-1} \bk{\sigma \,\pd_x 
	\bk{\sigma\, \pd_x u_n}}\,\d x
	\\ & \quad = -\int_\T \pd_x^{\ell + 1} u_n  
	\,\pd_x^{\ell-1} \bk{\sigma^2  \pd_x^2 u_n}
	+\pd_x^{\ell+1} u_n  \,\pd_x^{\ell-1} 
	\bk{\sigma \,\pd_x \sigma\,  \pd_x u_n}\,\d x
	\\ & \quad = -\int_\T \sigma^2 
	\abs{\pd_x^{\ell + 1}u_n}^2\,\d x
	-\int_\T \pd_x^{\ell+1} u_n  
	\sum_{k = 0}^{\ell-2} \binom{\ell -1}{k}
	\pd_x^{\ell-1-k}\sigma^2  
	\pd_x^{2+k} u_n\,\d x 
	\\ &\quad\quad
	-\int_\T \sigma\,\pd_x\sigma\,\pd_x^{\ell+1} 
	u_n \pd_x^\ell u_n + \pd_x^{\ell + 1} 
	u_n \sum_{k=0}^{\ell - 2} \binom{\ell - 1}{k}
	\pd_x^{\ell-1-k}\bk{\sigma \,\pd_x \sigma}
	\,\pd_x^{k+1} u_n\,\d x
	\\ & \quad = -\int_\T \sigma^2 
	\abs{\pd_x^{\ell+1}u_n}^2\,\d x  
	+\frac12 \int_\T \pd_x \bk{\sigma \pd_x \sigma}
	\,\abs{\pd_x^\ell u_n}^2\,\d x
	\\ & \quad \quad - \int_\T \pd_x^{\ell+1} 
	u_n  \sum_{k=0}^{\ell-2} 
	\binom{\ell-1}{k}\pd_x^{\ell-1-k}
	\sigma^2  \pd_x^{2 + k} u_n\,\d x 
	\\ & \quad \quad - \int_\T \pd_x^{\ell + 1} 
	u_n \sum_{k=0}^{\ell - 2} \binom{\ell - 1}{k} 
	\pd_x^{\ell-1-k}\bk{\sigma \,\pd_x \sigma}
	\,\pd_x^{k+1} u_n\,\d x,
\end{align*}
which implies
\begin{equation*}
2\abs{I_4^n} \norm{\pd_x^{\ell} u_n}_{L^2(\T)}^{-2p + 2}\le C_{\sigma, \ell}
	\norm{\pd_x^\ell u_n}_{L^2(\T)}^2,
\end{equation*}
because the summands do not have derivatives 
of order higher than $\ell$.

Similarly for $I_5^n$ we have
\begin{align*}
	&\int_\T \pd_x^{\ell+1} u_n 
	\,\pd_x^{\ell - 1} \bk{\sigma \pd_x u_n}\,\d x
	\\ & \qquad =  \int_\T \big[\sigma\, \pd_x^{\ell+1} 
	u_n\, \pd_x^{\ell } u_n+\pd_x^{\ell} u_n \pd_x 
	\sum_{k = 0}^{\ell -2} \binom{\ell-1}{k}
	\pd_x^{\ell-1-k}\sigma\,\pd_x^{k + 1} u_n\big]\,\d x
	\\ & \qquad = - \int_\T \big[\frac12\pd_x 
	\sigma \abs{\pd_x^{\ell} u_n}^2
	-\pd_x^{\ell} u_n \pd_x \sum_{k = 0}^{\ell-2}
	\binom{\ell-1}{k} \pd_x^{\ell-1-k}
	\sigma\,\pd_x^{k+1} u_n\big]\,\d x,
\end{align*}
and therefore
\begin{align*}
	\abs{I_5^n}\le C_{\sigma, \ell} 
	\norm{\pd_x^{\ell} u_n}_{L^2(\T)}^{2p-4} 
	\bk{\norm{\pd_x^{\ell} u_n}_{L^2(\T)}^2}^2 
	\le C_{\sigma, \ell}  
	\norm{\pd_x^{\ell} u_n}_{L^2(\T)}^{2p}.
\end{align*}

Gathering the estimates for $I_1^n$ to $I_5^n$, we find
\begin{align*}
	&\frac1{2p}\d
	\norm{\pd_x^\ell u_n}_{L^2(\T)}^{2p} 
	+\frac\ep2\norm{\pd_x^\ell u_n}_{L^2(\T)}^{2p- 2}
	\norm{\pd_x^{\ell+1} u_n}_{L^2(\T)}^2\,\d t
	\\ & \quad 
	\le  C_{\sigma, \ell, \ep} 
	\bk{1+\norm{u_n}_{L^\infty(\T)}
	+\norm{\pd_x u_n}_{L^\infty(\T)}}
	\norm{\pd_x^\ell  u_n}_{L^2(\T)}^{2p}
	\,\d t  + I_6^n \,\d W,
\end{align*}
where $\int_0^{t \wedge \eta_R^n} I_6^n \,\d W$ is a square-integrable martingale. 
We can overestimate the right-hand side by adding 
to ``$\norm{\pd_x^\ell u_n}_{L^2(\T)}^{2p}$" 
the term ``$\int_0^t \frac{\ep}{2}  \ldots \, \d s$". 
Setting 
\begin{align*}
	& \xi_n(t):= \frac1{2p}
	\norm{\pd_x^\ell u_n(t)}_{L^2(\T)}^{2p} 
	+\frac\ep2\int_0^t
	\norm{\pd_x^\ell u_n(s)}_{L^2(\T)}^{2p- 2}
	\norm{\pd_x^{\ell+1} u_n(s)}_{L^2(\T)}^2\, \d s,
	\\ & 
	A_n(t) := \int_0^t C_{\sigma, \ell, \ep}
	\bk{1+\norm{u_n(s)}_{L^\infty(\T)}
	+\norm{\pd_x u_n(s)}_{L^\infty(\T)}}\, \d s,
\end{align*}
and $M_n(t) := \int_0^t I_6^n \,\d W(s)$, we obtain
$\d\xi_n(t) \le \xi_n(t)\, \d A_n(t)+\d M_n(t)$.
Now an application of the stochastic Gronwall inequality 
(Lemma \ref{thm:stochastic_gronwall_st}) gives
\begin{equation}\label{eq:parabolic_2_m}
	\begin{aligned}
		&\Biggl(\Ex \sup_{s \in \left[0,\eta_R^n\right]}
		\, \biggl ( \, 
		\norm{\pd_x^\ell u_n(s)}_{L^2(\T)}^{2p} 
		\\ & \qquad \qquad
		+\frac{\ep}{2}\int_0^s 
		\norm{\pd_x^\ell u_n(s')}_{L^2(\T)}^{2p-2}
		\norm{\pd_x^{\ell+1}u_n(s')}_{L^2(\T)}^2
		\,\d s'\,\biggr )^{1/2} \Biggr)^2
		\\ & \qquad \qquad\qquad\qquad
		\le C_{p,\sigma,\ep,T,\ell,R}\,
		\Ex \norm{\pd_x^\ell u_n(0)}_{L^2(\T)}^{2p},
		\qquad \ell =0,\ldots,m,
	\end{aligned}
\end{equation}
from which \eqref{eq:galerkin_Hm} easily follows. 

With $p=1$ and $\ell=1,\ldots,m$, it also follows 
from \eqref{eq:parabolic_2_m} that
\begin{align*}
	\Ex \, \biggl ( \, 
	\frac{\ep}{2}\int_0^{\eta_R^n} 
	\norm{\pd_x^{\ell+1}u_n(s')}_{L^2(\T)}^2
	\,\d s'\,\biggr )^{1/2}
	\le C_{\sigma,\ep,T,\ell,R,u_0},
\end{align*}
which implies \eqref{eq:diss-term-new}.
\end{proof}

Next, we establish stochastic boundedness and 
tightness of laws for $\seq{u_n}$. The proof differs 
from the straightforward deduction leading to 
Lemma \ref{thm:stochastic_boundedness} ($m=1$), and 
$u_n\in_{\rm sb} L^2([0,T]; H^{2}(\T)) \cap 
W^{\theta', 2}([0, T]; L^2(\T))$ and the tightness 
of laws in $L^2([0,T];H^{1}(\T))$, where $u_n\in_{\rm sb}
L^2_tH^{2}_x$ follows trivially from 
$u_n\in_b L^2(\Omega;L^2_tH^2_x)$. For $m \ge 2$, we 
do not have $u_n\in_b L^2\left(\Omega;L^2_tH^{m+1}_x\right)$ 
for the entire interval $[0,T]$, but rather only 
up to a suitable stopping time, cf.~\eqref{eq:diss-term-new}. 
The next proof develops a refined stopping time argument 
to deal with this issue, leading to 
$u_n\in_{\rm sb} L^2_tH^{m+1}_x$. 

\begin{lem}\label{thm:stoch_bound_Hm_v2}
Let $u_{n}$ be a solution to \eqref{eq:galerkin_n} 
with $\Ex \norm{u_0}_{H^1(\T)}^p,\, 
\Ex \norm{u_0}_{H^{m}(\T)}^2 < \infty$, for some $p > 2$. 
For $\theta' < (2 - p)/4p$, the laws of $\{u_n\}_{n \in\N}$ are uniformly stochastically bounded in 
$L^2([0,T];H^{m+1}(\T))
	\cap W^{\theta',2}([0,T];L^2(\T))$, i.e.,
\begin{equation}\label{eq:stochastic-bound}
	\lim_{M\to\infty} 
	\prob\bk{\seq{\norm{u_n}_{L^2([0,T];H^{m+1}(\T))
	\cap W^{\theta',2}([0,T];L^2(\T))}>M}}=0
\end{equation}
holds, uniformly in $n$. 
The laws of $\{u_n\}_{n \in\N}$ are also uniformly stochastically bounded in 
$L^\infty([0,T];H^{m}(\T))$.

Moreover, the laws of $\seq{u_n}$ 
are tight on $L^2([0,T];H^{m}(\T))$.
\end{lem}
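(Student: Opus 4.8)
The plan is to establish the three assertions in turn, reducing each (via Markov's inequality) to an a priori moment bound supplied by Proposition \ref{thm:galerkinHm} or Lemma \ref{thm:u_Ctheta_L2}, and then to harvest tightness from the compact Aubin--Lions embedding of Lemma \ref{thm:temam_flandoli_gatarek}. The temporal-regularity factor $\norm{u_n}_{W^{\theta',2}([0,T];L^2(\T))}$ is the easy one: fixing $\theta$ in the admissible range of Lemma \ref{thm:u_Ctheta_L2} with $\theta>\theta'$, the continuous embedding $C^\theta([0,T];L^2(\T))\hookrightarrow W^{\theta',2}([0,T];L^2(\T))$ together with the $n$-uniform bound \eqref{eq:galerkin_Ctheta} yields, by Markov's inequality, $\prob(\norm{u_n}_{W^{\theta',2}([0,T];L^2(\T))}>M)\lesssim 1/M$ uniformly in $n$, exactly as in the proof of Lemma \ref{thm:stochastic_boundedness}. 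This step needs no stopping time, since \eqref{eq:galerkin_Ctheta} holds on all of $[0,T]$.

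The genuinely new point is the factor $\norm{u_n}_{L^2([0,T];H^{m+1}(\T))}$, for which (unlike the case $m=1$) no global-in-time bound in $L^1(\Omega)$ is available; Proposition \ref{thm:galerkinHm} controls it only up to the stopping time $\eta_R^n$, cf.~\eqref{eq:diss-term-new}. Here the plan is to argue with a \emph{layered} choice of parameters. For any $R>1$ and $M>0$, I would decompose
\begin{align*}
	\prob\bk{\seq{\norm{u_n}_{L^2([0,T];H^{m+1}(\T))}>M}}
	&\le \prob\bk{\seq{\norm{u_n}_{L^2([0,\eta_R^n];H^{m+1}(\T))}>M}}
	+\prob\bk{\seq{\eta_R^n<T}},
\end{align*}
using that on $\seq{\eta_R^n=T}$ the two $L^2_tH^{m+1}_x$ norms coincide. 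By Markov's inequality applied to the square root together with \eqref{eq:diss-term-new}, the first term is bounded by $C_R/M$, while Proposition \ref{thm:galerkinHm} guarantees $\sup_n\prob(\eta_R^n<T)\toR 0$. Given $\delta>0$, I would \emph{first} fix $R$ so large that $\sup_n\prob(\eta_R^n<T)<\delta/2$, and \emph{then}, with this (now frozen) constant $C_R$, choose $M>2C_R/\delta$; this makes $\sup_n\prob(\norm{u_n}_{L^2([0,T];H^{m+1}(\T))}>M)<\delta$. The order of the two choices is essential, and constitutes the main obstacle, which is precisely what is absent when $m=1$.

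An identical stopping-time split, now invoking \eqref{eq:galerkin_Hm} and Markov's inequality at the power $p$, shows $\sup_n\prob(\norm{u_n}_{L^\infty([0,T];H^m(\T))}>M)\to 0$ as $M\to\infty$, giving the stochastic boundedness in $L^\infty([0,T];H^m(\T))$. Combining the $L^2_tH^{m+1}_x$ and $W^{\theta',2}_tL^2_x$ bounds through $\prob(A+B>M)\le\prob(A>M/2)+\prob(B>M/2)$ then yields \eqref{eq:stochastic-bound}. Finally, tightness on $L^2([0,T];H^m(\T))$ follows as in Lemma \ref{thm:stochastic_boundedness}: I would apply Lemma \ref{thm:temam_flandoli_gatarek} with $B_0=H^{m+1}(\T)$, $B=H^m(\T)$, $B_1=L^2(\T)$, $p=2$ and $\alpha=\theta'$ (the embedding $H^{m+1}(\T)\hookrightarrow H^m(\T)$ being compact), so that the sublevel sets $\seq{v:\norm{v}_{L^2([0,T];H^{m+1}(\T))}+\norm{v}_{W^{\theta',2}([0,T];L^2(\T))}\le M}$ are compact in $L^2([0,T];H^m(\T))$; estimate \eqref{eq:stochastic-bound} then says their complements carry law-mass $<\delta$ uniformly in $n$ once $M$ is chosen large.
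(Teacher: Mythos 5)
Your proof is correct and follows essentially the same route as the paper: Markov's inequality applied to the $C^\theta_tL^2_x$ bound for the $W^{\theta',2}$ factor, a two-layer stopping-time argument (control up to $\eta_R^n$ via \eqref{eq:diss-term-new} and \eqref{eq:galerkin_Hm}, plus the uniform-in-$n$ bound on $\prob(\eta_R^n<T)$, with $R$ fixed before $M$) for the $L^2_tH^{m+1}_x$ and $L^\infty_tH^m_x$ factors, and the Aubin--Lions embedding of Lemma \ref{thm:temam_flandoli_gatarek} for tightness. The only cosmetic difference is that the paper encodes the event $\{\norm{u_n}_{L^2([0,T];H^{m+1}(\T))}>M\}$ through an auxiliary stopping time $\xi_M^n$ before intersecting with $\{\eta_R^n\ge t\}$, whereas you use the equivalent direct set inclusion; both yield the same estimate $C_R/M+C/R$.
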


\begin{proof}
As in the proof of Lemma \ref{thm:stochastic_boundedness}, for any $\theta \in (\theta', (2 - p)/4p)$,
$$
\prob\bk{
\Bigl\{\norm{u_n}_{W^{\theta', r}([0, T]; L^2(\T))}>M
\Bigr\}} \le \frac1M 
\,\Ex \norm{u_n}_{C^\theta([0,T];L^2(\T))} 
\lesssim \frac1M,
$$
where, in passing, we mention that the requirement 
$u_0\in L^p_{\omega}H^1_x$ is linked to the application
of Lemma \ref{thm:u_Ctheta_L2}, which allows us to
 arrive at the final $1/M$ estimate. 

Following the proof of Lemma 
\ref{thm:stochastic_boundedness}, set
$$
X_n(t):=\bk{\int_0^t \norm{u_n(s)}_{H^{m+1}(\T)}^2
\,\d s}^{1/2}=\norm{u_n}_{L^2([0,t];H^{m+1}(\T))},
$$
and introduce the stopping time
$$
\xi_M^n =\inf\seq{t\in [0,T]: X_n(t)>M},
$$
setting $\xi_M^n=T$ if the set is empty. 
Let $\eta_R^n$ be the stopping time defined 
in \eqref{eq:stoptime2}. For any fixed $R$, we have
\begin{align*}
	\seq{X_n(t)> M} 
	=\seq{\xi_M^n < t} 
	& = \seq{\xi_M^n<t,\eta_{R}^n<t} 
	\cup \seq{\xi_M^n<t,\eta_{R}^n\ge t}
	\\ & \subseteq \seq{\eta_{R}^n<t}
	\cup \seq{\xi_M^n<t,\eta_{R}^n\ge t}.
\end{align*}
We can estimate the probability of the last 
event on the right-hand side separately 
as follows: 
\begin{align*}
	\seq{\xi_M^n<t,\eta_R^n\ge t} 
	&\subseteq 
	\seq{\xi_M^n<\bk{t\wedge \eta_R^n}} 
	\subseteq \seq{X_n(t \wedge \xi_M^n\wedge \eta_R^n) \ge M},
\end{align*}
which implies
\begin{align*}
	 \prob
	\bigl(\seq{\xi_M^n<t,\eta_R^n \ge t}\bigr) 
	&\le \prob
	\bigl(\seq{X_n(t\wedge\xi_M^n\wedge \eta_R^n) \ge M}\bigr)
	\\ & 
	\le \frac1M \Ex\, 
	X_n(t\wedge \xi_M^n \wedge \eta_R^n) 
	\le \frac1M \Ex\, X_n(t \wedge \eta_R^n) 
	\overset{\eqref{eq:diss-term-new}}{\le}
	\frac{C_{T,R,\ep}}{M}.
\end{align*}
Separately, via \eqref{eq:stoptime_2_bd-new},
$$
\mathbb{P}\bigl(\seq{\eta_{R}^n < t}\bigr)
\le\frac{C_{T,\ep}}R.
$$
Therefore,
\begin{align*}
	\prob\bigl(\seq{X_n(t)>M}\bigr) 
	\le \frac{C_{T,R,\ep}}{M}
	+\frac{C_{T,\ep}}{R}.
\end{align*}
Sending $M \to \infty$, recalling the 
definition of $X_n$, we arrive at
\begin{align*}
	\lim_{M\to\infty}
	\prob
	\bk{\seq{\left(\int_0^t \norm{u_n(s)}_{H^{m+1}(\T)}^2
	\,\d s \right)^{1/2}>M}}
	\le \frac{C_{T,\ep}}{R},
\end{align*}
which can be made arbitrarily small by taking $R$ 
large, uniformly in $t\in [0,T]$. 
This implies \eqref{eq:stochastic-bound}. 
Tightness on $L^2([0,T];H^m(\T))$ follows
from this, Lemma \ref{thm:temam_flandoli_gatarek}, 
and the $n$-uniformity of the limit $M\to \infty$, 
arguing as in  in the proof of 
Lemma \ref{thm:stochastic_boundedness}.

The same argument yields stochastic boundedness in 
$L^\infty([0,T];H^m(\T))$ for the laws of $\{u_n\}_{n \in\N}$.
\end{proof}

Introducing the path spaces:
\begin{align*}
	& \mathcal{X}_{u,s}^m := L^2([0,T];H^m(\T)), 
	\quad \mathcal{X}_{u,w}^m :=  C([0,T];H^1_w(\T)), 
	\\ 
	& \mathcal{X}_W := C([0,T]), 
	\quad
	\mathcal{X}_0 := H^{m}(\T),
\end{align*}
and setting $\mathcal{X}_m :=\mathcal{X}_{u,s}^m \times \mathcal{X}_{u,w}^m 
\times \mathcal{X}_W\times \mathcal{X}_0$, we 
repeat the procedure in Section \ref{sec:existence1}.

\begin{lem}\label{eq:JS_2}
The joint laws of $\bigl(u_n, u_n,W,\bm{\Pi}_n u_0\bigr)$ 
are tight on $\mathcal{X}_m$.
\end{lem}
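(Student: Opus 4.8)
The plan is to follow the structure of Lemma~\ref{thm:jointlaw_tightness} almost verbatim, replacing each $H^1$-level ingredient by its $H^m$ counterpart. The guiding principle is the elementary fact that a family of product measures on a finite product of spaces is tight precisely when each of its marginal families is tight, together with the observation that the joint law $\bigl(u_n,u_n,W,\bm{\Pi}_n u_0\bigr)_*\prob$ has exactly the four marginals $\mu_{u,s}^n$, $\mu_{u,w}^n$, $\mu_W^n$, $\mu_0^n$ of the product measure $\mu_{u,s}^n\otimes\mu_{u,w}^n\otimes\mu_W^n\otimes\mu_0^n$. Hence it suffices to produce, for each $\eps>0$, a compact set in each factor space whose complement has measure below $\eps$ uniformly in $n$, and then take the product of these compacta, using that the measure of the complement of a product compactum is bounded by the sum of the complementary measures in the factors.

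First I would dispatch the two factors that are unchanged from the $H^1$ setting. The law $\mu_W^n$ does not depend on $n$ and equals the law of the fixed Wiener process $W$, which is tight on $\mathcal{X}_W=C([0,T])$. The weak-topology factor is handled by Lemma~\ref{rem:Ctheta_tightness}: the tightness of $\{u_n\}$ on $\mathcal{X}_{u,w}^m=C([0,T];H^1_w(\T))$ depends only on the $H^1$-level energy and temporal-continuity bounds of Proposition~\ref{thm:galerkin_H1} and Lemma~\ref{thm:u_Ctheta_L2}, which are available here under the standing hypothesis $\Ex\norm{u_0}_{H^1(\T)}^p<\infty$ for some $p>2$.

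The two genuinely $H^m$-specific ingredients come next. For the initial-data factor I would invoke the technical observation \eqref{eq:proj_Hm_conv}, that is $\norm{\bm{\Pi}_n u_0-u_0}_{H^m(\T)}\ton 0$; since $\{\bm{\Pi}_n u_0\}$ converges in $H^m(\T)$, the laws $\{\mu_0^n\}$ form a convergent, hence tight, family on $\mathcal{X}_0=H^m(\T)$. For the strong-topology factor $\mathcal{X}_{u,s}^m=L^2([0,T];H^m(\T))$, the tightness of $\{u_n\}$ is precisely the conclusion of Lemma~\ref{thm:stoch_bound_Hm_v2}.

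The only step that is not a routine transcription of the $m=1$ proof is this last factor, and the work there has already been carried out in Lemma~\ref{thm:stoch_bound_Hm_v2}: because the dissipation estimate \eqref{eq:diss-term-new} controls $\norm{u_n}_{L^2([0,T];H^{m+1}(\T))}$ only up to the stopping time $\eta_R^n$ rather than on all of $[0,T]$ (in contrast to the trivial $L^2_\omega L^2_t H^2_x$ bound available when $m=1$), the stochastic boundedness in $L^2([0,T];H^{m+1}(\T))\cap W^{\theta',2}([0,T];L^2(\T))$ required the layered stopping-time argument combined with the Aubin--Lions-type Lemma~\ref{thm:temam_flandoli_gatarek}. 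Granting that lemma, assembling the four tight marginals and passing from the product measure to the joint law is immediate, so I expect no further obstacle in the present statement.
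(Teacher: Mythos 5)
Your proposal is correct and follows essentially the same route as the paper: marginal tightness on each of the four factors (Lemma~\ref{thm:stoch_bound_Hm_v2} for $\mathcal{X}_{u,s}^m$, Lemma~\ref{rem:Ctheta_tightness} for $\mathcal{X}_{u,w}^m$, convergence \eqref{eq:proj_Hm_conv} for $\mathcal{X}_0$, and the $n$-independence of the law of $W$ for $\mathcal{X}_W$), assembled into joint tightness on the product exactly as in Lemma~\ref{thm:jointlaw_tightness}. Your added remark correctly identifies the $L^2([0,T];H^m(\T))$ factor, with its layered stopping-time argument, as the only genuinely new ingredient relative to the $m=1$ case.
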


\begin{proof}
By Proposition \ref{thm:stoch_bound_Hm_v2} and 
Lemma \ref{rem:Ctheta_tightness}, the laws of $u_n$ are tight 
on $\mathcal{X}_{u,s}^m$ and $\mathcal{X}_{u,w}^m$. 
Since $\bm{\Pi}_n u_0 \to u_0$ 
in $H^m(\T)$, cf.~\eqref{eq:proj_Hm_conv}, the laws of 
$\bm{\Pi}_n u_0$ are tight on $H^m(\T)$. As $n \to \infty$, 
the law of $W$ is stationary on $\mathcal{X}_W$ and therefore tight. 
\end{proof}

\begin{thm}[Weak $H^m$ solution]
Suppose $\sigma \in W^{m+1,\infty}(\T)$ and 
that $u_0$ belongs to $L^p(\Omega;H^{1}(\T))
\cap L^2(\Omega;H^{m}(\T))$, for $p\in [1,\infty)$. 
There exists a weak $H^m$ solution 
$\bigl((\tilde{\Omega},\tilde{\mathcal{F}},
\bigl\{\tilde{\mathcal{F}}_t\bigr\}_{t \ge 0},
\tilde{\mathbb{P}}),\tilde{u},\tilde{W}\bigr)$ 
to the viscous stochastic CH equation \eqref{eq:u_ch_ep} 
with initial condition $u|_{t=0}=u_0$.

\end{thm}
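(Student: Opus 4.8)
The plan is to reproduce the stochastic compactness scheme of Section~\ref{sec:existence1} on the higher-regularity path space $\mathcal{X}_m$, using the tightness already secured in Lemma~\ref{eq:JS_2}. Starting from the Galerkin solutions $\{u_n\}$ of \eqref{eq:galerkin_n}, I would apply the Skorokhod--Jakubowski theorem (Theorem~\ref{thm:skorohod}) to obtain a new probability space $(\tilde{\Omega},\tilde{\mathcal{F}},\tilde{\mathbb{P}})$ carrying variables $(\tilde{u}_{n,s},\tilde{u}_{n,w},\tilde{W}_n,\tilde{u}_{0,n})$ with the same joint laws as $(u_n,u_n,W,\bm{\Pi}_n u_0)$ and converging $\tilde{\mathbb{P}}$-a.s.\ in $\mathcal{X}_m$ to a limit $(\tilde{u}_s,\tilde{u}_w,\tilde{W},\tilde{u}_0)$. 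The doubled $u$-variables are identified exactly as in Lemma~\ref{thm:identification_lemma}, so that $\tilde{u}_n:=\tilde{u}_{n,s}=\tilde{u}_{n,w}$ and $\tilde{u}:=\tilde{u}_s=\tilde{u}_w$, and the augmented filtration $\{\tilde{\mathcal{F}}_t\}$ and stochastic basis $\tilde{\mathcal{S}}$ are built as in the paragraph following Theorem~\ref{thm:skorohod}.

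Next I would assemble the convergence package. Because the strong convergence now takes place in $\mathcal{X}_{u,s}^m=L^2([0,T];H^m(\T))$ (which dominates the $L^2_tH^1_x$ convergence used when $m=1$) together with $\tilde{u}_n\to\tilde{u}$ in $C([0,T];H^1_w(\T))$, the analogue of Lemma~\ref{thm:products_convergences} is immediate; in particular $\tilde{q}_n\to\tilde{q}$ in $L^2([0,T]\times\T)$, $\tilde{u}_n^2\to\tilde{u}^2$ in $L^1([0,T];W^{1,1}(\T))$ and $\tilde{u}_n\tilde{q}_n\to\tilde{u}\tilde{q}$ in $L^2([0,T]\times\T)$, exactly as in \eqref{eq:basic-conv}. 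I would then verify, via L\'evy's characterisation and the equality of laws precisely as in Lemma~\ref{thm:W_BM}, that $\tilde{W}$ is a Brownian motion on $\tilde{\mathcal{S}}$.

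The passage to the limit in the weak formulation (e) of Definition~\ref{def:wk_sol} is then \emph{identical} to the computation in Theorem~\ref{thm:existence_H1}: the spatial test function is still $\varphi\in C^1(\T)$, and the transport, nonlocal $\pd_xP$, viscous and It\^o terms involve only $\tilde{u}_n$, $\tilde{q}_n$ and their products. The convergence $\bm{\Pi}_n\to 1$ in the operator norm on $L^2(\T)$ disposes of the projections, while Lemma~\ref{lem:stoch-conv} handles the martingale term $I_4^n$ as in Part~3 of the proof of Theorem~\ref{thm:existence_H1}. This produces the weak formulation \eqref{eq:weak_solution_eq} for the limit triple $(\tilde{\mathcal{S}},\tilde{u},\tilde{W})$ with initial law $(\tilde{u}_0)_*\tilde{\mathbb{P}}$.

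The genuinely new step, and the one I expect to be the main obstacle, is verifying the regularity of the limit required by the $m\ge 2$ clause of Definition~\ref{def:wk_sol}, namely the stochastic boundedness $\tilde{u}\in_{\rm sb}L^2([0,T];H^{m+1}(\T))\cap L^\infty([0,T];H^m(\T))$ together with $\tilde{u}\in L^2([0,T];H^m(\T))$ $\tilde{\mathbb{P}}$-a.s. In contrast to the $m=1$ case, there is no uniform $L^2(\Omega)$ bound on $\norm{u_n}_{L^2_tH^{m+1}_x}$ over all of $[0,T]$; Proposition~\ref{thm:galerkinHm} controls this norm only up to the stopping times $\eta_R^n$ of \eqref{eq:stoptime2}, cf.~\eqref{eq:diss-term-new}. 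The resolution is that stochastic boundedness is a statement about laws: by the equality of laws \eqref{eq:laws-sim} the uniform estimate \eqref{eq:stochastic-bound} transfers verbatim from $\{u_n\}$ to $\{\tilde{u}_n\}$, once the relevant inclusions are checked to be Borel via the Lusin--Souslin theorem as in Part~5 of Theorem~\ref{thm:existence_H1}. Combining this with the $\tilde{\mathbb{P}}$-a.s.\ convergence $\tilde{u}_n\to\tilde{u}$ and the weak lower semicontinuity of the $L^2_tH^{m+1}_x$- and $L^\infty_tH^m_x$-norms (so that $\norm{\tilde{u}}_X\le\liminf_n\norm{\tilde{u}_n}_X$ $\tilde{\mathbb{P}}$-a.s., whence $\tilde{\mathbb{P}}(\norm{\tilde{u}}_X>M)\le\liminf_n\tilde{\mathbb{P}}(\norm{\tilde{u}_n}_X>M)$) yields the stochastic boundedness of $\tilde{u}$. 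The a.s.\ inclusion $\tilde{u}\in L^2_tH^m_x$ follows from strong convergence in $\mathcal{X}_{u,s}^m$, and $\tilde{u}\in C([0,T];H^1_w(\T))$ is immediate from the Skorokhod construction; as in Remark~\ref{rem:wk_time_cont}, the strong temporal continuity is deferred and not needed here.
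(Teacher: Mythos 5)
Your proposal is correct and follows essentially the same route as the paper: Skorokhod--Jakubowski on the product path space $\mathcal{X}_m$ using the tightness of Lemma~\ref{eq:JS_2}, identification of the doubled variables, limit passage in the weak formulation exactly as in Theorem~\ref{thm:existence_H1}, and transfer of the stochastic boundedness in $L^2([0,T];H^{m+1}(\T))$ from Lemma~\ref{thm:stoch_bound_Hm_v2} via equality of laws and the Lusin--Souslin theorem. You correctly identify the stopping-time obstruction as the genuinely new ingredient, and your explicit lower-semicontinuity step for passing the stochastic bound to the limit merely spells out what the paper leaves implicit.
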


\begin{proof}
From the Skorokhod--Jakubowski theorem 
(Theorem \ref{thm:skorohod}), we can extract variables 
$(\tilde{u}_{n,s}, \tilde{u}_{n,w}, \tilde{W}_n, \tilde{u}_{0,n})$ 
and $(\tilde{u}_s, \tilde{u}_w,\tilde{W},\tilde{u}_{0})$ 
on a probability space $(\tilde{\Omega},
\tilde{\mathcal{F}},\tilde{\mathbb{P}})$ such that
\begin{align*}
	&(\tilde{u}_{n,s}, \tilde{u}_{n,w},\tilde{W}_n,\tilde{u}_{0,n}) 
	\sim (u_n, u_n, W,\bm{\Pi}_n u_0)
	\quad \text{in $\mathcal{X}_m$},
	\\
	& (\tilde{u}_{n,s}, \tilde{u}_{n,w}, \tilde{W}_n,\tilde{u}_{0,n})  
	\ton (\tilde{u}_s,\tilde{u}_w,\tilde{W},\tilde{u}_0) 
	\quad \text{in $\mathcal{X}_m$,
	\,\, $\mathbb{P}$-a.s.},
\end{align*} 
along a subsequence that is not relabelled. 
From Lemma \ref{thm:W_BM}, $\tilde{W}$ is 
a Brownian motion on $\bigl(\tilde{\Omega},
\tilde{\mathcal{F}},\bigl\{\tilde{\mathcal{F}}_t
\bigr\}_{t\ge 0},\tilde{\prob}\bigr)$, 
where $\bigl\{\tilde{\mathcal{F}}_t\bigr\}_{t\ge 0}$ 
is the canonical filtration defined by
$$
\tilde{\mathcal{F}}_t 
:={\it \Sigma}\bk{{\it \Sigma}
\bigl(u\Big|_{[0,t]}, W\Big|_{[0,t]}\bigr) 
\cup \seq{N \in \tilde{\mathcal{F}}:
\tilde{\prob}(N) = 0}}.
$$

As in Lemma \ref{thm:identification_lemma}, 
we can identify $\tilde{u}_n := \tilde{u}_{n,s} = \tilde{u}_{n,w}$, 
and $\tilde{u} := \tilde{u}_s = \tilde{u}_w$, 
$\tilde{\mathbb{P}} \otimes \d t \otimes \d x$-a.e.

Following the proof of Theorem \ref{thm:existence_H1}, 
the Galerkin equation \eqref{eq:galerkin_n} 
holds in the PDE weak sense using 
the equivalence of laws, for the variables 
$\bigl(\tilde{u}_n, \tilde{W}_n,\tilde{u}_{0,n}\bigr)$ 
in place of $(u_n, W,\bm{\Pi}_n u_0)$, 
$\tilde{\mathbb{P}}$-almost surely, 
up to any $t \in [0,T]$. Using the 
$\tilde{\mathbb{P}}$-almost everywhere 
convergence of  $\bigl(\tilde{u}_n, \tilde{W}_n, 
\tilde{u}_{0,n}\bigr)$ in the 
joint path space $\mathcal{X}^m$, as in the 
proof of Theorem \ref{thm:existence_H1}, 
we can extract the limiting equation for 
$\bigl(\tilde{u}, \tilde{W}, \tilde{u}_0\bigr)$, 
thereby establishing the existence of 
a weak (martingale) $H^m$ solution 
in the $n \to \infty$ limit.

Strong temporal continuity in $H^1(\T)$ can
be established exactly as in Section \ref{sec:temporalcont}, 
and stochastic boundedness in $L^2([0,T];H^{m + 1}(\T))$ 
follows from equality of laws and Lemma \ref{thm:stoch_bound_Hm_v2}
because, by the Lusin--Souslin theorem, $L^2([0,T];H^{m + 1}(\T))$ injects
continuously into $L^2([0,T];H^m(\T))$ and 
hence is Borel in the bigger space (see Part 6 of the proof of 
Theorem \ref{thm:existence_H1}).
\end{proof}

\subsection{Pathwise uniqueness and strong $H^m$ solutions}
\label{sec:st_sol_Hm}

In this section, we briefly conclude with 
pathwise uniqueness in $H^m$. 

\begin{thm}[Pathwise uniqueness in $H^m$]
\label{thm:st_uniqueness_Hm} 
Let $u$, $v$ be strong $H^m$ solutions to 
the viscous stochastic CH equation \eqref{eq:u_ch_ep}, 
with $\sigma \in W^{m + 1,\infty}(\T)$ 
and initial condition $u|_{t=0}=v|_{t=0}=u_0 
\in L^8(\Omega;H^{m}(\T))$. Then 
$$
\Ex \norm{u-v}_{L^\infty([0,T];H^m(\T))}=0.
$$
\end{thm}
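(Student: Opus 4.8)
The plan is to transcribe the $H^1$ pathwise-uniqueness argument of Theorem \ref{thm:st_uniqueness} to every derivative up to order $m$, reusing the nonlinear bookkeeping already carried out for the Galerkin scheme in Proposition \ref{thm:galerkinHm}. Setting $w = u - v$, I would first record the SPDEs satisfied by $\pd_x^\ell w$ for $\ell = 0,1,\ldots,m$, then convolve each with the Friedrichs mollifier $J_\delta$ to obtain equations valid pointwise in $x$, exactly as in \eqref{eq:u_ch_ep_delta}; the regularisation introduces the commutator errors $\pd_x^\ell E^i_\delta$ ($i=1,2,3$) built from \eqref{eq:commutator_defin1}. Applying the finite-dimensional It\^o formula to $\frac12\norm{\pd_x^\ell w_\delta}_{L^2(\T)}^2$ and summing over $\ell$ produces an evolution inequality for $\frac12\norm{w_\delta}_{H^m(\T)}^2$, with the $\ep$-dissipation furnishing the favourable term $\ep\norm{\pd_x w_\delta}_{H^m(\T)}^2\,\d t$ on the left-hand side.

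Next I would estimate the right-hand side. The deterministic Camassa--Holm contributions are treated precisely as the terms $I_1^n,\ldots,I_3^n$ in Proposition \ref{thm:galerkinHm}: writing $u\,\pd_x u-v\,\pd_x v=\frac12\pd_x\bk{w(u+v)}$ and using that the commutator $u_\delta\,\pd_x^{\ell+1}w_\delta-\pd_x^\ell\bk{u_\delta\,\pd_x w_\delta}$ removes the top-order derivative, the Gagliardo--Nirenberg estimate \eqref{eq:LGN} bounds these by $C\bk{1+\norm{u+v}_{W^{1,\infty}(\T)}^2}\norm{w_\delta}_{H^m(\T)}^2$, the cubic factors being absorbed (after Young's inequality) into the $\ep$-dissipation. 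The nonlocal kernel terms are controlled by Young's convolution inequality, as in the $H^m$ Galerkin estimate, and the martingale terms become square-integrable martingales once the process is stopped.

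The delicate part is the $\sigma$-dependent machinery. At each derivative level the Stratonovich--It\^o correction $\pd_x^\ell E^3_\delta$ must cancel, to leading order, against the quadratic-variation contribution $\frac12\abs{\pd_x^\ell\bk{\sigma\,\pd_x w_\delta}}^2$ arising from the martingale part; this is the same ``balance'' exploited in the $H^1$ proof through Proposition \ref{thm:commutator3} with $S(r)=r^2/2$. I expect this to be the main obstacle: Lemmas \ref{thm:commutator1}, \ref{thm:commutator2} and Proposition \ref{thm:commutator3} are stated only for first- and second-order commutators, so I would need to verify their $\ell$-th order analogues, establishing $\pd_x^\ell E^i_\delta\todelta 0$ in $L^2(\Omega\times[0,T]\times\T)$ for $\ell\le m$. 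This is where the hypotheses $\sigma\in W^{m+1,\infty}(\T)$ and $u,v\in L^8(\Omega;H^m(\T))$ are consumed: the extra derivative on $\sigma$ feeds the higher commutators, while the $L^8$ moment supplies the fourth moment on $\norm{u}_{L^\infty([0,T];H^1(\T))}$ demanded by the commutator lemmas together with the strictly higher moment required by the stochastic Gronwall inequality (cf.~Remark \ref{rem:L8_omega}).

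Finally, I would conclude as in Theorem \ref{thm:st_uniqueness}. Introduce the stopping time $\eta_R=\inf\seq{t:\int_0^{t\wedge T}\norm{u(s)+v(s)}_{W^{1,\infty}(\T)}^2\,\d s>R}$; since $u,v\in L^2([0,T];H^m(\T))\hookrightarrow L^2([0,T];W^{1,\infty}(\T))$ $\mathbb{P}$-a.s.\ (Definition \ref{def:wk_sol}(c), using $m\ge 2$), the integrand is a.s.\ integrable and $\eta_R\toR T$ a.s. Stopping at $\eta_R$ renders the accumulated coefficient $\int_0^{\eta_R}\bk{1+\norm{u+v}_{W^{1,\infty}(\T)}^2}\,\d s\le T+R$ bounded, so the stochastic Gronwall inequality (Lemma \ref{thm:stochastic_gronwall_st}, with $\nu=1/2$ and any $1/2<r<1$) yields $\lim_{\delta\to 0}\bigl(\Ex\sup_{t\in[0,\eta_R]}\norm{w_\delta(t)}_{H^m(\T)}\bigr)^2=0$. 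Passing $\delta\to 0$, using $w_\delta\to w$ in $H^m(\T)$ for $\d\mathbb{P}\otimes\d t$-a.e.\ $(\omega,t)$ together with dominated convergence, gives $\Ex\norm{w}_{L^\infty([0,\eta_R];H^m(\T))}=0$, and sending $R\to\infty$ completes the proof.
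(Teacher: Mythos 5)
Your program is workable in principle, but it is not the paper's proof, and as written it contains a genuine gap. The paper's entire proof of Theorem \ref{thm:st_uniqueness_Hm} is the observation that Theorem \ref{thm:st_uniqueness} (pathwise uniqueness in $H^1$) already forces $u=v$ as processes, $\mathbb{P}\otimes\d t\otimes\d x$-a.e.; since both $u$ and $v$ lie in $L^\infty([0,T];H^m(\T))$ by Definition \ref{def:wk_sol}(c), their difference is then the zero element of $L^1(\Omega;L^\infty([0,T];H^m(\T)))$, and there is nothing more to prove. No order-$m$ energy estimate for the difference is needed: once two functions coincide a.e., the $H^m$ norm of their difference vanishes in whatever space both inhabit. (The only point worth checking is that the $H^1$ uniqueness argument applies to strong $H^m$ solutions; the stopping times $\eta_R$ still satisfy $\eta_R\to T$ a.s.\ because for $m\ge 2$ one has $u,v\in L^2([0,T];H^m(\T))\subseteq L^2([0,T];W^{1,\infty}(\T))$, $\mathbb{P}$-a.s.)

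The gap in your route is the one you flag yourself: the order-$\ell$ commutator estimates. Lemmas \ref{thm:commutator1}, \ref{thm:commutator2} and Proposition \ref{thm:commutator3} are proved only for the first- and second-order commutators arising in the $H^1$ balance; the cancellation between $\pd_x^\ell E^3_\delta$ and the quadratic-variation contribution $\frac12\abs{\pd_x^\ell\bk{\sigma\,\pd_x w_\delta}}^2$ at each level $\ell\le m$ would require an order-$\ell$ analogue of the double-commutator Lemma \ref{thm:commutator2}, which is nowhere established in the paper and is precisely the hard part of your plan. Deferring it (``I would need to verify\dots'') leaves the proof incomplete. If carried through, your approach would deliver more than the theorem asks for --- a quantitative $H^m$ stability estimate up to the stopping time --- but the paper deliberately sidesteps all of this machinery by reducing to the already-proven $H^1$ statement.
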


\begin{proof}
Having established that 
$\Ex \norm{u-v}_{L^\infty([0,T];H^1(\T))}=0$ 
in Theorem \ref{thm:st_uniqueness}, 
we conclude that $u=v$, 
$\mathbb{P}\otimes \d t \otimes \d x$-a.e. 
Then necessarily, 
$\Ex \norm{u-v}_{L^\infty([0,T];H^m(\T))}=0$ 
also. This is uniqueness in 
$L^1(\Omega;L^\infty([0,T];H^m(\T)))$.
\end{proof}

With the same argument that was employed in 
Section \ref{sec:H1_existence}, we can now conclude 
that the second main theorem of the paper
(Theorem \ref{thm:main2}), holds.

\section*{Acknowledgement}

We thank the anonymous reviewers whose comments helped 
improve and clarify this manuscript.

\appendix 

\section{Stochastic toolbox}
\label{sec:toolbox}

In this section, we recall some notations 
and results from stochastic analysis that are 
used throughout the paper. We use \cite{Chow:2015,LR2015,Revuz:1999wi} 
as general references on stochastic analysis and SPDEs. 
Recall that $\bigl(\Omega,\mathcal{F},
\mathbb{P}\bigr)$ is a complete probability space 
with a countably generated $\sigma$-algebra 
$\mathcal{F}$ and probability measure $\mathbb{P}$. 
Let $\Bbb{B}$ be a separable Banach space, equipped 
with the Borel $\sigma$-algebra $\mathcal{B}(\Bbb{B})$. 
A $\Bbb{B}$-valued random variable $v$ is 
a measurable mapping from $(\Omega,\mathcal{F},\prob)$ to 
$\bigl(\Bbb{B},\mathcal{B}(\Bbb{B})\bigr)$, 
$\omega\mapsto v(\omega)$. The expectation of $v$ 
is $\Ex\, v:=\int_{\Omega} v\, d\prob$. 
We often use the abbreviation a.s.~or almost surely 
to mean for $\prob$-almost every $\omega\in \Omega$. 
The collection of $\Bbb{B}$-valued random variables $v$
for which $\Ex \abs{v}<\infty$ is denoted 
by $L^1(\Omega)=L^1(\Omega,\mathcal{F},\prob)$. 
This is a Banach space with norm 
$\norm{v}_{L^1(\Omega)}=\Ex\norm{v}_{\Bbb{B}}$. 
For $p>1$, $L^p(\Omega)$ is defined similarly, 
with $\norm{v}_{L^p(\Omega)}$ given by 
$\left(\Ex \norm{v}_{\Bbb{B}}^p\right)^{1/p}$ if $p<\infty$ 
and $\operatorname{ess\, sup}_{\omega\in \Omega} 
\norm{v(\omega)}_{\Bbb{B}}$ if $p=\infty$.

A stochastic process $v=\seq{v(t)}_{t\in [0,T]}$, for $T>0$, 
is a collection of $\Bbb{B}$-valued random variables $v(t)$.  
We say that $v$ is measurable if $v$ is jointly 
measurable from $\mathcal{F}\times \mathcal{B}([0,T])$ 
to $\mathcal{B}(\Bbb{B})$. Recall that we consider filtrations 
$\{\mathcal{F}_t\}_{t\in[0,T]}$ that satisfy 
the ``usual conditions" of being complete 
and right-continuous, and we refer 
to $\mathcal{S}:=\bigl(\Omega,\mathcal{F},
\{\mathcal{F}_t\}_{t\ge 0},\mathbb{P}\bigr)$, 
see \eqref{eq:stoch-basis}, as a stochastic basis. 
A stochastic process $v$ is \textit{adapted} if $v(t)$ 
is $\mathcal{F}_t$ measurable for all $t\in [0,T]$. 
When a filtration is involved there are additional notions of 
measurability (predictable, optional, progressive) 
that are more convenient to work with. Here we use the 
(stronger) notion of a predictable process. 
A \textit{predictable} process $v$ is a $\mathcal{P}_T\times 
\mathcal{B}([0,T])$-measurable map 
$\Omega\times [0,T]\to \Bbb{B}$, $(\omega,t)
\mapsto v(\omega,t)$, where $\mathcal{P}_T$ denotes the 
predictable $\sigma$-algebra on $\Omega\times [0,T]$ 
associated with $\seq{\mathcal{F}_t}_{t\in [0,T]}$ 
(the $\sigma$-algebra generated by all left-continuous 
adapted processes). A predictable process is adapted. 
Although the converse is not true, adaptive 
processes with regular (e.g.,~continuous) paths 
are predictable. To check for continuity, one uses 
the \textit{Kolmogorov test} \cite[p.~7]{Chow:2015}: suppose 
there are constants $\kappa>1$, $\delta>0$, and $K>0$ such that
$$
\Ex\norm{v(t)-v(s)}_{\Bbb{B}}^{\kappa}
\leq K \abs{t-s}^{1+\delta},
\quad \forall  s,t \in [0,T],
$$
then there exists a continuous modification of $v$, still 
denoted by $v$, such that $\Ex 
\norm{v}_{C^\gamma([0,T];\Bbb{B})}^\kappa\le K$, 
where the constant $K$ is independent of $v$ and 
$\gamma \in \left[0,\frac{\delta}{\kappa}\right)$.

\medskip

Throughout the work, we repeatedly end up with 
SDE inequalities of the form  
$\d \xi \le \eta \,\d t + L\xi\,\d t + \d M$, 
for some quantity of interest $\xi=\xi(\omega,t)$ and 
a zero-mean martingale $M$. For us $L\ge 0$ is often 
a stochastic process, so that the standard 
(deterministic) Gronwall inequality cannot be applied. 
The following stochastic Gronwall inequality 
is taken from \cite[Lemma~3.8]{XZ2017}, which 
is a version of a result proved first 
in \cite[Thm.~4]{Sch2013}. The term $L\xi\,\d t$ can 
be written as $\xi \d\int_0^t L(s)\,\d s=\xi\, \d A(t)$, which 
is the form used in the lemma. Besides, the 
inequality provides a bound on the $\nu$th moment 
of $\xi$ that does not depend on the martingale 
term $M$. It is this ``martingale 
uniformity" that forces the non-standard 
condition $\nu\in (0,1)$.

\begin{lem}[Stochastic Gronwall inequality]
\label{thm:stochastic_gronwall}
Relative to the stochastic basis $\mathcal{S}$, see 
\eqref{eq:stoch-basis}, let $\xi(t)$ and $\eta(t)$ be 
two non-negative adapted processes, $A(t)$ 
be an adapted non-decreasing process with $A(0)=0$, 
and $M$ a local martingale with $M(0) = 0$. 
Suppose $\xi$ is c{\`a}dl{\`a}g in time and satisfies 
the following SDE inequality on $[0,T]$:
$$
\d \xi \le \eta \,\d t + \xi\,\d A + \d M.
$$
For $0 < \nu < r < 1$ and $t \in [0,T]$, we have
\begin{align*}
	\Big(\Ex \sup_{s \in [0,t]} 
	\abs{\xi(s)}^\nu\Big)^{1/\nu} 
	\le \Big(\frac{r}{r-\nu}\Big)^{1/\nu} 
	\bk{\Ex \exp\left(\frac{rA(t)}{1-r}
	\right)}^{(1-r)/r}
	\Ex \Big(\xi(0) + \int_0^t \eta(s) \,\d s\Big).
\end{align*}
\end{lem}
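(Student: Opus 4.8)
The plan is to remove the random ``Gronwall factor'' $\xi\,\d A$ by an integrating-factor computation, thereby reducing the inequality to a pure drift-plus-martingale domination, and then to separate the resulting exponential weight from the martingale contribution by H\"older's inequality, controlling the latter by a Lenglart-type domination estimate that is available precisely because $\nu<1$.

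First I would introduce the integrating factor $Z(t):=e^{-A(t)}$. Since $A$ is nondecreasing (hence of finite variation) with $A(0)=0$, the product rule gives $\d(Z\xi)=Z\,\d\xi-\xi Z\,\d A$, and substituting the hypothesis $\d\xi\le\eta\,\d t+\xi\,\d A+\d M$ makes the two $\xi\,\d A$ terms cancel:
\[
\d\bigl(e^{-A}\xi\bigr)\le e^{-A}\eta\,\d t+e^{-A}\,\d M.
\]
Integrating and using $0\le e^{-A(s)}\le1$ yields $U(t):=e^{-A(t)}\xi(t)\le G(t)+\bar N(t)$, where $G(t):=\xi(0)+\int_0^t\eta(s)\,\d s$ is nonnegative and nondecreasing and $\bar N(t):=\int_0^t e^{-A(s)}\,\d M(s)$ is a local martingale with $\bar N(0)=0$. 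Because $A$ is nondecreasing and $U\ge0$, one has $\xi(s)=e^{A(s)}U(s)\le e^{A(t)}U(s)$ for $s\le t$, so $\sup_{s\le t}\xi(s)\le e^{A(t)}\sup_{s\le t}U(s)$.

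Next I would raise to the power $\nu$, take expectations, and split the two factors by H\"older with weight $\theta:=\nu(1-r)/r$; the constraint $\nu<r$ guarantees $\theta\in(0,1-\nu)$, which is exactly what is needed on both sides below. This produces the exponential factor $\bigl(\Ex e^{rA(t)/(1-r)}\bigr)^{(1-r)/r}$ (after taking the final $1/\nu$ power) together with a term $\bigl(\Ex(\sup_{s\le t}U(s))^{\gamma}\bigr)^{1-\theta}$ in which $\gamma:=\nu/(1-\theta)<1$.

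The main obstacle is this last factor: one must bound the $\gamma$-th moment of the running maximum of $U$ by the first moment of the drift $G(t)$ alone, with no dependence whatsoever on $M$ or its quadratic variation. This is where $0<\gamma<1$ is indispensable and is the source of the stated ``martingale uniformity.'' I would handle it by a Lenglart--Rebolledo domination argument: after localizing $M$ by stopping times $\tau_k\uparrow\infty$ for which $M^{\tau_k}$ is a genuine martingale, optional sampling shows $\Ex U(\tau)\le\Ex G(\tau)$ for every bounded stopping time $\tau$, i.e.\ $U$ is dominated by the nondecreasing process $G$; Lenglart's inequality then gives $\Ex(\sup_{s\le t}U(s))^{\gamma}\le c_\gamma\,\Ex G(t)^{\gamma}$, and concavity of $x\mapsto x^\gamma$ (Jensen) upgrades $\Ex G^\gamma$ to $(\Ex G)^\gamma$. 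Since $\gamma(1-\theta)=\nu$, this recombines with the exponential factor to give exactly $(\Ex G(t))^\nu$, whose $1/\nu$ power is the first-power term $\Ex G(t)$ in the statement; careful tracking of the domination constant then collapses $c_\gamma^{1-\theta}$ to $\tfrac{r}{r-\nu}$. Finally I would remove the localization by monotone convergence and Fatou, the bound being uniform in $k$ because $A$ and $G$ are monotone.
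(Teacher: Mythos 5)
The paper does not prove this lemma at all: it is quoted verbatim from Xie--Zhang \cite[Lemma 3.8]{XZ2017}, itself a version of Scheutzow's stochastic Gronwall lemma \cite[Thm.~4]{Sch2013}, so there is no in-paper proof to compare against. Your argument is nevertheless a correct and essentially standard route to this kind of statement, and it is close in spirit to Scheutzow's: reduce to a drift-plus-local-martingale domination (he does this by a pathwise Gronwall--Bellman step, you by the integrating factor $e^{-A}$), split off the exponential weight by H\"older with exponents tuned so that the remaining moment order $\gamma=\nu/(1-\theta)$ falls below $1$ precisely when $\nu<r$ (your exponent bookkeeping here is right), and control the sub-unit moment of the running maximum by a Lenglart/optional-sampling domination argument that sees only $\Ex G(t)$ and never the quadratic variation of $M$ --- which is indeed the whole point of the restriction $\nu<1$. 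Two caveats you should make explicit. First, the clean cancellation in $\d(e^{-A}\xi)=e^{-A}\,\d\xi-\xi e^{-A}\,\d A$ uses the product rule without jump corrections and therefore tacitly assumes $A$ continuous; the lemma as stated allows a general c\`adl\`ag nondecreasing $A$, for which one should either insert the covariation/jump terms (the inequality survives, since $e^{-A}-e^{-A_-}\ge -e^{-A_-}\Delta A$ works against you only in the harmless direction after one replaces the integrating factor by the pathwise Gronwall estimate $\xi(t)\le K(t)e^{A(t)}$ for nondecreasing $K$), or note that every application in this paper has $A(t)=\int_0^t L\,\d s$ absolutely continuous. Second, the claim that the constants ``collapse to $\tfrac{r}{r-\nu}$'' is asserted, not derived: the Lenglart constant $c_\gamma$ composed with your H\"older exponents does not obviously reproduce $\bigl(\tfrac{r}{r-\nu}\bigr)^{1/\nu}$, and what you actually obtain is some finite $C(\nu,r)$. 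That is harmless for every use of the lemma in this paper, but if you want the stated constant you should follow the optimisation in \cite{Sch2013} rather than wave at it. Finally, in the Lenglart step note that $G(t)=\xi(0)+\int_0^t\eta\,\d s$ is predictable (continuous adapted plus an $\mathcal{F}_0$-measurable constant), so the sharp form of the domination inequality applies, and the localisation/Fatou passage you describe is the right way to handle the fact that $M$ is only a local martingale.
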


This lemma can be formulated 
for stopping times $\tau$ in place of $t$.
For suppose $\xi$, $\eta$, $A$, and $M$ 
are as in Lemma \ref{thm:stochastic_gronwall},
then for any stopping time $\tau$,
$$
\d \xi(t\wedge \tau) 
\le \eta(t \wedge \tau)\,\d (t \wedge \tau) 
	+ \xi(t \wedge \tau)\,\d A(t \wedge \tau) 
	+ \d M(t \wedge \tau).
$$
Since $\tau$ is a stopping time, $M(t \wedge \tau)$ 
remains a local martingale 
(see \cite[Cor.~II.3.6, Def.~IV.1.5]{Revuz:1999wi}), 
and using the elementary equality
$$
\sup_{s \in [0, T]} \abs{\xi(s \wedge \tau)} 
= \sup_{s \in [0, T \wedge \tau]} \abs{\xi(s)},
$$
Lemma \ref{thm:stochastic_gronwall} is then seen to imply:
\begin{lem}\label{thm:stochastic_gronwall_st}
Let $\xi$, $\eta$, $A$ and $M$ be as in Lemma 
\ref{thm:stochastic_gronwall}. Let $\tau$ be a 
stopping time on the same filtration as $M$ is 
a martingale. For $0 < \nu <  r < 1$, we have
\begin{align*}
	\bigg(\Ex &\sup_{s \in [0, T \wedge \tau]} 
	\abs{\xi(s)}^\nu\bigg)^{1/\nu} \\
	&\le \bk{\frac{r}{r-\nu}}^{1/\nu} 
	\bigg(\Ex \exp\left(\frac{rA(T \wedge \tau)}{1-r} \right)\bigg)^{(1-r)/r}
	\Ex \bk{\xi(0) + \int_0^{T} \eta(s  \wedge \tau) \,\d s}.
\end{align*}
\end{lem}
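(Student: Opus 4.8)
The plan is to reduce the stopped inequality to the already-proved Lemma \ref{thm:stochastic_gronwall} by passing to the stopped processes and then undoing the stopping at the very end via the sup identity recorded above. Concretely, I would set $\xi^\tau(t):=\xi(t\wedge\tau)$, $\eta^\tau(t):=\eta(t\wedge\tau)$, $A^\tau(t):=A(t\wedge\tau)$ and $M^\tau(t):=M(t\wedge\tau)$, and first check that this quadruple again satisfies all the hypotheses of Lemma \ref{thm:stochastic_gronwall}. Non-negativity of $\xi^\tau$ and $\eta^\tau$, the non-decreasing property of $A^\tau$ with $A^\tau(0)=0$, and the c\`adl\`ag property of $\xi^\tau$ are all inherited from the unstopped processes; the only point requiring an external input is that $M^\tau$ remains a local martingale, which is exactly the optional-stopping statement cited from \cite[Cor.~II.3.6, Def.~IV.1.5]{Revuz:1999wi}.

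Second, I would verify that the stopped processes obey the SDE inequality $\d\xi^\tau\le\eta^\tau\,\d t+\xi^\tau\,\d A^\tau+\d M^\tau$ on $[0,T]$. Working from the integrated form of the hypothesis, stopping at $\tau$ gives $\xi(t\wedge\tau)\le\xi(0)+\int_0^{t\wedge\tau}\eta\,\d s+\int_0^{t\wedge\tau}\xi\,\d A+M(t\wedge\tau)$. The drift-in-$A$ term is exactly $\int_0^t\xi^\tau\,\d A^\tau$, since $\d A^\tau(s)=\mathbf 1_{\{s\le\tau\}}\,\d A(s)$, while for the $\eta$ term I would use $\eta\ge 0$ to bound $\int_0^{t\wedge\tau}\eta\,\d s=\int_0^t\eta(s)\mathbf 1_{\{s\le\tau\}}\,\d s\le\int_0^t\eta^\tau(s)\,\d s$, which is the direction needed (and matches the $\eta(s\wedge\tau)$ appearing on the right-hand side of the claimed bound).

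Third, I would apply Lemma \ref{thm:stochastic_gronwall} verbatim to $(\xi^\tau,\eta^\tau,A^\tau,M^\tau)$ at the deterministic time $t=T$, obtaining $\bk{\Ex\sup_{s\in[0,T]}\abs{\xi^\tau(s)}^\nu}^{1/\nu}\le\bk{\frac{r}{r-\nu}}^{1/\nu}\bk{\Ex\exp\bk{\frac{rA^\tau(T)}{1-r}}}^{(1-r)/r}\Ex\bk{\xi(0)+\int_0^T\eta^\tau(s)\,\d s}$. Finally, I would translate each stopped quantity back: $A^\tau(T)=A(T\wedge\tau)$, $\xi^\tau(0)=\xi(0)$, $\eta^\tau(s)=\eta(s\wedge\tau)$, together with the elementary identity $\sup_{s\in[0,T]}\abs{\xi(s\wedge\tau)}=\sup_{s\in[0,T\wedge\tau]}\abs{\xi(s)}$ recorded before the lemma statement. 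This yields the asserted inequality.

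I do not expect a genuine obstacle here: the entire content is the stability of the four hypotheses under stopping, and the one slightly delicate bookkeeping point is the inequality (rather than equality) in the $\eta$-term, which goes in the right direction precisely because $\eta$ is non-negative. The only step relying on a nontrivial external result is the preservation of the (local) martingale property under stopping, for which the cited optional-stopping theorem suffices.
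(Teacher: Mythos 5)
Your proposal is correct and follows essentially the same route as the paper: stop all four processes at $\tau$, invoke optional stopping to keep $M(\cdot\wedge\tau)$ a local martingale, apply Lemma \ref{thm:stochastic_gronwall} at $t=T$, and undo the stopping via $\sup_{s\in[0,T]}\abs{\xi(s\wedge\tau)}=\sup_{s\in[0,T\wedge\tau]}\abs{\xi(s)}$. The only difference is that you spell out the verification that the stopped quadruple satisfies the hypotheses (in particular the sign argument $\int_0^{t\wedge\tau}\eta\,\d s\le\int_0^t\eta(s\wedge\tau)\,\d s$), which the paper leaves implicit.
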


\medskip

Next, we use on a few occasions 
the following convergence result for stochastic 
integrals, which is due to Debussche, Glatt-Holtz, and Temam, 
see \cite[Lemma 2.1]{DGT2011}. 

\begin{lem}[Convergence of stochastic integrals]
\label{lem:stoch-conv}
Fix a probability space $(\Omega,\mathcal{F},\prob)$. 
For each $n\in \N$, consider a stochastic basis 
$\mathcal{S}_n=\bigl(\Omega,\mathcal{F},
\{\mathcal{F}_t^n\}_{t\in [0,T]},\prob\bigr)$, 
a Wiener process $W^n$ on $\mathcal{S}_n$, and 
a predictable $L^2(\T)$-valued process $G^n$ 
on $\mathcal{S}_n$ satisfying $G^n\in L^2([0,T];L^2(\T))$, 
$\prob$-almost surely. Suppose there is a stochastic basis 
$\mathcal{S}=\bigl(\Omega,\mathcal{F},
\{\mathcal{F}_t\}_{t\in [0,T]},\prob\bigr)$, a 
Wiener process $W$ on $\mathcal{S}$, and a predictable 
$L^2(\T)$-valued process $G$ on $\mathcal{S}$ with 
$G\in L^2((0,T);L^2(\T))$ $\prob$-almost surely, such that
\begin{align*}
	& \text{$W^n \ton W$ in $C([0,T])$},
	\quad 
	\text{$G^n \ton G$ in $L^2([0,T];L^2(\T))$},
	\quad \text{in probability}.
\end{align*} 
Then
$$
\int_0^t G^n \, \d W^n \ton \int_0^t G \, \d W 
\quad \text{in $L^2([0,T];L^2(\T))$, in probability}.
$$
\end{lem}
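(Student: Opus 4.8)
The plan is to prove this It\^o-integral stability result by reducing to almost sure convergence, replacing the integrands with adapted piecewise-constant-in-time approximations for which the stochastic integral is an explicit finite sum, and controlling the approximation error \emph{uniformly in $n$} by a compactness argument. Since $L^2([0,T];L^2(\T))$ is a Polish space, convergence in probability there is metrizable, so it suffices to show that every subsequence admits a further subsequence along which $\int_0^\cdot G^n\,\d W^n\to \int_0^\cdot G\,\d W$ in probability. Fixing such a subsequence (not relabelled) and using the hypotheses, I would first pass to a further subsequence along which $W^n\to W$ in $C([0,T])$ and $G^n\to G$ in $L^2([0,T];L^2(\T))$ hold $\prob$-almost surely; it then remains to deduce convergence in probability of the integrals.

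Next I introduce, for a uniform partition $0=t_0<\dots<t_{N_M}=T$ of mesh $1/M$, the shifted time-averaging operator
\[
[H]_M(t):=\sum_i \Bigl(\tfrac{1}{t_i-t_{i-1}}\int_{t_{i-1}}^{t_i}H(s)\,\d s\Bigr)\mathbf{1}_{(t_i,t_{i+1}]}(t).
\]
The crucial structural point is that $[G^n]_M$ is predictable on $\mathcal{S}_n$ (its coefficient on $(t_i,t_{i+1}]$ is $\mathcal{F}^n_{t_i}$-measurable) and $[G]_M$ is predictable on $\mathcal{S}$, so the It\^o integrals $\int_0^\cdot[G^n]_M\,\d W^n$ and $\int_0^\cdot[G]_M\,\d W$ are legitimate and coincide with the \emph{pathwise} finite sums $\sum_i \bar H_i\,\bigl(W^\bullet(t_{i+1}\wedge\cdot)-W^\bullet(t_i\wedge\cdot)\bigr)$, where $\bar H_i$ is the indicated interval average and $W^\bullet$ is $W^n$ or $W$ accordingly. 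Moreover each $[\,\cdot\,]_M$ is, up to vanishing boundary terms, a contraction on $L^2([0,T];L^2(\T))$ satisfying $[H]_M\to H$ as $M\to\infty$ for every fixed $H$.

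I would then bound $\int_0^\cdot G^n\,\d W^n-\int_0^\cdot G\,\d W$ by the triangle inequality through the three pieces $\int_0^\cdot(G^n-[G^n]_M)\,\d W^n$, $\int_0^\cdot[G^n]_M\,\d W^n-\int_0^\cdot[G]_M\,\d W$, and $\int_0^\cdot([G]_M-G)\,\d W$. For fixed $M$, the middle piece is a finite sum that converges $\prob$-almost surely to $0$ as $n\to\infty$, since the interval averages of $G^n$ converge to those of $G$ in $L^2(\T)$ while $W^n\to W$ uniformly. The two outer pieces are treated by the Burkholder--Davis--Gundy inequality together with a stopping-time localization (cutting when $\int_0^t\norm{G^n}_{L^2(\T)}^2\,\d s\ge K$, an event of uniformly small probability by tightness of the a.s.\ convergent $L^2_t$-norms); this shows that a stochastic integral whose integrand is small in $L^2([0,T];L^2(\T))$ in probability is itself small in probability. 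A diagonal choice of first $M$ and then $n$ closes the estimate.

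The main obstacle, and the step demanding the most care, is making the outer piece $\int_0^\cdot(G^n-[G^n]_M)\,\d W^n$ small \emph{uniformly in $n$} before $n$ is sent to infinity at fixed $M$: for each individual $n$ this is routine, but a bound uniform in $n$ is essential. This is exactly where almost sure convergence is exploited: the set $\seq{G}\cup\seq{G^n:n\in\N}$ is $\prob$-almost surely compact in $L^2([0,T];L^2(\T))$, and since the uniformly bounded operators $[\,\cdot\,]_M$ converge strongly to the identity, they converge uniformly on this compact set, giving $\sup_n\norm{[G^n]_M-G^n}_{L^2([0,T];L^2(\T))}\to 0$ as $M\to\infty$, $\prob$-almost surely. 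The filtration mismatch between $\mathcal{S}_n$ and $\mathcal{S}$ is precisely what forces the use of the adaptedness-preserving averaging operators in place of a single common approximation of the limiting integrand $G$.
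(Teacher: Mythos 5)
The paper does not give its own proof of this lemma --- it is quoted verbatim from Debussche--Glatt-Holtz--Temam \cite[Lemma 2.1]{DGT2011} (going back to Bensoussan \cite{Bensoussan:1995aa}) --- and your argument is a correct reconstruction of essentially that standard proof: backward-shifted, adapted piecewise-constant approximation of the integrands, explicit a.s.\ convergence of the resulting finite Riemann sums using $W^n\to W$ in $C([0,T])$, and a Lenglart-type stopping-time/It\^o-isometry estimate converting smallness in probability of $\int_0^T\norm{H^n}_{L^2(\T)}^2\,\d t$ into smallness in probability of the stochastic integral, with the uniform-in-$n$ control of the approximation error supplied by compactness of $\{G\}\cup\{G^n:n\in\N\}$ in $L^2([0,T];L^2(\T))$. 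I see no gaps.
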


\medskip

A sequence $\seq{v_n}$ of $\Bbb{B}$-valued random 
variables is \textit{stochastically bounded} (in $\Bbb{B})$ 
if 
\begin{align}\label{eq:stochbounded_defin}
\prob\bigl(\norm{v_n}_{\Bbb{B}}>M\bigr)\to 0, \mbox{ as } M\to \infty, \mbox{ uniformly in }n,
\end{align}
here written $v_n\in_{\rm sb} \Bbb{B}$. A simple 
approach for proving stochastic boundedness 
is---via Markov's (or Chebychev's) inequality---to bound 
$\norm{v_n}_{\Bbb{B}}$ in $L^p(\Omega)$, uniformly 
in $n$.  Denote by $\mu_n:=(v_n)_*\prob$ the probability 
law of $v_n$, i.e., for any 
$A\in \mathcal{B}(\Bbb{B})$, $\mu_n(A)=(v_n)_*\prob(A):=
\prob\bigl(X_n\in A\bigr)$. Stochastic boundedness 
is equivalent to the requirement that
$\mu_n\bigl(\seq{v\in \Bbb{B}:\norm{v}_{\Bbb{B}}>M}
\bigr)\to 0$ as $M\to\infty$, uniformly in $n$. If $\Bbb{B}$ 
is finite dimensional, this condition is that of 
tightness of the probability laws $\seq{\mu_n}$. 
If $\Bbb{B}$ is infinite dimensional, or more generally 
for a topological space $\bigl(\mathcal{X},
\mathcal{B}(\Bbb{\mathcal{X}})\bigr)$, 
by \textit{tightness} of a sequence of (Borel) 
probability measures $\seq{\mu_n}$ on $\mathcal{X}$, we 
mean that for any $\delta>0$, there is a 
compact set $K_\delta \subset \mathcal{X}$ 
such that $\mu_n\bigl(\mathcal{X} 
\backslash K_\delta\bigr)<\delta$, uniformly in $n$. 
The identification of a suitable compact set relies 
on Aubin--Lions--Simon type embedding 
theorems, see for example \cite{Simon:1987vn}. 
In a separable metric (or even a Hausdorff) 
space $\mathcal{X}$, by the well-known 
\textit{Prokhorov theorem}, tightness of the laws 
$\seq{\mu_n}$ implies weak compactness of $\seq{\mu_n}$, 
where we recall that $\seq{\mu_n}$ is 
\textit{weakly (or narrowly) convergent} to $\mu$ if 
$\int_{\mathcal{X}} f \, d\mu_n \ton \int_{\mathcal{X}} 
f \, d\mu$, for all $f\in C_b(\mathcal{X})$, 
the set of bounded continuous functions. 
If $\mathcal{X}$ is a Polish space, i.e., a 
separable completely metrisable topological space, then 
weak compactness implies tightness.

Finally, we will need the Gy{\"o}ngy-Krylov characterization 
of convergence in probability \cite{GK1996}. 
It will be used to upgrade weak (martingale) solutions 
to pathwise solutions.

\begin{lem}[Gy\"ongy--Krylov]\label{thm:gyongy_krylov}
Let $\mathcal{X}$ be a Polish space. For a sequence
$\seq{v_n}$ of $\mathcal{X}$-valued random variables 
define the joint probability laws $\seq{\mu^{m,n}}_{m,n}$ 
by setting, for all $A\in \mathcal{B}(\mathcal{X}
\times \mathcal{X})$, $\mu^{m,n}(A):=\prob\bigl(
\seq{(v_m,v_n)\in A}\bigr)$. Then the sequence $\seq{v_n}$ 
converges in probability if and only if for every subsequence 
$\seq{\mu^{m_k,n_k}}_{k}$, there exists a further 
subsequence that converges weakly to a 
probability measure $\mu$ supported on the diagonal:
$\mu\bigl( \seq{(v,w) \in \mathcal{X} 
\times \mathcal{X}: v = w}\bigr) = 1$.
\end{lem}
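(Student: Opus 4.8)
The plan is to establish the two implications separately, with essentially all of the content residing in the sufficiency (``if'') direction. Throughout, I would fix a complete metric $d$ inducing the Polish topology on $\mathcal{X}$, and write $\Delta := \seq{(x,y) \in \mathcal{X} \times \mathcal{X} : x = y}$ for the diagonal, which is closed in the product topology because $d$ is jointly continuous.

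For the \emph{necessity} (``only if'') direction, suppose $v_n \to v$ in probability. Fix any index sequences $m_k, n_k \to \infty$. Since both $v_{m_k} \to v$ and $v_{n_k} \to v$ in probability, the pairs $(v_{m_k}, v_{n_k})$ converge in probability, hence in distribution, to $(v,v)$ on $\mathcal{X} \times \mathcal{X}$. Thus $\mu^{m_k, n_k}$ converges weakly to $\mu := \operatorname{Law}(v,v)$, and since $(v,v) \in \Delta$ almost surely we have $\mu(\Delta) = 1$. (No further subsequence is actually required here, the whole sequence already converges.)

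For the \emph{sufficiency} (``if'') direction, I would argue by contradiction, using the classical fact that on a complete metric space a sequence converges in probability if and only if it is Cauchy in probability. Suppose $\seq{v_n}$ does \emph{not} converge in probability; then it fails to be Cauchy in probability, so there exist $\eps > 0$, $\delta > 0$, and strictly increasing sequences $\seq{m_k}$, $\seq{n_k}$ with
\begin{equation*}
	\prob\bk{d(v_{m_k}, v_{n_k}) > \eps} \ge \delta,
	\qquad k \in \N.
\end{equation*}
By hypothesis, passing to a further (relabelled) subsequence, the joint laws $\mu^{m_k, n_k}$ converge weakly to some $\mu$ with $\mu(\Delta) = 1$. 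Consider the closed set $F_\eps := \seq{(x,y) : d(x,y) \ge \eps}$; since $\mu$ is carried by $\Delta$, on which $d \equiv 0$, we have $\mu(F_\eps) = 0$. The closed-set form of the Portmanteau theorem then yields
\begin{equation*}
	\delta \le \limsup_{k \to \infty}
	\prob\bk{d(v_{m_k}, v_{n_k}) > \eps}
	\le \limsup_{k \to \infty} \mu^{m_k, n_k}(F_\eps)
	\le \mu(F_\eps) = 0,
\end{equation*}
a contradiction. Hence $\seq{v_n}$ converges in probability.

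The main obstacle is organising the sufficiency direction cleanly rather than any hard estimate: it rests on two standard facts --- the equivalence of convergence in probability with the Cauchy-in-probability property on complete spaces, and the $\limsup$ (closed-set) inequality of the Portmanteau theorem. The single point demanding care is that the test set $F_\eps$ must be \emph{closed}, so that the $\limsup$ inequality runs in the correct direction; this is why I phrase it with $d(x,y) \ge \eps$ and rely on the joint continuity of $d$. The event $\seq{d(v_{m_k}, v_{n_k}) > \eps}$ is then contained in $\seq{(v_{m_k},v_{n_k}) \in F_\eps}$, which supplies the first inequality in the displayed chain.
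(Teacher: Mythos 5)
Your proof is correct. Note that the paper does not prove this lemma at all --- it is stated in the appendix and attributed to Gy\"ongy and Krylov \cite{GK1996} --- and your argument (necessity via convergence in law of the pairs $(v_{m_k},v_{n_k})$ to $(v,v)$; sufficiency by contradiction through failure of the Cauchy-in-probability property, extraction of a weakly convergent sub-subsequence with diagonal-supported limit, and the closed-set Portmanteau inequality applied to $F_\eps=\seq{(x,y): d(x,y)\ge \eps}$) is essentially the standard proof from that reference. You correctly flag the two points where the hypotheses are consumed: completeness of $\mathcal{X}$ is used exactly once, to pass from Cauchy in probability to convergence in probability, and closedness of $F_\eps$ is what makes the $\limsup$ inequality run in the needed direction.
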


The fact that the support of the limit of the 
joint laws $\mu^{m,n}$ in Lemma \ref{thm:gyongy_krylov} 
lies on the diagonal follows from a 
pathwise uniqueness property, that is, 
for two solutions $v_a$ and $v_b$ of the same SPDE 
sharing the same initial condition, one has
$$
\mathbb{P}\bigl(\left\{\omega \in \Omega: 
\norm{v_a(\omega,t)-v_b(\omega,t)}_{\mathcal{X}}=0, \,\,
\forall t \in [0,T]\right\}\bigr) = 1.
$$
We point out that pathwise uniqueness also implies 
uniqueness in law \cite[Theorem IX.1.7]{Revuz:1999wi}, 
i.e., that for two weak solutions $\bigl(v_a, W_a,\mathcal{S}_a\bigr)$ 
and $\bigl(v_b, W_b, \mathcal{S}_b\bigr)$, with their 
respective Brownian motions $W_a,W_b$ and stochastic bases 
$\mathcal{S}_a, \mathcal{S}_b$, one has that 
the laws of $v_a$ and $v_b$ coincide, i.e., $v_a \sim v_b$. 

\vspace{0.25cm}

Generally, to ensure convergence of a sequence of
approximate solutions towards a solution for a nonlinear 
SPDE, it is essential that we secure strong compactness 
in the $\omega$ variable (a.s.~convergence). 
To that end, one often relies on the 
Skorokhod representation theorem for random variables 
taking values in a Polish space $\mathcal{X}$, delivering a 
new probability space and new random variables, 
with the same laws as the original ones, 
converging almost surely.  In this work, we use 
the spaces $L^2([0,T];H^1(\T))$ and $C([0,T];H^1_w(\T))$. 
The former is a Polish space, whereas the latter is not.  
Here $C([0,T];H^1_w(\T))$ refers to the continuous 
functions from $[0,T]$ to the Hilbert space $H^1(\T)$ 
equipped with the \textit{weak topology}. 
This is a locally convex space with the weak topology 
generated by the system of seminorms $\norm{v}_\phi
=\sup_{t\in[0,T]} \abs{\bigl \LL v(t),\phi\bigr\RR_X}$, 
for $\phi \in X:= H^1(\T)$.  Since $X$ is separable and 
reflexive, the unit ball $B_X \subset X$ is 
a metrisable compact set and one can equip $C([0,T];B_X)$ 
with a complete metric topology induced by the above system of 
seminorms. On $C([0,T];H^1_w(\T))$ we consider the 
$\sigma$-algebra $\mathcal{B}_T$ generated by the mappings 
$C([0,T];H^1_w(\T))\ni v\mapsto v(t)\in X$, $t\in [0,T]$. 

Weakly continuous functions taking values in 
a separable Banach space are not Polish but rather 
quasi-Polish. \textit{Quasi-Polish} refers to a 
topological space $\bigl(\mathcal{X},\tau\bigr)$ 
that asks for point-separability by countably 
many continuous functions, i.e., that there exists a 
countable family 
\begin{align}\label{eq:countable}
\seq{f_\ell:
\mathcal{X}\to [-1,1]}_{\ell\in \N}
\end{align}
of continuous functions that separate points of 
$\mathcal{X}$ \cite{Jak1998}.  
In other words, $\mathcal{X}$ is quasi-Polish if 
$\mathcal{X}$ is a Hausdorff space (but need not be regular) 
that admits a continuous injection 
$f(v)=\seq{f_\ell(v)}_{\ell\in \N}$ to the Polish 
space $[-1,1]^{\N}$. The idea behind the proof of 
the theorem below \cite{Jak1998} is to transfer the Skorokhod 
representation problem via homeomorphism 
methods to a compact subset of $[-1,1]^{\N}$, where the 
Skorokhod representation theorem is known to hold, and 
then map back to $\mathcal{X}$ via $f^{-1}$, noting that 
every compact set in $\mathcal{X}$ is 
$\sigma\bigl(\seq{f_\ell}\bigr)$-measurable and metriseable. 
Whenever the $\sigma$-algebra $\sigma\bigl(\seq{f_\ell}\bigr)$ 
is strictly smaller than the Borel $\sigma$-algebra 
$\mathcal{B}_\tau$, it turns out that every 
\textit{tight} Borel probability measure on 
$\bigl(\mathcal{X},\tau\bigr)$ is uniquely determined 
by its values on $\sigma\bigl(\seq{f_\ell}\bigr)$ and 
can be uniquely extended to $\mathcal{B}_\tau$. Besides, 
$f$ has a continuous inverse ($f$ is a homeomorphic 
embedding) when restricted to a $\tau$-compact subset 
of $\mathcal{X}$. As in 
\cite[Cor.~3.12]{Brzezniak:2013aa} (see 
also \cite{Brzezniak:2011aa,Brzezniak:2013ab,Ond2010}), 
one can easily prove that $C([0,T];H^1_w(\T))$ is quasi-Polish, 
and that the separating sequence 
$\seq{f_\ell}_{\ell\in \N}$ generates the $\sigma$-algebra 
$\mathcal{B}_T$. We refer to \cite[Sec.~3]{Brzezniak:2016wz} 
for a discussion that collects many relevant properties of 
quasi-Polish spaces, including $C([0,T];X_w)$ for an 
arbitrary separable Hilbert space $X$.

As the original Skorokhod theorem is not applicable in 
quasi-Polish spaces, we use the more recent version by Jakubowski 
\cite{Jak1998}. The following form of the theorem 
is taken from \cite{Brzezniak:2013aa,Brzezniak:2011aa,
Brzezniak:2013ab,Ond2010}, which are some of the 
first works to employ the theorem to construct 
martingale solutions of nonlinear SPDEs, including stochastic 
nonlinear wave equations and the stochastic 
incompressible Navier--Stokes equations, see also \cite{BH2016} 
for an application to the compressible 
Navier--Stokes equations. 

\begin{thm}[Skorokhod--Jakubowski a.s.~representations]
\label{thm:skorokhod}
Let $\bigl(\mathcal{X},\tau,\mathcal{B}_\tau\bigr)$ 
be a quasi-Polish space, and denote by 
$\Sigma_f \subset \mathcal{B}_\tau $ 
the $\sigma$-algebra generated by the sequence 
$\seq{f_\ell}$ of continuous functions that 
separate points. Then
\begin{enumerate}
	\item every $\tau$-compact subset of $\mathcal{X}$ 
	is metrisable;
	\item every Borel subset of a sigma compact set 
	in $\mathcal{X}$ belongs to $\Sigma_f$;
	\item every probability measure supported by 
	a sigma compact set in $\mathcal{X}$ has 
	a unique Radon extension to the Borel 
	$\sigma$-algebra $\mathcal{B}_\tau=
	\mathcal{B}(\mathcal{X})$.
\end{enumerate}
Moreover, if $\seq{\mu_n}$ is a tight sequence of probability 
measures on $\bigl(\mathcal{X},\Sigma_f\bigr)$, then 
there exist a subsequence $\seq{n_k}_k$, a 
probability space $\bigl(\tilde{\Omega},
\tilde{\mathcal{F}},\tilde{\prob}\bigr)$, and  Borel 
measurable $\mathcal{X}$-valued random variables 
$\tilde{v}_k$, $\tilde{v}$, such that $\mu_{n_k}$ 
is the law of $\tilde{v}_k$ and $\tilde{v}_k\to \tilde{v}$ 
$\tilde{\prob}$-a.s.~in $\mathcal{X}$. Besides, the 
law $\mu$ of $\tilde{v}$ is a Radon measure on 
$\mathcal{B}_\tau$.
\end{thm}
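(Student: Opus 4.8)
The plan is to reduce the whole statement to the classical Skorokhod representation theorem on the Polish (compact metric) space $[-1,1]^{\N}$ by transporting the problem along the canonical map induced by the separating family $\seq{f_\ell}$. First I would define $f\colon\mathcal{X}\to[-1,1]^{\N}$ by $f(v)=\seq{f_\ell(v)}_{\ell\in\N}$; this is continuous, since each coordinate is, and injective, since the $f_\ell$ separate points. A key preliminary observation is that $\Sigma_f=\sigma\bigl(\seq{f_\ell}\bigr)=f^{-1}\bigl(\mathcal{B}([-1,1]^{\N})\bigr)$, because the product Borel $\sigma$-algebra on $[-1,1]^{\N}$ is generated by the coordinate projections; this is precisely what makes pushforwards and pullbacks along $f$ compatible with the $\sigma$-algebra $\Sigma_f$ on which the measures live, even though $\mathcal{X}$ is not assumed metrisable.

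With $f$ in hand, the three structural assertions follow. For (1), on any $\tau$-compact set $\mathcal{K}\subseteq\mathcal{X}$ the restriction $f|_{\mathcal{K}}$ is a continuous injection from a compact space into the metric space $[-1,1]^{\N}$; since a continuous bijection from a compact space onto a Hausdorff space is a homeomorphism, $f|_{\mathcal{K}}$ is a homeomorphism onto its image, so $\mathcal{K}$ is metrisable. For (2), on each such $\mathcal{K}$ the subspace topology is then generated by $\seq{f_\ell}$, so the Borel $\sigma$-algebra of $\mathcal{K}$ coincides with the trace of $\Sigma_f$; passing to countable unions gives the claim for $\sigma$-compact sets. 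Assertion (3) is the measure-theoretic counterpart: a probability measure carried by a $\sigma$-compact set lives on a countable union of metrisable compacta on which $\Sigma_f$ and $\mathcal{B}_\tau$ agree, so by a standard extension argument it admits a unique Radon extension to $\mathcal{B}_\tau=\mathcal{B}(\mathcal{X})$. For the representation statement itself, I would push each $\mu_n$ forward to $\nu_n:=f_*\mu_n$ on $[-1,1]^{\N}$; because $f$ sends compacta to compacta, tightness of $\seq{\mu_n}$ transfers to tightness of $\seq{\nu_n}$, and the classical Skorokhod theorem on $[-1,1]^{\N}$ yields a subsequence $\seq{n_k}$, a space $\bigl(\tilde\Omega,\tilde{\mathcal{F}},\tilde{\prob}\bigr)$, and variables $\tilde w_k,\tilde w$ with $(\tilde w_k)_*\tilde{\prob}=\nu_{n_k}$ and $\tilde w_k\to\tilde w$ $\tilde{\prob}$-a.s. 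Setting $\tilde v_k:=f^{-1}(\tilde w_k)$, $\tilde v:=f^{-1}(\tilde w)$ produces Borel $\mathcal{X}$-valued variables with the correct laws, and continuity of $f^{-1}$ on compacta upgrades a.s.\ convergence in $[-1,1]^{\N}$ to a.s.\ convergence in $\mathcal{X}$; that the law of $\tilde v$ is Radon is exactly (3).

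The main obstacle, and the only point where non-metrisability genuinely bites, is the compactness bookkeeping needed to apply $f^{-1}$ legitimately: one must extract from tightness a \emph{single} $\sigma$-compact set $\bigcup_j\mathcal{K}_j$ supporting all the $\nu_{n_k}$ \emph{and} their limit, and verify that the Skorokhod limit $\tilde w$ in $[-1,1]^{\N}$ cannot escape $f\bigl(\bigcup_j\mathcal{K}_j\bigr)$, so that $f^{-1}$ is defined and continuous where it is evaluated. This requires that the limiting measure $\nu$ itself be inner-regular on the $f$-images of compacta (which tightness of the sequence provides by a diagonal/monotone-class argument) and that $f^{-1}$ restricted to each $f(\mathcal{K}_j)$ be Borel measurable, an immediate consequence of the homeomorphism property (1). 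Once this selection-and-compactness step is secured, the remaining work—transferring laws through $f$, checking measurability via $\Sigma_f=f^{-1}(\mathcal{B}([-1,1]^{\N}))$, and reading off the convergence—is routine.
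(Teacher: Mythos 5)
Your sketch is essentially the argument the paper has in mind: the paper's own ``proof'' of Theorem \ref{thm:skorokhod} is simply a citation of Jakubowski \cite{Jak1998}, and the discussion preceding the theorem in Appendix \ref{sec:toolbox} describes exactly your strategy --- embed $\mathcal{X}$ into $[-1,1]^{\N}$ via the separating family, apply the classical (Prokhorov plus Skorokhod) machinery there, and pull back along $f^{-1}$ restricted to a $\sigma$-compact set on which $f$ is a homeomorphic embedding and on which the limit law concentrates. Your identification of the compactness bookkeeping (forcing the limit to live in $f\bigl(\bigcup_j \mathcal{K}_j\bigr)$ via inner regularity on closed images of compacta) as the one genuinely non-metric step is exactly where Jakubowski's proof does its real work, so the proposal is a faithful outline of the cited argument.
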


\begin{proof}
See \cite[pp.~169--173]{Jak1998}.
\end{proof}

A path space for a sequence of variables $\{v_n\}$ defines 
the topology in which we would like the
Skorokhod--Jakubowski representations $\{\tilde{v}_{k}\}$ 
to converge (a.s.). It is often important that 
$\tilde{v}_{k} \to \tilde{v}$ in multiple 
spaces/topologies (say, $\mathcal{X}_1$ and $\mathcal{X}_2$). 
When both spaces $\mathcal{X}_1$ and $\mathcal{X}_2$ 
are normed spaces, or when one space injects continuously 
into another, it is often possible to set up a topology on 
the intersection space $\mathcal{Y}:=\mathcal{X}_1 
\cap \mathcal{X}_2$ directly that meet two criteria:
\begin{itemize}
	\item[(i)] $\mathcal{Y} $ is quasi-Polish.
	\item[(ii)] Compact sets on $\mathcal{Y} $ 
	are sufficiently plentiful; in particular, tightness of laws 
	on $\mathcal{Y}$ can be readily deduced by the separate 
	tightness on $\mathcal{X}_1$ and on $\mathcal{X}_2$. 
\end{itemize}
These criteria are opposed in the sense that 
a topology on the intersection space $\mathcal{Y}$ 
stronger than (the subspace topology induced by) 
each of the topologies on $\mathcal{X}_1$ and $\mathcal{X}_2$ 
makes it easy to show that $\mathcal{Y}$ is quasi-Polish. 
One such example is the supremum topology.  
On the other hand, the strength 
of the topology placed on $\mathcal{Y} $ makes convergence 
there more difficult and compact sets harder to come by. 

Herein, the difficulty of characterising 
compact sets on any sufficiently strong topology 
on the intersection space $\mathcal{Y}$ is side-stepped 
by finding a.s.~representations and limits for $\{(v_n, v_n)\}$ 
on the product space $\mathcal{X}_1 \times \mathcal{X}_2$, 
and after that identifying their limits as the same 
process (Lemma \ref{thm:identification_lemma}).
 
(Countable) products of quasi-Polish 
spaces are quasi-Polish. We shall 
apply Theorem \ref{thm:skorokhod} in the product space  
$\mathcal{X} = \mathcal{X}_1 \times \mathcal{X}_2 
\times  \mathcal{X}_3 \times \mathcal{X}_4$,
where $\mathcal{X}_1$ and $\mathcal{X}_2$ are 
two path spaces for two copies of the same variable. 
A Cartesian product of topological spaces is always 
equipped with the product topology and, thus, the 
Borel $\sigma$-algebra generated by the product topology.

On a product space there are two natural 
$\sigma$-algebras: the product of the Borel $\sigma$-algebras 
and the already introduced Borel $\sigma$-algebra for 
the product topology. Although, in general, these two 
are not the same, they do coincide on a separable metric space. 
This implies that coordinatewise 
measurability and tightness is the same as joint 
measurability and tightness, which is convenient since 
we would want to use the product of the 
Borel $\sigma$-algebras in computations leading up 
to joint tightness and weak convergence 
in the product space. Whilst the setting of 
Theorem \ref{thm:skorokhod} goes far beyond separable 
metric spaces, in applications a priori estimates 
ensure that the involved random variables take values 
in a compact set, and then we can rely on (1) and (2) 
of Theorem \ref{thm:skorokhod}.

\bibliographystyle{plain}

\end{document}